\newtheorem{Thm}{Theorem}[section]\newtheorem*{Thm*}{Theorem}
\newtheorem{Lem}[Thm]{Lemma} 
\newtheorem{Cor}[Thm]{Corollary}
\newtheorem{Prop}[Thm]{Proposition}
\newtheorem{Prop-Def}[Thm]{Proposition-Definition}
\theoremstyle{definition}
\newtheorem{Ex}[Thm]{Example}
\newtheorem{Def}[Thm]{Definition}
\newtheorem{Assu}[Thm]{Assumption}
\newtheorem{Rem}[Thm]{Remark}
\newcommand{\ra}{\rightarrow}
\newcommand{\la}{\leftarrow}
\newcommand{\D}{\mathsf{D}}
\newcommand{\K}{\mathsf{K}}
\newcommand{\SSS}{\mathbb{S}}
\newcommand{\EEE}{\mathbb{E}}
\newcommand{\s}{\mathfrak{s}}
\newcommand{\CCC}{\mathscr{C}}
\newcommand{\cal}{\mathcal}
\newcommand{\T}{\mathcal T}
\newcommand{\bb}{\mathrm{b}}
\newcommand{\Filt}{\mathsf{Filt}}
\newcommand{\h}{{\mathrm H}}
\renewcommand{\dim}{{\rm dim}}
\newcommand{\B}{{\bf B}}
\newcommand{\R}{\mathbf{R}}
\newcommand{\xra}{\xrightarrow}
\newcommand{\Z}{{\mathbb Z}}
\newcommand{\N}{{\mathbb N}}
\newcommand{\lra}{\longrightarrow}
\newcommand{\hra}{\hookrightarrow}
\newcommand{\un}{\underline}
\newcommand{\op}{\oplus}
\newcommand{\bop}{\bigoplus}
\newcommand{\ot}{\otimes}
\newcommand{\sg}{\operatorname{sg}\nolimits}
\newcommand{\Ext}{\operatorname{Ext}\nolimits}
\newcommand{\Hom}{\operatorname{Hom}\nolimits}
\newcommand{\Top}{\operatorname{top}\nolimits}
\newcommand{\rad}{\operatorname{rad}\nolimits}
\newcommand{\soc}{\operatorname{soc}\nolimits}
\newcommand{\ke}{\operatorname{Ker}\nolimits}
\newcommand{\End}{\operatorname{End}\nolimits}
\newcommand{\RHom}{\mathbf{R}\strut\kern-.2em\operatorname{Hom}\nolimits}
\newcommand{\RshHom}{\mathbf{R}\strut\kern-.2em\mathscr{H}\strut\kern-.3em\operatorname{om}\nolimits}
\newcommand{\shHom}{\mathscr{H}\strut\kern-.3em\operatorname{om}\nolimits}
\newcommand{\shEnd}{\mathscr{E}\strut\kern-.3em\operatorname{nd}\nolimits}
\DeclareMathOperator{\moduleCategory}{{\mathsf{mod}}} \renewcommand{\mod}{\moduleCategory}
\DeclareMathOperator{\proj}{\mathsf {proj}}
\DeclareMathOperator{\thick}{\mathsf{thick}}
\DeclareMathOperator{\per}{\mathsf{per}}
\DeclareMathOperator{\add}{\mathsf {add}}
\DeclareMathOperator{\CM}{\mathsf {CM}}
\DeclareMathOperator{\Inj}{\mathsf{Inj}}
\DeclareMathOperator{\Proj}{\mathsf{Proj}}
\DeclareMathOperator{\ind}{\mathsf {ind}}
\numberwithin{equation}{section}
\title[CM dg modules and negative CY configurations]{Cohen-Macaulay differential graded modules \\ and Negative Calabi-Yau configurations}
\begin{document}

\author{Haibo Jin}
\address{Haibo Jin: Graduate School of Mathematics, Nagoya University, Furocho, Chikusaku, Nagoya 464-8602, Japan}
\email{d16002n@math.nagoya-u.ac.jp}

\begin{abstract}
In this paper, we introduce  the class of Cohen-Macaulay (=CM)   dg (=differential graded)  modules over  Gorenstein dg algebras  and study their basic properties. We show that the category of CM dg modules forms a Frobenius extriangulated category, in the sense of Nakaoka and Palu, and it admits almost split extensions. 
We also study representation-finite $d$-self-injective dg algebras $A$ in detail for some positive integer $d$. In particular,  we classify the Auslander-Reiten (=AR) quivers of $\CM A$ for a large class of $d$-self-injective dg algebras $A$ in terms of $(-d)$-Calabi-Yau (=CY) configurations, which are Riedtmann's configurations for the case $d=1$.
  For any given $(-d)$-CY configuration $C$, 
  we show there exists a $d$-self-injective dg algebra $A$,  such that the AR quiver of $\rm{CM} A$ is given by $C$.
For type $A_{n}$, by
  using a bijection between  $(-d)$-CY configurations and certain purely combinatorial objects which we call maximal $d$-Brauer relations given by Coelho Sim\~oes, we construct such $A$ through a Brauer tree dg algebra.  
\end{abstract}

\thanks{The  author is  supported by China Scholarship Council (No.201606140033).}

\maketitle
\tableofcontents
\setcounter{section}{-1}

\section{Introduction}

The notion of Cohen-Macaulay (CM) modules is classical   in commutative algebra \cite{M,BH}, and has natural generalizations for non-commutative algebras \cite{Buc, H2, IW}, often  called Gorenstein projective modules \cite{AB1, C, EJ}. 
The category of CM modules
has been studied by many researchers in representation theory (see, for example, \cite{CR,Yoshino, Simson, LW}).  On the other hand,  the derived categories of   differential graded (dg) categories  introduced by Bondal-Kapranov \cite{BK} and Keller \cite{Keller94, Keller06} is an active subject appearing in various areas of mathematics \cite{Mi, T, Ye}. Among others, we refer to \cite{Andrei, Jo, J,KY2, Schmidt} for the representation  theory of dg categories.

In this paper,  
we introduce Cohen-Macaulay dg modules over dg algebras and develop their representation theory  to
 build a connection between these two subjects. 
 One of the main properties of the category of Cohen-Macaulay dg modules is that it has 
   a structure of extriangulated category and the stable category is equivalent to the singularity category, which is  an analogue of Buchweitz's equivalence. Moreover, it admits almost split extensions and we can study it by Auslander-Reiten theory. In fact, there are many nice dg algebras (including those given in this paper), whose categories of Cohen-Macaulay dg modules can be well understood, while the derived categories of dg algebras are usually wild and it is hopeless to  classify all  the indecomposable objects. 
  

To make everything works well, we need to add some restrictions on dg algebras. More precisely, we work on dg algebras $A$ over a field $k$ satisfying the following assumptions.
\begin{Assu}\label{assumption}
 \begin{enumerate}[\rm(1)]
 \item $A$ is \emph{non-positive}, $i.e.$ $\h^{i}(A)=0$ for $i>0$ (without loss of generality, we may assume $A^{i}=0$ for $i>0$, see Section \ref{Section:nonpositivedg});
 \item $A$ is \emph{proper}, $i.e.$ $\dim_{k}\bigoplus_{i\in\Z}\h^{i}(A)<\infty$;
 \item $A$ is \emph{Gorenstein}, $i.e.$ the thick subcategory $\per A$ of the derived category $\D (A)$  generated by $A$ coincides with the thick subcategory generated by $DA$, where $D=\Hom_{k}(?, k)$ is the $k$-dual.  
 \end{enumerate}
 \end{Assu}
In this case, we define Cohen-Macaulay dg $A$-modules as follows, where we denote by $\D^{\bb}(A)$ the full subcategory of $\D(A)$ consisting of the dg $A$-modules whose total cohomology is finite-dimensional. 

\begin{Def}[Definition \ref{Def:CM}]\label{Def:CM0}
\begin{enumerate}[\rm(1)]
\item
A dg $A$-module $M$ in $\D^{\bb}(A)$ is called a \emph{Cohen-Macaulay dg $A$-module} if $\h^{i}(M)=0$ and  $\Hom_{\D A}(M, A[i])=0$ for $i>0$;
\item
We denote by  $\CM A$  the subcategory of $\D^{\bb}(A)$ consisting of Cohen-Macaulay dg $A$-modules.
\end{enumerate}
\end{Def}

 Definition   \ref{Def:CM0}  is   motived by the fact that if 
$A$ is concentrated in degree zero, then the condition (1) above gives an alternative description of classical Cohen-Macaulay modules \cite[Theorem 3.10]{IY2}. Moreover, in this case  the category $\CM A$  forms a Frobenius category in the sense of \cite{Happel} and the stable category $\un{\CM}A$ is a triangulated category which is triangle equivalent to the singularity category $\D_{\sg}(A)=\D^{\rm b}({\mod} A)/\K^{\rm b}(\proj A)$ introduced by Buchweitz \cite{Buc} and Orlov \cite{O1}. However, $\CM A$ does not necessarily have a natural structure of exact category in our setting. Instead, the following result shows it has a natural structure of extriangulated category introduced by  Nakaoka and Palu \cite{Palu}. 
\begin{Thm} [{Theorems \ref{Thm:properties}}, \ref{Thm:AR} and \ref{Thm:cmappro}]
Let $A$ be a non-positive proper Gorenstein dg algebra. Then 
\begin{enumerate}[\rm(1)]
\item $\CM A$ is functorially finite in $\D^{\bb}(A)$;
\item $\CM A$ is a Frobenius extriangulated category with $\Proj (\CM A)=\add A$;
 \item The stable category $\underline{\CM} A:=(\CM A)/[\add A] $ is a triangulated category;
 \item The composition $\CM A \hookrightarrow \D^{\bb}(A)\ra \D^{\bb}(A)/\per A$ induces a triangle equivalence
\[ \underline{\CM} A=(\CM A)/[\add A] \simeq  \D^{\bb}(A)/\per A=\D_{\sg}(A); \]
\item $\un{\CM}A$ admits a Serre functor and  $\CM A$ admits almost split extensions.
\end{enumerate}
 \end{Thm}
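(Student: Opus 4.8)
I would organize the proof along the three cited results: Theorem~\ref{Thm:cmappro} will supply the Cohen--Macaulay approximations needed for (1), Theorem~\ref{Thm:properties} the Frobenius/extriangulated structure and the singularity-category identification needed for (2)--(4), and Theorem~\ref{Thm:AR} the Auslander--Reiten theory needed for (5). Everything is carried out inside the triangulated category $\D^{\bb}(A)$, using its standard $t$-structure (available since $A$ is non-positive) and the duality $\RHom_{A}(-,A)\colon \D^{\bb}(A)^{\mathrm{op}}\simeq\D^{\bb}(A^{\mathrm{op}})$ together with biduality $\RHom_{A^{\mathrm{op}}}\RHom_{A}(-,A)\simeq\mathrm{id}$ (available since $A$ is Gorenstein). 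First, applying $\h^{\ast}(-)$ and $\Hom_{\D A}(-,A[\ast])$ to a triangle $X\to Y\to Z\to X[1]$ with $X,Z\in\CM A$ shows $\h^{i}(Y)=0=\Hom_{\D A}(Y,A[i])$ for $i>0$, so $\CM A$ is extension-closed in $\D^{\bb}(A)$ and hence inherits (Nakaoka--Palu) an extriangulated structure whose conflations are exactly the triangles of $\D^{\bb}(A)$ with all three terms in $\CM A$. For $N\in\CM A$ one has $\Ext^{1}_{\CM A}(A,N)=\h^{1}(N)=0$ and $\Ext^{1}_{\CM A}(N,A)=\Hom_{\D A}(N,A[1])=0$, so $\add A$ consists of projective--injective objects; and $\CM A$ has enough projectives (take $P\in\add A$ with $\h^{0}(P)\twoheadrightarrow\h^{0}(N)$, lift to $P\to N$, and use the two long exact sequences to see the fibre lies in $\CM A$) and, dually via $\RHom_{A}(-,A)$ and biduality, enough injectives, both classes equal to $\add A$. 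Since projective (resp.\ injective) objects of $\CM A$ are then $\add A$-summands, $\Proj(\CM A)=\Inj(\CM A)=\add A$ and $\CM A$ is Frobenius, which is (2). For (1), I would use the Gorenstein hypothesis to construct, for each $X\in\D^{\bb}(A)$, Cohen--Macaulay approximation triangles $M_{X}\to X\to P_{X}\to M_{X}[1]$ and $Q^{X}[-1]\to X\to M^{X}\to Q^{X}$ with $M_{X},M^{X}\in\CM A$ and $P_{X},Q^{X}\in\per A$ suitably cohomologically truncated (the dg analogue of Buchweitz's Gorenstein-projective approximations); since $\CM A\subseteq\D^{\le 0}$ and $P_{X},Q^{X}$ are arranged so that $\Hom_{\D A}(\CM A,P_{X})=0=\Hom_{\D A}(Q^{X},\CM A)$, the maps $M_{X}\to X$ and $X\to M^{X}$ are then right and left $\CM A$-approximations, giving (1).

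For (3), by the extriangulated version of Happel's theorem (Nakaoka), the stable category $\un{\CM}A=(\CM A)/[\add A]$ of the Frobenius extriangulated category $\CM A$ is triangulated, with suspension $\Sigma$ computed from ``injective'' conflations $N\to I_{N}\to\Sigma N$, $I_{N}\in\add A$. For (4), the composite $\CM A\hookrightarrow\D^{\bb}(A)\to\D_{\sg}(A)=\D^{\bb}(A)/\per A$ annihilates $\add A\subseteq\per A$, hence factors through a functor $\Phi\colon\un{\CM}A\to\D_{\sg}(A)$; it is a triangle functor because $I_{N}\in\per A$ dies in $\D_{\sg}(A)$, so $\Sigma N\cong N[1]$ there, and it is dense because the triangle $M_{X}\to X\to P_{X}$ identifies $X$ with $M_{X}\in\CM A$ inside $\D_{\sg}(A)$. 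Fullness and faithfulness are the standard Buchweitz computation: a morphism of $\D_{\sg}(A)$ out of an object of $\CM A$ is represented by an honest morphism of $\D^{\bb}(A)$ because $\Hom_{\D A}(\CM A,P_{X})=0$, and a morphism of $\CM A$ that becomes zero in $\D_{\sg}(A)$ factors through $\per A$ and therefore through $\add A$. Hence $\Phi$ is a triangle equivalence.

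For (5): properness of $A$ gives the derived Nakayama functor $\nu:=(-)\otimes^{\mathbf L}_{A}DA$ with the functorial isomorphism $D\Hom_{\D A}(X,Y)\cong\Hom_{\D A}(Y,\nu X)$ for $X\in\per A$, $Y\in\D^{\bb}(A)$; the Gorenstein hypothesis makes $\nu$ preserve $\D^{\bb}(A)$ and restrict to an auto-equivalence of $\per A$ (since $\nu A=DA$ generates $\per A=\thick(DA)$), so $\nu$ descends to an auto-equivalence $\bar\nu$ of $\D_{\sg}(A)\simeq\un{\CM}A$. Using Cohen--Macaulay approximations to bring one argument into $\per A$, the displayed duality descends to $D\Hom_{\D_{\sg}(A)}(X,Y)\cong\Hom_{\D_{\sg}(A)}(Y,\bar\nu X[-1])$ on $\D_{\sg}(A)$, which is Hom-finite and Krull--Schmidt because $A$ is proper; thus $\SSS:=\bar\nu[-1]$ is a Serre functor on $\un{\CM}A$. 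By Reiten--Van den Bergh, $\un{\CM}A$ then has Auslander--Reiten triangles, and pulling each of these back through the Frobenius structure — absorbing the $\add A$-summands of its middle term — produces an almost split conflation in $\CM A$, which completes (5).

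The two genuinely substantial points, where more than long-exact-sequence and $t$-structure bookkeeping is required, are: the construction of the Cohen--Macaulay approximation triangles, which is the place the Gorenstein hypothesis does real work and where one must control dg syzygies and the cohomological amplitude of the perfect error terms $P_{X},Q^{X}$; and the verification that $\nu$ descends to a genuine Serre functor on $\D_{\sg}(A)$ — that it preserves both $\D^{\bb}(A)$ and $\per A$, and that the Serre pairing remains non-degenerate after passage to the Verdier quotient. I expect the former to be the main obstacle.
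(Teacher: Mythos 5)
Your overall architecture for (2) and (3) matches the paper's, but the construction you lean on for (1) and for the density/fullness part of (4) cannot exist as stated. You ask, for \emph{every} $X\in\D^{\bb}(A)$, for a triangle $M_X\to X\to P_X\to M_X[1]$ with $M_X\in\CM A$, $P_X\in\per A$ and $\Hom_{\D(A)}(\CM A,P_X)=0$ (and dually). Take a simple module $S\notin\per A$ (such $S$ exists whenever $\D^{\bb}(A)\neq\per A$, since the simples generate $\D^{\bb}(A)$) and put $X=S[-1]$, whose cohomology sits in degree $1$. Since $\CM A\subseteq\D^{\bb}_{\le 0}$, the $t$-structure forces $\Hom_{\D(A)}(M_X,X)=0$, so the triangle splits as $P_X\cong X\oplus M_X[1]$; if $P_X$ were perfect, thickness of $\per A$ would give $S\in\per A$, a contradiction. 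So requiring the error term to be simultaneously perfect and right-orthogonal to $\CM A$ is too strong, and the same false claim is what makes your one-line density argument in (4) ("$X\cong M_X$ in $\D_{\sg}(A)$") and your straightening of roofs go through. The correct statement is the paper's Theorem \ref{Thm:cmappro}: $\D^{\bb}(A)=\CM A\perp\add(\Filt A[>0])\perp\D^{\bb}_{>0}(A)$ (and its dual), obtained from the standard $t$-structure together with the co-$t$-structure $({}^{\perp}A[>0],A_{\le 0})$ of Lemma \ref{Lem:tcot}; the error term is perfect only after restricting to $M\in\D^{\bb}_{\le 0}(A)$ (Corollary \ref{Prop:CMappro}). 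Functorial finiteness (1) follows from that three-term decomposition without any perfection of the error, and the equivalence (4) is then obtained in the paper by applying \cite[Corollary 2.1]{Iyama} to $\T=\D^{\bb}(A)$, $\cal P=\add A$, rather than by rerunning Buchweitz's argument; if you insist on a direct proof you must first reduce an arbitrary object to $\D^{\bb}_{\le 0}(A)$ (absorbing shifts via (co)syzygies inside $\CM A$) before invoking approximations.

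Two smaller points. Your proofs of (2) and (3) are essentially the paper's (extension-closedness, approximation triangles staying in $\CM A$, $\Proj=\Inj=\add A$, then Nakaoka--Palu). For (5), however, the assertion that the Serre pairing ``descends'' to $\D_{\sg}(A)$ by ``bringing one argument into $\per A$ via CM approximations'' is not an argument: a CM approximation replaces an object by a Cohen--Macaulay one, not a perfect one, so it does not put you in the range of validity of Lemma \ref{Lem:bifunc}. The paper instead verifies Amiot's criterion (Proposition \ref{Amiot}) by producing local $\per A$-covers (Lemma \ref{cover}, an induction on cohomological length using the octahedral axiom), and then gets almost split extensions in $\CM A$ from the Iyama--Nakaoka--Palu equivalence between ARS duality and almost split extensions (Proposition \ref{Prop:INP}), rather than by lifting AR triangles from $\un{\CM}A$ by hand; your proposed lifting step would in effect have to reprove that correspondence in the extriangulated setting.
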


 The main examples considered in this paper are trivial extension dg algebras  and truncated polynomial dg algebras. We determine all indecomposable Cohen-Macaulay dg modules over truncated polynomial dg algebras concretely and give their AR quivers (see Theorem \ref{Thm:ARquiver} for the details). We also show that, in this case, the stable categories are cluster categories by using a criterion given by Keller and Reiten \cite{Keller08} (see Theorem \ref{main}).

One of the traditional subjects is the classification of Gorenstein rings which are representation-finite in the sense that they have only finitely many indecomposable Cohen-Macaulay modules. 
Riedtmann \cite{Riedtmann, Riedtmann2} and Wiedemann \cite{Wiedemann} considered the classification of representation-finite self-injective algebras and Gorenstein orders respectively. In both classifications, configurations play an  important role. We may regard Wiedemann's configurations as ``0-Calabi-Yau'' since they are preserved by Serre functor $\SSS$ and regard Riedtmann's configurations as ``$(-1)$-Calabi-Yau'' since they are preserved by $\SSS\circ [1]$. Inspired by this, we introduce the negative Calabi-Yau configurations to study the   AR quivers of $\CM A$.
 
 \begin{Def}[Definitions \ref{Def:configuration} and \ref{Def:SMS}]
Let $\T$ be a $k$-linear Hom-finite Krull-Schmidt  triangulated category and let $C$ be a set of indecomposable objects of $\T$.  We call $C$ a \emph{$(-d)$-Calabi-Yau configuration} (or \emph{$(-d)$-CY configuration} for short) for $d\ge 1$ if the following conditions hold.
 \begin{enumerate} [\rm (1)]
 \item $\dim_{k} \Hom_{\T}(X,Y)=\delta_{X,Y}$ for $X, Y\in C$;
\item $\Hom_{\T}(X, Y[-j])=0$ for any two objects $X, Y$ in $ C$ and $0<j \le d-1$;

\item For any indecomposable object $M$ in $
\T$, there exists $X\in C$ and $0\le j \le d-1$, such that $\Hom_{\T}(X, M[-j])\not=0$.
\end{enumerate}
We call $C$ a \emph{$d$-simple-minded system} (or \emph{$d$-SMS}), if it satisfies (1), (2) and 

\noindent
(3') $\T=\add \Filt\{C, C[1], \cdots, C[d-1]\}$.
\end{Def}
 It is precisely Riedtmann's configuration if $d=1$ and $\T$ is the mesh category of $\Z \Delta$ for a Dynkin diagram $\Delta$ (see \cite[Definition 2.3]{Riedtmann} for the details). 
 It is easy to see $d$-SMS implies $(-d)$-CY configuration and the converse is also  true if $\Filt(C)$ is functorially finite in $\T$ due to \cite[Proposition 2.13]{CSP}.
 ``$(-d)$-Calabi-Yau configuration'' are also introduced as ``left $d$-Riedtmann configuration'' in \cite{CS}, and further studied in \cite{CS2, CSP}. 
 When the AR quiver of $\T$ is $\Z\Delta/G$ for some Dynkin diagram $\Delta$ and some group $G$, Calabi-Yau configuration can be characterized combinatorially (see Section \ref{Section:combi}). 
Our name ``$(-d)$-Calabi-Yau configuration''   is motivated by the following theorem, which is new 
even for  $d=1$ (see Remark \ref{Rem:closed}).
\begin{Thm}[Theorem \ref{Thm:closed}]\label{Thm:preserved}
Let $\T$ be a $k$-linear Hom-finite  Krull-Schmidt  triangulated category with a Serre functor $\SSS$. 
Let $C$ be a $(-d)$-CY configuration in $\T$, then $\SSS[d]C= C$.
\end{Thm}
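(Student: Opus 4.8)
The plan is to show that $\SSS[d]$ permutes the objects of $C$, and then conclude by a counting argument that it is a bijection onto $C$. First I would fix an indecomposable $X \in C$ and analyze the object $Y := \SSS[d]X$. The key computation is to show that $Y$ satisfies the same Hom-vanishing properties that characterize membership in $C$, and that $Y$ is ``detected'' by $C$ in the sense of condition (3). For the first part, I would use the Serre duality isomorphism $\Hom_{\T}(Z, \SSS W) \cong D\Hom_{\T}(W, Z)$ together with conditions (1) and (2) for $C$: for any $Z \in C$ and $0 \le j \le d-1$,
\[
\Hom_{\T}(Z, Y[-j]) = \Hom_{\T}(Z, \SSS X[d-j]) \cong D\Hom_{\T}(X, Z[d-j]).
\]
Since $1 \le d-j \le d$, when $d-j \le d-1$ this vanishes by condition (2), and when $d-j = d$ (i.e. $j=0$) I need $\Hom_{\T}(X, Z[d]) = 0$; this should follow by applying Serre duality once more, $\Hom_{\T}(X, Z[d]) \cong D\Hom_{\T}(Z, \SSS^{-1}X[-d])$, or more directly by an inductive/symmetry argument using that $C$ is also ``right'' detecting — I would need to be a little careful here and may first establish the dual detection statement (every indecomposable maps \emph{from} some shift of $C$) as a lemma, using Serre duality applied to condition (3).

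Next I would show $Y = \SSS[d]X$ is indecomposable (clear, since $\SSS$ and $[d]$ are autoequivalences) and that it lies in $C$. For this, by condition (3) applied to the indecomposable $Y$, there exist $Z \in C$ and $0 \le j \le d-1$ with $\Hom_{\T}(Z, Y[-j]) \ne 0$. But the computation above shows $\Hom_{\T}(Z, Y[-j]) \cong D\Hom_{\T}(X, Z[d-j])$, which forces $d-j$ to be such that this is nonzero; combined with condition (2) for $C$ (vanishing for shifts $1,\dots,d-1$) and condition (1) (the only nonzero self-Hom is the identity scalar for shift $0$), the only possibility is $d - j = 0$, which is impossible since $j \le d-1$, \emph{unless} $Z = X$ and the relevant shift is $0$ — so I must arrange the bookkeeping so that the surviving case is exactly $j$ with $d-j=0$ excluded and instead the nonvanishing comes from a shift that identifies $Y$ with an object of $C$. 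The cleanest route: show directly that $\SSS[d]X \in \Filt\{C,\dots,C[d-1]\}$ is impossible unless $\SSS[d] X \in C$, by examining where the nonzero Hom from $C$ lands in a filtration, using that Hom's from $C$ into shifted copies of $C$ are concentrated in degree $0$.

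Then, having shown $\SSS[d] C \subseteq C$ (as a set of iso-classes of indecomposables), I would run the symmetric argument with $\SSS^{-1}[-d]$, which is equally a Serre-type autoequivalence shifting the conditions, to get $\SSS^{-1}[-d] C \subseteq C$, hence $C \subseteq \SSS[d] C$; together these give $\SSS[d] C = C$. If $C$ is finite this last step is automatic once $\SSS[d]$ restricts to an injective self-map of $C$, but since $C$ need not be finite a priori I would include the two-sided inclusion argument. The main obstacle I anticipate is the degree-$d$ endpoint case $\Hom_{\T}(X, Z[d])$: conditions (1) and (2) only control shifts $0$ through $d-1$, so establishing that this top-degree Hom vanishes (or correctly using condition (3) in its dual form) is the crux, and I expect to need Serre duality together with the dual detection property — which itself requires a short argument — to close that gap.
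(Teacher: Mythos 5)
Your reduction of the detection step to the case $j=0$ is sound (modulo a sign slip: $\Hom_{\T}(Z,\SSS X[d-j])\cong D\Hom_{\T}(X,Z[j-d])$, not $D\Hom_{\T}(X,Z[d-j])$; with the correct sign the cases $1\le j\le d-1$ do vanish by condition (2)), and your plan for the reverse inclusion via $(3^{\rm op})$ and Serre duality matches what the paper does. But the crux you flag is a genuine gap, and both escape routes you sketch fail. First, the top-degree vanishing you hope for is false: the relevant space is $\Hom_{\T}(X,Z[-d])$, and once the theorem is proved one has $Z\cong\SSS[d]X\in C$, whence $\Hom_{\T}(X,Z[-d])\cong\Hom_{\T}(X,\SSS X)\cong D\End_{\T}(X)\neq 0$. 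So there is no hope of characterizing membership of $\SSS[d]X$ in $C$ by Hom-vanishing alone; the nonvanishing at the endpoint is exactly the phenomenon to be controlled, not excluded. Second, your ``cleanest route'' via $\SSS[d]X\in\Filt\{C,\dots,C[d-1]\}$ silently assumes $\T=\add\Filt\{C,C[1],\dots,C[d-1]\}$, which is the $d$-SMS condition (3$'$), not part of the definition of a $(-d)$-CY configuration; the theorem is stated for the weaker condition (3), and passing from (3) to (3$'$) requires functorial finiteness of $\Filt(C)$, which is not assumed.

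What is missing is the argument that the nonzero degree-zero map $f\colon Z\to\SSS[d]X$ (with $Z\in C$) is an isomorphism. The paper closes this by completing $f$ to a triangle $\SSS X[d-1]\xrightarrow{h}N\xrightarrow{g}Z\xrightarrow{f}\SSS X[d]$ and showing $N=0$: if $N\neq 0$, condition (3) produces a nonzero $p\colon W[j]\to N$ with $W\in C$, $0\le j\le d-1$; if $g\circ p\neq 0$ then conditions (1),(2) force $j=0$ and $g\circ p$ an isomorphism, making $g$ a retraction and $f=0$; otherwise $p$ factors through $h$ via some nonzero $q\colon W[j]\to\SSS X[d-1]$, and Serre duality with (1),(2) forces $j=d-1$, $W\cong X$, after which the lemma that a nonzero map into $\SSS X$ (for $X$ with $\End_{\T}(X)=k$) killed by every non-section shows $h$ is a section, again giving $f=0$. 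Both branches contradict $f\neq 0$, so $N=0$ and $\SSS[d]X\cong Z\in C$. Some argument of this kind (or an equivalent replacement) is indispensable, and nothing in your proposal supplies it.
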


We say a dg $k$-algebra $A$ in Assumption~\ref{assumption} is \emph{$d$-self-injective} (resp. \emph{$d$-symmetric}) if 
$\add A=\add DA[d-1]$
 in $\D(A)$ (resp. $\D(A^{\rm e})$). 
The following result, characterizing   simple dg $A$-modules as a $(-d)$-CY configuration,  generalizes \cite[Proposition 2.4]{Riedtmann}.
\begin{Thm}[Theorem \ref{Thm:configuration}]\label{Thm:con}
Let $A$ be a $d$-self-injective dg algebra. Then the set of simple dg $A$-modules is a  $d$-SMS, and hence a $(-d)$-CY configuration in $\un{\CM}A$. 
\end{Thm}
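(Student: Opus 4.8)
The plan is to verify conditions (1), (2) and (3$'$) of the definition of a $d$-SMS for the set $\{S_1,\dots,S_n\}$ of simple dg $A$-modules, i.e.\ the simple $\h^0(A)$-modules regarded as complexes in degree $0$; the conclusion ``and hence a $(-d)$-CY configuration'' is then the elementary implication recorded just after that definition. Throughout I identify $\un{\CM}A$ with $\D_{\sg}(A)=\D^{\bb}(A)/\per A$ via the equivalence established above, so that the suspension of $\un{\CM}A$ is induced from $\D^{\bb}(A)$. The first observation is the cohomological constraint forced by $d$-self-injectivity: comparing the cohomological supports of $A$ (concentrated in degrees $\le 0$, with $\h^0(A)\ne 0$) and of $DA[d-1]$ inside the identity $\add A=\add DA[d-1]$ forces $A$ — hence every object of $\add A$, and also $DA[d-1]$ itself — to have cohomology concentrated in degrees $[-(d-1),0]$. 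Using this together with the natural isomorphism $\Hom_{\D(A)}(X,DA[m])\cong D\h^{-m}(X)$ and $A\in\add DA[d-1]$, one gets $\Hom_{\D(A)}(S_i[j],A[m])\cong\bigoplus D\h^{\,j-(d-1)-m}(S_i)$, which vanishes for $m>0$ exactly when $0\le j\le d-1$; combined with $\h^{m}(S_i[j])=0$ for $m>0$, this shows that $S_i[j]\in\CM A$ precisely for $0\le j\le d-1$ (in particular $S_i\in\CM A$).

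The next step is a band estimate for syzygies. For $M\in\CM A$ put $\Omega M=\con(P_M\to M)[-1]$, where $P_M\to M$ is a deflation with $P_M\in\add A$ inducing a projective cover on $\h^0$, and $\Omega^{-1}M=\con(\iota)$, where $\iota\colon M\to I_M$ is a left $\add A$-approximation (which is an inflation, since $\add A=\Proj(\CM A)=\Inj(\CM A)$); both lie in $\CM A$, and in $\D_{\sg}(A)$ one has $\Omega M\cong M[-1]$ and $\Omega^{-1}M\cong M[1]$ because $P_M,I_M\in\per A$. I claim that if $M$ has cohomology in degrees $[-(d-1),0]$, then so do $\Omega M$ and $\Omega^{-1}M$. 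For $\Omega M$ this follows from the long exact cohomology sequence, the band estimate for $P_M$, and surjectivity of $\h^0(P_M)\to\h^0(M)$. For $\Omega^{-1}M=\con(\iota)$ the only point is that $\h^{-d}(\con(\iota))=\ker\h^{-(d-1)}(\iota)$ vanishes; this holds because, under the natural isomorphism above, $\h^{-(d-1)}(\iota)$ is the $k$-dual of $\iota^{*}\colon\Hom_{\D(A)}(I_M,DA[d-1])\to\Hom_{\D(A)}(M,DA[d-1])$, which is surjective since $DA[d-1]\in\add A$ and $\iota$ is a left $\add A$-approximation.

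For condition (3$'$), put $\mathcal S:=\add\Filt\{S_i[j]:1\le i\le n,\ 0\le j\le d-1\}$ inside $\un{\CM}A$. Any CM dg module with cohomology in degrees $[-(d-1),0]$ lies in $\Filt\{S_i[j]:0\le j\le d-1\}$, by filtering it along the natural $t$-structure on $\D^{\bb}(A)$ and using that the heart $\mod\h^{0}(A)=\Filt\{S_1,\dots,S_n\}$ is a length category. Applying this to the iterated syzygies and cosyzygies $\Omega^{\pm m}S_i$ ($m\ge 0$), which by the previous step are CM with cohomology in $[-(d-1),0]$ and which represent $S_i[\mp m]$ in $\D_{\sg}(A)$, shows $S_i[m]\in\mathcal S$ for every $m\in\Z$. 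Since $\Filt$ is extension-closed, $\mathcal S$ is closed under extensions and summands; containing all shifts $S_i[m]$, it is then closed under $[\pm 1]$ (a shift of an object of $\mathcal S$ is an iterated extension of objects $S_i[m]\in\mathcal S$) and under cones (if $X,Z\in\mathcal S$ and $X\to Z$ is a morphism, rotating the triangle $X\to Z\to\con(X\to Z)\to X[1]$ exhibits $\con(X\to Z)$ as an extension of $X[1]$ by $Z$), hence $\mathcal S$ is a thick triangulated subcategory of $\un{\CM}A=\D_{\sg}(A)$. Since the simples generate $\D^{\bb}(A)$ as a thick subcategory (again because the heart is a length category), their images generate $\D_{\sg}(A)$; therefore $\mathcal S=\un{\CM}A$, which is (3$'$).

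For conditions (1) and (2), fix $0\le\ell\le d-1$. Since $S_i[\ell],S_j\in\CM A$ by the first step, applying the autoequivalence $[\ell]$ of $\D_{\sg}(A)$ yields
\[
\Hom_{\un{\CM}A}(S_i,S_j[-\ell])\ \cong\ \Hom_{\D_{\sg}(A)}(S_i[\ell],S_j)\ =\ \Hom_{\un{\CM}A}(S_i[\ell],S_j),
\]
which equals $\Hom_{\D^{\bb}(A)}(S_i,S_j[-\ell])$ modulo morphisms factoring through $\add A$. Now $\Hom_{\D^{\bb}(A)}(S_i,S_j[-\ell])=0$ for $\ell>0$ (the source has cohomology in degree $0$ and the target in degree $\ell\ge 1$, so this vanishes by the natural $t$-structure) and equals $\delta_{ij}k$ for $\ell=0$. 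This gives (2); for (1), when $i=j$ no nonzero scalar endomorphism of $S_i$ can factor through $\add A$ (else $\mathrm{id}_{S_i}$ would, forcing $S_i\in\per A$, which is excluded), so $\Hom_{\un{\CM}A}(S_i,S_i)=k$. The main obstacle is the cosyzygy half of the second step: keeping $\Omega^{-1}M$ inside the cohomological band $[-(d-1),0]$ is precisely where the Gorenstein and $d$-self-injective hypotheses are genuinely used, through the duality $\Hom_{\D(A)}(-,DA[d-1])\cong D\h^{1-d}(-)$ together with the approximation property of $\iota$; a secondary subtlety is that in (2) one cannot compute the relevant Hom naively after passing to the Verdier quotient, which is why it is first transported, via $[\ell]$, to the left-hand argument so as to become an honest shift of $S_i$ lying in $\CM A$.
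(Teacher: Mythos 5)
Your proof is correct in substance and runs on the same two engines as the paper's argument, namely the duality $\Hom_{\D(A)}(-,DA[d-1])\cong D\h^{1-d}(-)$ coming from $\add A=\add DA[d-1]$, and the standard $t$-structure of Proposition \ref{Prop:heart}; but you reach condition (3$'$) by a genuinely different route. The paper first establishes $\CM A=\D^{\bb}_{\le 0}\cap\D^{\bb}_{\ge -d+1}$ (Proposition \ref{Prop:cm}(2)): for any $X\in\CM A$ one has $0=\Hom_{\D(A)}(X,A[>0])$, hence $0=\Hom_{\D(A)}(X,DA[d-1+i])\cong D\h^{1-d-i}(X)$ for all $i>0$, so \emph{every} CM dg module has cohomology in the band $[-(d-1),0]$ and is therefore filtered by $S,S[1],\dots,S[d-1]$ via Proposition \ref{Prop:heart}(2); (3$'$) is then immediate. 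You only control the band for the particular objects $\Omega^{\pm m}S_i$ by an induction on syzygies and cosyzygies (your cosyzygy step is exactly the same duality-plus-approximation computation as above, just applied one object at a time), and you then need the extra argument that $\add\Filt\{S_i[j]\}$ is thick and contains generators of $\D_{\sg}(A)$. That argument is valid, but it is more roundabout: applying your own duality computation to an arbitrary $M\in\CM A$ gives the band for all of $\CM A$ at once and makes the syzygy induction and the thickness step unnecessary. For (1) and (2) the paper uses Lemma \ref{12} (no nonzero maps $S_i[t]\to\add A$ for $0\le t\le d-2$, $d>1$), so that stable Homs equal derived Homs; your weaker observation that the stable Hom is a quotient of the derived Hom is enough for all the vanishing statements and for $i\ne j$, and your transport via $[\ell]$ correctly avoids computing Homs naively in the Verdier quotient.

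The one point that is genuinely unjustified is the case $i=j$, $\ell=0$ of (1): you assert that $\mathrm{id}_{S_i}$ factoring through $\add A$ would force $S_i\in\per A$, ``which is excluded'', without excluding it. For $d\ge 2$ the needed fact is already contained in your first step: $\Hom_{\D(A)}(S_i,A)\cong D\h^{1-d}(S_i)=0$ (this is exactly Lemma \ref{12} in the paper), so there is no nonzero morphism from $S_i$ to any object of $\add A$ at all, and in particular the identity cannot factor; alternatively, every nonzero indecomposable object of $\add A$ has nonzero cohomology in both degrees $0$ and $-(d-1)$, while $S_i$ is concentrated in degree $0$. For $d=1$ the claim can genuinely fail when $S_i$ is projective (a semisimple block), in which case $S_i$ is zero in $\un{\CM}A$; the paper sidesteps this degenerate situation by quoting the classical self-injective case for $d=1$, and the same caveat applies to your argument. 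With this small repair your proof is complete.
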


 Let $\Delta$ be a Dynkin digram. 
For a subset $C$ of vertices of $\Z\Delta$, we define a translation quiver $(\Z\Delta)_{C}$ by 
 adding to $\Z\Delta$ a vertex $p_{c}$ and two arrows $c\ra p_{c} \ra \tau^{-1}(c)$ for each $c\in C$ (see Definition \ref{Def:newstable}). 
 Our main result in this paper states that the converse of Theorem \ref{Thm:con} also holds in the following sense.

\begin{Thm}[Theorem \ref{Thm:main}]\label{Thm:intromain}
Let $\Delta$ be a Dynkin digram. 
Let $C$ be a subset of vertices of $\Z \Delta/\SSS[d]$. The following are equivalent.
\begin{enumerate}[\rm (1)]
 \item $C$ is a $(-d)$-CY configuration;
 \item There exists a $d$-symmetric   dg $k$-algebra $A$ with AR quiver  of $\CM A$ being $(\Z \Delta)_{C}/\SSS[d]$.
\end{enumerate}
\end{Thm}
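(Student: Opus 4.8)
The plan is to prove the two implications separately, using the dictionary built up in the earlier part of the paper between $d$-self-injective dg algebras and $(-d)$-CY configurations. For the implication $(2)\Rightarrow(1)$, suppose $A$ is a $d$-symmetric dg $k$-algebra with AR quiver of $\CM A$ equal to $(\Z\Delta)_C/\SSS[d]$. Since a $d$-symmetric dg algebra is in particular $d$-self-injective, Theorem \ref{Thm:con} tells us that the set of simple dg $A$-modules forms a $(-d)$-CY configuration in $\un{\CM}A$. It remains to identify this configuration, under the equivalence $\un{\CM}A\simeq\D_{\sg}(A)$ and the description of the AR quiver, with the prescribed subset $C$ of vertices of $\Z\Delta/\SSS[d]$. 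The key observation is that in the construction of $(\Z\Delta)_C$, the extra vertices $p_c$ are precisely the indecomposable projective-injective objects $\add A$ (which are killed in the stable category), while the simple dg modules sit at the vertices $c\in C$: indeed $p_c$ has irreducible maps $c\to p_c\to\tau^{-1}c$, which is exactly the shape of the AR sequence / triangle through a projective whose socle and top are the simples indexed by $c$ and $\tau^{-1}c$. So after removing the $p_c$ and passing to the stable category, the simples occupy exactly the vertices of $C$ inside $\Z\Delta/\SSS[d]$, and we are done.

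For the harder implication $(1)\Rightarrow(2)$, I would proceed in three steps. First, realize $\Z\Delta/\SSS[d]$ as the AR quiver of $\un{\CM}B\simeq\D_{\sg}(B)$ for some concrete $d$-symmetric dg algebra $B$; the truncated polynomial dg algebras studied in the paper (Theorem \ref{Thm:ARquiver}, with stable category a cluster category by Theorem \ref{main}) provide such a model, so that $\un{\CM}B$ has AR quiver $\Z\Delta/\SSS[d]$ and a canonical $(-d)$-CY configuration given by its simples. Second, given the abstract $(-d)$-CY configuration $C$ in $\Z\Delta/\SSS[d]$, I want to produce a new $d$-symmetric dg algebra $A$ whose simples realize $C$. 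The natural candidate is $A=\End_{\D(B)}(T)$ (with its dg enhancement) for a suitable ``tilting-like'' object $T\in\per B$ chosen so that the images of the indecomposable summands of $B$ get mutated into the objects of $C$; equivalently, one builds $A$ as the dg endomorphism algebra of a silting object whose simple modules correspond to $C$. One must check that such $A$ is again non-positive, proper, Gorenstein and in fact $d$-symmetric — the $d$-symmetry should follow because the relevant CY property is inherited from $B$ via the derived equivalence, and because $C$ being a $(-d)$-CY configuration forces $\SSS[d]C=C$ by Theorem \ref{Thm:preserved}, which is exactly the numerical shadow of $d$-symmetry. Third, identify the AR quiver of $\CM A$: by Theorem \ref{Thm:con} the simples of $A$ form a $(-d)$-CY configuration, by construction this configuration is $C$, and then a general position argument (the projective-injectives $\add A$ must attach to the stable AR quiver exactly along $C$, since the socle/top of the indecomposable projectives are the simples) shows the AR quiver of $\CM A$ is forced to be $(\Z\Delta)_C/\SSS[d]$.

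The main obstacle is the existence part in step two of $(1)\Rightarrow(2)$: constructing, from the purely homological/combinatorial datum of a $(-d)$-CY configuration $C$, an actual dg algebra $A$ with the right properties. This is where I expect the real work to lie. The strategy would be to show that any $(-d)$-CY configuration in $\un{\CM}B$ can be reached from the configuration of simples of $B$ by a finite sequence of "mutations" (simple-minded mutations / irreducible mutations of SMS), and that each such mutation can be lifted to a derived equivalence of dg algebras preserving Assumption \ref{assumption} and $d$-symmetry — a dg analogue of the theory of simple-minded mutation and the associated "Morita-type" dg algebras. One subtlety is that a $(-d)$-CY configuration need not a priori be a $d$-SMS (the filtration-closure condition (3') versus the weaker (3)), so one either invokes functorial finiteness of $\Filt(C)$ via \cite[Proposition 2.13]{CSP} to upgrade $C$ to a $d$-SMS, or argues directly; for type $A_n$ this is handled via the Brauer tree dg algebra construction and the Coelho Sim\~oes bijection mentioned in the abstract, which I would use as the base case before treating general Dynkin type by a reduction/folding argument.
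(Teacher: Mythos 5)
Your skeleton for $(1)\Rightarrow(2)$ --- realize $\Z\Delta/\SSS[d]$ as the stable AR quiver of a concrete $d$-symmetric dg algebra, lift $C$ to a silting object whose dg endomorphism algebra has simples realizing $C$, and check that $d$-symmetry survives the derived equivalence --- is indeed the architecture of the paper's proof (Proposition \ref{Prop:KoY} and Lemma \ref{Lem:siltGoren} are exactly the tools you invoke). But two things go wrong. First, your base model is not the right one: the truncated polynomial dg algebra $k[X]/(X^{n+1})$ with $\deg X=-d$ is $(nd+1)$-symmetric, its stable category is the \emph{positive} cluster category $\cal C_{d+1}(A_{n})$ (Theorem \ref{main}), whose AR quiver is $\Z A_{n}$ modulo $\tau^{-1}[d]$ rather than $\SSS[d]$, and it only exists in type $A$. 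The correct base is the trivial extension $A=k\Delta\op D(k\Delta)[d-1]$ of Section \ref{Section:trivial}, for which Corollary \ref{Cor:cluster} gives $\un{\CM}A\simeq \D^{\bb}(\mod k\Delta)/\nu[d]$ with AR quiver $\Z\Delta/\SSS[d]$ for every Dynkin $\Delta$.

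Second, and more seriously, the step you yourself flag as ``the main obstacle'' is precisely the content of the paper's proof and is not supplied by your proposal: one must show that the given $(-d)$-CY configuration $C$ lifts to a simple-minded collection of $\D^{\bb}(A)$ (equivalently, by Proposition \ref{Prop:KoY}, to a silting object of $\per A$). You propose to get this either by showing every configuration is reachable from the simples of $A$ by a chain of SMS mutations lifted to derived equivalences, or by a ``reduction/folding argument'' from the type $A_n$ Brauer-tree case; neither claim is proved, and mutation-connectivity of SMSs is not available in the generality needed, so as it stands the central existence step fails. The paper instead proves the lifting directly as Theorem \ref{Thm:SMCsurj}: using the compatibility of SMC reduction of $\D^{\bb}(A)$ with SMS reduction of $\D_{\sg}(A)$ (Proposition \ref{Prop:reduction}) and the observation that, for an alternating orientation of $\Delta$, some shift of a simple lies in $C$, one reduces to a trivial extension of a smaller Dynkin quiver and inducts on the number of vertices. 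Your $(2)\Rightarrow(1)$ direction is essentially fine, except that the vertex where an indecomposable projective $P$ attaches is $\rad P$ (see the proposition following Theorem \ref{Thm:configuration}), not its socle or top; this only replaces the set of simples by a global shift of it, which is harmless since a shift of a $(-d)$-CY configuration is again one.
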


To study the classification of configurations, Riedtmann \cite{Riedtmann} gave a geometrical description of configurations by  Brauer relations,  and Luo \cite{Luo} gave a description of Wiedemann's configuration by $2$-Brauer relations.  Similarly, we introduce \emph{maximal $d$-Brauer relations} (see Definition \ref{Def:Brauer relation}). 
It gives a nice description of $(-d)$-CY configurations of type $A_{n}$. This geometric model has been studied by Coelho Sim\~oes \cite[Theorem 6.5]{CS}. By using this model, we show the number of $(-d)$-CY configurations in $\Z A_{n}/\SSS[d]$ is $\frac{1}{n+1}\binom{(d+1)n+d-1}{n}$ (Corollary \ref{Cor:n}).
We develop several technical concepts and results on maximal $d$-Brauer relations and by using them we give another proof of Theorem \ref{Thm:intromain} for the case $\Delta=A_{n}$ (Theorem \ref{Thm:section}). 
In this case,  for any given  $(-d)$-CY configuration $C$, the corresponding  $d$-symmetric dg $k$-algebra is given explicitly by \emph{Brauer tree dg algebra} (see Section \ref{Brauertreedga} for the details). 
 The following table explains the comparison among different configurations.
\begin{center}
\begin{tabular}{|c|c|c|}
\hline  
$(-d)$-CY ($d\ge 1$)&$(-1)$-CY &$0$-CY \\
\hline  
$(-d)$-CY configuration&Riedtmann's configuration& Wiedemann's configuration\\
\hline
maximal $d$-Brauer relation & Brauer relation &$2$-Brauer relation\\
\hline
$d$-self-injective dg algebras & self-injective algebras& Gorenstein orders\\
\hline 
\end{tabular}
\end{center}

The paper is organized as follows. Section \ref{Section:preliminaries} provides the necessary material on dg algebras, extriangulated categories and translation quivers. 
In Section \ref{Section:CMdg}, we introduce Cohen-Macaulay dg modules and show some basic properties of them. 
Section \ref{Section:AR} deals with the Auslander-Reiten theory in $\CM A$. 
We compute the AR quiver of  a truncated polynomial dg algebra in Section \ref{Section:poly}. From its own point of view, this example is also interesting.
   We introduce (negative) CY-configurations and combinatorial configurations in Section \ref{Section:confi} and then show they coincide with each other in our context. 
 In Section \ref{Section:trivial}, we construct a class of self-injective dg algebras by taking trivial extension. In Section \ref{Section:mainsection}, we prove our main theorem that any CY configuration is given by simples of  symmetric dg algebras.
 In section \ref{Section:maximal},
 we introduce the maximal $d$-Brauer relations  and give a formula of  the number of $(-d)$-CY configurations in $\Z A_{n}/\SSS[d]$.
 We prove Theorem \ref{Thm:intromain} for the case $\Delta=A_{n}$ by constructing a Brauer tree dg algebra from given maximal $d$-Brauer relation.
   In Appendix \ref{appendixA}, we give a new proof of the bijection between $(-d)$-CY configurations and maximal $d$-Brauer relations.

\medskip\noindent
{\bf Acknowledgements }
The author would like  to thank his supervisor Osamu Iyama for many useful discussions and for his consistent support.
He also thanks Raquel Coelho Sim\~oes and David Pauksztello for pointing out their results on   $d$-Riedtmann configurations  in \cite{CS, CS2, CSP}.

\section{Preliminaries}\label{Section:preliminaries}
\setcounter{section}{1}

\subsection{Notations}
Throughout this paper, $k$ will be an algebraically closed field. All algebras, modules and categories are over the base field $k$. We denote by $D=\Hom_{k}(?, k)$ the $k$-dual. When we consider graded $k$-module, $D$ means the graded dual. We denote by $[1]$ the suspension functors for  the triangulated categories. 
Let $\T$ be a Krull-Schmidt $k$-linear category. We denote by $\ind \T$ the set of indecomposable objects in $\T$. Let $\cal S$ and $\cal S'$ be   two full subcategories  of $\T$. Denote by $\add \cal S$ the smallest full subcategory of $\T$ containing $\cal S$ and closed under isomorphisms, finite direct sums and direct summands. If $\T$ is a  triangulated category, we denote by  $\thick(\cal S)$ the smallest triangulated subcategory of $\T$ containing $\cal S$ and stable under direct summands. We denote by $\cal S\ast \cal S'$ the full subcategory of $\T$ consisting of objects $T\in \T$ such that there exists a triangle $X \ra T\ra Y\ra X[1]$ for $X\in \cal S$ and $Y\in \cal S'$.
 If $\Hom_{\T}(\cal S, \cal S')=0$, that is,  $\Hom_{\T}(S, S')=0$ for any $S\in \cal S$ and $S'\in \cal S'$, we denote $\cal S\ast \cal S'$ by $\cal S\perp \cal S'$. 
We define $\Filt \cal S$ as the category  $\bigcup_{n\ge 0}\underbrace{\cal S\ast\cal S\ast\cdots\ast\cal S}_{n}$. 
 If $\cal S=\{S\}$ has only one object, we write $\thick(\{S\})$ as $\thick(S)$, and we use same convention for $\Filt$ and $*$.

\subsection{DG algebras and the Nakayama functor}

Let $A$  be a dg $k$-algebra, that is, a graded algebra endows with a compatible structure of a complex. A (right) dg $A$-module is a graded $A$-module endows with a compatible structure of a complex. Let $\D(A)$ be the derived category of right dg $A$-modules (see \cite{Keller94, Keller06}). It is a triangulated category obtained from the category of dg $A$-modules by formally inverting all quasi-isomorphisms. The shift functor is given by the shift of complexes.

 Let $\per A=\thick(A_{A})\subset\D(A)$ be the perfect category and let $\D^{\bb}(A)$ be the 
full subcategory of $\D(A)$ consisting of the objects whose total cohomology is finite-dimensional.

We consider the derived dg functor 
\[\nu:=?\ot_{A}^{\bf L}DA: \D(A) \ra \D(A),\]
 called the \emph{Nakayama functor}. We have the following Auslander-Reiten formula.
\begin{Lem}\cite[Section 10.4]{Keller94}\label{Lem:bifunc}
 There is a bifunctorial isomorphism
     \begin{equation}\label{equation:Serre}
  D\Hom_{\D A}(X,Y)\cong \Hom_{\D A}(Y,\nu(X))           
      \end{equation}
     for $X\in \per A$ and $Y\in \D(A)$.
\end{Lem}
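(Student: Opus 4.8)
The plan is to prove the Auslander–Reiten formula for dg algebras, Lemma~\ref{Lem:bifunc}, by the standard reduction from a perfect object $X$ to the free module $A$ itself, combined with the defining adjunction of the derived tensor functor.

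\textbf{Step 1: the case $X = A$.} First I would verify the isomorphism when $X = A_A$. On the one hand, $\Hom_{\D A}(A, Y) \cong \h^0(Y)$ naturally in $Y$, so $D\Hom_{\D A}(A,Y) \cong D\h^0(Y)$. On the other hand, $\nu(A) = A \ot_A^{\bf L} DA \cong DA$, and $\Hom_{\D A}(Y, DA)$ computes, via the standard identification of $DA$ with $\RHom_k(A,k)$ and the derived tensor–Hom adjunction (or directly, since $DA = D(A_A)$ represents the functor $Y \mapsto D\h^0(Y)$ on $\D(A)$), exactly $D\h^0(Y)$. One checks both identifications are bifunctorial and compatible, giving \eqref{equation:Serre} for $X=A$. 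This is essentially the statement that $DA$ is an injective cogenerator-type object representing $D\h^0(-)$.

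\textbf{Step 2: propagate along $\per A$.} Both sides of \eqref{equation:Serre} are, for fixed $Y \in \D(A)$, contravariant triangulated functors in $X$ (the left side because $\Hom_{\D A}(-,Y)$ is cohomological and $D$ is exact on $k$-vector spaces; the right side because $\nu$ is a triangle functor — being a derived tensor — and $\Hom_{\D A}(Y,\nu(-))$ is cohomological). They agree on $A$ by Step 1, hence on $\add A$, hence on all shifts, and hence on the thick closure $\per A = \thick(A_A)$: the full subcategory of $X$ on which the natural transformation is an isomorphism is a thick subcategory containing $A$. One must check the comparison map is genuinely a \emph{natural transformation} of functors in $X$, not just a levelwise isomorphism — this is where a little care is needed, but it follows from functoriality of the adjunction unit/counit involved.

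\textbf{The main obstacle.} The delicate point is bookkeeping the \emph{bifunctoriality}: producing a canonical morphism $D\Hom_{\D A}(X,Y) \to \Hom_{\D A}(Y,\nu(X))$ natural in both variables simultaneously, so that the dévissage in $X$ is legitimate and the result for general $X \in \per A$, $Y \in \D(A)$ is not merely an abstract isomorphism of vector spaces. Concretely one wants a pairing $\Hom_{\D A}(X,Y) \ot_k \Hom_{\D A}(Y, X\ot_A^{\bf L} DA) \to k$ coming from composition $Y \to X \ot_A^{\bf L} DA$ with $X \to Y$ landing in $X \ot_A^{\bf L} DA$, followed by the trace/evaluation $X \ot_A^{\bf L} DA \to DA \to \ldots$; equivalently, use that $\nu$ is right adjoint-like via $\Hom_{\D A}(X \ot_A^{\bf L} DA, -)$-type adjunctions at the dg level. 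Since the excerpt cites \cite[Section 10.4]{Keller94} for exactly this, I would follow Keller's construction of the pairing there rather than reinventing it, and then the only real work left is the thick-subcategory dévissage of Step 2. No separate hypothesis beyond $X \in \per A$ is needed, since $\per A$ is precisely the thick subcategory generated by $A$ where the argument closes up.
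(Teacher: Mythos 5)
The paper itself gives no argument for this lemma: it is quoted verbatim from Keller \cite[Section 10.4]{Keller94}, so there is no internal proof to compare against. Your proposal is the standard proof of that result and its structure is sound: the case $X=A$ reduces to the adjunction $\Hom_{\D A}(Y,DA)\cong\Hom_{\D(k)}(Y\ot_{A}^{\bf L}A,k)\cong D\h^{0}(Y)$, both sides are cohomological in $X$ (using exactness of $D$ over the field $k$ and the fact that $\nu$ is a triangle functor), and the full subcategory of objects $X$ where a fixed natural comparison map is invertible is thick and contains $A$, hence contains $\per A$; no finiteness hypotheses are needed, as you say. The one genuinely incomplete piece is the one you flag yourself: the dévissage is only legitimate after a morphism $D\Hom_{\D A}(X,Y)\to\Hom_{\D A}(Y,\nu X)$ (or its inverse direction) natural in both variables has been constructed, and your sketch of the trace pairing is left to Keller. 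A way to close this without invoking a trace is to compose canonical natural maps: $\RHom_{A}(Y,X\ot_{A}^{\bf L}DA)\to\RHom_{A}(Y,D\RHom_{A}(X,A))\cong D\bigl(Y\ot_{A}^{\bf L}\RHom_{A}(X,A)\bigr)\to D\RHom_{A}(X,Y)$, where the middle isomorphism is tensor-Hom adjunction (natural in everything) and the two outer arrows come from the canonical evaluation maps $X\ot_{A}^{\bf L}DA\to D\RHom_{A}(X,A)$ and $Y\ot_{A}^{\bf L}\RHom_{A}(X,A)\to\RHom_{A}(X,Y)$, each natural in $X$ and $Y$ and invertible precisely on a thick subcategory containing $A$; taking $\h^{0}$ then gives the bifunctorial isomorphism by your Step 2, and the compatibility check in Step 1 becomes unnecessary. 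With that (or with Keller's pairing) supplied, your argument is complete.
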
 

It is clear that $\nu$ restricts to a triangle functor 
\begin{equation} \label{equation:Nakayama}
\nu: \per A \xra{} \thick(DA). \end{equation}
By Lemma \ref{Lem:bifunc}, \eqref{equation:Nakayama} is a triangle equivalence provides that $A$ has finite-dimensional cohomology in each degree. In this case, if we have $\per A=\thick(DA)$ (for example, $A$ is a finite-dimensional Gorenstein $k$-algebra), then $\nu$ defines a Serre functor on $\per A$. Immediately, we have the following result.
\begin{Lem}\label{Lem:degreefinite}
Assume $A$ has finite-dimensional cohomology in each degree and $\per A=\thick(DA)$ in $\D(A)$. Let $X, Y$ be two dg $A$-modules with finite-dimensional cohomology in each degree. Then the isomorphism \eqref{equation:Serre} also holds for $Y\in \per A$ and $X\in \D(A)$.
\end{Lem}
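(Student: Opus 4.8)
\textbf{Proof proposal for Lemma \ref{Lem:degreefinite}.}
The plan is to upgrade the Auslander--Reiten formula of Lemma \ref{Lem:bifunc} from the case $X \in \per A$ to the case $X \in \D(A)$, at the cost of restricting $Y$ to $\per A$, using the self-duality of the situation together with the triangle equivalence $\nu : \per A \xra{\sim} \thick(DA)$. First I would note that, under the hypotheses, $\thick(DA) = \per A$, so $\nu$ is an autoequivalence of $\per A$ and in particular defines a Serre functor there; write $\nu^{-1}$ for a quasi-inverse. The goal is the isomorphism $D\Hom_{\D A}(X, Y) \cong \Hom_{\D A}(Y, \nu(X))$ for $Y \in \per A$ and $X \in \D(A)$ with finite-dimensional cohomology in each degree.

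The key step is to apply Lemma \ref{Lem:bifunc} with the roles swapped. Concretely, for $Y \in \per A$ the formula \eqref{equation:Serre} gives $D\Hom_{\D A}(Y, Z) \cong \Hom_{\D A}(Z, \nu(Y))$ for all $Z \in \D(A)$. Now I would set $Z = X$ and also substitute $Y \mapsto \nu^{-1}(Y)$ (legitimate since $\nu^{-1}(Y) \in \per A$): this yields
\[
D\Hom_{\D A}(\nu^{-1}(Y), X) \cong \Hom_{\D A}(X, \nu\nu^{-1}(Y)) = \Hom_{\D A}(X, Y).
\]
Dualizing both sides (all the relevant Hom spaces are finite-dimensional, which is where the degreewise finiteness of cohomology and membership in $\per A$ enter — one should check that $\Hom_{\D A}(X, Y)$ and $\Hom_{\D A}(\nu^{-1}(Y), X)$ are finite-dimensional, e.g. because $\per A \subseteq \D^{\bb}(A)$ and $X \in \D^{\bb}(A)$), we obtain $\Hom_{\D A}(\nu^{-1}(Y), X) \cong D\Hom_{\D A}(X, Y)$. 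Finally I would apply the autoequivalence $\nu$ to the left-hand Hom to rewrite $\Hom_{\D A}(\nu^{-1}(Y), X) \cong \Hom_{\D A}(Y, \nu(X))$ — here using that $\nu$ is fully faithful on $\per A$, so $\Hom_{\D A}(\nu^{-1}(Y), X) \cong \Hom_{\D A}(\nu\nu^{-1}(Y), \nu(X)) = \Hom_{\D A}(Y, \nu(X))$; this last step requires knowing $\nu$ acts fully faithfully on morphisms \emph{out of} $\per A$ into all of $\D(A)$, not merely within $\per A$, which again follows from Lemma \ref{Lem:bifunc} applied twice. Combining the two displayed isomorphisms gives the claim, and bifunctoriality is inherited from that of \eqref{equation:Serre}.

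The main obstacle I anticipate is bookkeeping the finiteness hypotheses and the direction of full faithfulness of $\nu$ carefully: Lemma \ref{Lem:bifunc} as stated only gives the AR formula with the \emph{first} variable perfect, so every use of ``$\nu$ is fully faithful'' must ultimately be reduced to an instance of that lemma (or to the fact that $\nu$ is an equivalence $\per A \to \thick(DA) = \per A$), rather than invoked as a black box on $\D(A)$. One clean way to organize this is to prove first, as an auxiliary observation, that for $Y \in \per A$ and arbitrary $X \in \D(A)$ one has $\Hom_{\D A}(\nu^{-1}(Y), X) \cong \Hom_{\D A}(Y, \nu X)$ functorially — this is precisely the statement that $\nu$ is an equivalence together with the fact that it admits $\nu^{-1}$ as a two-sided adjoint on the subcategory $\per A$ (itself a consequence of the Serre duality of Lemma \ref{Lem:bifunc}) — and then feed this into the dualized form of \eqref{equation:Serre} as above. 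Provided the finite-dimensionality of the Hom spaces is recorded, the rest is formal.
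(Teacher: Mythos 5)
Your steps (1)--(3) are fine: applying Lemma \ref{Lem:bifunc} with the perfect object $\nu^{-1}(Y)$ in the first slot gives $D\Hom_{\D A}(\nu^{-1}Y,X)\cong\Hom_{\D A}(X,Y)$, and dualizing is legitimate because $\Hom_{\D A}(\nu^{-1}Y,X)$ is finite-dimensional ($\nu^{-1}Y$ is perfect and $X$ has degreewise finite-dimensional cohomology). The problem is step (4), the identification $\Hom_{\D A}(\nu^{-1}Y,X)\cong\Hom_{\D A}(Y,\nu X)$ for $X\in\D(A)$ \emph{not} assumed perfect. This is exactly the nontrivial content of the lemma, and neither of your two justifications establishes it. ``Lemma \ref{Lem:bifunc} applied twice'' only trades the statement for its mirror image: from $D\Hom(\nu P,\nu X)\cong\Hom(\nu X,\nu^{2}P)$ and $D\Hom(P,X)\cong\Hom(X,\nu P)$ you are left needing $\Hom(X,\nu P)\cong\Hom(\nu X,\nu^{2}P)$, i.e.\ full faithfulness of $\nu$ on maps from an arbitrary $X$ into $\per A$ --- the same kind of assertion, not a proof of it. Likewise, ``$\nu^{-1}$ is a two-sided adjoint of $\nu$ on $\per A$'' only yields $\Hom(\nu^{-1}Y,X)\cong\Hom(Y,\nu X)$ when $X$ itself lies in $\per A$; for $X$ outside $\per A$ the bijectivity of the map induced by applying $\nu$ is precisely the claim that $\nu$ is fully faithful against all of $\D(A)$ (equivalently, that $DA$ is a two-sided tilting complex), which does not follow formally from $\nu\colon\per A\to\per A$ being an equivalence.

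To close the gap you must actually use both hypotheses. From degreewise finiteness of $\h^{*}(A)$ one gets a quasi-isomorphism $A\simeq DDA$ of bimodules; applying $D$ to the hypothesis $A\in\thick(DA)$ then shows $DA$ is perfect as a left $A$-module as well (it is perfect on the right because $DA\in\thick(DA)=\per A$). With this, one can for instance check that the unit $X\to\RHom_A(DA,X\ot_A^{\bf L}DA)$ is invertible (reduce, by d\'evissage in the perfect bimodule variable, to the bimodule quasi-isomorphism $A\to\RHom_A(DA,DA)$), which gives the missing adjunction statement; alternatively, construct the bifunctorial pairing and reduce by d\'evissage in $Y\in\thick(A)$ to $Y=A[n]$, where $\Hom(A[n],\nu X)=\h^{-n}(X\ot_A^{\bf L}DA)\cong D\h^{n}\RHom_A(X,A)=D\Hom(X,A[n])$ via $D(X\ot_A^{\bf L}DA)\cong\RHom_A(X,DDA)\cong\RHom_A(X,A)$. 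Note that the paper itself gives no argument (the lemma is declared ``immediate'' from $\nu$ being a Serre functor on $\per A=\thick(DA)$), so there is no written proof to match; but the honest content hiding behind that word is exactly the step you have left unproved, so as it stands your proposal has a genuine gap rather than a complete proof.
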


Let $A$ be a dg $k$-algebra and let $M$ be a dg $A$-module. Then $\h^{0}(A)$ is an ordinary $k$-algebra and we regard $\h^{n}(M)$ as a $\h^{0}(A)$-module for $n\in \Z$.
\begin{Lem}\label{Lem:KN}
Let $A$ be a dg $k$-algebra and $M\in \D(A)$.  Then
 \begin{enumerate}[\rm (1)]
  \item \cite[Lemma 4.4]{KN}
 For $P\in \add A$, the morphism of $k$-modules induced by $\h^{0}$
\[ \Hom_{\D(A)}(P, M) \ra \Hom_{\h^{0}(A)}(\h^{0}(P),\h^{0}(M))\]
is an isomorphism;
\item Dually, for $I\in \add DA$, the morphism of $k$-modules induced by $\h^{0}$
\[ \Hom_{\D(A)}(M, I) \ra \Hom_{\h^{0}(A)}(\h^{0}(M),\h^{0}(I))\]
is an isomorphism.
\end{enumerate}
\end{Lem}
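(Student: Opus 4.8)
Part (1) is the statement the author attributes to \cite[Lemma 4.4]{KN}, so I would simply invoke it; the content to prove is the dual statement (2), and the cleanest route is to deduce it from (1) by applying the Nakayama functor, together with the duality $D$ on complexes of $k$-modules.

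The plan is as follows. First I would reduce to the case $I = DA$ itself, since both sides commute with finite direct sums and direct summands in $I$, and $\add DA$ is generated by $DA$. Next, I would use the graded/derived $k$-dual: since $M \in \D(A)$ has a nice model and $DA = \Hom_k(A,k)$, there is a natural isomorphism $\Hom_{\D(A)}(M, DA) \cong \Hom_{\D(A^{\mathrm{op}})}(A, DM) \cong D(\h^0(\text{something}))$... more precisely, I would use the bifunctorial identification $\Hom_{\D(A)}(M, DA) \cong D\bigl(M \ot^{\mathbf L}_A A\bigr)^{0}$-type formula, i.e. $\Hom_{\D(A)}(M,DA)\cong D\h^{0}(M)$ canonically (this is the derived tensor-hom adjunction applied to $DA = \Hom_k(A,k)$, evaluated in degree $0$). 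Symmetrically, on the right-hand side, $\h^{0}(DA) = D\h^{0}(A)$ as a $\h^0(A)$-module (here I use that $A$ is non-positive, so that taking $\h^0$ of the $k$-dual is the $k$-dual of $\h^0$), and $\Hom_{\h^0(A)}(\h^0(M), D\h^0(A)) \cong D\h^0(M)$ by the standard adjunction $\Hom_{\Lambda}(N, D\Lambda) \cong DN$ for a finite-dimensional algebra $\Lambda = \h^0(A)$. The final step is to check that the map induced by $\h^0$ is compatible with these two identifications of both sides with $D\h^0(M)$, hence is an isomorphism.

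Alternatively — and this is probably the slickest write-up — I would apply part (1) with $A$ replaced by $A^{\mathrm{e}}$ or simply dualize directly: $I \in \add DA$ means $DI \in \add A$ as a left dg $A$-module (equivalently a right $A^{\mathrm{op}}$-module), and $\Hom_{\D(A)}(M, I) \cong \Hom_{\D(A^{\mathrm{op}})}(DI, DM)$ bifunctorially. Then part (1) over $A^{\mathrm{op}}$ gives $\Hom_{\D(A^{\mathrm{op}})}(DI, DM) \cong \Hom_{\h^0(A^{\mathrm{op}})}(\h^0(DI), \h^0(DM))$, and unravelling $\h^0(DI) = D\h^0(I)$, $\h^0(DM) = D\h^0(M)$ (again using non-positivity of $A$) together with $\Hom_{\h^0(A^{\mathrm{op}})}(D\h^0(I), D\h^0(M)) \cong \Hom_{\h^0(A)}(\h^0(M), \h^0(I))$ yields the claim, with the understanding that one tracks through that the composite isomorphism is the one induced by $\h^0$.

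The one genuine point requiring care — the main obstacle, such as it is — is the interaction between taking cohomology in degree $0$ and taking $k$-duals: the identity $\h^0(DX) \cong D\h^{0}(X)$ for a dg module $X$ requires $X$ (or at least $A$) to be non-positive, which is exactly Assumption \ref{assumption}(1), so this must be flagged. Beyond that, verifying that the relevant diagram of canonical isomorphisms commutes is routine but should be stated; everything else is formal manipulation with adjunctions in $\D(A)$.
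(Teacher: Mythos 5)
Your proposal is correct and is essentially the paper's own treatment made explicit: the paper offers no proof, citing \cite[Lemma 4.4]{KN} for (1) and labelling (2) as dual, and your main route via the adjunction $\Hom_{\D(A)}(M,DA)\cong \Hom_{\D(k)}(M,k)\cong D\h^{0}(M)$ together with $\Hom_{\h^{0}(A)}(\h^{0}(M),D\h^{0}(A))\cong D\h^{0}(M)$ (plus the compatibility check) is exactly the right way to spell out that ``dually''. One small correction: the identity $\h^{0}(DX)\cong D\h^{0}(X)$ needs no non-positivity assumption, since $D=\Hom_{k}(?,k)$ is exact over the field $k$, whereas your alternative ``slicker'' route does silently use $DDA\simeq A$ (hence finite-dimensionality of cohomology), which is not among the lemma's hypotheses, so the adjunction argument should be the one you keep.
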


We need the following lemma for later use.
\begin{Lem}\label{Lem:retraction}
Let $A$ be a  dg $k$-algebra and $M\in \D(A)$. Then 
 \begin{enumerate}[\rm (1)]
   \item Let $P\in \add A$ and $f\in \Hom_{\D(A)}(M, P)$. If the induced map $\h^{0}(f): \h^{0}(M)\ra \h^{0}(P)$ is surjective, then $f$ is a retraction in $\D(A)$;
   \item Let $I\in \add DA$ and $g\in  \Hom_{\D(A)}(I, M)$. If the induced map $\h^{0}(g): \h^{0}(I)\ra \h^{0}(M)$ is injective, then $g$ is a section in $\D(A)$.
 \end{enumerate}
\end{Lem}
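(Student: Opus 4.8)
The two statements are dual to each other, so the plan is to prove (1) and obtain (2) by applying (1) to the opposite dg algebra $A^{\mathrm{op}}$ (equivalently, by dualizing with $D$). For (1), the key observation is that since $P \in \add A$, Lemma \ref{Lem:KN}(1) applies: the functor $\h^{0}$ induces an isomorphism
\[
\Hom_{\D(A)}(P, M) \xra{\ \sim\ } \Hom_{\h^{0}(A)}(\h^{0}(P), \h^{0}(M)),
\]
and also an isomorphism $\Hom_{\D(A)}(P,P) \xra{\sim} \Hom_{\h^{0}(A)}(\h^{0}(P), \h^{0}(P))$, compatibly with composition. Now $\h^{0}(P)$ is a projective $\h^{0}(A)$-module: indeed $A$ is projective over itself, $\h^{0}$ is additive, and $\h^{0}(A_A)$ is a projective $\h^{0}(A)$-module (this uses that $A$ is non-positive, so that $\h^{0}(A) = A^{0}/\im(d^{-1})$ receives $A$ as a dg module with $\h^{0}(A_A) = \h^{0}(A)$ free of rank one), hence $\h^{0}$ of any object of $\add A$ is a direct summand of a finite free $\h^{0}(A)$-module.

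So the plan is: given $f\colon M \to P$ in $\D(A)$ with $\h^{0}(f)$ surjective, note that $\h^{0}(P)$ is projective over $\h^{0}(A)$, hence $\h^{0}(f)$ splits in $\mod \h^{0}(A)$ — there is $\bar g\colon \h^{0}(P) \to \h^{0}(M)$ with $\h^{0}(f)\circ \bar g = \mathrm{id}_{\h^{0}(P)}$. By Lemma \ref{Lem:KN}(1) applied to the pair $(P, M)$, there is a unique $g\colon P \to M$ in $\D(A)$ with $\h^{0}(g) = \bar g$. Then $f \circ g\colon P \to P$ satisfies $\h^{0}(f\circ g) = \h^{0}(f)\circ \h^{0}(g) = \mathrm{id}_{\h^{0}(P)}$, and since $\h^{0}\colon \Hom_{\D(A)}(P,P) \to \End_{\h^{0}(A)}(\h^{0}(P))$ is an isomorphism (again Lemma \ref{Lem:KN}(1), now with $M$ replaced by $P$) sending $f\circ g$ and $\mathrm{id}_P$ to the same element, we conclude $f \circ g = \mathrm{id}_P$ in $\D(A)$. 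Thus $f$ is a retraction.

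For (2), dually, $I \in \add DA$ gives by Lemma \ref{Lem:KN}(2) that $\h^{0}\colon \Hom_{\D(A)}(M,I) \xra{\sim} \Hom_{\h^{0}(A)}(\h^{0}(M), \h^{0}(I))$ and $\h^{0}\colon \End_{\D(A)}(I) \xra{\sim} \End_{\h^{0}(A)}(\h^{0}(I))$; moreover $\h^{0}(I)$ is an injective $\h^{0}(A)$-module (it is a summand of a finite direct sum of copies of $\h^{0}(DA) = D(\h^{0}(A^{\mathrm{op}}))$, which is injective). Given $g\colon I \to M$ with $\h^{0}(g)$ injective, split it in $\mod \h^{0}(A)$ to get $\bar h\colon \h^{0}(M) \to \h^{0}(I)$ with $\bar h\circ \h^{0}(g) = \mathrm{id}_{\h^{0}(I)}$, lift $\bar h$ to $h\colon M \to I$ via Lemma \ref{Lem:KN}(2), and conclude $h\circ g = \mathrm{id}_I$ by the same faithfulness argument. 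Hence $g$ is a section.

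The only mildly delicate point — the ``main obstacle,'' though it is not severe — is justifying that $\h^{0}(P)$ is projective (resp. $\h^{0}(I)$ injective) as an ordinary $\h^{0}(A)$-module, so that the splitting can be produced at the level of $\h^{0}(A)$-modules; once that is in hand, everything else is a formal consequence of the faithfulness and fullness of $\h^{0}$ on the relevant Hom-spaces supplied by Lemma \ref{Lem:KN}. It is worth recording explicitly the reduction to $A^{i}=0$ for $i>0$ noted in Assumption \ref{assumption}(1), since it is what makes $\h^{0}(A_A)$ a genuine rank-one free $\h^{0}(A)$-module and hence legitimizes the claim about $\add A$.
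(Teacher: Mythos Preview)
Your proof is correct and follows essentially the same approach as the paper: split $\h^{0}(f)$ using projectivity of $\h^{0}(P)$, lift the splitting via Lemma~\ref{Lem:KN}(1), and use Lemma~\ref{Lem:KN}(1) again on $\End(P)$ to conclude $f\circ g=\mathrm{id}_{P}$; the dual argument for (2) is likewise identical. You are in fact slightly more careful than the paper in justifying why $\h^{0}(P)$ is projective and in noting the implicit reliance on non-positivity.
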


\begin{proof} We only prove (1), since (2) is dual. 
Because $\h^{0}(P)$ is a projective $\h^{0}(A)$-module and $\h^{0}(f)$ is surjective, then $\h^{0}(f)$  is a retraction. Then by Lemma \ref{Lem:KN}, there is $p\in \Hom_{\D(A)}(P, M)$, such that $\h^{0}(f)\circ \h^{0}(p)=\rm{Id}_{\h^{0}(P)}$. By Lemma \ref{Lem:KN} again, we have $f\circ g=\rm{Id}_{P}$. Therefore $f$ is a retraction in $\D(A)$.
\end{proof}

\subsection{Non-positive dg algebras}\label{Section:nonpositivedg}

 We call a dg $k$-algebra $A$ \emph{non-positive} if it satisfies $\h^{i}(A)=0$ for $i>0$. 
 Write  $A$ as a complex over $k$,
\[ A:= \cdots \lra A^{-1} \xra{d^{-1}}  A^{0} \xra{d^{0}} A^{1} \lra \cdots \]
Consider the following standard truncation.
\[ A':= \cdots \lra A^{-1} \xra{d^{-1}} \ker d^{0} \lra 0 \lra \cdots \]
\noindent
It is easy to see $A'$ is a sub-dg-algebra of $A$ and the inclusion $A' \hra A$ is a quasi-isomorphism of dg $k$-algebras. Thus in this paper,  when we mention non-positive dg $k$-algebra $A$, we always assume that $A^{i}=0$ for $i>0$.
In this case, the canonical projection $A\ra \h^{0}(A)$ is a homomorphism of dg $k$-algebras (here we regard $\h^{0}(A)$ as a dg algebra concentrated in degree $0$). Then we can regard a module over $\h^{0}(A)$ as a dg module over $A$ via this homomorphism. This induces a natural functor $ \mod\h^{0}(A) \ra \D(A)$. 
Let $\{ S_{1}, \dots, S_{r}\}$ be the set of isomorphic classes of simple $\h^{0}(A)$-modules. We may regard them as simple dg $A$-modules.  For any $S_{i}$, there exists $P_{i}\in\add A$ such that $S_{i}=\Top \h^{0}(P_{i}):=\h^{0}(P_{i})/\rad \h^{0}(P_{i})$. By Lemma \ref{Lem:KN}, $P_{i}$ is indecomposable in $\D^{\bb}(A)$. Now we introduce the radical of $A$, which will be used later.
Let $P\in \add A$. We have a short exact sequence in $\mod \h^{0}(A)$
\[ 0\ra \rad \h^{0}(P) \ra \h^{0}(P) \xra{f} \Top \h^{0}(P)\ra 0.\] 
By Lemma \ref{Lem:KN}, there is a morphism $f'\in\Hom_{\D^{\rm b}(A)}(P, \Top\h^{0}(P))$ which is sent  to $f$ by  $\h^{0}$. Then we define the radical of $P$ in $\D^{\rm b}(A)$ as the third term of the following triangle.
\[  \rad P \ra P \xra{f'} \Top \h^{0}(P) \ra \rad P[1]. \]
It is well-known that simple dg $A$-modules generate $\D^{\bb}(A)$ in the following sense.
\begin{Prop}\label{Prop:Dbfinite}
Let $A$ be a non-positive dg $k$-algebra with $\h^{i}(A)$ finite-dimensional for $i\in\Z$. Then $\D^{\bb}(A)=\thick(\bop_{i=1}^{r} S_{i})$ and 
$\D^{\bb}(A)$ is Hom-finite.
\end{Prop}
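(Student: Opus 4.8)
The plan is to prove both assertions of Proposition~\ref{Prop:Dbfinite} together, starting from the fact that $A$ (being non-positive with $A^{i}=0$ for $i>0$) carries the standard $t$-structure on $\D(A)$ whose heart is $\mod\h^{0}(A)$, truncated appropriately. First I would recall that for a non-positive dg algebra the derived category $\D(A)$ has aisles given by cohomological (co)truncation, $\D^{\le n}(A)$ and $\D^{\ge n}(A)$, and that the heart of the induced $t$-structure is $\mod\h^{0}(A)$. Given $M\in\D^{\bb}(A)$, its total cohomology is finite-dimensional, so only finitely many $\h^{i}(M)$ are nonzero, and each is a finite-dimensional $\h^{0}(A)$-module. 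I would then argue by a double induction: outer induction on the number of nonzero cohomology groups of $M$ (the \emph{width}), using the truncation triangle $\tau^{\le n-1}M\to M\to \h^{n}(M)[-n]\to$ for $n$ maximal with $\h^{n}(M)\ne 0$; and inner induction, for a module $N\in\mod\h^{0}(A)$ regarded as a dg $A$-module concentrated in degree zero, on $\dim_{k}N$, using short exact sequences $0\to N'\to N\to S_{i}\to 0$ in $\mod\h^{0}(A)$ which become triangles in $\D(A)$. This reduces membership in $\thick(\bigoplus_{i=1}^{r}S_{i})$ to the single claim that each shift $S_{i}[j]$ lies in $\thick(\bigoplus S_{i})$, which is immediate since $\thick$ is closed under shifts. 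Hence $\D^{\bb}(A)=\thick(\bigoplus_{i=1}^{r}S_{i})$.

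For the Hom-finiteness statement, the natural strategy is to use the description just obtained: since $\D^{\bb}(A)$ is generated as a thick subcategory by the finite set $\{S_{1},\dots,S_{r}\}$, and the class of objects $X$ such that $\Hom_{\D(A)}(X,Y)$ and $\Hom_{\D(A)}(Y,X)$ are finite-dimensional for all $Y\in\D^{\bb}(A)$ is a thick subcategory, it suffices to check that $\Hom_{\D(A)}(S_{i}[m],S_{j}[n])$ is finite-dimensional for all $i,j$ and all $m,n\in\Z$, i.e.\ that $\Ext$-groups between simples are finite-dimensional in each degree. This in turn I would deduce by taking a resolution of $S_{i}$ by objects of $\add A$: because $A$ is non-positive, one builds a (possibly infinite, but in each degree finitely generated) semifree/projective resolution $P_{\bullet}\to S_{i}$ with each term in $\add A$, bounded above in the relevant grading, and then $\Hom_{\D(A)}(S_{i},S_{j}[n])$ is computed by a complex whose terms are $\Hom_{\D(A)}(P_{m},S_{j}[n])\cong\Hom_{\h^{0}(A)}(\h^{0}(P_{m}),S_{j})$ by Lemma~\ref{Lem:KN}, each finite-dimensional, and only finitely many contribute in a fixed total degree because the resolution is minimal and $\h^{i}(A)$ is finite-dimensional for every $i$ with $A$ proper-like conditions ensuring the grading is locally finite. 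Carefully: one uses that $\rad\h^{0}(A)$-adic layers force the minimal resolution to have $\h^{0}(P_{m})$ with top a finite sum of simples, and the connectivity of $A$ bounds which degrees can contribute.

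The main obstacle I anticipate is the Hom-finiteness part, specifically justifying that a minimal $\add A$-resolution of a simple $S_{i}$ is ``locally finite'' — that in each cohomological degree only finitely many summands from only finitely many resolution steps contribute to $\Hom_{\D(A)}(S_{i},S_{j}[n])$. The non-positivity of $A$ gives one-sided boundedness (the resolution lives in non-positive ``extra'' degrees relative to the step), and finite-dimensionality of each $\h^{i}(A)$ controls the size of each $\Hom_{\h^{0}(A)}(\h^{0}(P_{m}),S_{j})$, but one must combine these to see the total complex computing $\RHom$ has finite-dimensional cohomology. I would handle this by an explicit bookkeeping argument on the bigrading (resolution step $m$ versus internal degree), or alternatively sidestep it by first proving Hom-finiteness of $\Hom_{\D(A)}(A,M)=\h^{0}(M)$-type statements and bootstrapping through the thick closure using the two truncation triangles above, since finite-dimensionality is preserved under cones, shifts, and summands. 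Either way, the thick-subcategory generation result proved in the first paragraph does most of the work, and the remaining point is purely a finiteness estimate.
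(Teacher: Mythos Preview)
Your proposal is correct. For the first assertion (thick generation by simples), your double induction via truncation triangles and composition series in the heart is exactly what the paper means by ``shown by truncations and induction easily.''

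For Hom-finiteness, the paper simply cites \cite[Theorem~3.1(c)]{Keller94} and gives no argument. Your direct approach is more informative, and your second alternative --- bootstrapping through thick closure --- is the cleaner of the two and works without difficulty. Concretely: once $\D^{\bb}(A)=\thick(\bigoplus S_i)$ is established, Hom-finiteness reduces to showing each $\Hom_{\D(A)}(S_i,S_j[n])$ is finite-dimensional. For $n<0$ this vanishes by the $t$-structure; for $n=0$ it is $\Hom_{\h^0(A)}(S_i,S_j)$. For $n\ge 1$, the triangle $\rad P_i\to P_i\to S_i$ gives $\Hom(S_i,S_j[n])\cong\Hom(\rad P_i,S_j[n-1])$ (using $\Hom(P_i,S_j[m])=0$ for $m\ne 0$), and then truncating $\rad P_i$ degree by degree reduces to morphisms of the form $\Hom_{\h^0(A)}(\h^{-m}(\rad P_i),S_j)$ and previously-handled $\Hom(S_k,S_j[n'])$ with $n'<n$; all of these are finite because each $\h^{-m}(P_i)$ is finite-dimensional by hypothesis. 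This is a clean induction on $n$.

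Your first alternative (minimal semifree resolution) is also valid but, as you correctly flagged, requires verifying that only finitely many resolution terms contribute in each fixed degree. This follows from non-positivity: if $Q_m\in\add A[\ge 0]$ then $\Hom(Q_m[m],S_j[n])=0$ whenever $m>n$, so only $Q_0,\dots,Q_n$ contribute; and each $Q_m$ is finite because at each step one covers only the finite-dimensional top of the graded $\h^*(A)$-module. Either route succeeds; the truncation route is the one to write down.
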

\begin{proof}
The  first statement can be shown by truncations and induction easily. The second one is  from \cite[Theorem 3.1 (c)]{Keller94}.
\end{proof}
 
For $i\in \Z$, Let $\D^{\bb}_{\le i}$ (respectively, $\D^{\bb}_{\ge i}$) denote the full subcategory of $\D^{\bb}(A)$ consisting of those dg $A$-modules whose  cohomologies are concentrated in degree $\le i$ (respectively, $\ge i$).
This gives us a standard $t$-structure \cite{BBD} as the following result shows.
\begin{Prop}\cite[Proposition 2.1]{KY}\label{Prop:heart}
Let $A$ be a non-positive dg algebra. Then 
\begin{enumerate}[\rm(1)]
\item The pair
$(\D^{\bb}_{\le 0}, \D^{\bb}_{\ge 0})$ is a   $t$-structure on $\D^{\bb}(A)$;
\item 
For $n\le m$, we have the following, where $\cal H$ is the heart of $(\D^{\bb}_{\le 0}, \D^{\bb}_{\ge 0})$.
\[ \D^{\bb}_{\ge n}\cap \D^{\bb}_{\le m}=\cal H[-n]\ast \cal H[-n+1]\ast\cdots \ast \cal H[-m]; \]
\item
Moreover, taking $\h^{0}$ is an equivalence from the $\cal H$ to  $\mod \h^{0}(A)$, and the natural functor $\mod \h^{0}(A) \ra \D^{\bb}(A)$ is a quasi-inverse to this equivalence.
\end{enumerate}
\end{Prop}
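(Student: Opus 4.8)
The plan is to build the $t$-structure by hand from the ``good'' truncation functors on dg modules, with non-positivity of $A$ entering at exactly one point. For a dg $A$-module $M$ and $n\in\Z$ I would set $(\tau^{\le n}M)^i=M^i$ for $i<n$, $(\tau^{\le n}M)^n=\ker(d_M^n)$, $(\tau^{\le n}M)^i=0$ for $i>n$, and $\tau^{\ge n+1}M=M/\tau^{\le n}M$. The thing to verify is that $\tau^{\le n}M$ is a dg $A$-\emph{sub}module and not merely a subcomplex: since $A^i=0$ for $i>0$, the action $M^i\ot A^j\ra M^{i+j}$ has $j\le 0$, so it cannot raise degrees, and the only way to land in degree $n$ from degrees $\le n$ is $i=n$, $j=0$, where $\ker d_M^n$ is preserved. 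Thus $\tau^{\le n}$ and $\tau^{\ge n+1}$ are functorial, preserve $\D^{\bb}(A)$, fit into a functorial triangle $\tau^{\le n}M\ra M\ra\tau^{\ge n+1}M\ra(\tau^{\le n}M)[1]$ in $\D(A)$, and satisfy $\h^i(\tau^{\le n}M)=\h^i(M)$ for $i\le n$ and $0$ otherwise.

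For (1) the only axiom that is not formal, once the truncation triangles are in hand, is $\Hom_{\D A}(X,Y)=0$ for $X\in\D^{\bb}_{\le 0}$ and $Y\in\D^{\bb}_{\ge 1}$. Replacing $X$ by $\tau^{\le 0}X$ and $Y$ by $\tau^{\ge 1}Y$, I may assume $X^i=0$ for $i>0$ and $Y^i=0$ for $i\le 0$. Then I would take the bar resolution $\mathbf{p}X\xra{\sim}X$, built from tensors $X\ot(\bar A[1])^{\ot n}\ot A$; since $X$, $\bar A$ and $A$ all lie in degrees $\le 0$ and the shifts $[1]$ only lower degree, $\mathbf{p}X$ again lies in degrees $\le 0$. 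As $\mathbf{p}X$ is K-projective, $\Hom_{\D A}(X,Y)=\h^0\Hom_A^\bullet(\mathbf{p}X,Y)$, and a degree-$0$ graded $A$-linear map from a module in degrees $\le 0$ to one in degrees $\ge 1$ is zero, so the group vanishes. Shifting this gives also $\Hom_{\D A}(\D^{\bb}_{\le -1},\D^{\bb}_{\ge 0})=0$ and $\Hom_{\D A}(\D^{\bb}_{\le -1},\D^{\bb}_{\ge 1})=0$, needed below. I expect this degree bookkeeping---the existence of a K-projective replacement in degrees $\le 0$ for a dg module in degrees $\le 0$---to be the only genuinely technical step and the sole place non-positivity of $A$ is used; one could also arrange it by a cellular-filtration argument instead of the explicit bar resolution.

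For (2): $\D^{\bb}_{\ge n}$ and $\D^{\bb}_{\le m}$ are each closed under extensions by the long exact cohomology sequence, and $\cal H[-i]\subseteq\D^{\bb}_{\ge i}\cap\D^{\bb}_{\le i}\subseteq\D^{\bb}_{\ge n}\cap\D^{\bb}_{\le m}$ for $n\le i\le m$, which gives the inclusion ``$\supseteq$''. For ``$\subseteq$'' I would induct on $m-n$: the case $m=n$ is the definition of the shifted heart, and for $m>n$ the truncation triangle $\tau^{\le m-1}X\ra X\ra\tau^{\ge m}X\ra$ presents any $X$ with cohomology in $[n,m]$ as an extension of $\tau^{\ge m}X\in\cal H[-m]$ by $\tau^{\le m-1}X\in\cal H[-n]\ast\cdots\ast\cal H[-(m-1)]$.

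For (3): an object of $\cal H$ has cohomology concentrated in degree $0$, hence is quasi-isomorphic to $\h^0(M)$ placed in degree $0$, which is an $\h^0(A)$-module; conversely the natural functor $\mod\h^0(A)\ra\D(A)$ lands in $\cal H$, and these assignments are mutually inverse on objects. For full faithfulness of $\h^0\colon\cal H\ra\mod\h^0(A)$ I would first treat the projective objects: every projective $\h^0(A)$-module is $\h^0(\bar P)$ for some $\bar P\in\add A$ (it is a summand of $\h^0(A)$, and idempotents lift by Lemma~\ref{Lem:KN}), and applying $\Hom_{\D A}(-,N)$ with $N\in\cal H$ to the truncation triangle $\tau^{\le -1}\bar P\ra\bar P\ra\tau^{\ge 0}\bar P\ra$, the two vanishings above collapse it to $\Hom_{\D A}(\tau^{\ge 0}\bar P,N)\cong\Hom_{\D A}(\bar P,N)\cong\Hom_{\h^0(A)}(\h^0(\bar P),\h^0(N))$ via Lemma~\ref{Lem:KN}, compatibly with $\h^0$; here $\tau^{\ge 0}\bar P$ is the object of $\cal H$ with $\h^0=\h^0(\bar P)$. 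The general case then follows from a projective presentation $\bar Q_1\ra\bar Q_0\ra M\ra 0$ in the abelian category $\cal H$, obtained by lifting one in $\mod\h^0(A)$, together with left-exactness of $\Hom(-,N)$.
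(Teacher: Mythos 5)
The paper itself offers no proof of this proposition: it is quoted from \cite[Proposition 2.1]{KY}, so there is no internal argument to compare with. Your proof is the standard one (smart truncations are dg submodules because $A^{>0}=0$ and $d(A^{0})\subseteq A^{1}=0$; Hom-vanishing via a K-projective resolution concentrated in degrees $\le 0$; identification of the heart via Lemma \ref{Lem:KN}), and it is essentially correct. Two places should be tightened. First, in step (3) you invoke the vanishing $\Hom_{\D(A)}(\D^{\bb}_{\le -1},\D^{\bb}_{\ge 0})=0$ for $\tau^{\le -1}\bar P$ with $\bar P\in\add A$; since the proposition assumes no properness, this object need not lie in $\D^{\bb}(A)$, so you should state (and your bar-resolution argument does prove) the vanishing for arbitrary dg modules with the indicated cohomological bounds. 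Second, the phrase ``a projective presentation $\bar Q_{1}\ra \bar Q_{0}\ra M\ra 0$ in the abelian category $\cal H$ obtained by lifting one in $\mod\h^{0}(A)$'' hides the only step where a careless argument becomes circular: exactness of the lifted sequence in $\cal H$ is not formal, because it is close to the full faithfulness you are proving. The non-circular justification is the observation that if $g$ is a morphism of heart objects with $\h^{0}(g)$ surjective, then the co-cone of $g$ again has cohomology concentrated in degree $0$; this gives both that the lifted maps are epimorphisms onto the relevant kernels and the injectivity of the induced maps on $\Hom_{\D(A)}(-,N)$, after which your comparison of kernels with $\Hom_{\h^{0}(A)}(-,\h^{0}N)$ goes through. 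With these two small additions the proposal is a complete proof, agreeing in substance with the argument in the cited reference.
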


We call a dg algebra $A$ \emph{proper}, if $A\in \D^{\bb}(A)$.
We end this section by a proposition, which plays an important role in the proof of Theorem \ref{Thm:section}. 
\begin{Prop}\label{Prop:key2}
Let $A$ be a non-positive proper dg $k$-algebra whose underlying graded algebra is a quotient $kQ/I$ of the path algebra  of a graded quiver $Q$. Let  $j$ and $j'$ be   vertices  in $Q$. 
\begin{enumerate}[\rm (1)]
\item
If  $\Hom_{\D^{\rm b}(A)}(S_{j},S_{j'}[l])\not=0$ for some $l>0$, then there exists a path from $j$ to $j'$ with degree bigger than $-l$;
\item Assume that  the differential of $A$ is zero and $I$ is an admissible ideal of $kQ$.
 If there is an arrow $j\ra j'$ with degree $-l\le 0$, then $\Hom_{\D^{\bb}(A)}(S_{j},S_{j'}[l+1])\not=0$.
 \end{enumerate}
\end{Prop}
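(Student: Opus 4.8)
The plan is to compute the relevant $\Hom$-spaces in $\D^{\bb}(A)$ via explicit projective resolutions of the simple dg modules built from the quiver $Q$. For part (1), I would argue contrapositively: suppose there is \emph{no} path from $j$ to $j'$ of degree strictly bigger than $-l$, and show $\Hom_{\D^{\bb}(A)}(S_j, S_{j'}[l])=0$. The key observation is that $\Hom_{\D^{\bb}(A)}(S_j, S_{j'}[l])$ can be read off from a minimal projective resolution $P^\bullet \to S_j$, where $P^\bullet$ is built from the indecomposable summands $P_i$ of $A$. Since $A$ is non-positive and $A = kQ/I$ as a graded algebra, the map $\rad P_j \to P_j$ and iteration produces a complex whose terms in (resolution-)degree $n$ are governed by paths in $Q$ of a controlled degree; more precisely, a summand $P_{j'}$ can only appear contributing to $\Hom(S_j, S_{j'}[l])$ through a composite of arrows forming a path $j \to j'$, and the internal degree bookkeeping forces such a path to have degree $> -l$ (the shift $[l]$ with $l>0$ is what creates the discrepancy with the non-positivity of $A$). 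I would make this precise using Lemma \ref{Lem:KN} to identify morphisms from projectives with morphisms of $\h^0(A)$-modules on $\h^0$, together with the truncation structure from Proposition \ref{Prop:heart}, so that the only way to get a nonzero class in positive degree $l$ is to ``pay'' for it with an arrow (hence a path) of sufficiently high degree.

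For part (2), where the differential is zero and $I$ is admissible, the situation is cleaner because $A$ is then a graded algebra with zero differential, so $\D^{\bb}(A)$ is just the derived category of graded $A$-modules. Given an arrow $a\colon j \to j'$ of degree $-l \le 0$, I would exhibit a nonzero element of $\Hom_{\D^{\bb}(A)}(S_j, S_{j'}[l+1])$ directly as an extension class: the arrow $a$, not lying in $I^2$ (since $I$ is admissible, arrows survive in $\rad/\rad^2$), gives a nonsplit extension of graded modules
\[
0 \to S_{j'}[l] \to E \to S_j \to 0,
\]
representing a nonzero class in $\Ext^1_{\D^{\bb}(A)}(S_j, S_{j'}[l]) = \Hom_{\D^{\bb}(A)}(S_j, S_{j'}[l][1]) = \Hom_{\D^{\bb}(A)}(S_j, S_{j'}[l+1])$. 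The admissibility of $I$ is exactly what guarantees this extension is nonsplit: it ensures the arrow $a$ is part of a minimal set of generators of the radical and hence appears genuinely in the first syzygy of $S_j$. I would verify nonsplitness by applying $\h^0$ (or the appropriate graded component) and checking the sequence does not split as graded $A$-modules, again invoking Lemma \ref{Lem:KN}.

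The main obstacle I anticipate is making the degree bookkeeping in part (1) fully rigorous: one must track simultaneously the cohomological degree coming from the resolution and the internal grading degree coming from $Q$, and argue that a nonzero $\Hom$ into a shift by $l$ forces, through the iterated $\rad$ construction, the existence of a path whose \emph{accumulated} internal degree exceeds $-l$. The subtlety is that the differential of $A$ need not be zero in part (1), so the projective resolution of $S_j$ is not simply read off from paths in $Q$ but involves the differential as well; one has to argue that the differential can only \emph{lower} internal degree (by $1$), which combined with non-positivity still yields the claimed bound. I would handle this by induction on $l$, using the triangle $\rad P_j \to P_j \to S_j \to \rad P_j[1]$ and the long exact sequence it induces, reducing the degree-$l$ statement for $S_j$ to a degree-$(l-1)$ statement for the composition factors of $\h^0(\rad P_j)$ and the higher cohomologies $\h^{-i}(\rad P_j)$, each of which is reached from $j$ by an arrow of degree $\ge -1$ (respectively, by paths through the differential).
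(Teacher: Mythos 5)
Your part (2) is correct, and it is a more direct route than the paper's: instead of producing a nonzero map $\rad P_{j}\ra S_{j'}[l]$ and chasing the long exact sequence of the triangle $\rad P_{j}\ra P_{j}\ra S_{j}\ra \rad P_{j}[1]$ (which forces a case distinction at $(l,j)=(0,j')$), you realize the class directly by the two-dimensional graded module $E$ with a generator $u$ in degree $0$ at $j$, a socle element $v$ in degree $-l$ at $j'$, and the arrow acting by $u\mapsto v$; admissibility of $I$ (relations lie in $\rad^{2}$) is exactly what makes this action well defined. One point you should make explicit: non-splitness of the short exact sequence of graded modules has to be upgraded to non-vanishing of the connecting morphism in $\D^{\bb}(A)$. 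Here this is easy because all differentials are zero: if the triangle split, taking cohomology would give $E\cong S_{j}\op S_{j'}[l]$ as graded $A$-modules, contradicting $u\cdot a=v\ne 0$. With that remark your argument for (2) is complete and uniform in all cases.

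For part (1) your overall plan (induct on $l$ using the triangle $\rad P_{j}\ra P_{j}\ra S_{j}$ and splitting $\rad P_{j}$ by the standard $t$-structure) can be completed, but the degree bookkeeping you state, which you yourself identify as the crux, is incorrect as written, so the key step is not yet established. First, the differential of $A$ has degree $+1$; it does not ``lower internal degree by $1$'', and in the corrected argument it plays no role because one only uses the cohomology modules $\h^{-i}(\rad P_{j})$. Second, a composition factor $S_{a}$ of $\h^{-i}(\rad P_{j})$ is reached from $j$ by a (possibly long) path of degree exactly $-i$ (degree $0$ when $i=0$), not by ``an arrow of degree $\ge -1$''. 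Third, the reduction is not to a degree-$(l-1)$ statement for all pieces: since $\Hom_{\D^{\bb}(A)}(P_{j},S_{j'}[l])=0$ for $l>0$, a nonzero class gives $\Hom_{\D^{\bb}(A)}(\rad P_{j},S_{j'}[l-1])\ne 0$, and the piece $\h^{-i}(\rad P_{j})[i]$ of the $t$-structure filtration contributes through $\Hom_{\D^{\bb}(A)}(S_{a},S_{j'}[l-1-i])$, i.e.\ at shift $l-1-i$. With the correct bookkeeping the induction does close: if $l-1-i=0$ then $a=j'$ and the degree $-i=1-l$ path already works; if $l-1-i>0$ the inductive hypothesis gives a path $a\leadsto j'$ of degree at least $2-l+i$, and composing with the degree $-i$ path from $j$ gives degree at least $2-l>-l$. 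Note that even the repaired argument differs in mechanism from the paper's proof, which at each stage takes a projective cover of the \emph{whole} graded cohomology $\h^{\ast}(X_{i})$ by an object of $\add A[\ge 0]$, obtains $S_{j}\in Q_{0}\ast Q_{1}[1]\ast\cdots\ast Q_{l}[l]\ast \D^{\bb}_{\le -l-1}$, and reads off the path from a chain of nonzero maps between projective summands, whereas you peel off only the top at each step and compensate with the $t$-structure filtration; both work, but as submitted your degree claims would not survive scrutiny and must be replaced as above.
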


\begin{proof}
(1)
Let $X_{0}:=S_{j}$.
For each $i\ge 0$,
we take  the following triangle,
\[ X_{i+1} \ra Q_{i} \ra X_{i} \ra X_{i+1} [1], \]
such that  $Q_{i}\in \add A[\ge0]$ and  the induced map $\h^{\ast}(Q_{i})\ra \h^{\ast}(X_{i})$ is the projective cover of $\h^{\ast}(X_{i})$ as a graded $\h^{\ast}(A)$-module. Then we have  an exact sequence 
 \[0\ra \h^{\ast}(X_{i+1}) \ra \h^{\ast}(Q_{i})\ra \h^{\ast}(X_{i})\ra 0.\] 
 Thus the composition $Q_{i+1} \ra X_{i+1} \ra Q_{i}$ is non-zero.  For each direct summand  $P_{a_{i}}[s_{i}]$ of $Q_{i}$ with a vertex $a_{i}$ of $Q$ and $s_{i}\in \Z$,  
   there exists a direct summand $P_{a_{i-1}}[s_{i-1}]$ of $ Q_{i-1}$ with a vertex $a_{i-1}$ in  $Q$ and $s_{i-1}\in \Z$,  such that $\Hom_{\D^{\bb}(A)}(P_{a_{i}}[s_{i}],P_{a_{i-1}}[s_{i-1}])\not=0$. Then there is a path  $a_{i-1}\leadsto a_{i}$ with degree $s_{i-1}-s_{i}$. Repeating  this,  we obtain a path $j=a_{0}\leadsto a_{1} \leadsto \cdots \leadsto a_{i}$ with degree $\sum_{k=1}^{i}(s_{k-1}-s_{k})=s_{0}-s_{i}=-s_{i}$.  
   
   By the construction above, we have
$S_{j} \in Q_{0}\ast Q_{1}[1] \ast Q_{2}[2]\ast \cdots \ast Q_{l}[l] \ast \D^{\bb}_{\le -l-1}$. Since  $\Hom_{\D^{\bb}(A)}(Y, S_{j'}[l])=0$ for any $Y\in \D^{\bb}_{\le -l-1}$ and by our assumption   $\Hom_{\D^{\bb}(A)}(S_{j}, S_{j'}[l])\not=0$, then there exists a non-zero map from  some object in $\add (Q_{0}\ast Q_{1}[1] \ast Q_{2}[2]\ast \cdots \ast Q_{l}[l])$ to $S_{j'}[l]$, which implies  $P_{j'}[l]\in \add Q_{k}[k]$ for some $0\le k\le l$. Since $Q_{0}=P_{j}$ and $l$ is positive, we have $1\le k\le l$.
 Since $P_{j'}[l-k]\in \add Q_{k}$, by our argument above, there is a path from $j$ to $j'$ with degree $k-l$, which is bigger than $-l$.

(2)
Consider the following triangle, 
\[ \rad P_{j} \ra P_{j} \ra S_{j} \ra \rad P_{j}[1].\]
Since  the differential of $A$  is zero, $I$ is admissible, and there is an arrow $j\ra j'$ with degree $-l$, then $S_{j'}\in \add \h^{-l}(\frac{\rad P_{j}}{\rad^{2} P_{j}})$. Then
 the composition $\rad P_{j}\ra \frac{\rad P_{j}}{\rad^{2} P_{j}} \ra \h^{-l}(\frac{\rad P_{j}}{\rad^{2} P_{j}})[l] \ra S_{j'}[l]$  is non-zero.  
Thus  $\Hom_{\D^{\bb}(A)}(\rad P_{j}, S_{j'}[l])\not=0$. 
Applying the functor $\Hom_{\D^{\bb}(A)}(?, S_{j'}[l+1])$ to the triangle above,   
we obtain an exact sequence
\[ \Hom_{\D^{\bb}(A)}(S_{j}, S_{j'}[l])\ra \Hom_{\D^{\bb}(A)}(P_{j}, S_{j'}[l]) \ra \Hom_{\D^{\bb}(A)}(\rad P_{j},S_{j'}[l]) \ra \Hom_{\D^{\bb}(A)}(S_{j},S_{j'}[l+1]). \]
By dividing into two cases, $(l,j)=(0, j')$ or not, one   can check that the left map is always surjective.
 Then $\Hom_{\D^{\bb}(A)}(S_{j},S_{j'}[l+1])\neq0$.
\end{proof}

\subsection{Extriangulated categories}\label{Section:extri}

In this section, we briefly recall the definition and basic properties of extriangulated categories from \cite{Palu}.  We omit some details here, but the reader can find them in \cite{Palu}.

Let $\CCC$ be an additive category equipped with an additive bifunctor $\EEE: \CCC^{\rm op}\ot \CCC \ra Ab$. For any pair of objects $A,C\in \CCC$, an element $\delta\in \EEE(C,A)$ is called an \emph{$\EEE$-extension}. Let $\s$ be a correspondence which associates an equivalence class $\s(\delta)=[A\xra{x}B\xra{y}C]$ to any $\EEE$-extension $\delta\in \EEE(C,A)$. This $\s$ is called a \emph{realization} of $\EEE$ if it makes the diagrams  in \cite[Definition 2.9]{Palu} commutative. 
  A triple $(\CCC, \EEE,\s)$ is called an \emph{extriangulated category} if it satisfies the following conditions.
\begin{enumerate}[\rm(1)]
 \item $\EEE: \CCC^{\rm op}\ot \CCC\ra Ab$ is an additive bifunctor;
 \item $\s$ is an additive realization of $\EEE$;
 \item $\EEE$ and $\s$ satisfy the compatibility conditions in \cite[Definition 2.12]{Palu}.
\end{enumerate}

Extriangulated categories is a generalization of exact categories and triangulated categories. Let us see some easy examples.
\begin{Ex}
\begin{enumerate}[\rm(1)]
  \item Let $\CCC$ be an exact category. Then $\CCC$ is extriangulated by taking $\EEE$ as the bifunctor $\Ext_{\CCC}^{1}(?,?):\CCC^{\rm op}\ot \CCC\ra Ab$ and for any $\delta\in \Ext_{\CCC}^{1}(C,A)$, taking $\s(\delta)$ as the equivalence class of short exact sequences (=conflations) correspond to $\delta$;
   \item Let $\CCC$ be a triangulated category. Then $\CCC$ is extriangulated by taking $\EEE$ as the bifunctor $\Hom_{\CCC}(?,?[1]): \CCC^{\rm op}\ot \CCC\ra Ab$, and for any $\delta\in\Hom_{\CCC}(C,A[1])$, taking $\s(\delta)$ as the equivalence class of the  triangle
   $A \ra B \ra C \xra{\delta} A[1]$;
   
   \item Let $\CCC$ be a triangulated category and let $\mathscr D$ be an extension-closed (that is, for any triangle $X\ra Y\ra Z \ra X[1]$ in $\CCC$, if  $ X, Z\in \mathscr{D}$, then $Y\in \mathscr{D}$) subcategory of $\CCC$. Then $\mathscr D$ has an extriangulated structure given by restricting the extriangulated structure of $\CCC$ on $\mathscr D$.
\end{enumerate}
\end{Ex}

Let $(\CCC, \EEE, \s)$ be an extriangulated category. An  object $X$ in $\CCC$ is called \emph{projective}  if $\EEE(X,Y)=0$  for any $Y\in \CCC$. We say $\CCC$ has \emph{enough projective objects} if for any $Y\in \CCC$, there exists  $Z\in \CCC$ and $\delta\in \EEE(Y,Z)$, such that the middle term of the realization $\s(\delta)$ is projective. 
We denote by $\mathscr P$ (resp. $\mathscr I$) the subcategory of projective (resp. injectvie) objects.
When $\CCC$ has enough projective (resp. injective) objects, we define \emph{the stable (resp. costable) category of $\CCC$} as the ideal quotient $\un{\CCC}:=\CCC/[\mathscr P]$ (resp. $\overline{\CCC}:=\CCC/[\mathscr I]$). 
 We call $\CCC$  \emph{Frobenius} if it has enough projective objects and enough injective objects, and projective objects coincide with injective ones. In this case $\un{\CCC}$ coincides with $\overline{\CCC}$, and we call $\un{\CCC}$ \emph{the stable category of $\CCC$}. 

\begin{Prop}\cite[Corollary 7.4]{Palu}\label{Prop:Frobeniusextri}
Let $(\CCC, \EEE, \s)$ be a Frobenius extriangulated category and let $\cal I$ be subcategory of injective objects. Then $\un{\CCC}$ is a triangulated category.
\end{Prop}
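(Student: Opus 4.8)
The plan is to adapt Happel's construction of the triangulated structure on the stable category of a Frobenius exact category \cite{Happel} to the extriangulated setting; the compatibility axioms of \cite{Palu} are tailored precisely so that this argument goes through. \textbf{Suspension functor.} Using that $\CCC$ has enough injectives, fix for each $X\in\CCC$ an $\EEE$-extension $\delta_X\in\EEE(\Sigma X,X)$ with realization $\s(\delta_X)=[X\xrightarrow{\iota_X}I_X\xrightarrow{p_X}\Sigma X]$ and $I_X\in\mathscr{I}$. For a morphism $f\colon X\to Y$, injectivity of $I_Y$ (i.e. $\EEE(\Sigma X,I_Y)=0$) together with the long exact sequence attached to the $\EEE$-triangle of $\delta_X$ allows one to lift $\iota_Y f$ through $\iota_X$, producing a morphism of $\EEE$-triangles over $f$ and hence a map $\Sigma f\colon\Sigma X\to\Sigma Y$. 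A comparison-of-injective-coresolutions argument, using only the defining axioms of $(\CCC,\EEE,\s)$, shows that $\Sigma f$ is independent in $\un\CCC$ of all choices, so that $\Sigma\colon\un\CCC\to\un\CCC$ is a well-defined additive endofunctor.

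\textbf{Invertibility and triangles.} Dually, enough projectives yields a loop functor $\Omega$. Here the Frobenius hypothesis $\mathscr{P}=\mathscr{I}$ is used essentially: the $\EEE$-triangle $X\xrightarrow{\iota_X}I_X\xrightarrow{p_X}\Sigma X$ has $I_X$ projective as well, hence simultaneously computes $\Omega(\Sigma X)$, giving a natural isomorphism $\Omega\Sigma\cong\mathrm{id}_{\un\CCC}$, and symmetrically $\Sigma\Omega\cong\mathrm{id}_{\un\CCC}$; thus $\Sigma$ is an autoequivalence of $\un\CCC$. One then declares a candidate triangle in $\un\CCC$ to be \emph{distinguished} if it is isomorphic to one of the form $A\xrightarrow{\bar x}B\xrightarrow{\bar y}C\xrightarrow{\bar w}\Sigma A$ extracted from an $\EEE$-triangle $A\xrightarrow{x}B\xrightarrow{y}C$ with class $\delta\in\EEE(C,A)$, where the connecting map $w\colon C\to\Sigma A$ is the third component of the (good) morphism of extriangles comparing $\delta$ with $\delta_A$.

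\textbf{The axioms.} It remains to verify the axioms of a triangulated category for $(\un\CCC,\Sigma)$. For (TR1): the split triangle $X\xrightarrow{\mathrm{id}}X\to0\to\Sigma X$ is distinguished, the class is closed under isomorphism by construction, and any $f\colon A\to B$ is the first leg of the triangle $A\xrightarrow{f}B\to C_f\to\Sigma A$ obtained by pushing the $\EEE$-triangle $A\to I_A\to\Sigma A$ out along $f$ — the resulting $\EEE$-triangle $B\to C_f\to\Sigma A$ exists by the pushout lemma for extriangulated categories. Axiom (TR2), rotation in both directions, follows from the shifted-octahedron lemmas of \cite{Palu} together with the invertibility of $\Sigma$. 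Axiom (TR3), the filler axiom, follows from functoriality of $\EEE$ and the realization axioms, which complete a commutative square on the first two terms to a morphism of extriangles. Finally, the octahedral axiom (TR4) is essentially a repackaging of the axioms (ET4) and (ET4)$^{\mathrm{op}}$ of \cite{Palu}, once one checks that the various pushout and pullback squares they furnish descend to $\un\CCC$.

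\textbf{Main obstacle.} The genuinely delicate points are two. First, that $\Sigma$ is well-defined on $\un\CCC$ and invertible: this requires setting up the rudimentary homotopy theory of injective (co)resolutions inside an extriangulated category, namely that the lifting property of injectives forces uniqueness of comparison maps up to a morphism factoring through $\mathscr{I}$. Second, the verification of (TR4), which is bookkeeping-heavy, since one must track several interlocking $\EEE$-triangles and confirm that all the connecting morphisms agree in $\un\CCC$. No genuinely new idea beyond Happel's is needed; the content of the definition of an extriangulated category is exactly what makes the argument run, and the full details are carried out in \cite[Section 7]{Palu}.
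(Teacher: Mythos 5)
Your sketch is correct, but note that the paper itself gives no argument for this statement: it is quoted verbatim from Nakaoka--Palu \cite[Corollary 7.4]{Palu}, and your outline (cosyzygy suspension via embeddings into injectives, well-definedness on $\un{\CCC}$, invertibility from $\mathscr P=\mathscr I$, distinguished triangles induced by $\EEE$-triangles, and the axioms via (ET4) and its dual) is exactly the Happel-style proof carried out there. So the proposal matches the intended proof of the cited source; there is nothing to add beyond the details you already defer to \cite[Section 7]{Palu}.
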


\subsection{Auslander-Reiten theory in extriangulated categories}\label{Section:ARtheoryinex}
Let us briefly recall  Auslander-Reiten theory in extriangulated categories form \cite{INP}.
In this subsection, let $(\CCC,\EEE, \s)$ be an extriangulated category.
\begin{Def}\cite[Definition 2.1]{INP}
A non-split $\EEE$-extension $\delta\in \EEE(C,A)$ is said to be \emph{almost split} if it satisfies the following conditions
\begin{enumerate}[\rm (1)]
 \item $\EEE(C,a)(\delta)=0$ for any non-section $a\in \CCC(A,A')$; 
 \item $\EEE(c,A)(\delta)=0$ for any non-retraction $c\in \CCC(C',C)$.
\end{enumerate}
\end{Def}

We say that $\CCC$ \emph{has right almost split extensions} if for any endo-local non-projective object $A\in \CCC$, there exists an almost split extension $\delta\in \EEE(A,B)$ for some $B\in \CCC$. Dually, we say that  $\CCC$ \emph{has left almost split extensions} if for any endo-local non-projective object $B\in \CCC$, there exists an almost split extension $\delta\in \EEE(A,B)$ for some $A\in \CCC$. We say that $\CCC$ \emph{has almost split extensions} if it has right and left almost split extensions. 

Let $A\in \CCC$. If there exists an almost split extension $\delta\in \EEE(A,B)$, then it is unique up to isomorphism of $\EEE$-extensions.  

\begin{Def}\cite[Definition 3.2]{INP}
Let $(\CCC, \EEE, \s)$ be a $k$-linear extriangulated category.
\begin{enumerate}[\rm(1)]
 \item A \emph{right Auslander-Reiten-Serre (ARS) duality} is a pair $(\tau, \eta)$ of an additive functor $\tau: \un{\CCC}\ra \overline{\CCC}$ and a binatural isomorphism
 \[ \eta_{A,B}: \un{\CCC}(A,B) \simeq D\EEE(B,\tau A) \mbox{ for any } A,B\in \CCC;\]
 \item If moreover $\tau$  is an equivalence, we say that $(\tau, \eta)$ is an \emph{Auslander-Reiten-Serre (ARS) duality}. 
\end{enumerate}
\end{Def}

We say a $k$-linear extriangulated category $(\CCC,\EEE,\s)$ is \emph{Ext-finite}, if $\dim_{k}\EEE(A,B)<\infty$ for any $A, B\in \CCC$.

\begin{Prop}\cite[Theorem 3.4]{INP} \label{Prop:INP}
Let $\CCC$ be a $k$-linear Ext-finite Krull-Schmidt extriangulated category. Then the following are equivalent.
\begin{enumerate}[\rm(1)]
 \item $\CCC$ has almost split extensions;
 \item $\CCC$ has an Auslander-Reiten-Serre duality.
\end{enumerate}
\end{Prop}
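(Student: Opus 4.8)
The plan is to prove the two implications separately; in both, I would reduce to indecomposable objects by Krull--Schmidt and assemble the local data afterwards. The only structural fact I need at the start is that, for an indecomposable non-projective $C\in\CCC$, the canonical surjection $\CCC(C,C)\to\un{\CCC}(C,C)$ has kernel inside $\rad\CCC(C,C)$, so that $\un{\CCC}(C,C)$ is again a finite-dimensional local $k$-algebra with residue field $k$ (Ext-finiteness, hence Hom-finiteness, and $k$ algebraically closed enter here). It follows that a morphism $c\colon C'\to C$ with $C'$ indecomposable is a non-retraction exactly when $c\circ g\in\rad\un{\CCC}(C,C)$ for every $g\colon C\to C'$, and dually for non-sections out of an object with local endomorphism ring; these characterisations are what let me translate the ``almost split'' conditions into the vanishing of linear functionals.

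For (2) $\Rightarrow$ (1): given an Auslander--Reiten--Serre duality $(\tau,\eta)$, I would, for each indecomposable non-projective $C$, let $\delta_{C}\in\EEE(C,\tau C)$ be the $\EEE$-extension corresponding, under the chain of isomorphisms of finite-dimensional spaces $\EEE(C,\tau C)\cong DD\EEE(C,\tau C)\xra{D\eta_{C,C}}D\un{\CCC}(C,C)$, to the functional on $\un{\CCC}(C,C)$ that sends $\mathrm{id}_{C}$ to $1$ and kills $\rad\un{\CCC}(C,C)$. This $\delta_{C}$ is non-zero, hence a non-split $\EEE$-extension. For the first almost-split condition, reduce to $C'$ indecomposable and use naturality of $\eta$ in its first variable: under the above identifications, $\EEE(c,\tau C)\colon\EEE(C,\tau C)\to\EEE(C',\tau C)$ becomes the transpose of $g\mapsto c\circ g$, $\un{\CCC}(C,C')\to\un{\CCC}(C,C)$, so if $c$ is a non-retraction each $c\circ g$ lies in $\rad\un{\CCC}(C,C)$, where the chosen functional vanishes; hence $\EEE(c,\tau C)(\delta_{C})=0$. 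The second condition follows symmetrically from naturality of $\eta$ in the second variable together with $\tau$ being an equivalence. Applying the whole construction to the opposite extriangulated category $(\CCC^{\mathrm{op}},\EEE^{\mathrm{op}},\s^{\mathrm{op}})$, whose Auslander--Reiten--Serre duality is $\tau^{-1}$, yields left almost split extensions; thus $\CCC$ has almost split extensions.

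For (1) $\Rightarrow$ (2): given almost split extensions, fix for each indecomposable non-projective $C$ an almost split extension $\delta_{C}\in\EEE(C,\tau C)$ — unique up to isomorphism of $\EEE$-extensions — which defines $\tau$ on objects of $\un{\CCC}$ (projectives already being zero there). Using the almost-split conditions I would extract a canonical functional $\pi_{C}\colon\EEE(C,\tau C)\to k$ with $\pi_{C}(\delta_{C})=1$, and then define $\eta_{C,B}\colon\un{\CCC}(C,B)\to D\EEE(B,\tau C)$ by $\eta_{C,B}(f)(\epsilon)=\pi_{C}\big(\EEE(f,\tau C)(\epsilon)\big)$; bilinearity and separate naturality in each variable are formal, and one checks that the two almost-split conditions for $\delta_{C}$ are exactly what make each $\eta_{C,B}$ (for $B$ indecomposable non-projective) bijective, the remaining cases of $B$ being handled by additivity and the projective case being trivial. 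Finally, to upgrade the object-assignment $C\mapsto\tau C$ to a functor $\un{\CCC}\to\overline{\CCC}$: for $g\colon C\to C'$ between indecomposable non-projectives the almost-split property of $\delta_{C'}$, applied to the non-retraction obtained by composing $g$ with the defining epimorphism of $\delta_{C}$, produces a morphism $\tau g\colon\tau C\to\tau C'$ with $(\tau g)_{*}\delta_{C}=\EEE(g,\tau C')(\delta_{C'})$; one then checks independence of the choices made, functoriality, and — using left almost split extensions to construct a quasi-inverse $\tau^{-}$ — that $\tau$ is an equivalence, while $\eta$ inherits binaturality from this functoriality.

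I expect the main obstacle to be precisely this last part of (1) $\Rightarrow$ (2): showing that $\tau$ descends to a well-defined \emph{functor} on the stable category and that $\eta$ is genuinely \emph{binatural}, rather than merely natural in each variable separately. This forces an essential use of the uniqueness of almost split extensions and of the defining property of $\pi_{C}$, and it also requires care with the projective and injective objects, since $\tau$ interchanges the roles of $\un{\CCC}$ and $\overline{\CCC}$.
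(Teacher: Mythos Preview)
The paper does not give its own proof of this proposition: it is quoted verbatim from \cite[Theorem~3.4]{INP} and used as a black box. There is therefore nothing in the paper to compare your argument against.

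That said, your outline is the standard strategy and is essentially how the result is proved in \cite{INP}. The direction (2)$\Rightarrow$(1) is clean and your sketch is correct. For (1)$\Rightarrow$(2) you have identified the genuine difficulty: the passage from the object assignment $C\mapsto\tau C$ to an honest equivalence $\tau\colon\un{\CCC}\to\overline{\CCC}$, and the binaturality of $\eta$. One point you leave vague is the construction of the ``canonical functional'' $\pi_{C}\colon\EEE(C,\tau C)\to k$: the almost split condition tells you that $\delta_{C}$ spans the socle of $\EEE(C,\tau C)$ as a one-sided $\End_{\CCC}(C)$-module, but it does not by itself single out a functional; you need to argue (as in \cite{INP}) that any choice of $\pi_{C}$ with $\pi_{C}(\delta_{C})\neq 0$ yields the same $\eta$ up to the isomorphism of $\EEE$-extensions defining $\tau$, and that these choices can be made compatibly. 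This is exactly the bookkeeping you flag as the ``main obstacle'', and it is handled in \cite{INP} by working systematically with the bifunctor $\EEE$ and its stable/costable quotients rather than choosing $\pi_{C}$ pointwise.
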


The following characterization of almost split extensions are analogous to the corresponding result on Auslander-Reiten triangles (see \cite[Proposition I.2.1]{Reiten}) and on almost split sequences (see \cite{ARS}).
\begin{Prop}\label{Prop:characAR}
Assume $(\CCC,\EEE,\s)$ has almost split extensions. Assume $A\in \CCC$ is an end-local object and $\delta\in \EEE(A,B)$. Then the following are equivalent.
\begin{enumerate}[\rm(1)]
\item $\delta$ is an almost split extension;
\item $\delta$ is in the socle of $\EEE(A,B)$ as right $\End_{\CCC}(A)$-module and $B\cong \tau(A)$;
\item $\delta$ is in the socle of $\EEE(A,B)$ as left $\End_{\CCC}(B)$-module and $B\cong \tau(A)$.
\end{enumerate}
\end{Prop}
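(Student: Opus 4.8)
The plan is to reduce everything to the Auslander--Reiten--Serre (ARS) duality provided by Proposition~\ref{Prop:INP}, together with elementary local algebra. We may assume $A$ is not projective, since otherwise $\EEE(A,-)=0$, there is no non-split extension, and there is nothing to prove. As $A$ is endo-local and $k=\bar k$, the ring $R:=\End_{\CCC}(A)$ is local with $R/\rad R\cong k$, and so is its quotient $\un R:=\un\End_{\CCC}(A)$ (the kernel of $R\twoheadrightarrow\un R$ consists of morphisms factoring through a projective, each of which is a non-isomorphism of the indecomposable $A$, hence lies in $\rad R$). I will use two standard facts: (a) for a finite-dimensional right $R$-module $M$ one has $\soc_{R}M=\{m\in M\mid m\cdot\rad R=0\}$; and (b) for an endo-local object $X$, a morphism $c\colon X'\to X$ is a non-retraction iff $c\in\rad_{\CCC}(X',X)$, while a morphism $a\colon X\to X'$ is a non-section iff $a\in\rad_{\CCC}(X,X')$; in particular the non-retraction endomorphisms of $A$ are exactly $\rad R$.

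First I would extract what is needed from the ARS duality. By Proposition~\ref{Prop:INP} there is an ARS duality $(\tau,\eta)$ with $\tau\colon\un\CCC\to\overline\CCC$ an equivalence carrying indecomposable non-projectives to indecomposable non-injectives; in particular $\tau A$ is again endo-local, and (by the construction in the proof of Proposition~\ref{Prop:INP}) the almost split extension starting at $A$ may be taken in $\EEE(A,\tau A)$. Evaluating the binatural isomorphism $\eta_{X,Y}\colon\un\CCC(X,Y)\cong D\EEE(Y,\tau X)$ at $X=Y=A$ yields $\un\End_{\CCC}(A)\cong D\EEE(A,\tau A)$; tracking the naturality of $\eta$ in each variable shows this is an isomorphism of $\un R$-bimodules, where $\EEE(A,\tau A)$ carries the right $\End_{\CCC}(A)$-action through $\EEE(-,\tau A)$ and the left $\End_{\CCC}(\tau A)$-action through $\EEE(A,-)$ (both actions factor through $\un R$, resp.\ through $\overline\End_{\CCC}(\tau A)\cong\un R$, since $\EEE$ annihilates morphisms through a projective, resp.\ through an injective). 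Dualizing, $\EEE(A,\tau A)\cong D(\un R)$ as an $\un R$-bimodule; since $\un R$ is local with residue field $k$, it follows that, both as a right $\End_{\CCC}(A)$-module and as a left $\End_{\CCC}(\tau A)$-module, $\EEE(A,\tau A)$ has one-dimensional socle, namely the functionals on $\un R$ vanishing on $\rad\un R$.

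Next, (1)$\Rightarrow$(2) and (1)$\Rightarrow$(3). Assume $\delta\in\EEE(A,B)$ is almost split. By hypothesis and the previous paragraph there is an almost split extension $\delta'\in\EEE(A,\tau A)$, and by the uniqueness of almost split extensions recalled above, $\delta\cong\delta'$; in particular $B\cong\tau A$, and we identify $\EEE(A,B)=\EEE(A,\tau A)$. Condition~(2) in the definition of an almost split extension gives $\EEE(c,B)(\delta)=0$ for every non-retraction $c$, in particular for every $c\in\rad R$, which by fact~(a) says precisely that $\delta\in\soc_{\End_{\CCC}(A)}\EEE(A,B)$; this is (2). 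Applying condition~(1) of the definition to non-sections out of $B=\tau A$, in particular to $\rad\End_{\CCC}(\tau A)$, yields (3) in exactly the same way.

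Finally, (2)$\Rightarrow$(1), the case (3)$\Rightarrow$(1) being identical. Assume $B\cong\tau A$ and that $\delta$ is a non-split element of $\soc_{\End_{\CCC}(A)}\EEE(A,\tau A)$; as that socle is one-dimensional, $\delta$ spans it. Let $\delta'\in\EEE(A,\tau A)$ be an almost split extension (it exists by hypothesis); by (1)$\Rightarrow$(2) it is also a nonzero element of this one-dimensional socle, so $\delta=\lambda\delta'$ for some $\lambda\in k^{\times}$. Since $\lambda\cdot\mathrm{id}_{A}$ is an automorphism of $A$ sending $\delta'$ to $\delta$, the $\EEE$-extensions $\delta$ and $\delta'$ are isomorphic, and being almost split is preserved under isomorphism of $\EEE$-extensions; hence $\delta$ is almost split. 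The main obstacle is the second paragraph: verifying that the ARS-duality isomorphism respects both bimodule structures — and hence that the relevant socles are one-dimensional — requires a careful diagram chase with the naturality of $\eta$ and with the interplay of $\CCC$, $\un\CCC$ and $\overline\CCC$ (including that $\tau A$ is endo-local); once this is settled, the remainder is local algebra together with the uniqueness of almost split extensions.
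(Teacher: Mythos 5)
Your proof is correct. Note that the paper does not actually prove this proposition: it is stated with a pointer to the classical analogues for Auslander--Reiten triangles and almost split sequences, and your argument is precisely that standard proof transported to the extriangulated setting — identify $\EEE(A,\tau A)$ with $D\un{\End}_{\CCC}(A)$ via the ARS duality of Proposition \ref{Prop:INP}, observe that the socle on either side is the space of linear forms vanishing on $\rad\un{\End}_{\CCC}(A)$, and combine this with the uniqueness of almost split extensions and the fact that the non-retractions (resp.\ non-sections) among endomorphisms of an endo-local object are exactly the radical. Three minor remarks. First, for (2)$\Rightarrow$(1) and (3)$\Rightarrow$(1) one must read $\delta$ as a nonzero (equivalently non-split) element of the socle, since the zero extension lies in the socle but is split; you correctly insert this hypothesis, and it matches how the proposition is used in Lemma \ref{nonsplit}. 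Second, your step ``$\End_{\CCC}(A)/\rad\cong k$, so the socle is one-dimensional and $\delta=\lambda\delta'$'' tacitly uses finite-dimensionality of $\un{\End}_{\CCC}(A)$; this does hold here (Ext-finiteness plus the duality forces it), but it can be bypassed entirely by noting that $\delta=\EEE(r,B)(\delta')$ for some $r\in\End_{\CCC}(A)$ because the socle is a simple module, and $r$ must be a unit since $\rad$ annihilates the socle. Third, the two points you flag — the bimodule compatibility of $\eta$ and the fact that the almost split extension ending at $A$ lies in $\EEE(A,\tau A)$ — are indeed routine naturality checks and part of the construction in [INP], so they present no genuine obstruction.
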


 \subsection{Translation quivers}\label{Section:quiver}
 
We  recall some definitions and notations concerning quivers. 
A \emph{quiver $Q = (Q_{0} , Q_{1}, s, t)$} is given by the set $Q_{0}$ of its vertices, the set $Q_{1}$ of its arrows, a source map $s$ and a target map $t$. If $x\in Q_{0}$ is a vertex, we denote by $x^{+}$ the set of direct successors of $x$, and by $x^{-}$ the set of its direct predecessors. We say that Q is \emph{locally finite} if for each vertex $x\in Q_{0}$, there are finitely many arrows ending at $x$ and starting at $x$. An automorphism group $G$ of $Q$ is  said to be \emph{weakly admissible}  if for each $g \in G  \backslash \{1\}$ and for each $x\in Q_{0}$ , we have $x^{+}\cap (gx)^{+}=\varnothing$.


A \emph{stable translation quiver $(Q, \tau)$} is a locally finite quiver $Q$ without double arrows with a bijection $\tau : Q_{0} \ra Q_{0}$ such that $(\tau x)^{+} = x^{-}$ for each vertex $x$. For each arrow $\alpha : x \ra y$, we denote by  $\sigma\alpha$  the unique arrow $\tau y\ra x$.

\begin{Def}\label{Def:newstable}
Let $Q$ be a stable translation quiver and $C$ be a subset of $Q_{0}$. We define a translation quiver $Q_{C}$ by adding to $Q_{0}$ a vertex $p_{c}$ and two arrows $c\ra p_{c} \ra \tau^{-1}(c)$ for each $c\in C$. The translation of $Q_{C}$ coincides with the translation of $Q$ on $Q_{0}$ and is not defined on $\{ p_{c}\mid c\in C \}$.
\end{Def}

Let $\Delta$ be an oriented tree, then the  \emph{repetition quiver of $\Delta$} is defined as follows:
  \begin{enumerate}[\rm (1)]
  \item $(\Z\Delta)_{0}=\Z\times \Delta_{0}$
  \item  $(\Z\Delta)_{1}=\Z\times \Delta_{1}\cup \sigma(\Z\times \Delta_{1})$ with arrows $(n,\alpha):(n,x)\ra (n,y)$ and $\sigma(n,\alpha): (n-1,y)\ra (n,x)$ for each arrow $\alpha :x \ra y$ of $\Delta$.
  \end{enumerate}
The quiver $\Z\Delta$ with the translation $\tau(n, x) = (n-1, x)$ is a stable translation quiver which does not depend (up to isomorphism) on the orientation of $\Delta$ (see \cite{Riedtmann1}). 

From now on, we assume $\Delta$ is a Dynkin diagram. Let us fix a numbering and an orientation of the simply-laced Dynkin trees.
\[ {\small \xymatrixrowsep{1pc}\xymatrix{ A_{n}(n\ge1): & 1 \ar[r] & 2 \ar[r] & \cdots \ar[r] &n-1 \ar[r] & n \\  & & & & &n-1 \ar[dl]
\\ D_{n}(n\ge 4): & 1 \ar[r] & 2 \ar[r] & \cdots \ar[r] &n-2 \\ & & & & & n \ar[ul]
\\ & & & 4
\\ E_{n}(n=6,7,8): &1 & 2\ar[l] &3\ar[u] \ar[l] \ar[r] & 5 \ar[r] & \cdots \ar[r] & n
} }\]
We define the ``Nakayama permutation'' $\SSS$ of $\Z \Delta$ as follows:
\begin{enumerate}[\rm $\bullet$]
\item if $\Delta=A_{n}$, then $\SSS(p,q)=(p+q-1, n+1-q)$;
\item if $\Delta=D_{n}$ with $n$ even, then $\SSS=\tau^{-n+2}$;
\item if $\Delta=D_{n}$ with $n$ odd, then $\SSS=\tau^{-n+2}\phi$, where $\phi$ is the automorphism which exchanges $n$ and $n-1$;
\item if $\Delta=E_{6}$, then $\SSS=\phi \tau^{-5}$, where $\phi$ is the automorphism which exchanges $2$ and $5$, and $1$ and $6$;
\item if $\Delta=E_{7}$, then $\SSS=\tau^{-8}$;
\item if $\Delta=E_{8}$, then $\SSS=\tau^{-14}$.
\end{enumerate}
By \cite[Proposition 6.5]{Gabriel}, when we identify the Auslander-Reiten quiver of $k\Delta$ as the full subquiver of $\Z \Delta$, the Nakayama functor is related to $\SSS$ defined above. We can also define ``shift permutation'' $[1]$ of $\Z\Delta$ by $\SSS\tau^{-1}$.

 \section{Cohen-Macaulay dg modules} \label{Section:CMdg}

Let $A$ be a dg $k$-algebra.
In this section, we assume $A$ satisfies Assumption \ref{assumption}.

\begin{Def}\label{Def:CM}
\begin{enumerate}[\rm(1)]
\item
A dg $A$-module $M$ is called  \emph{Cohen-Macaulay}  if $M\in \D_{\le 0}^{\bb}(A)$ and  $\Hom_{\D^{\bb}(A)}(M, A[i])=0$ for $i>0$;
\item
We denote by  $\CM A$  the subcategory of $\D^{\bb}(A)$ consisting of Cohen-Macaulay dg $A$-modules.
\end{enumerate}
\end{Def}

If $A$ is an ordinary $k$-algebra, then $\CM A$ defined here is canonically equivalent to the usual one.
We mention that Yekutieli  also introduced Cohen-Macaulay dg modules (see \cite[Section 8]{Ye2}), but they are  different from ours.
Now we introduce some special  dg algebras which are the main  objects in this paper. 

\begin{Def}\label{Def:dselfinjective}
Let $A$ be a non-positive dg $k$-algebra and let $d$ be a positive integer. 
\begin{enumerate}[\rm(1)]
\item We call $A$  \emph{$d$-self-injective} if 
$\add A=\add(DA[d-1])$ in $\D(A)$;
\item 
We call $A$  \emph{$d$-symmetric} if  
$\add A=\add(DA[d-1])$ in $\D(A^{\rm e})$.
\end{enumerate}
\end{Def}

Since in our setting $A$ is Gorenstein,  then the equivalence  \eqref{equation:Nakayama} induces the following triangle auto-equivalence. 
\begin{equation*}\nu: \per A \simeq \per A, \end{equation*}
and moreover we get another triangle auto-equivalence
$\nu:\D^{\bb}(A)\simeq \D^{\bb}(A)$. 
In particular, $\nu$ is a Serre functor on $\per A$.
 We give another description of $\CM A$ as follows.
 \begin{Prop}\label{Prop:cm}
 \begin{enumerate}[\rm(1)]
\item $\CM A=\D_{\le 0}^{\bb}\cap \nu^{-1}(\D^{\bb}_{\ge 0})$;
\item In particular,  If $A$ is a $d$-self-injective dg algebra, then $\CM A= \D_{\le 0}^{\bb}\cap \D_{\ge -d+1}^{\bb}$. 
\end{enumerate}
\end{Prop}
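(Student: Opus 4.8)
The plan is to prove part (1) first, namely $\CM A = \D^{\bb}_{\le 0} \cap \nu^{-1}(\D^{\bb}_{\ge 0})$, and then deduce (2) as a special case. The starting observation is that by definition an object $M \in \D^{\bb}(A)$ lies in $\CM A$ precisely when $M \in \D^{\bb}_{\le 0}$ and $\Hom_{\D A}(M, A[i]) = 0$ for all $i > 0$. So the whole content is to re-express the vanishing condition $\Hom_{\D A}(M, A[i]) = 0$ ($i>0$) as the condition $\nu(M) \in \D^{\bb}_{\ge 0}$, i.e. $\h^{i}(\nu(M)) = 0$ for $i < 0$.

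For this I would invoke the Auslander--Reiten/Serre formula available in our setting. Since $A$ is proper and Gorenstein, $A$ has finite-dimensional cohomology in each degree (by Proposition~\ref{Prop:Dbfinite} applied after noting $A \in \D^{\bb}(A)$) and $\per A = \thick(DA)$, so Lemma~\ref{Lem:degreefinite} applies: for $Y = A[i] \in \per A$ and $X = M \in \D^{\bb}(A) \subseteq \D(A)$ we get
\[
D\Hom_{\D A}(M, A[i]) \cong \Hom_{\D A}(A[i], \nu(M)).
\]
Now $\Hom_{\D A}(A[i], \nu(M)) = \Hom_{\D A}(A, \nu(M)[-i]) \cong \h^{-i}(\nu(M))$ by the standard fact that $\Hom_{\D A}(A, N) \cong \h^{0}(N)$ for any dg module $N$ (or directly from Lemma~\ref{Lem:KN}(1) with $P = A$, since $\h^{0}(A)$-module structure recovers $\h^{0}$). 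Therefore $\Hom_{\D A}(M, A[i]) = 0$ for all $i > 0$ if and only if $\h^{-i}(\nu(M)) = 0$ for all $i > 0$, i.e. $\h^{j}(\nu(M)) = 0$ for all $j < 0$, i.e. $\nu(M) \in \D^{\bb}_{\ge 0}$, equivalently $M \in \nu^{-1}(\D^{\bb}_{\ge 0})$. This proves (1). I should be a little careful that $\nu$ genuinely restricts to an autoequivalence of $\D^{\bb}(A)$ so that $\nu^{-1}(\D^{\bb}_{\ge 0})$ makes sense — but this is exactly what was recorded just before the statement (the Gorenstein hypothesis gives $\nu : \D^{\bb}(A) \simeq \D^{\bb}(A)$), so it may be cited.

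For part (2), suppose $A$ is $d$-self-injective, so $\add A = \add(DA[d-1])$ in $\D(A)$, equivalently $DA \cong A[d-1]$ up to summands, hence $\nu(M) = M \ot^{\bf L}_A DA$ behaves, on the relevant subcategory, like $M[d-1]$ in the sense that $\add \nu(N) = \add N[d-1]$; more precisely one checks $\nu(A) \in \add(A[d-1])$ and, since both $\nu$ and $[d-1]$ are triangle functors agreeing on $\add A$ which generates $\per A$, they agree on $\per A$, but we need the statement on all of $\D^{\bb}(A)$. The cleanest route is: $\nu(M) \cong M \ot^{\bf L}_A DA$ and $DA \cong {}_A A_A[d-1]$ as dg bimodules up to the self-injectivity isomorphism — however $d$-self-injective only gives this one-sidedly. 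So instead I would argue at the level of cohomological amplitude: for $M \in \D^{\bb}_{\le 0}$, the condition $\nu(M) \in \D^{\bb}_{\ge 0}$ should be rephrased. Using the Serre formula again, $\h^{j}(\nu(M)) = 0 \iff \Hom_{\D A}(A, \nu(M)[-j]) = 0 \iff D\Hom_{\D A}(M, A[-j]) = 0 \iff \Hom_{\D A}(M, A[-j]) = 0$; and $A[-j] \in \add(DA[d-1-j])$, with $\Hom_{\D A}(M, DA[d-1-j]) \cong D\h^{-(d-1-j)}(M) = D\h^{j-d+1}(M)$ by Lemma~\ref{Lem:KN}(2). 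Hence for $M \in \D^{\bb}_{\le 0}$, $\nu(M) \in \D^{\bb}_{\ge 0}$ iff $\h^{j-d+1}(M) = 0$ for all $j < 0$, iff $\h^{i}(M) = 0$ for $i < -d+1$, iff $M \in \D^{\bb}_{\ge -d+1}$. Combined with (1) this gives $\CM A = \D^{\bb}_{\le 0} \cap \D^{\bb}_{\ge -d+1}$.

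The main obstacle I anticipate is bookkeeping around the bimodule-level versus module-level statement of $d$-self-injectivity in part (2): one must avoid assuming $DA \simeq A[d-1]$ as bimodules (which is the stronger $d$-symmetric condition) and instead use only $\add A = \add(DA[d-1])$ in $\D(A)$, pushing all computations through $\Hom$-spaces and Lemma~\ref{Lem:KN} rather than through an identification of functors. Everything else is a routine application of the Serre duality formula (Lemma~\ref{Lem:degreefinite}) together with the observation $\Hom_{\D A}(A, N) \cong \h^{0}(N)$ and its dual.
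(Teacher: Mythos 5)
Your proof is correct and follows essentially the same route as the paper's: part (1) is precisely the paper's application of Lemma~\ref{Lem:degreefinite} (with $Y=A[i]$ and $X=M$, giving $\h^{<0}(\nu(M))\cong D\Hom_{\D(A)}(M,A[>0])$), and part (2) is the paper's computation $\Hom_{\D(A)}(X,A[>0])=\Hom_{\D(A)}(X,DA[>d-1])=D\Hom_{\D(A)}(A[>d-1],X)$ merely rerouted through $\nu$ and the identity $\Hom_{\D(A)}(M,DA[m])\cong D\h^{-m}(M)$. The only blemish is notational: in your chain for (2) the intermediate identifications $\h^{j}(\nu(M))\cong\Hom_{\D(A)}(A,\nu(M)[-j])$ and $\Hom_{\D(A)}(A,\nu(M)[-j])\cong D\Hom_{\D(A)}(M,A[-j])$ are each off by a sign in the shift, but the slips cancel and the composite $\h^{j}(\nu(M))\cong D\Hom_{\D(A)}(M,A[-j])$, which is all you actually use, is correct.
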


\begin{proof}
(1)
By definition, 
\[ A[<0]^{\perp} = \{ X\in \D^{\bb}(A)\ |\ \Hom_{\D^{\bb} (A)}(A[<0], X)=0\}= \D^{\bb}_{\le 0}. \]
By Lemma \ref{Lem:degreefinite}, $\h^{<0}(\nu(X))=\Hom_{\D^{\bb}(A)}(A[>0], \nu(X))=D\Hom_{\D^{\bb}(A)}(X, A[>0])$, then 
 \[ {}^{\perp}A[>0] 
 =\{ X\in \D^{\bb}(A)\ |\  \h^{<0}(\nu(X))=0 \}
  =\{ X\in \D^{\bb}(A)\ |\  \nu(X)\in \D^{\bb}_{\ge 0}\}.\]
    Then $\CM A=\D^{\bb}_{\le 0}\cap \nu^{-1}(\D_{\ge 0}^{\bb})$. 
  
 (2) Let $X\in \CM A$. If $A$  is $d$-self-injective, then we have
\[\Hom_{\D A}(X, A[>0])=\Hom_{\D^{\bb}(A)}(X, DA[>d-1])=D\Hom_{\D^{\bb}(A)}(A[>d-1], X)=0,\]
which  implies $X\in \D_{\ge -d+1}^{\bb}$. So $\CM A= \D_{\le 0}^{\bb}\cap \D_{\ge -d+1}^{\bb}$.
 \end{proof}

The first properties of $\CM A$ are the following, which  are  analogues  of the well-known properties of Cohen-Macaulay modules. We refer to Section \ref{Section:extri}
 for the notion of extriangulated category. We call the Verdier quotient $\D^{\bb}(A)/\per A$  the \emph{singularity category $\D_{\sg}(A)$} of $A$.
\begin{Thm}\label{Thm:properties}
Let $A$ be a dg $k$-algebra satisfying Assumption \ref{assumption}. Then
\begin{enumerate}[\rm (1)]
\item $\CM A$ is an Ext-finite Frobenius extriangulated category with $\Proj (\CM A)=\add A$; 
\item The stable category $\underline{\CM}A:=(\CM A)/[\add A] $ is a triangulated category;
\item The composition $\CM A \hookrightarrow \D^{\bb}(A)\ra \D^{\bb}(A)/\per A$ induces a triangle equivalence
\[ \underline{\CM} A=(\CM A)/[\add A] \simeq  \D^{\bb}(A)/\per A=\D_{\sg}(A). \]
\end{enumerate}
\end{Thm}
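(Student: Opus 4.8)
The strategy is to verify the three claims in order, building on the $t$-structure of Proposition~\ref{Prop:heart}, the Auslander--Reiten formula (Lemmas~\ref{Lem:bifunc} and~\ref{Lem:degreefinite}), and the description $\CM A=\D^{\bb}_{\le 0}\cap\nu^{-1}(\D^{\bb}_{\ge 0})$ from Proposition~\ref{Prop:cm}(1). First I would establish that $\CM A$ is an extension-closed subcategory of $\D^{\bb}(A)$: given a triangle $X\ra Y\ra Z\ra X[1]$ with $X,Z\in\CM A$, the long exact sequences obtained by applying $\h^{i}$ and $\Hom_{\D^{\bb}(A)}(-,A[i])$ show $Y\in\D^{\bb}_{\le 0}$ and $\Hom_{\D^{\bb}(A)}(Y,A[i])=0$ for $i>0$. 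Hence, by Example (3) in Section~\ref{Section:extri}, $\CM A$ inherits an extriangulated structure from the triangulated category $\D^{\bb}(A)$, with $\EEE(Z,X)=\Hom_{\D^{\bb}(A)}(Z,X[1])$; Ext-finiteness is then immediate from Proposition~\ref{Prop:Dbfinite}, which says $\D^{\bb}(A)$ is Hom-finite.

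Next I would identify the projective and injective objects. The key point is that $A\in\CM A$ (indeed $\h^{i}(A)=0$ for $i>0$ since $A$ is non-positive, and $\Hom_{\D^{\bb}(A)}(A,A[i])=\h^{i}(A)=0$ for $i>0$), and for any $M\in\CM A$ we have $\EEE(M,A)=\Hom_{\D^{\bb}(A)}(M,A[1])=0$ by definition of $\CM A$; so $\add A\subseteq\Proj(\CM A)$. Dually, using Lemma~\ref{Lem:degreefinite}, $\EEE(A,M)=\Hom_{\D^{\bb}(A)}(A,M[1])=D\Hom_{\D^{\bb}(A)}(M,\nu(A)[-1])$, and since $\nu(A)=DA\in\thick(DA)=\per A$ (Gorenstein assumption) one checks $DA\in\CM A$ as well, with $\EEE(A,DA)=0$. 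The main work here is showing $\CM A$ has \emph{enough} projectives and injectives with $\Proj(\CM A)=\Inj(\CM A)=\add A$: for $M\in\CM A$, truncating a projective resolution of $\h^{\ast}(M)$ over $\h^{\ast}(A)$ (as in the proof of Proposition~\ref{Prop:key2}) produces a triangle $M'\ra P\ra M\ra M'[1]$ with $P\in\add A$, and one must verify $M'\in\CM A$; by Proposition~\ref{Prop:cm}(1) this reduces to checking $M'\in\D^{\bb}_{\le 0}$ (clear from the long exact cohomology sequence) and $\nu(M')\in\D^{\bb}_{\ge 0}$ (here the Serre duality $\h^{<0}\nu(M')=D\Hom_{\D^{\bb}(A)}(M',A[>0])$ and the defining vanishing for $M'$ are used). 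Applying $D$ and $\nu$ gives the dual construction for injectives. Since $A$ is both projective and injective in $\CM A$, and these exhaust $\Proj$ and $\Inj$ (any projective is a summand of the $P$ in its defining conflation, hence in $\add A$), the category is Frobenius. Part (2) is then immediate from Proposition~\ref{Prop:Frobeniusextri}.

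For part (3), the functor $F\colon\CM A\hra\D^{\bb}(A)\ra\D_{\sg}(A)=\D^{\bb}(A)/\per A$ kills $\add A=\Proj(\CM A)$, so it factors through $\un{\CM}A$. To show the induced functor $\bar F\colon\un{\CM}A\ra\D_{\sg}(A)$ is an equivalence I would argue as in Buchweitz's theorem. \emph{Density}: for $X\in\D^{\bb}(A)$, iterated truncation with respect to the $t$-structure together with killing the perfect part lets one replace $X$, up to isomorphism in $\D_{\sg}(A)$, by an object in $\D^{\bb}_{\le 0}$; then a ``syzygy'' construction against $\add A$ (each step a triangle $N'\ra P\ra N\ra N'[1]$ with $P\in\add A$) pushes the object down far enough that the Gorenstein/Serre-duality condition forces it into $\D^{\bb}_{\ge -m}$ for large enough truncation, landing in $\CM A$; this uses that $\per A=\thick(A)=\thick(DA)$ has finite ``Gorenstein dimension'' bounds. \emph{Full faithfulness}: for $M,N\in\CM A$ one shows $\Hom_{\D_{\sg}(A)}(M,N)\cong\un{\CM}A(M,N)$ by checking that a morphism in the Verdier quotient is represented by a genuine morphism $M\ra N$ (using that $\Hom_{\D^{\bb}(A)}(M,P[i])=0$ for $P\in\per A$ and $i>0$, which follows from the defining vanishing of $\CM A$ against $A$ and the thick closure), and that two such become equal in the quotient iff they differ by a map factoring through $\add A$ (using the injectivity of $A$ in $\CM A$ to lift the factorization). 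I expect \textbf{the density argument to be the main obstacle}: one must control simultaneously the upper truncation (to stay in $\D^{\bb}_{\le 0}$) and the lower bound coming from $\nu^{-1}(\D^{\bb}_{\ge 0})$, and show that finitely many syzygy steps suffice — this is precisely where properness and the Gorenstein condition on $A$ are essential, and it is the analogue of the classical fact that high syzygies of finitely generated modules over a Gorenstein ring are maximal Cohen--Macaulay.
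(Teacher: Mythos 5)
Your handling of parts (1) and (2) is essentially the paper's own argument: extension-closedness gives the extriangulated structure, Hom-finiteness of $\D^{\bb}(A)$ gives Ext-finiteness, and triangles over $(\add A)$-approximations give enough projectives and injectives, after which $\Proj(\CM A)=\add A=\Inj(\CM A)$ and Proposition \ref{Prop:Frobeniusextri} finish (2). (One small slip there: $\EEE(M,A)=\Hom_{\D^{\bb}(A)}(M,A[1])=0$ shows $A$ is \emph{injective} in $\CM A$; its projectivity is the easier vanishing $\EEE(A,M)=\h^{1}(M)=0$ for $M\in\D^{\bb}_{\le 0}$ — no Serre duality or detour through $DA$ is needed.) For part (3) the paper does not argue by hand at all: it applies \cite[Corollary 2.1]{Iyama} to $\T=\D^{\bb}(A)$ and $\cal P=\add A$, whereas you propose a Buchweitz-style direct proof. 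That route is viable, and your density sketch is essentially sound (the deferred estimate — that after $k$ syzygy steps $\h^{-j}(\nu N^{(k)})$ is a quotient of $\h^{-j-k}(\nu N)$, which vanishes for $k\gg 0$ by properness — can be completed, and is close in spirit to Theorem \ref{Thm:cmappro}).

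The genuine gap is in your full-faithfulness step. You justify straightening a roof $M\xleftarrow{s}Z\to N$ by the claim that $\Hom_{\D^{\bb}(A)}(M,P[i])=0$ for every $P\in\per A$ and $i>0$, ``by the defining vanishing of $\CM A$ against $A$ and the thick closure.'' This is false: $\per A$ is closed under \emph{all} shifts, so the claim (applied to $P[-i]$) would force $\Hom_{\D^{\bb}(A)}(M,Q)=0$ for every perfect $Q$; already $M=Q=A$ gives $\Hom_{\D^{\bb}(A)}(A,A)=\h^{0}(A)\neq 0$. The defining condition $\Hom_{\D^{\bb}(A)}(M,A[>0])=0$ does not propagate to $\thick(A)$ with positive shifts precisely because the thick closure contains negative shifts of $A$. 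Consequently the cone $Q\in\per A$ of $s$ cannot be killed against $N$ by this vanishing alone, and your argument for surjectivity (and injectivity) of $\Hom_{\un{\CM}A}(M,N)\to\Hom_{\D_{\sg}(A)}(M,N)$ collapses at this point. The correct hands-on argument must first truncate $Q$ with respect to the co-$t$-structures of Lemma \ref{Lem:tcot} (writing $Q$ as an extension of an object of $\add\Filt A[\ge 0]$-type by one of $\add\Filt A[<0]$-type) and then use \emph{both} orthogonality conditions, $\Hom(\CM A, A[>0])=0$ and $\Hom(A[<0],\CM A)=0$, to dispose of the two pieces separately; this bookkeeping is exactly what the cited result of Iyama--Yang packages, which is why the paper can quote it in one line. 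Either supply that truncation argument or invoke \cite[Corollary 2.1]{Iyama} as the paper does.
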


\begin{proof}
 By our definition, $\CM A$ is an extension-closed subcategory of $\D^{\bb}(A)$, then it has a natural extriangulated category structure by restricting the triangles of $\D^{\bb}(A)$ on $\CM A$ (see \cite[Remark 2.18]{Palu}). By Proposition \ref{Prop:Dbfinite}, $\D^{\bb}(A)$ is Hom-finite, then so is $\CM A$. It implies that $\CM A$ is Ext-finite and $\add A$ is functorially finite in $\CM A$.
 
  Since $\Hom_{\CM A}(P, X[1])=0=\Hom_{\CM A}(X, P[1])$ for any $P\in \add A$ and $X\in \CM A$,  then we have $\add A\subset \sf{Proj}(\CM A)\cap \sf{Inj}(\CM A)$. For any $X\in \CM A$, we 
consider the right $(\add A)$-approximation $P \ra X$, which extends to the following triangle in $\D^{\bb}(A)$.
\[   Y\ra P\ra X\ra Y[1]. \]
It is easy to check $Y\in \CM A$ by applying the functors $\Hom_{\D^{\bb}(A)}(A[<0],?)$ and $\Hom_{\D^{\bb}(A)}(?, A[>0])$ to the triangle above. So $\CM A$ has enough projectives. Similarly, it also has enough injectives. 

Finally, we show $\Proj(\CM A)= \add A= \Inj(\CM A)$. Assume $X\in \CM A$ is projective,  and take a right 
 $(\add A)$-approximation $P \ra X$. As we have shown above, we have a triangle  $Y\ra P\ra X\ra Y[1]$
where $Y\in \CM A$. Since $X$ is projective, then $\Hom_{\CM A}(X, Y[1])=0$. Then the triangle splits and thus $X\in \add A$. So $\sf{Proj}(\CM A)= \add A$. Similarly, one can show $ \sf{Inj}(\CM A)=\add A$. Then by Proposition \ref{Prop:Frobeniusextri}, $\un{\CM} A$ is a triangulated category.

For the last statement, applying \cite[Corollary 2.1]{Iyama} to $\cal T=\D^{\bb}(A)$ and $\cal P= \add A$, we have $\un{\CM} A$ is triangle equivalent to $\D^{\bb}(A)/\per A$.
\end{proof}

Immediately,  we have the following.
\begin{Cor}
In Theorem \ref{Thm:properties}, 
$\D^{\bb}(A)=\per A$ if and only if $\CM A=\add A$.
\end{Cor}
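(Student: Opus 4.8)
The plan is to read the corollary off part (3) of Theorem~\ref{Thm:properties}, which supplies a triangle equivalence $\un{\CM}A \simeq \D_{\sg}(A) = \D^{\bb}(A)/\per A$. The point is simply that each of the two conditions in the corollary says precisely that one side of this equivalence vanishes, so the equivalence transports one statement into the other. Concretely, I would split the proof into two elementary observations and then invoke Theorem~\ref{Thm:properties}(3).

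First I would check that $\CM A = \add A$ if and only if $\un{\CM}A = (\CM A)/[\add A] = 0$. One implication is immediate. For the other, if the additive quotient is zero then for each $X \in \CM A$ the identity $\mathrm{id}_X$ lies in the ideal $[\add A]$, i.e.\ factors as $X \to P \to X$ with $P \in \add A$; composing the two maps realizes $X$ as a direct summand of $P$, and since $\add A$ is closed under direct summands we get $X \in \add A$. Together with the inclusion $\add A \subseteq \CM A$ established in the proof of Theorem~\ref{Thm:properties}, this yields $\CM A = \add A$. Symmetrically, I would check that $\D^{\bb}(A) = \per A$ if and only if $\D_{\sg}(A) = \D^{\bb}(A)/\per A = 0$: here $\per A \subseteq \D^{\bb}(A)$ because $A$ is proper and $\D^{\bb}(A)$ is a thick subcategory of $\D(A)$, and $\per A$ is itself thick; if the Verdier quotient vanishes then $\mathrm{id}_X = 0$ in the quotient for every $X \in \D^{\bb}(A)$, which forces $X$ to be a direct summand of an object of $\per A$, hence $X \in \per A$ by thickness. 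The converse is trivial.

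Combining the two observations with the triangle equivalence $\un{\CM}A \simeq \D_{\sg}(A)$ of Theorem~\ref{Thm:properties}(3) finishes the proof: $\D^{\bb}(A) = \per A \iff \D_{\sg}(A) = 0 \iff \un{\CM}A = 0 \iff \CM A = \add A$. I do not expect a real obstacle here; the only step that deserves a moment's care is the implication "a quotient category vanishes $\Rightarrow$ the original subcategory is the whole category," which in both instances relies on the relevant subcategory ($\add A$, resp.\ $\per A$) being closed under direct summands, so that the splitting idempotent argument applies.
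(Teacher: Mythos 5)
Your proof is correct and follows exactly the route the paper intends: the corollary is stated as an immediate consequence of Theorem \ref{Thm:properties}(3), and your argument just makes explicit that $\CM A=\add A$ iff $\un{\CM}A=0$ and $\D^{\bb}(A)=\per A$ iff $\D_{\sg}(A)=0$, using that both $\add A$ and $\per A$ are closed under direct summands. Nothing further is needed.
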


Recall from \cite{Palu}, the suspension functor in $\un{\CM}A$ is given by the cone of a left $(\add A)$-approximation $X\ra P\ra \Omega^{-1} X \ra X[1]$ for $X\in \CM A$. The following result is an analogue of the well-known properties for classical Gorenstein rings.
\begin{Prop}\label{Prop:Groper}
Let $A$ be a dg algebra satisfying Assumption \ref{assumption}. Then 
\begin{enumerate}[\rm (1)]
\item There is a duality $(\ )^{*}=\RshHom_{A}(?, A):\D^{\bb}(A) \xra{\simeq} \D^{\bb}(A^{\rm op})$, which restricts to a duality $\CM A \xra{\simeq} \CM (A^{\rm op})$;
\item For $X, Y\in \CM A$ and $i>0$, we have $\Hom_{\D^{\bb}(A)}(X,Y[i])=\Hom_{\un{\CM} A}(X, \Omega^{-i}Y)$.
\end{enumerate}
\end{Prop}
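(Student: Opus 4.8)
The plan is to treat the two statements separately, deriving (2) from the general machinery of the Frobenius extriangulated structure of Theorem~\ref{Thm:properties}, and proving (1) by a direct check on the functor $\RshHom_A(?,A)$.

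For part (1), I would first recall that $A$ is proper, so $A \in \D^{\bb}(A)$, and hence $\RshHom_A(?,A)$ carries $\per A$ into $\per(A^{\rm op})$ and is a duality there. To extend it to all of $\D^{\bb}(A)$, I would use that $\D^{\bb}(A) = \thick(\bop_{i=1}^r S_i)$ by Proposition~\ref{Prop:Dbfinite} together with the fact that each simple $S_i$ lies in $\thick(A)$; since $A$ is Gorenstein, $\RshHom_A(?,A)$ sends a generator to a generator and is compatible with triangles and direct summands, so it restricts to a duality $\D^{\bb}(A) \xra{\simeq} \D^{\bb}(A^{\rm op})$ with quasi-inverse $\RshHom_{A^{\rm op}}(?,A)$ (the biduality morphism is an isomorphism on $A$, hence on all of $\D^{\bb}(A)$). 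It remains to check that this duality interchanges $\CM A$ and $\CM(A^{\rm op})$. Here I would use the characterization from Proposition~\ref{Prop:cm}(1), or argue directly from Definition~\ref{Def:CM}: for $M \in \CM A$ one has $\Hom_{\D^{\bb}(A)}(M, A[i]) = 0$ for $i>0$, which says exactly that $\h^i(M^*) = 0$ for $i > 0$, i.e.\ $M^* \in \D^{\bb}_{\le 0}(A^{\rm op})$; dually, $\h^i(M) = 0$ for $i>0$ translates under the duality into $\Hom_{\D^{\bb}(A^{\rm op})}(M^*, A[i]) = 0$ for $i>0$. Hence $M^* \in \CM(A^{\rm op})$, and by symmetry the duality is an equivalence between the two CM categories.

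For part (2), the identity $\Hom_{\D^{\bb}(A)}(X,Y[i]) = \Hom_{\un{\CM}A}(X, \Omega^{-i}Y)$ for $i>0$ is the standard ``dimension shifting'' argument inside a Frobenius category, transported to the extriangulated setting of Theorem~\ref{Thm:properties}. I would proceed by induction on $i$. For $i=1$: take a left $(\add A)$-approximation, giving a triangle $Y \to P \to \Omega^{-1}Y \to Y[1]$ in $\D^{\bb}(A)$ with $P \in \add A$ and $\Omega^{-1}Y \in \CM A$ (this is the description of the suspension in $\un{\CM}A$ recalled just before the Proposition). Applying $\Hom_{\D^{\bb}(A)}(X, -)$ and using $\Hom_{\D^{\bb}(A)}(X, P[1]) = 0$ (since $P \in \add A = \Inj(\CM A)$ and $X \in \CM A$) yields a surjection $\Hom_{\D^{\bb}(A)}(X, \Omega^{-1}Y) \twoheadrightarrow \Hom_{\D^{\bb}(A)}(X, Y[1])$, whose kernel consists of the maps $X \to \Omega^{-1}Y$ factoring through $P$, i.e.\ through $\add A$. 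This gives $\Hom_{\un{\CM}A}(X, \Omega^{-1}Y) \cong \Hom_{\D^{\bb}(A)}(X,Y[1])$. For the inductive step I would apply the case $i=1$ to $\Omega^{-(i-1)}Y$ and combine with $\Hom_{\D^{\bb}(A)}(X, \Omega^{-(i-1)}Y[1]) = \Hom_{\D^{\bb}(A)}(X, Y[i])$, which itself follows by a further $(i-1)$-fold use of the approximation triangles (each intermediate cokernel lies in $\add A$, and $\Hom_{\D^{\bb}(A)}(X, A[j]) = 0$ for $j>0$ as $X \in \CM A$).

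I expect the main technical obstacle to be part (1): verifying carefully that $\RshHom_A(?,A)$ actually lands in $\D^{\bb}$ and is a biduality, rather than merely a functor, since $A$ is only a dg algebra and one must be slightly careful that properness plus the Gorenstein hypothesis suffice (properness gives $\RshHom_A(S_i, A) \in \D^{\bb}(A^{\rm op})$ via $S_i \in \thick(A)$, and the Gorenstein condition is what guarantees the simples remain in $\per A$ on both sides). Part (2), by contrast, is a routine transcription of the classical syzygy argument once the Frobenius extriangulated structure and the explicit form of $\Omega^{-1}$ from Theorem~\ref{Thm:properties} are in hand; the only point requiring the CM hypothesis on $X$ is the vanishing $\Hom_{\D^{\bb}(A)}(X, A[j]) = 0$ for $j > 0$, which is built into Definition~\ref{Def:CM}.
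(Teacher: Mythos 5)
Your part (2) is essentially the paper's own dimension-shifting argument and is correct; the only point you gloss is why the kernel of the surjection $\Hom_{\D^{\bb}(A)}(X,\Omega^{-1}Y)\to\Hom_{\D^{\bb}(A)}(X,Y[1])$, namely the maps factoring through the chosen approximation object, coincides with all maps factoring through $\add A$: this needs the remark that $\Hom_{\D^{\bb}(A)}(A,Y[1])=0$, so any map from an object of $\add A$ to $\Omega^{-1}Y$ factors through the approximation $Q\to\Omega^{-1}Y$, exactly as in the paper. The genuine problem is in part (1). Your extension of the duality from $\per A$ to $\D^{\bb}(A)$ rests on the claim that each simple $S_i$ lies in $\thick(A)=\per A$ (and you repeat it at the end: ``the Gorenstein condition is what guarantees the simples remain in $\per A$''). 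This is false in general: if every simple were perfect, then $\D^{\bb}(A)=\thick(\bigoplus_i S_i)\subseteq\per A$, i.e.\ $\CM A=\add A$ by the corollary to Theorem \ref{Thm:properties}, which already fails for $A=k[x]/(x^2)$ concentrated in degree $0$ --- a proper, non-positive, Gorenstein dg algebra whose simple module is not perfect. The Gorenstein hypothesis says $\per A=\thick(DA)$; it does not put the simples into $\per A$. Consequently both your argument that $\RshHom_A(?,A)$ lands in $\D^{\bb}(A^{\rm op})$ and your biduality d\'evissage (``an isomorphism on $A$, hence on all of $\D^{\bb}(A)$'' --- d\'evissage from $A$ only reaches $\per A$) break down at this step.

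The paper closes exactly this gap by a different mechanism: for $M\in\D^{\bb}(A)$ one has $\RshHom_A(M,DA)\cong DM\in\D^{\bb}(A^{\rm op})$, and since Gorensteinness gives $A\in\thick(DA)$, it follows that $M^{*}=\RshHom_A(M,A)\in\thick(DM)\subseteq\D^{\bb}(A^{\rm op})$; combined with the duality on $\per A$ (equivalently, with the identification of $(\ )^{*}$ on $\D^{\bb}(A)$ with the composite of the $k$-dual and the Nakayama autoequivalence $\nu$, which is an autoequivalence of $\D^{\bb}(A)$ precisely because $A$ is Gorenstein and proper), this yields the duality $\D^{\bb}(A)\simeq\D^{\bb}(A^{\rm op})$. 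Your subsequent verification that the duality exchanges the two CM conditions (translating $\Hom_{\D^{\bb}(A)}(M,A[i])=0$ for $i>0$ into $\h^{i}(M^{*})=0$ for $i>0$, and vice versa) is fine and agrees with the paper; it is only the passage from $\per A$ to $\D^{\bb}(A)$ that needs the $DM$-plus-$A\in\thick(DA)$ argument rather than the appeal to perfect simples.
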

\begin{proof}
(1) The functor $(\ )^{*}: \D(A)\ra \D(A^{\rm op})$ restricts to a duality  $\per A\xra{\simeq} \per (A^{\rm op})$.  For any $M\in \D^{\rm b}(A)$, it is clear that $\RshHom_{A}(M, DA)=DM\in \D^{\rm b}(A^{\rm op})$. Since $A$ is Gorenstein, then $A\in  \thick(DA)$ and moreover, $(\ )^{*}$ also induces a duality  $\D^{\bb}(A) \xra{\simeq} \D^{\bb}(A^{\rm op})$.
Since $\CM A=A[<0]^{\perp}\cap {}^{\perp}A[>0]$, it is clear that $(\ )^{*}$ restricts to a functor $(\ )^{*}: \CM A\ra \CM (A^{\rm op})$ and it is a duality. We have the following diagram.
\[ \xymatrix{ \D^{\bb}(A)  \ar[r]^{\simeq}&   \D^{\bb}(A^{\rm op})\\ \CM A \ar@{^{(}->}[u]\ar[r]^{\simeq} &  \CM (A^{\rm op}). \ar@{^{(}->}[u]
}\]

(2) Consider the following triangle  induced by the left $(\add A)$-approximation of $Y$,
\[ Y\ra Q \ra \Omega^{-1} Y \ra Y[1]. \]
Applying $\Hom_{\D^{\bb}(A)}(X, ?)$ to the triangle  above, since $\Hom_{\D^{\bb}(A)}(X, A[>0])=0$, we see
 \[\Hom_{\D^{\bb}(A)}(X, \Omega^{-1}Y[t])=\Hom_{\D^{\bb}(A)}(X, Y[t+1])\] 
 for $t \ge 1$. 
Moreover, we have the following exact sequence.
\[ \Hom_{\D^{\bb}(A)}(X, Q) \ra \Hom_{\D^{\bb}(A)}(X, \Omega^{-1}Y) \ra \Hom_{\D^{\bb}(A)}(X,Y[1]) \ra 0.\]
Since $ \Hom_{\D^{\bb}(A)}(A, Y[1])=0$, then  for $P\in \add A$, every map $P\ra \Omega^{-1}Y$ has a decomposition $P \ra Q \ra \Omega^{-1}Y$. Then \[\Hom_{\un{\CM}A}(X, \Omega^{-1}Y)=\Hom_{\D^{\bb}(A)}(X, \Omega^{-1}Y)/\Hom_{\D^{\bb}(A)}(X,Q),\]
 which is isomorphic to $\Hom_{\D^{\bb}(A)}(X, Y[1])$.
Then by induction, one can show $\Hom_{\D^{\bb}(A)}(X,Y[i])=\Hom_{\un{\CM} A}(X, \Omega^{-i}Y)$ holds for any $i\ge 1$.
\end{proof}


 The following proposition tells us that when $\CM A$ is an ordinary Frobenius category for a $d$-self-injective dg algebra $A$. 

\begin{Prop}
Assume $A$ is a $d$-self-injective dg $k$-algebra. Then $\CM A$ is a Frobenius category with $\add A$ as projective objects if and only if $d=1$ $($that is, $A$ has total cohomology concentrated in degree $0)$.
 \end{Prop}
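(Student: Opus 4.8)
The plan is to read off the statement from the explicit shape of $\CM A$ for a $d$-self-injective $A$. By Proposition~\ref{Prop:cm}(2) we have $\CM A=\D^{\bb}_{\le 0}\cap\D^{\bb}_{\ge -d+1}$, and (as in the proof of Theorem~\ref{Thm:properties}, see \cite[Remark~2.18]{Palu}) the extriangulated structure on $\CM A$ is obtained by restriction, so its conflations are exactly the triangles $X\ra Y\ra Z\ra X[1]$ of $\D^{\bb}(A)$ with $X,Y,Z\in\CM A$. Since $\CM A$ is already a Frobenius extriangulated category with $\Proj(\CM A)=\add A$, ``$\CM A$ is a Frobenius category with $\add A$ as projectives'' amounts to the single extra requirement that this extriangulated structure be an exact structure. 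I will use the elementary fact that in an exact category every conflation $X\ra Y\ra Z$ is a kernel--cokernel pair, so $X\ra Y$ is monic and $Y\ra Z$ is epic; in particular a conflation with $Y=0$ forces $X=Z=0$.

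For ``$d=1\Rightarrow$ Frobenius'': if $d=1$ then $A$ has total cohomology concentrated in degree $0$, hence is quasi-isomorphic to the ordinary algebra $\h^0(A)$, and $\add A=\add DA$ in $\D(A)$ just says $\h^0(A)$ is self-injective. By Proposition~\ref{Prop:cm}(2) and Proposition~\ref{Prop:heart}, $\CM A=\D^{\bb}_{\le 0}\cap\D^{\bb}_{\ge 0}=\cal H$ is the heart of the standard $t$-structure, and $\h^0$ identifies it with $\mod\h^0(A)$; under this identification $\EEE=\Hom_{\D^{\bb}(A)}(?,?[1])$ becomes $\Ext^{1}_{\h^0(A)}(?,?)$, so the extriangulated structure on $\CM A$ is the abelian (in particular exact) structure of $\mod\h^0(A)$. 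As $\h^0(A)$ is self-injective, $\mod\h^0(A)$ is a Frobenius category with projective-injectives $\add\h^0(A)=\add A$, matching $\Proj(\CM A)=\add A$.

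For the converse I would prove the contrapositive: if $d\ge 2$ then $\CM A$ is not an exact category. Pick a nonzero simple dg $A$-module $S$ (if there is none then $\CM A=0$ and there is nothing to prove). It is concentrated in degree $0$, so $S\in\D^{\bb}_{\le 0}\cap\D^{\bb}_{\ge -d+1}=\CM A$; and $S[1]$ is concentrated in degree $-1$, which satisfies $-1\ge -d+1$ since $d\ge 2$, so also $S[1]\in\CM A$. The cone triangle of the zero map $S\ra 0$, namely $S\ra 0\ra S[1]\xra{\mathrm{id}}S[1]$, is a triangle of $\D^{\bb}(A)$ with all three terms in $\CM A$, hence a conflation of the extriangulated category $\CM A$; it realizes the nonzero element $\mathrm{id}_{S[1]}\in\EEE(S[1],S)=\Hom_{\D^{\bb}(A)}(S[1],S[1])\cong k$. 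Its middle term is $0$ while its outer terms $S$ and $S[1]$ are nonzero, which is impossible in an exact category. Hence $\CM A$ is not a Frobenius category, so $\CM A$ Frobenius forces $d=1$.

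The computations above are routine; the real content is conceptual. First, one must agree that ``$\CM A$ is a Frobenius category'' refers to the canonical extriangulated structure of Theorem~\ref{Thm:properties} being exact -- with that reading the claimed equivalence with the cohomological condition on $A$ is immediate. Second, in the ``$d=1$'' direction one should verify that the equivalence $\cal H\simeq\mod\h^0(A)$ is compatible with extensions, i.e.\ that $\EEE$ restricted to the heart computes $\Ext^{1}$ in $\mod\h^0(A)$; this is standard bookkeeping for the heart of a $t$-structure. I do not expect any further obstacle.
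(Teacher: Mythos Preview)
Your proof is correct. The ``$d=1$'' direction is handled the same way in the paper: reduce to the ordinary self-injective algebra $\h^0(A)$. For the converse your argument and the paper's are two phrasings of the same obstruction. The paper argues that if $\CM A$ were Frobenius exact with $\add A$ as projectives, then for every nonzero $X\in\CM A$ the deflation $P\to X$ from a projective would be an epimorphism, forcing $\Hom_{\CM A}(A,X)=\h^0(X)\neq 0$; hence $\CM A\cap\D^{\bb}_{\le -1}=0$, which together with $\CM A=\D^{\bb}_{\le 0}\cap\D^{\bb}_{\ge -d+1}$ gives $d=1$. You instead exhibit the explicit witness $X=S[1]$: since $\Hom(A,S[1])=\h^0(S[1])=0$, the right $(\add A)$-approximation of $S[1]$ is $0\to S[1]$, and the resulting conflation $S\to 0\to S[1]$ visibly violates exactness. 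So your ``bad conflation'' is precisely the enough-projectives conflation the paper is implicitly invoking; the two arguments are the concrete and the abstract version of the same point. Your parenthetical ``if there is no simple'' is unnecessary (a nonzero proper non-positive dg algebra always has one), but harmless.
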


\begin{proof}
If $A$ has total cohomology concentrated in degree $0$, then $A$ is quasi-isomorphic to $\h^{0}(A)$. 
In this case, $\h^{0}(A)$ is a Gorenstein $k$-algebra and $\CM A$ is equivalent to $\CM \h^{0}(A)$, which is Frobenius.
 
On the other hand, suppose $X$ is a non-zero object of $\CM A$. If $\CM A$ is a Frobenius category with $\add A$ as projective objects, then 
\[\Hom_{\CM A}(A, X)=\Hom_{\D^{\bb}( A)}(A, X)=\h^{0}(X)\not=0\]
which implies $X\not \in \D_{\le -1}^{\bb}$. So $\CM A \cap \D_{\le -1}^{\bb}=0$. But by Proposition \ref{Prop:cm}, $\CM A= \D_{\le 0}^{\bb}\cap \D_{\ge -d+1}^{\bb}$. Then $d=1$, which implies that $A$ has total cohomology concentrated in degree $0$.
\end{proof}

\section{Auslander-Reiten theory in $\CM A$}\label{Section:AR}

We assume that all the dg $k$-algebras considered in this section satisfy Assumption \ref{assumption}.
\subsection{Serre duality and almost split extensions}
The aim of this section is to prove the following theorem.
\begin{Thm}\label{Thm:AR}

\begin{enumerate}[\rm(1)]
\item
 $\underline{\CM}A$ admits a Serre functor  $\nu[-1]=?\ot_{A}^{\bf L}DA[-1]$;
 \item $\CM A$ admits almost split extensions.
 \end{enumerate}
\end{Thm}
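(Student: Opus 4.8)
The plan is to deduce both statements from the general Auslander--Reiten theory for extriangulated categories recalled in Section~\ref{Section:ARtheoryinex}, together with the Serre-type duality \eqref{equation:Serre} on $\D^{\bb}(A)$. First I would prove~(1): since $A$ is Gorenstein, the Nakayama functor gives a triangle auto-equivalence $\nu:\D^{\bb}(A)\simeq\D^{\bb}(A)$, and I want to show $\nu[-1]$ restricts to an auto-equivalence of $\un{\CM}A$. The key point is that $\nu[-1]$ preserves $\CM A$: using Proposition~\ref{Prop:cm}(1), $\CM A=\D^{\bb}_{\le 0}\cap\nu^{-1}(\D^{\bb}_{\ge 0})$, and one checks directly that applying $\nu[-1]$ sends this subcategory into itself (here the Gorenstein condition, i.e. $\per A=\thick(DA)$, is what makes $\nu$ well-behaved on all of $\D^{\bb}(A)$). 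Moreover $\nu[-1]$ sends $\add A=\Proj(\CM A)$ to $\add(DA)[-1]$, which by Gorensteinness is $\Inj(\CM A)[-1]$... and since $\Proj(\CM A)=\Inj(\CM A)=\add A$ by Theorem~\ref{Thm:properties}, one must be slightly careful: $\nu[-1]$ need not fix $\add A$ on the nose, but it preserves the ideal $[\add A]$, so it descends to an auto-equivalence $\nu[-1]:\un{\CM}A\simeq\un{\CM}A$.

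Next I would establish the Serre duality formula. For $X,Y\in\CM A$ the duality \eqref{equation:Serre} (valid by Lemma~\ref{Lem:degreefinite} since $A$ has finite-dimensional cohomology in each degree and $\per A=\thick(DA)$) gives $D\Hom_{\D^{\bb}(A)}(X,Y)\cong\Hom_{\D^{\bb}(A)}(Y,\nu X)$. The task is to pass from $\Hom_{\D^{\bb}(A)}$ to $\Hom_{\un{\CM}A}$ on both sides, i.e. to quotient out maps factoring through $\add A$. Using that $\add A$ is simultaneously $\Proj$ and $\Inj$ in $\CM A$, a map $X\to Y$ factors through $\add A$ iff it lies in the kernel of the pairing against the part of $\Hom(Y,\nu X)$ coming from... concretely, one shows the pairing $D\Hom_{\D^{\bb}(A)}(X,Y)\cong\Hom_{\D^{\bb}(A)}(Y,\nu X)$ identifies the subspace of maps factoring through $\add A$ on the left with the annihilator of the analogous subspace on the right, yielding
\[
D\un{\CM}A(X,Y)\cong\overline{\CM A}(Y,\nu X)=\un{\CM}A(Y,\nu X),
\]
where the last equality is because $\add A$ is also injective. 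Composing with the shift $[-1]$ in $\un{\CM}A$ and Proposition~\ref{Prop:Groper}(2), this rearranges into an ARS-duality $\eta_{Y,X}:\un{\CM}A(Y,X)\simeq D\EEE(X,\nu[-1]Y)$ with $\tau=\nu[-1]$, establishing that $\nu[-1]$ is a Serre functor in the appropriate extriangulated sense.

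For~(2) I would simply invoke Proposition~\ref{Prop:INP}: $\CM A$ is $k$-linear, Krull--Schmidt (being a subcategory of $\D^{\bb}(A)$, which is Hom-finite by Proposition~\ref{Prop:Dbfinite}), and Ext-finite by Theorem~\ref{Thm:properties}(1); having exhibited an Auslander--Reiten--Serre duality in~(1), the proposition immediately gives almost split extensions. (If the form of the duality obtained is only a ``right'' ARS-duality, I would note that the duality $(\ )^{*}:\CM A\simeq\CM(A^{\rm op})$ of Proposition~\ref{Prop:Groper}(1) lets one transport the left-sided statement from the right-sided one, so $\tau$ is genuinely an equivalence.)

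The main obstacle I anticipate is the bookkeeping in part~(1): carefully verifying that $\nu[-1]$ really does preserve $\CM A$ (not just $\D^{\bb}(A)$) and, more delicately, that the Serre duality on $\D^{\bb}(A)$ descends correctly to the stable category — i.e. that ``factors through $\add A$'' on one side corresponds exactly to the orthogonal of ``factors through $\add A$'' on the other under the pairing. This is where the Frobenius property $\Proj(\CM A)=\Inj(\CM A)=\add A$ from Theorem~\ref{Thm:properties} is essential, and it needs to be used symmetrically on both arguments of $\Hom$.
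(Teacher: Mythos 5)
Your part (2) is fine and is exactly what the paper does: once the stable category has a Serre functor, Proposition \ref{Prop:INP} yields almost split extensions. The gap is in part (1), and it is at the very first step: the claim that $\nu[-1]$ sends $\CM A$ into itself is false. If $X\in\CM A$ then by Proposition \ref{Prop:cm}(1) we have $\nu X\in\D^{\bb}_{\ge 0}$, hence $\nu X[-1]\in\D^{\bb}_{\ge 1}$, and this lies in $\D^{\bb}_{\le 0}$ only if $\nu X=0$; already for an ordinary self-injective algebra ($d=1$, so $\CM A=\mod A$), $\nu(M)[-1]$ has cohomology concentrated in degree $1$ for every nonzero module $M$. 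The functor $\nu[-1]$ in the theorem is the auto-equivalence induced on the Verdier quotient $\D^{\bb}(A)/\per A\simeq\un{\CM}A$ (both $\nu$ and $[-1]$ preserve $\per A$), not a restriction of $?\ot_{A}^{\bf L}DA[-1]$ to $\CM A$. Consequently your display $D\Hom_{\un{\CM}A}(X,Y)\cong\Hom_{\un{\CM}A}(Y,\nu X)$ does not typecheck, since $\nu X\notin\CM A$ in general; and note that with $\tau=\nu[-1]$ one has $\EEE(Y,\tau X)=\Hom_{\D^{\bb}(A)}(Y,\nu X)$, so your proposed ``orthogonality of the $\add A$-ideals'' would, combined with \eqref{equation:Serre}, give $\Hom_{\un{\CM}A}(X,Y)\cong\Hom_{\D^{\bb}(A)}(X,Y)$, which is wrong.

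What is missing is precisely the mechanism that makes the duality \eqref{equation:Serre} descend to the quotient, and this is where the paper's actual work lies: it applies Amiot's criterion (Proposition \ref{Amiot}) to $\T=\D^{\bb}(A)$, $\cal N=\per A$, $S=\nu$, and the nontrivial hypotheses (existence of local $\per A$-covers, and of envelopes via Lemma \ref{Lem:cover} and Hom-finiteness) are verified in Lemma \ref{cover} by a truncation-and-induction construction, which is essentially the Cohen-Macaulay approximation of Theorem \ref{Thm:cmappro}. Without producing, for each $X\in\CM A$, a representative of $\nu X[-1]$ in $\CM A$ (up to $\per A$) and comparing the relevant Hom-spaces, the descent you sketch has no support — the ``bookkeeping'' you flag at the end is in fact the entire content of the proof, and it is not automatic for a general thick subcategory.
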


We first show $\underline{\CM } A$ admits a Serre functor. We will consider it in a general setting given in \cite[Section 1.2]{Amiot}.
Let $\T$ be a $k$-linear Hom-finite triangulated category and $\cal N$ be a thick subcategory of $\T$. Assume $\cal T $ has an auto-equivalence $S$, which gives a relative Serre duality in the sense that 
$S(\cal N)\subset \cal N$ and there exists a functorial isomorphism for any $X\in \cal N$ and $Y\in \T$
\[ D\Hom_{\T}(X,Y) \simeq \Hom_{\T}(Y, SX).\]

\begin{Def}\cite[Definition 1.2]{Amiot}
Let $X$ and $Y$ be objects in $\T$. A morphism $p: P\ra X$ is called a \emph{local $\cal N$-cover of $X$ relative to $Y$} if $P$ is in $\cal N $ and it induces an exact sequence
\[  0 \ra  \Hom_{\T}(X, Y) \xra{p^{*}}\Hom_{\T}(P, Y). \]
Dually, let $Y$ and $Z$ be objects in $\T$. A morphism $q: Y\ra Q$ is called a \emph{local $\cal N$-envelop of $Y$ relative to $Z$} if $Q$ is in $\cal N$ and it induces an exact sequence
\[  0 \ra  \Hom_{\T}(Z, Y) \xra{q_{*}}\Hom_{\T}(Z, Q). \]
\end{Def}

Amiot gave the following sufficient condition for $\T/{\cal N}$ to admit a Serre functor.
\begin{Prop}\label{Amiot}\cite[Theorem 1.3]{Amiot}
Assume for any $X, Y \in \T$, there is a local $\cal N$-cover of $X$ relative to $Y$ and a local $\cal N$-envelop of $S X$ relative to $Y$. Then the quotient category $\T/\cal N$ admits a Serre functor given by $S[-1]$.
\end{Prop}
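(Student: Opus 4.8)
The plan is to verify that $\bar S:=S[-1]$ descends to an auto-equivalence of $\T/\cal N$ and then to produce a bifunctorial Serre-duality isomorphism $D\Hom_{\T/\cal N}(X,Y)\cong\Hom_{\T/\cal N}(Y,\bar S X)$; the Serre functor property follows formally from these two facts.

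First I would treat the functor. Since $\cal N$ is thick it is closed under $[-1]$, and by hypothesis $S(\cal N)\subset\cal N$, so $\bar S=S[-1]$ sends $\cal N$ into $\cal N$ and hence descends to a triangle endofunctor of $\T/\cal N$. For essential surjectivity on $\T/\cal N$ I would use only that $S$ is an auto-equivalence of $\T$: every object $Z$ is isomorphic in $\T$, hence in $\T/\cal N$, to $S(W)=\bar S(W[1])$ for some $W$. Full faithfulness will then come for free once the duality isomorphism below is in hand, by applying it twice and identifying the double $k$-dual with the identity (which needs the Hom-finiteness of the quotient, also established below). Thus $\bar S$ will be an equivalence, i.e. a genuine Serre functor.

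The heart of the argument is the bifunctorial isomorphism. Fix $X,Y\in\T$ and a local $\cal N$-cover $p\colon P\to X$ relative to $Y$, completed to a triangle $P\xra{p}X\xra{\pi}N\to P[1]$ with $P\in\cal N$. Because $P\in\cal N$, the map $\pi$ becomes an isomorphism $X\cong N$ in $\T/\cal N$, while the cover condition (injectivity of $p^{*}$) forces $\pi^{*}=0$ on $\Hom_{\T}(-,Y)$, so morphisms $X\to Y$ in the quotient are governed by the object $P$ of $\cal N$. Writing $\Hom_{\T/\cal N}(X,Y)$ through the calculus of fractions and using this cover, I would present it (and its $k$-dual) in terms of $\Hom_{\T}(P,Y)$ with $P\in\cal N$, where the relative Serre duality $D\Hom_{\T}(P,Y)\cong\Hom_{\T}(Y,SP)$ is available and converts maps out of $P$ into maps into $SP$. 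Dually, a local $\cal N$-envelope $q\colon SX\to Q$ relative to $Y$, with $Q\in\cal N$, presents $\Hom_{\T/\cal N}(Y,SX[-1])$ in terms of maps into the object $Q$ of $\cal N$. Comparing the two presentations along the $S$-image $SP\to SX\to SN\to SP[1]$ of the cover triangle — whose connecting morphism produces the shift $[-1]$ — should yield the natural isomorphism $D\Hom_{\T/\cal N}(X,Y)\cong\Hom_{\T/\cal N}(Y,SX[-1])=\Hom_{\T/\cal N}(Y,\bar S X)$; along the way the covers and envelopes force the a priori colimit descriptions of the quotient Hom-spaces to collapse onto finite-dimensional spaces, so that $\T/\cal N$ is Hom-finite.

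The main obstacle is exactly this collapse. A priori $\Hom_{\T/\cal N}(X,Y)$ is a filtered colimit over $\cal N$-quasi-isomorphisms and carries no finiteness, whereas the relative Serre duality is only available on objects of $\cal N$; the entire point of the "local" covers and envelopes (relative to the fixed $Y$) is to reduce each such computation to a single object of $\cal N$ where duality applies. The two delicate verifications are (i) that the resulting identification is independent of the chosen cover and envelope, and (ii) that it is natural in both $X$ and $Y$ — this is what upgrades the pointwise duality into an isomorphism of functors. Once the bifunctorial isomorphism is secured, the full faithfulness of $\bar S$, and hence the conclusion that $S[-1]$ is a Serre functor on $\T/\cal N$, follow formally as indicated above.
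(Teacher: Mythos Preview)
The paper does not prove this proposition; it is quoted verbatim from \cite[Theorem 1.3]{Amiot} and used as a black box (together with Lemma \ref{Lem:cover}) in the proof of Theorem \ref{Thm:AR}. So there is no ``paper's own proof'' to compare your proposal against.

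That said, your outline is broadly in the spirit of Amiot's original argument: one does reduce the computation of $\Hom_{\T/\cal N}(X,Y)$ to a finite-dimensional space by means of a local $\cal N$-cover of $X$ relative to $Y$, applies the relative Serre duality there, and then uses a local $\cal N$-envelope of $SX$ relative to $Y$ to pass back. Two points are underspecified in your sketch. First, you have not actually identified $\Hom_{\T/\cal N}(X,Y)$ in terms of the cover triangle; the precise statement (in Amiot's proof) is that if $P\xra{p}X\to N\to P[1]$ is the completion of the cover, then the localisation map $\Hom_{\T}(N,Y[1])\to\Hom_{\T/\cal N}(X,Y[1])$ is surjective with kernel the image of $\Hom_{\T}(P[1],Y[1])$, and this is where the injectivity of $p^{*}$ is used. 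Second, the passage ``comparing the two presentations along the $S$-image of the cover triangle'' hides the real work: one must check that the resulting pairing is nondegenerate on both sides and natural, and this requires both the cover and the envelope hypotheses simultaneously. Your sketch gestures at these steps but does not carry them out.
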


To check the condition in Proposition \ref{Amiot}, the following  lemma is useful.

\begin{Lem}\cite[Proposition 1.4]{Amiot}\label{Lem:cover}
Let $X$ and $Y$ be two objects in $\T$. If for any $P\in \cal N$ the vector space $\Hom_{\T}(P, X)$ and $\Hom_{\T}(Y, P)$ are finite-dimensional, then the existence of a local $\cal N$-cover of $X$ relative to $Y$ is equivalent to the existence of a local $\cal N$-envelop of $Y$ relative to $X$.
\end{Lem}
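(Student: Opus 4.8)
The plan is to reduce both existence statements to one piece of finite-dimensional linear algebra, using the relative Serre duality to pass between the ``$\Hom_{\T}(-,Y)$'' picture and the ``$\Hom_{\T}(X,-)$'' picture. First I would record the trace form of the duality: for $P\in\cal N$ let $\mathrm{tr}_{P}\colon\Hom_{\T}(P,SP)\to k$ be the image of $\mathrm{id}_{P}$ under the isomorphism $\Hom_{\T}(P,P)\cong D\Hom_{\T}(P,SP)$; then naturality of $\Hom_{\T}(P,-)\cong D\Hom_{\T}(-,SP)$ in the second variable yields $\langle f,g\rangle=\mathrm{tr}_{P}(g\circ f)$ for all $f\colon P\to M$, $g\colon M\to SP$. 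The identity I really want is the shuffling identity: for $u\colon X\to Y$, $p\colon P\to X$ with $P\in\cal N$, and $g\colon Y\to SP$,
\[ \langle u\circ p,\,g\rangle\;=\;\mathrm{tr}_{P}(g\circ u\circ p)\;=\;\langle p,\,g\circ u\rangle, \]
where the left-hand pairing lives on $\Hom_{\T}(P,Y)\times\Hom_{\T}(Y,SP)$ and the right-hand one on $\Hom_{\T}(P,X)\times\Hom_{\T}(X,SP)$. I would also note at the outset that, since $S(\cal N)\subseteq\cal N$ and the duality restricts to a Serre duality on the Hom-finite category $\cal N$, the functor $S$ restricts to an auto-equivalence of $\cal N$, so $S^{-1}(\cal N)\subseteq\cal N$; combined with the finiteness hypothesis this makes $\Hom_{\T}(P,Y)\cong D\Hom_{\T}(Y,SP)$ and $\Hom_{\T}(X,Q)\cong D\Hom_{\T}(S^{-1}Q,X)$ finite-dimensional for all $P,Q\in\cal N$.

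Next I would prove ``cover $\Rightarrow$ envelop''. Given a local $\cal N$-cover $p\colon P\to X$ of $X$ relative to $Y$, so that $p^{*}\colon\Hom_{\T}(X,Y)\hookrightarrow\Hom_{\T}(P,Y)$, the space $\Hom_{\T}(X,Y)$ is finite-dimensional and dualizing $p^{*}$ gives a surjection $D\Hom_{\T}(P,Y)\twoheadrightarrow D\Hom_{\T}(X,Y)$. Transporting this along the Serre isomorphism $\Hom_{\T}(Y,SP)\cong D\Hom_{\T}(P,Y)$ and invoking the shuffling identity shows that the functionals $u\mapsto\langle p,\,g\circ u\rangle$ on $\Hom_{\T}(X,Y)$, as $g$ runs over $\Hom_{\T}(Y,SP)$, span $D\Hom_{\T}(X,Y)$. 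I would then choose finitely many $g_{1},\dots,g_{m}$ whose functionals span, set $Q:=(SP)^{m}\in\cal N$ and $q:=(g_{1},\dots,g_{m})\colon Y\to Q$; if $q\circ u=0$ then all $g_{i}\circ u=0$, hence all these functionals vanish on $u$, hence $u=0$. Thus $q_{*}\colon\Hom_{\T}(X,Y)\hookrightarrow\Hom_{\T}(X,Q)$ is injective, i.e.\ $q$ is a local $\cal N$-envelop of $Y$ relative to $X$.

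For the converse I would observe that the statement is self-dual: a local $\cal N$-cover of $X$ relative to $Y$ in $\T$ is the same thing as a local $\cal N^{\mathrm{op}}$-envelop of $X$ relative to $Y$ in $\T^{\mathrm{op}}$, and conversely, while $\T^{\mathrm{op}}$ carries the relative Serre duality given by $S^{-1}$ with $S^{-1}(\cal N^{\mathrm{op}})\subseteq\cal N^{\mathrm{op}}$; so the case already treated applies. (If one prefers, one just reruns the computation with $S^{-1}Q\in\cal N$ playing the role of $P$, using $\Hom_{\T}(X,Q)\cong D\Hom_{\T}(S^{-1}Q,X)$ to see that $\Hom_{\T}(X,Y)$ is finite-dimensional.) The main obstacle I anticipate is bookkeeping: keeping track of which two Hom-spaces each Serre pairing lives on, and making sure the ``$\cal N$-side'' object --- $P$, respectively $S^{-1}Q$ --- always lies in $\cal N$ so that the duality and the trace formula are legitimate; the shuffling identity is precisely the device that carries injectivity from the $\Hom_{\T}(-,Y)$ side to the $\Hom_{\T}(X,-)$ side. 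Everything beyond this is the elementary fact that a subspace of a vector space is zero exactly when its annihilating functionals separate its points.
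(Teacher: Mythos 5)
The paper does not prove this lemma at all --- it is quoted verbatim from Amiot \cite[Proposition 1.4]{Amiot} --- so your proposal can only be measured against the standard argument. Your forward direction (cover $\Rightarrow$ envelop) is correct and is exactly that argument: dualize the injection $p^{*}$, transport along $\Hom_{\T}(Y,SP)\cong D\Hom_{\T}(P,Y)$ (legitimate because $SP\in\cal N$, and this is also where the hypothesis ``$\Hom_{\T}(Y,P)$ finite-dimensional for $P\in\cal N$'' enters), use the naturality/trace identity to rewrite the resulting functionals as $u\mapsto\langle p,g\circ u\rangle$, and assemble finitely many spanning $g_{i}$ into $q\colon Y\to (SP)^{m}$. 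No complaints there.

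The gap is in the converse. Both of your routes --- the reduction to $\T^{\mathrm{op}}$ with ``Serre functor $S^{-1}$'', and the parenthetical rerun using $\Hom_{\T}(X,Q)\cong D\Hom_{\T}(S^{-1}Q,X)$ --- need $S^{-1}Q\in\cal N$, i.e.\ $S(\cal N)=\cal N$ up to isomorphism, whereas the stated hypothesis is only $S(\cal N)\subset\cal N$. Your justification, that the duality restricts to a Serre duality on the Hom-finite category $\cal N$ and hence $S|_{\cal N}$ is an auto-equivalence of $\cal N$, is not valid as stated: the restricted data make $S|_{\cal N}$ a \emph{right} Serre functor of $\cal N$, and right Serre functors are fully faithful but need not be essentially surjective (this is exactly the content of the distinction between right Serre functors and Serre functors in Reiten--Van den Bergh), nor does the relative duality against all of $\T$ formally force density. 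So as written the converse half rests on an unproven claim; the asymmetry is real, since the given duality only lets you dualize $\Hom_{\T}(N,-)$ for $N\in\cal N$, producing objects in $S(\cal N)$, not all of $\cal N$. The fix is cheap in context: either take $S(\cal N)=\cal N$ as part of the relative-Serre-duality setup (which is how it is used both here and in Amiot), or note that in this paper's application $\cal N=\per A$ and $S=\nu$ with $\per A=\thick(DA)$, so $\nu$ restricts to an auto-equivalence of $\per A$ and your argument goes through verbatim; but in the abstract generality you chose to work in, this step needs to be an explicit hypothesis or an explicit proof, not an aside.
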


In our setting, to apply Proposition \ref{Amiot}, we need the following  observation.
\begin{Lem}\label{cover}
For any $X, Y\in \D^{\bb}(A)$, there exists an object $P_{X}\in \per A$ with a morphism $P_{X}\xra{p} X$ such that we have the following exact sequence.
\[  0 \ra  \Hom_{\D^{\bb}(A)}(X, Y) \xra{p^{*}}\Hom_{\D^{\bb}(A)}(P_{X}, Y). \]
\end{Lem}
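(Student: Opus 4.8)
\textbf{Proof proposal for Lemma \ref{cover}.}

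The plan is to build $P_X$ by an iterated approximation process, truncating against the standard $t$-structure on $\D^{\bb}(A)$ from Proposition \ref{Prop:heart}. First I would observe that we may replace $X$ by a quasi-isomorphic dg module whose cohomology is bounded, say concentrated in degrees $[-m, 0]$ after a shift (using that $X \in \D^{\bb}(A)$, and $A$ being non-positive proper), and similarly we only need to control morphisms into $Y$, whose cohomology is also bounded, say in degrees $\ge -N$ for some $N$. The key point is that a morphism $P_X \xra{p} X$ induces an injection on $\Hom(-,Y)$ as soon as the cone of $p$ lies in the left-orthogonal of $Y$ with respect to the shifts that matter — concretely, it suffices that $\con(p) \in \D^{\bb}_{\le -N-1}(A)$, because then $\Hom_{\D^{\bb}(A)}(\con(p)[-1], Y) = 0$ by the $t$-structure (objects concentrated in sufficiently negative degrees have no maps to $Y$), and the long exact sequence obtained by applying $\Hom_{\D^{\bb}(A)}(-,Y)$ to the triangle $\con(p)[-1] \ra P_X \xra{p} X \ra \con(p)$ gives the desired injection $\Hom(X,Y) \hookrightarrow \Hom(P_X,Y)$.

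So the real task reduces to: given $X \in \D^{\bb}(A)$ and any integer $n$, construct $P_X \in \per A$ with a map $p: P_X \ra X$ whose cone lies in $\D^{\bb}_{\le n}(A)$. This is a standard ``resolution by perfect complexes'' argument. I would proceed by downward induction on the top cohomological degree: if $X$ has cohomology in degrees $[\ell, 0]$ with $\ell > n$, pick a projective cover of the $\h^{0}(A)$-module $\h^{\ell}(X)$, realize it as $P^{(0)} \in \add A$ shifted appropriately, lift the surjection $\h^{\ell}(P^{(0)}) \twoheadrightarrow \h^{\ell}(X)$ to a morphism $P^{(0)}[\,\cdot\,] \ra X$ using Lemma \ref{Lem:KN}, and take the cone; by the long exact sequence in cohomology the cone has strictly smaller cohomological amplitude at the top (or the same bottom, shifted). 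Iterating finitely many times — finiteness is guaranteed because $A$ is proper so $\D^{\bb}(A) = \thick(\bigoplus S_i)$ by Proposition \ref{Prop:Dbfinite}, and each step strictly decreases the total dimension of cohomology above degree $n$ — assembles the successive cones into a single $P_X \in \per A$ (a finite iterated extension of objects of $\add A[\,\cdot\,]$, hence perfect) mapping to $X$ with $\con(p) \in \D^{\bb}_{\le n}(A)$.

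Combining the two parts: take $n = -N-1$ where $\D^{\bb}_{\ge -N}(A) \ni Y$, and the resulting $p: P_X \ra X$ is the desired local cover. The main obstacle I anticipate is purely bookkeeping — tracking the cohomological degrees through the iterated cone construction and confirming the process terminates — rather than anything conceptually subtle; once the reduction in the first paragraph is in place, everything is forced. One should also note that by Lemma \ref{Lem:cover} (Amiot's \cite[Proposition 1.4]{Amiot}), since $\Hom_{\D^{\bb}(A)}(P, X)$ and $\Hom_{\D^{\bb}(A)}(Y, P)$ are finite-dimensional for $P \in \per A$ (Hom-finiteness, Proposition \ref{Prop:Dbfinite}), this simultaneously yields local $\per A$-envelopes, which is what will be fed into Proposition \ref{Amiot} to produce the Serre functor on $\un{\CM}A$.
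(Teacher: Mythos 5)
Your construction is essentially the paper's: both proofs repeatedly kill the top cohomology of $X$ by shifted objects of $\add A$ (projective covers over $\h^{0}(A)$, lifted via Lemma \ref{Lem:KN}), assemble the pieces into a perfect object via the octahedral axiom, and conclude from the $t$-structure vanishing $\Hom_{\D^{\bb}(A)}(Z,Y)=0$ for $Z\in\D^{\bb}_{\le -N-1}$ — the paper merely packages this as an induction on (top degree of $X$) minus (bottom degree of $Y$), with the injectivity statement itself as the inductive hypothesis. Two harmless slips to fix: the long exact sequence needs the vanishing of $\Hom_{\D^{\bb}(A)}(\con(p),Y)$ rather than $\Hom_{\D^{\bb}(A)}(\con(p)[-1],Y)$ (your hypothesis $\con(p)\in\D^{\bb}_{\le -N-1}(A)$ gives exactly this), and the ``total dimension of cohomology above degree $n$'' need not decrease at each step, since the cone can gain cohomology in intermediate degrees coming from $\h^{<0}(A)$ — termination instead follows simply because the top cohomological degree drops by at least one at each step.
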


\begin{proof}
Since $A$ is non-positive and $X, Y\in \D^{\bb}(A)$, by truncation, we may assume 
\begin{eqnarray*} 
&X:=[ \cdots 0 \ra X^{m} \xra{d_{m}} X^{m+1} \xra{d_{m+1}} \cdots \xra{d_{n-1}} X^{n} \ra 0 \ra \cdots ], \\
&Y:= [\cdots 0 \ra Y^{s} \xra{d_{s}} Y^{s+1} \xra{d_{s+1}} \cdots \xra{d_{t-1}} Y^{t} \ra 0 \ra \cdots ].  \end{eqnarray*}
Apply induction on $n-s$.

If $n-s<0$, then $  \Hom_{\D^{\bb}(A)}(X, Y)=0$, we can take any object in $\per A$ as $P_{X}$.

Now assume the result is true for $n-s=k$. Consider the case $n-s= k+1$.

There exists $Q_{X}\in \add A[-n]$
 and a morphism $p: Q_{X}\ra X$ such that $\h^{n}(p)$ is surjective. Then $\h^{i\geq n}(\rm{cone}(p))=0$. Let $Z=\rm{cone}(p)$. By our assumption, there exists $P_{Z}\in \per A$ with a morphism $r: P_{Z}\ra Z$ satisfies our condition. 
By the Octahedral Axiom, we have the following diagram.
 \[ \xymatrixcolsep{2pc}\xymatrix{ Q_{X} \ar@{=}[d] \ar[r]  & P_{X} \ar[r] \ar[d]& P_{Z} \ar[d]\ar[r]& Q_{X}[1] \ar@{=}[d]\\
  Q_{X} \ar[r]  & X \ar[r]  \ar[d]& Z \ar[r] \ar[d]& Q_{X}[1]  \\
  & T\ar[d] \ar@{=}[r]& T \ar[d]\\
  & P_{X}[1] \ar[r]& P_{Z}[1]&  
  }\] 
 Then it is easy to check $P_{X}$ is the cover we want.
\end{proof}

Now we are ready to prove Theorem \ref{Thm:AR}.

\begin{proof}[Proof  of Theorem \ref{Thm:AR}]
(1) Since $\per A=\thick(DA)$, then $\nu$ induces triangle equivalences $\D^{\bb}(A)\simeq \D^{\bb}(A)$ and $\per A\simeq \per A$. Moreover, $\nu$ gives a relative Serre duality by Lemma \ref{Lem:bifunc}.
We only need to show the conditions in Proposition $\ref{Amiot}$ hold in our setting. Because $\D^{\bb}(A)$ is Hom-finite by Proposition \ref{Prop:Dbfinite}, then by Lemma \ref{Lem:cover}, it suffices to check
 the existence of local $\per A$-cover. This has been proved in Lemma \ref{cover}. So the assertion is true.

(2)
By Proposition \ref{Prop:Dbfinite}, $\D^{\bb}(A)$ is Hom-finite, then $\CM A$ is Ext-finite (see Section \ref{Section:ARtheoryinex}).
 It is clear that $\CM A$ is a $k$-linear Krull-Schmidt extriangulated category. Moreover,  $\un{\CM} A$ admits a Serre functor by (1), then by 
Proposition \ref{Prop:INP}, $\CM A$ admits almost split extensions.
\end{proof}

We give the following lemma for later use.
\begin{Lem}\label{nonsplit}
Let $X$ be an non-projective indecomposable object in ${\CM }A$. Let $\tau$ be the Auslander-Reiten translation. If $\End_{{\CM}A}(X)=k$, then any non-split extension
\[ \tau(X) \xra{f} Y \xra{g} X \]
is an almost split extension.
\end{Lem}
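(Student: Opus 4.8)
\textbf{Proof proposal for Lemma \ref{nonsplit}.}
The plan is to verify the conditions characterizing almost split extensions in the extriangulated category $\CM A$, using Proposition \ref{Prop:characAR}. Since $\CM A$ admits almost split extensions by Theorem \ref{Thm:AR}(2), and $X$ is endo-local (indeed $\End_{\CM A}(X)=k$), there exists an almost split extension $\delta\in \EEE(X,\tau X)$, where $\EEE(X,\tau X)=\Hom_{\D^{\bb}(A)}(X,\tau(X)[1])$ and $\tau$ is the Auslander-Reiten translation attached to the ARS duality. By Proposition \ref{Prop:characAR}, $\delta$ is characterized as a nonzero element lying in the socle of $\EEE(X,\tau X)$ as a right $\End_{\CM A}(X)$-module, with middle term realizing the conflation $\tau X\to Y_\delta\to X$.

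The key observation is that $\End_{\CM A}(X)=k$ forces $\EEE(X,\tau X)$ to be a one-dimensional vector space, so its socle as a right $k$-module is the whole space, and every nonzero element of $\EEE(X,\tau X)$ is an almost split extension. Concretely, first I would invoke the ARS duality from Proposition \ref{Prop:INP} (which holds since $\CM A$ is Ext-finite Krull-Schmidt by Theorem \ref{Thm:properties}), giving a binatural isomorphism $\un{\CM A}(X,X)\cong D\,\EEE(X,\tau X)$; hence $\dim_k \EEE(X,\tau X)=\dim_k \un{\CM A}(X,X)$. Since $X$ is non-projective indecomposable, $\mathrm{id}_X\notin [\add A]$, so $\un{\CM A}(X,X)\neq 0$; combined with $\End_{\CM A}(X)=k$ this yields $\dim_k\un{\CM A}(X,X)\le 1$, hence $\EEE(X,\tau X)$ is exactly one-dimensional. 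Therefore any nonzero $\delta\in\EEE(X,\tau X)$ spans the socle and satisfies the hypotheses of Proposition \ref{Prop:characAR}(2).

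Next I would match the given data with this $\delta$. A non-split extension $\tau(X)\xrightarrow{f} Y\xrightarrow{g} X$ corresponds, under the realization $\s$, to a nonzero $\EEE$-extension $\delta'\in\EEE(X,\tau X)$ (nonzero precisely because the conflation is non-split). By the one-dimensionality just established, $\delta'$ is a nonzero scalar multiple of the almost split $\delta$, hence is itself almost split; and the middle term $Y$ of its realization is isomorphic to $\tau(X)$'s almost split middle term, in particular the sequence $\tau(X)\xrightarrow{f}Y\xrightarrow{g}X$ is an almost split extension. (One should note that $\tau X$ here is by definition the first term of the almost split extension ending at $X$, so no separate identification of $Y\cong\tau(X)$ via Proposition \ref{Prop:characAR} is needed; the statement uses $\tau$ in exactly this sense.)

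The only point requiring care — and the main (minor) obstacle — is the bookkeeping between the two natural notions floating around: the abstract Auslander-Reiten translation $\tau:\un{\CM A}\to\overline{\CM A}$ coming from the ARS duality of Proposition \ref{Prop:INP}, and the ``$\nu[-1]$-based'' description of the Serre functor from Theorem \ref{Thm:AR}(1). Since the lemma is stated purely in terms of $\tau$, I would simply take $\tau$ to be the translation associated to the almost split extensions guaranteed by Theorem \ref{Thm:AR}(2), so that by definition the almost split extension ending at $X$ has first term $\tau(X)$; then the argument above applies verbatim. If one instead wants $\tau(X)\cong\nu(X)[-1]$ in $\un{\CM A}$, that identification is exactly the content of Theorem \ref{Thm:AR}(1) together with Proposition \ref{Prop:characAR}, but it is not needed for the present statement.
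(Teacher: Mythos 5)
Your proof is correct and follows essentially the same route as the paper, which simply invokes Proposition \ref{Prop:characAR} (the given extension ends in $X$ with first term $\tau(X)$, and since $\End_{\CM A}(X)=k$ every nonzero element of $\EEE(X,\tau X)$ lies in the socle). Your detour through the ARS duality to establish that $\EEE(X,\tau X)$ is one-dimensional is correct but unnecessary: over the field $\End_{\CM A}(X)=k$ the socle is automatically the whole module, regardless of its dimension.
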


\begin{proof}
By Proposition \ref{Prop:characAR}, it is clear.
\end{proof}

\subsection{Cohen-Macaulay approximation}

In this subsection, we show the following result.

\begin{Thm}\label{Thm:cmappro}
\begin{enumerate}[\rm (1)]
\item $\CM A$ is functorially finite in $\D^{\bb}(A)$;
\item  More precisely,  we have the following result,   where $(\ )^{*}=\RshHom_{A^{\rm op}}(?, A^{\rm op}):\D^{\bb}(A^{\rm op}) \xra{\simeq} \D^{\bb}(A).$
 \[\D^{\bb}(A)=\CM A \perp \add(\Filt A[>0])\perp \D^{\bb}_{>0}(A)=\D^{\bb}_{>0}(A^{\rm op})^{*}\perp \add(\Filt A[<0])\perp \CM A.\]
\end{enumerate}
\end{Thm}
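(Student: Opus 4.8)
The plan is to build the two filtrations of $\D^{\bb}(A)$ from the $t$-structure of Proposition \ref{Prop:heart} together with the Gorenstein duality of Proposition \ref{Prop:Groper}(1). First I would establish the right-hand decomposition $\D^{\bb}(A)=\CM A \perp \add(\Filt A[>0])\perp \D^{\bb}_{>0}(A)$. Given $X\in\D^{\bb}(A)$, truncate with respect to the standard $t$-structure: there is a triangle $\tau_{\le 0}X \to X \to \tau_{>0}X \to$, with $\tau_{>0}X\in\D^{\bb}_{>0}(A)$ and $\tau_{\le 0}X\in\D^{\bb}_{\le 0}(A)$. Since $A$ is non-positive, $\Hom(\D^{\bb}_{\le 0}(A),\D^{\bb}_{>0}(A))$ need not vanish, so I would instead argue that $\Hom_{\D^{\bb}(A)}(\CM A\,\ast\,\add(\Filt A[>0]), \D^{\bb}_{>0}(A))=0$: indeed $\Hom(\CM A,\D^{\bb}_{>0}(A))=0$ because $\CM A\subset\D^{\bb}_{\le 0}(A)$ and, for the middle factor, $\Hom_{\D^{\bb}(A)}(A[i],Y)=\h^{-i}(Y)=0$ for $Y\in\D^{\bb}_{>0}(A)$ and $i>0$, which propagates to $\Filt A[>0]$ by the long exact sequence. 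So it remains to resolve $\tau_{\le 0}X$ as an object of $\add(\Filt A[>0])\perp\CM A$-type data. For this I would take a minimal right $(\add A)$-approximation as in the proof of Theorem \ref{Thm:properties}, iterate it, and use properness of $A$ (so $A\in\D^{\bb}(A)$) plus finiteness of cohomology to see the process stabilizes: the syzygies eventually land in $\CM A$ because $\Hom(\Omega^{-n}(-),A[>0])$ is killed once the cohomology is squeezed into a bounded window.

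More precisely, for $X\in\D^{\bb}_{\le 0}(A)$ I would construct a triangle $X'\to P\to X\to X'[1]$ with $P\in\add(\Filt A)$ (using that simple modules, hence all of $\D^{\bb}_{\le 0}(A)\cap\D^{\bb}_{\ge -m}(A)$, are built from $A$ by extensions and shifts, via Proposition \ref{Prop:Dbfinite} and Proposition \ref{Prop:heart}) such that $X'\in\CM A$; tracking degrees shows the copies of $A$ that appear are shifted into non-negative degrees, i.e. $P\in\add(\Filt A[\ge 0])$, and separating off the degree-zero part against $\D^{\bb}_{>0}$ already handled gives $P\in\add(\Filt A[>0])$ after absorbing the rest. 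Combining the two triangles via the octahedral axiom, exactly as in Lemma \ref{cover}, yields the factorization $X\in\CM A\,\ast\,\add(\Filt A[>0])\,\ast\,\D^{\bb}_{>0}(A)$, and the $\perp$-vanishing above upgrades $\ast$ to $\perp$, giving functorial finiteness of $\CM A$ in $\D^{\bb}(A)$ (the $\CM A$-coordinate of $X$ is the desired right approximation, and dually for left approximations).

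The left-hand decomposition $\D^{\bb}_{>0}(A^{\rm op})^{*}\perp\add(\Filt A[<0])\perp\CM A$ I would obtain for free by applying the duality $(\ )^{*}=\RshHom_{A^{\rm op}}(?,A^{\rm op}):\D^{\bb}(A^{\rm op})\xra{\simeq}\D^{\bb}(A)$ of Proposition \ref{Prop:Groper}(1) to the right-hand decomposition written for $A^{\rm op}$ in place of $A$. Since $(\ )^{*}$ is a duality it reverses the order of the $\perp$-factors, sends $\CM(A^{\rm op})$ to $\CM A$, sends $\add(\Filt A^{\rm op}[>0])$ to $\add(\Filt A[<0])$ (as $A^{\rm op}{}^{*}\simeq A$ and $*$ sends $[1]$ to $[-1]$), and sends $\D^{\bb}_{>0}(A^{\rm op})$ to the stated $\D^{\bb}_{>0}(A^{\rm op})^{*}$; the $\Hom$-vanishings transport verbatim. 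I expect the main obstacle to be the bookkeeping in the first part: showing that the iterated $(\add A)$-approximation of an object of $\D^{\bb}_{\le 0}(A)$ terminates after finitely many steps in $\CM A$ with the intermediate term genuinely in $\add(\Filt A[>0])$ rather than merely in $\thick(A)$, i.e. controlling both the homological degree window (which requires properness and Hom-finiteness of $\D^{\bb}(A)$) and the internal grading of the projective pieces simultaneously.
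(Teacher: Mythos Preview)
Your overall architecture matches the paper: split off $\D^{\bb}_{>0}(A)$ using the standard $t$-structure, then decompose $\D^{\bb}_{\le 0}(A)$ as $\CM A \perp \add(\Filt A[>0])$, and finally obtain the second equality by applying the duality $(\ )^{*}$ from Proposition~\ref{Prop:Groper}(1) to the decomposition for $A^{\rm op}$. The orthogonality checks and the duality step are fine.

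The gap is in the middle step. The paper does \emph{not} iterate $(\add A)$-approximations; instead it invokes the co-$t$-structure $({}^{\perp}A[>0],\,A_{\le 0})$ on $\D^{\bb}(A)$ recorded in Lemma~\ref{Lem:tcot}(2) (taken from \cite[Proposition~3.2]{IY2}). That co-$t$-structure hands you, for any $M\in\D^{\bb}_{\le 0}(A)$, a triangle $T\to M\to S\to T[1]$ with $T\in{}^{\perp}A[>0]$ and $S\in A_{<0}=\add(\Filt A[>0])$; one then observes $T\in A[<0]^{\perp}$ from the long exact sequence, so $T\in\CM A$. This is a one-line argument once the co-$t$-structure is in place.

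Your replacement for this step does not work as written. The triangle you propose, $X'\to P\to X\to X'[1]$ with $X'\in\CM A$ and $P\in\add(\Filt A)$, does \emph{not} exhibit $X$ as an element of $\CM A\ast\add(\Filt A[>0])$: rotating gives $X\in P\ast X'[1]$, which is $\add(\Filt A)\ast\CM A[1]$, the wrong shape and the wrong shift. Iterated right $(\add A)$-approximations $P_{0}\to X$, $P_{1}\to\Omega X$, \dots\ produce $X\in P_{0}\ast P_{1}[1]\ast\cdots\ast\Omega^{n}X[n]$, again with the CM piece on the right, positively shifted. (You also write $\Omega^{-n}$ while speaking of right approximations, which suggests some direction confusion.) To get the CM piece on the \emph{left} one must instead peel off copies of $A[>0]$ on the right, i.e.\ build $S$ step by step from maps $M\to A[i]$ with $i>0$; carrying this out carefully and proving termination is precisely what goes into establishing the co-$t$-structure $({}^{\perp}A[>0],A_{\le 0})$ in \cite{IY2}. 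So either cite the co-$t$-structure (as the paper does) or redo that construction explicitly; the iteration you sketched is in the wrong direction and cannot be repaired just by relabeling.
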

\noindent
Immediately,  
$\CM A$ admits a property   analogous to the usual Cohen-Macaulay approximation (see \cite{AB}) in the following sense.

 \begin{Cor}\label{Prop:CMappro}
Let $M\in \D^{\bb}_{\le 0}(A)$, then there is a triangle 
\[ P\ra T\ra M \ra P[1], \]
such that $T\ra M$ is a right $(\CM A)$-approximation  of $M$ and $P\in \per A$.
\end{Cor}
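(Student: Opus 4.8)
The plan is to derive Corollary~\ref{Prop:CMappro} directly from the approximation decomposition in Theorem~\ref{Thm:cmappro}(2) by applying it to the given object $M$. First I would invoke the chain of subcategories
\[ \D^{\bb}(A)=\D^{\bb}_{>0}(A^{\rm op})^{*}\perp \add(\Filt A[<0])\perp \CM A \]
and specialize it to $M\in \D^{\bb}_{\le 0}(A)$. Since $M$ has cohomology concentrated in non-positive degrees, the component of $M$ lying in $\D^{\bb}_{>0}(A^{\rm op})^{*}$ should vanish: one checks that $\D^{\bb}_{>0}(A^{\rm op})^{*}$ consists of objects $N$ with $\Hom_{\D^{\bb}(A)}(N, \D^{\bb}_{\le 0}(A))$ controlled appropriately (using the duality $(\ )^{*}$ together with the characterization $\CM A = A[<0]^{\perp}\cap {}^{\perp}A[>0]$ and the $t$-structure of Proposition~\ref{Prop:heart}), so that the only contribution to $M$ comes from $\add(\Filt A[<0])\perp \CM A$. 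Thus there is a triangle $P'\ra T\ra M\ra P'[1]$ with $T\in \CM A$ and $P'\in \add(\Filt A[<0])\subset \per A$.

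Next I would verify that $T\ra M$ is a right $(\CM A)$-approximation. This follows from the $\perp$ in the decomposition: for any $C\in\CM A$, applying $\Hom_{\D^{\bb}(A)}(C,-)$ to the triangle $P'\ra T\ra M\ra P'[1]$ and using $\Hom_{\D^{\bb}(A)}(C, P'[i])=0$ for $i=0,1$ (because $C\in {}^{\perp}A[>0]\cap A[<0]^{\perp}$ forces $\Hom_{\D^{\bb}(A)}(\CM A, \add(\Filt A[<0]))=0$ and likewise after a shift, which is exactly what the symbol $\perp$ encodes), one gets that $\Hom_{\D^{\bb}(A)}(C,T)\to\Hom_{\D^{\bb}(A)}(C,M)$ is an isomorphism, in particular surjective. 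Setting $P:=P'$ finishes the construction; note $P\in\per A$ since $\add(\Filt A[<0])\subseteq \thick(A)=\per A$.

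The main obstacle I anticipate is the bookkeeping needed to show that the leftmost component $\D^{\bb}_{>0}(A^{\rm op})^{*}$ genuinely does not appear when $M\in \D^{\bb}_{\le 0}(A)$. Concretely, in the filtration $M\in \D^{\bb}_{>0}(A^{\rm op})^{*}\ast \add(\Filt A[<0])\ast \CM A$ one must argue that the first triangle $M_1\ra M\ra M_2\ra M_1[1]$ (with $M_1\in\D^{\bb}_{>0}(A^{\rm op})^{*}$ and $M_2\in \add(\Filt A[<0])\ast\CM A$) has $M_1=0$. This should come from the orthogonality built into $\perp$: since $\add(\Filt A[<0])\ast\CM A\subseteq \D^{\bb}_{\le 0}(A)$ (both $A[<0]$ and $\CM A$ lie in $\D^{\bb}_{\le 0}(A)$, and this class is extension-closed), and $\D^{\bb}_{>0}(A^{\rm op})^{*}$ is right-orthogonal to $\D^{\bb}_{\le 0}(A)$ — or rather, we have $M\in\D^{\bb}_{\le 0}(A)$ and $\Hom(M_1, M_2)$, $\Hom(M_1,M)$ both constrained — one deduces $M_1$ is a summand-free piece that must vanish. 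I would make this precise using Proposition~\ref{Prop:cm}(1) and the fact that $\D^{\bb}_{>0}(A^{\rm op})^{*}={}^{\perp}(\D^{\bb}_{\le 0}(A))$ inside the relevant range, which is implicit in how Theorem~\ref{Thm:cmappro}(2) is proved. Everything else is a routine diagram chase with the octahedral axiom.
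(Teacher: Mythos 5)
There is a genuine error: you are using the wrong half of Theorem \ref{Thm:cmappro}(2). With the paper's conventions, $\cal S\ast\cal S'$ consists of objects $T$ sitting in a triangle $X\ra T\ra Y\ra X[1]$ with $X\in\cal S$, $Y\in\cal S'$, and $\cal S\perp\cal S'$ means moreover $\Hom(\cal S,\cal S')=0$. So membership of $M$ in $\add(\Filt A[<0])\ast\CM A$ produces a triangle $P'\ra M\ra C\ra P'[1]$ with $C\in\CM A$, i.e.\ a candidate \emph{left} $(\CM A)$-approximation $M\ra C$; it does not produce the triangle $P'\ra T\ra M\ra P'[1]$ with $T\in\CM A$ that you wrote down. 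Two further steps in your argument are also false as stated: (i) the claim that the $\D^{\bb}_{>0}(A^{\rm op})^{*}$-piece of $M$ vanishes when $M\in\D^{\bb}_{\le 0}(A)$ fails already for $A=k$ and $M=k[1]$, since any $X\in\add(\Filt A[<0])\ast\CM A$ has $\h^{-1}(X)=0$ by the long exact sequence, while $\h^{-1}(k[1])=k$ (and the identification $\D^{\bb}_{>0}(A^{\rm op})^{*}={}^{\perp}(\D^{\bb}_{\le 0}(A))$ is likewise false over a field); (ii) the orthogonality you invoke goes the wrong way: the symbol $\perp$ in $\add(\Filt A[<0])\perp\CM A$ encodes $\Hom(\Filt A[<0],\CM A)=0$, whereas $\Hom(\CM A,\Filt A[<0])$ need not vanish, e.g.\ $\Hom_{\D(A)}(A,A[i])=\h^{i}(A)$ can be nonzero for $i<0$.

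The corollary instead follows from the \emph{first} chain in Theorem \ref{Thm:cmappro}(2), or more directly from the claim established in its proof that $\D^{\bb}_{\le 0}=\CM A\perp\Filt A[>0]$ (obtained from the co-$t$-structure $({}^{\perp}A[>0],A_{\le 0})$ of Lemma \ref{Lem:tcot}): for $M\in\D^{\bb}_{\le 0}(A)$ one gets a triangle $T\ra M\ra S\ra T[1]$ with $T\in\CM A$ and $S\in\Filt A[>0]\subseteq\per A$; rotating and setting $P:=S[-1]\in\per A$ gives $P\ra T\ra M\ra P[1]$, and since $\CM A\subseteq{}^{\perp}A[>0]$ implies $\Hom_{\D^{\bb}(A)}(\CM A,S)=0$, applying $\Hom_{\D^{\bb}(A)}(C,?)$ for $C\in\CM A$ shows $T\ra M$ is a right $(\CM A)$-approximation. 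Here it is essential that $\CM A$ is the \emph{first} factor of the decomposition; the decomposition you chose, with $\CM A$ last, is the one relevant for left approximations.
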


To show the theorem, we consider the $t$-structures and co-$t$-structures on $\D^{\bb}(A)$ first.  Let 

\begin{eqnarray*} A_{\ge l}  =  A_{> l-1} &:=& \add\bigcup_{i\ge 0} A[-l-i]\ast \cdots \ast A[-l-1]\ast A[-l],\\
 A_{\le l}  =  A_{< l+1} &:=& \add\bigcup_{i\ge 0} A[-l]\ast A[-l+1]\ast \cdots \ast A[-l+i].
\end{eqnarray*}

There are two $t$-structures and two co-$t$-structures \cite{P} (also called weight structures \cite{Bo}) in $\D^{\bb}(A)$ induced by $A$.
\begin{Lem}\label{Lem:tcot}
 \begin{enumerate}[\rm (1)]
 \item The two paris $(A[<0]^{\perp}, A[>0]^{\perp})$ and $({}^{\perp}A[<0],{}^{\perp}A[>0])$ are $t$-structures on $\D^{\bb}(A)$;
 \item
  The two pairs $({}^{\perp}A[>0], A_{\le 0})$ and $(A_{\ge 0}, A[<0]^{\perp})$ are co-$t$-structures on $\D^{\bb}(A)$.
 \end{enumerate}
\end{Lem}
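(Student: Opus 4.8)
The plan is to dispatch the two $t$-structures quickly, as transports of the standard one, and to put the real work into the two co-$t$-structures, where the only non-formal ingredient is the construction of the truncation triangles.

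\emph{The $t$-structures.} It was already noted that $A[<0]^{\perp}=\D^{\bb}_{\le 0}$, and the dual computation $\Hom_{\D A}(A[i],X)=\h^{-i}(X)$ gives $A[>0]^{\perp}=\D^{\bb}_{\ge 0}$; thus the first pair is precisely the standard $t$-structure of Proposition~\ref{Prop:heart}. For the second pair I would use that $A$ is proper, hence has finite-dimensional cohomology in each degree, and Gorenstein, hence $\per A=\thick(DA)$; then Lemma~\ref{Lem:degreefinite} applies, $\nu$ is a triangle auto-equivalence of $\D^{\bb}(A)$, and $\Hom_{\D A}(X,A[i])\cong D\h^{-i}(\nu X)$ for all $X\in\D^{\bb}(A)$, $i\in\Z$. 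Reading off the vanishing yields ${}^{\perp}A[<0]=\nu^{-1}(\D^{\bb}_{\le 0})$ and ${}^{\perp}A[>0]=\nu^{-1}(\D^{\bb}_{\ge 0})$ (the latter already appears in the proof of Proposition~\ref{Prop:cm}), so the second pair is the image of the standard $t$-structure under the auto-equivalence $\nu^{-1}$ and is therefore again a $t$-structure.

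\emph{Reduction for the co-$t$-structures.} I would observe that the two pairs in (2) are interchanged by the duality $(\ )^{*}=\RshHom_{A^{\rm op}}(?,A^{\rm op})\colon\D^{\bb}(A^{\rm op})\xra{\simeq}\D^{\bb}(A)$ of Proposition~\ref{Prop:Groper} applied to $A^{\rm op}$, which also satisfies Assumption~\ref{assumption} (Gorensteinness being left--right symmetric). Indeed $(\ )^{*}$ is a triangulated duality with $(A^{\rm op}[-j])^{*}=A[j]$, so it reverses the order in $\ast$-products and sends $(A^{\rm op})_{\ge 0}$ to $A_{\le 0}$; and from $\Hom_{\D A}(X,A[i])\cong\h^{i}(X^{*})$ it sends $(A^{\rm op})[<0]^{\perp}$ to ${}^{\perp}A[>0]$. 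Hence it suffices to prove that $(A_{\ge 0},A[<0]^{\perp})=(A_{\ge 0},\D^{\bb}_{\le 0})$ is a co-$t$-structure. Closure of both classes under direct summands is built into the definitions; the shift inclusions $A_{\ge 0}[-1]\subseteq A_{\ge 0}$ and $\D^{\bb}_{\le 0}[1]\subseteq\D^{\bb}_{\le 0}$ are immediate; and the orthogonality $\Hom_{\D A}(A_{\ge 0}[-1],\D^{\bb}_{\le 0})=0$ reduces, via the filtration definition of $A_{\ge 0}$, to $\Hom_{\D A}(A[m],Z)=\h^{-m}(Z)=0$ for $m\le -1$ and $Z\in\D^{\bb}_{\le 0}$, which holds.

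\emph{The truncation triangle, and the main obstacle.} It remains to produce, for every $X\in\D^{\bb}(A)$, a triangle $P\ra X\ra Z\ra P[1]$ with $P\in A_{\ge 0}$ and $Z\in\D^{\bb}_{\le -1}$. If the top nonzero cohomology of $X$ lies in degree $t<0$, take $P=0$. Otherwise I would peel off cohomology one degree at a time: set $X_{0}=X$ and inductively choose $P_{j}\in\add A[j-t]$ with a map $P_{j}\ra X_{j}$ whose $\h^{t-j}$ is the projective cover of the finite-dimensional $\h^{0}(A)$-module $\h^{t-j}(X_{j})$ --- which exists and lifts to $\D(A)$ by Lemma~\ref{Lem:KN} --- and let $X_{j+1}$ be its cone; non-positivity of $A$ forces $\h^{\ge t-j}(X_{j+1})=0$, so $X_{j+1}\in\D^{\bb}_{\le t-j-1}$, and after $t+1$ steps $X_{t+1}\in\D^{\bb}_{\le -1}$. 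Taking $P$ to be the fibre of the composite $X\ra X_{t+1}$, an octahedral induction along the tower $X_{0}\ra X_{1}\ra\cdots\ra X_{t+1}$ shows $P\in\add A[-t]\ast\add A[-t+1]\ast\cdots\ast\add A[0]\subseteq A_{\ge 0}$. Combined with the orthogonality above, this establishes the co-$t$-structure, and then the other pair follows from the $(\ )^{*}$-duality. (Alternatively one could invoke the general correspondence between silting objects and weight structures, e.g.\ \cite{Bo}, but the direct construction is self-contained.) The step I expect to be the real obstacle is exactly this last octahedral bookkeeping --- tracking the shifts so that the iterated fibre genuinely lies in $A[-t]\ast\cdots\ast A[0]$ --- whereas the $t$-structure statements and all the closure and orthogonality checks are routine.
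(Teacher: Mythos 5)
Your treatment of (1) coincides with the paper's: identify $(A[<0]^{\perp},A[>0]^{\perp})$ with the standard $t$-structure of Proposition \ref{Prop:heart} and transport it by the triangle auto-equivalence $\nu^{-1}$ (via Lemma \ref{Lem:degreefinite}) to obtain $({}^{\perp}A[<0],{}^{\perp}A[>0])$. For (2) you take a genuinely different route: the paper disposes of it by citing \cite[Proposition 3.2]{IY2}, whereas you prove it from scratch — you reduce $({}^{\perp}A[>0],A_{\le 0})$ to $(A_{\ge 0},A[<0]^{\perp})$ through the duality of Proposition \ref{Prop:Groper} applied to $A^{\rm op}$ (using that Assumption \ref{assumption} is left--right symmetric, which is true and implicitly used by the paper as well), check orthogonality, shifts and summand-closure directly, and construct the weight decomposition $P\to X\to Z\to P[1]$ with $P\in\add A[-t]\ast\cdots\ast\add A[0]\subseteq A_{\ge 0}$ and $Z\in\D^{\bb}_{\le -1}$ by the tower of minimal projective covers lifted through Lemma \ref{Lem:KN}, together with an octahedral induction. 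This is correct: the cohomology bookkeeping in the tower works exactly as you describe (non-positivity of $A$ kills $\h^{\ge t-j}$ of each cone), the octahedral step gives the asserted filtration of the fibre, and since your triangle has the strongest possible form it is a valid decomposition for either indexing convention of co-$t$-structures and agrees with how the paper later uses this co-$t$-structure in the proof of Theorem \ref{Thm:cmappro}. In effect you have reproved the relevant special case of the Iyama--Yang result (equivalently, of the silting/weight-structure correspondence of \cite{Bo}): the citation buys the paper brevity and the generality of silting subcategories, while your argument buys self-containedness using only techniques already present in the paper — the same projective-cover tower appears in the proof of Proposition \ref{Prop:key2}.
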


\begin{proof}
(1) By Proposition \ref{Prop:heart}, $(A[<0]^{\perp}, A[>0]^{\perp})=(\D_{\le 0}^{\bb}, \D_{\ge 0}^{\bb})$ is a  $t$-structure on $\D^{\bb}(A)$. Since the Nakayama functor $\nu$ induces a triangle equivalence $\nu: \D^{\rm b}(A)\simeq\D^{\rm b}(A)$, then applying $\nu^{-1}$ to this $t$-structure,  we get a new $t$-structure $({}^{\perp}A[<0],{}^{\perp}A[>0])$ on $\D^{\bb}(A)$.

(2) See \cite[Propsition 3.2]{IY2}.
\end{proof}

Now we show the theorem.
\begin{proof}[Proof of Theorem \ref{Thm:cmappro}]
We only show (2), since (1) is directly from (2).

By Lemma \ref{Lem:tcot}, we have $\D^{\bb}(A)=\D^{\bb}_{\le 0}\perp \D^{\bb}_{>0}$.
We claim that $\D^{\bb}_{\le 0}=\CM A \perp \Filt A[>0]$.
Let $M\in \D^{\bb}_{\le 0}=A[<0]^{\perp}$. Considering the co-$t$-structure $(^{\perp}A[>0], A_{\le 0})$, we have the following  decomposition of $M$. 
\[         T \ra M \ra S \ra T[1], \]
where $T\in {}^{\perp}A[>0]$ and $S\in A_{< 0}=\Filt A[>0]$. Applying $\Hom_{\D^{\bb}(A)}(A[<0], ?)$ to the triangle above, we have  $T\in \CM A$.
So the claim holds and  $\D^{\bb}(A)=\CM A \perp \Filt A[>0]\perp \D^{\bb}_{>0}(A)$.
By the duality $(\ )^{*}: \D^{\bb}(A^{\rm op}) \xra{\simeq} \D^{\bb}(A)$, we have $\D^{\bb}(A)=\D^{\bb}_{>0}(A^{\rm op})^{*}\perp \Filt A[<0]\perp \CM A$.
\end{proof}

We end this section by giving a result analogous to 
 famous results of Auslander and Yamagata \cite[Theorem VI.1.4]{ARS}\cite{Ya} on 
the first Brauer-Thrall theorem. For an object $X\in \D^{\rm b}(A)$, there is an integer $t$ such that $\h^{<t}(X)=0$ and $\h^{t}(X)\not=0$. By the standard truncation, we may assume $X^{i}=0$ for $i<t$ and in this case, we  have a natural inclusion $\h^{t}(X)\hookrightarrow X$. So we may regard $\soc \h^{t}(X)$ as the socle $\soc X$ of $X$.
\begin{Prop}\label{Prop:finiteall}
Let  $\cal S$ be a finite subset of $\ind (\CM A)$. Then $\cal S= \ind(\CM A)$  if  $\cal S$ is closed under successors in the AR quiver of $\CM A$ and  for any $i\ge 0$, there exists a  left $(\CM A)$-approximation  $A[i]\ra X$   in $\D^{\bb}(A)$ such that $X\in \add \cal S$. 
\end{Prop}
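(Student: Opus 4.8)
The statement is a Brauer--Thrall-type finiteness result: under the stated hypotheses, a finite ``successor-closed'' subset $\mathcal S \subseteq \ind(\CM A)$ that receives all the relevant approximations must exhaust $\ind(\CM A)$. The natural strategy is a connectedness/propagation argument on the AR quiver of $\CM A$, combined with the fact that every object of $\CM A$ is ``reached'' from the projective-injective objects $\add A$ by iterated almost split extensions. First I would recall that $\CM A$ has almost split extensions (Theorem~\ref{Thm:AR}) and is functorially finite in $\D^{\bb}(A)$ (Theorem~\ref{Thm:cmappro}), so the AR quiver is well-defined and every non-projective indecomposable $X$ sits at the right end of an almost split extension $\tau X \to Y \to X$, while for a projective $P \in \add A$ one uses instead the left $(\CM A)$-approximation $A[i] \to X$ from the hypothesis to ``start'' the propagation.

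The key steps, in order: (i) Show $\add A \cap \ind(\CM A) \subseteq \add\mathcal S$. Indeed, for each indecomposable $P_i \in \add A$, apply the hypothesis with a suitable $i \ge 0$ so that the left $(\CM A)$-approximation $A[i] \to X$ with $X \in \add\mathcal S$ detects $P_i$ among the indecomposable summands --- more precisely, one wants the approximation $A \to X$ (case $i=0$) to be a monomorphism onto a direct summand set containing each $P_i$, using that $A \in \CM A$ is itself projective-injective, so $\mathrm{Id}_A$ factors through its own left approximation and forces $P_i \in \add\mathcal S$. (ii) Show $\mathcal S$ is also closed under \emph{predecessors} that lie in $\ind(\CM A)$: if $X \in \mathcal S$ is non-projective, the almost split extension $\tau X \to Y \to X$ has $Y \in \add\mathcal S$ because the irreducible maps $Y_j \to X$ exhibit each indecomposable summand $Y_j$ as an immediate predecessor of $X$ in the AR quiver, and hence $\tau X \in \mathcal S$ as it is an immediate predecessor of each $Y_j$ --- wait, that shows predecessors of a \emph{non-projective} $X$; the subtlety is handling arrows into projectives, which is exactly where the approximation hypothesis must be invoked. (iii) Combine with successor-closedness (given) to conclude $\mathcal S$ is a union of connected components of the AR quiver; since $\D^{\bb}(A) = \thick(\bigoplus S_i)$ (Proposition~\ref{Prop:Dbfinite}) forces $\D_{\sg}(A) = \un{\CM}A$ to have a ``connected enough'' AR quiver --- or rather, since every indecomposable of $\CM A$ is linked through $\add A$ to the rest by the duality and approximation structure of Theorem~\ref{Thm:cmappro} --- the component containing $\add A$ is everything, so $\mathcal S \supseteq \ind(\CM A)$, giving equality.

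The main obstacle I anticipate is step (ii)/(iii): showing that successor-closedness plus the approximation hypothesis genuinely forces $\mathcal S$ to be a whole union of connected components, and then that there is only \emph{one} relevant component (or that all components meet $\add A$). Successor-closedness alone does not give predecessor-closedness, so the propagation must run ``backwards'' through the projectives: the point of the hypothesis ``for all $i \ge 0$, $A[i]$ has a left $(\CM A)$-approximation into $\add\mathcal S$'' is precisely that it lets one walk from any indecomposable back toward $\add A$ --- concretely, for $X \in \ind(\CM A)$ one takes $t$ with $\soc X = \soc \h^t(X)$, uses that the inclusion $\soc X \hookrightarrow X$ and the dual of the approximation in Theorem~\ref{Thm:cmappro} produce a nonzero map from some $A[i]$-related object, and chases irreducible maps. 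Making this chase precise --- identifying exactly which $i$ to use and verifying the map lands in a summand lying in $\mathcal S$ --- is the technical heart; everything else is the standard AR-quiver bookkeeping already available from \cite{ARS} and Sections~\ref{Section:AR}--\ref{Section:quiver}.
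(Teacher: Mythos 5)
There is a genuine gap, and it sits exactly where you flagged your own unease. In step (ii) you claim that for a non-projective $X\in\cal S$ the almost split extension $\tau X\to Y\to X$ forces $Y$ and $\tau X$ into $\add\cal S$; but the hypothesis is closure under \emph{successors}, and the indecomposable summands of $Y$ are \emph{predecessors} of $X$, so nothing puts them in $\cal S$. Consequently the whole strategy of steps (ii)--(iii) --- upgrade $\cal S$ to a union of connected components of the AR quiver and then argue the quiver is ``connected enough'' --- does not get off the ground; moreover, even granting that $\cal S$ were a union of components, the assertion that the component meeting $\add A$ is all of $\ind(\CM A)$ is essentially the proposition itself and is never established. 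You also use the approximation hypothesis only at $i=0$ to catch the projectives, whereas its real role is different: for an arbitrary $M\in\ind(\CM A)$ one has $\h^{-i}(M)\neq 0$ for some $i\ge 0$, hence $\Hom_{\D^{\bb}(A)}(A[i],M)=\h^{-i}(M)\neq 0$, and this nonzero map factors through the left $(\CM A)$-approximation $N\in\add\cal S$ of $A[i]$, producing a nonzero map $X_1\to M$ with $X_1\in\cal S$ indecomposable.

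The missing mechanism is the forward propagation that the paper runs from this point. If $X_1\to M$ is not a section, it factors through the left almost split map $X_1\to Y$ (for $X_1\in\add A$ one uses $X_1\to X_1/\soc X_1$), yielding an indecomposable $X_2\in\add Y$ with nonzero composition $X_1\to X_2\to M$; by successor-closedness $X_2\in\cal S$, and one iterates to get chains $X_1\to X_2\to\cdots$ inside $\cal S$ with nonzero composite to $M$. The finiteness of $\cal S$ together with Hom-finiteness of $\CM A$ gives $\rad(\cal S,\cal S)^N=0$ for large $N$ (a Harada--Sai-type nilpotency), so the chain cannot consist of non-isomorphisms forever: some $X_n\to M$ must be a section, hence $X_n\cong M$ and $M\in\cal S$. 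This termination argument is the technical heart you left open, and it replaces, rather than supplements, the predecessor-closure/connectedness route you proposed.
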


\begin{proof}
Notice that $A\xra{\rm{id}}A$ is the minimal left $(\CM A)$-approximation of $A$, then $A\in \add \cal S$ by our assumption. 
Let $M\in \ind(\CM A)$.
Then there exists $i\ge 0$ such that $\Hom_{\CM A}(A[i], M)\not=0$. Let $N$ be the left $(\CM A)$-approximation of $A[i]$ such that  $N\in\add\cal S$. Then $\Hom_{\CM A}(N, M)\not=0$. Let $X_{1}$ be an indecomposable direct summand of $N$ with $0\not=f\in\Hom_{\CM A}(X_{1}, M)$. If $f$ is a section, since $X_{1}, M\in \ind(\CM A)$, then $X_{1}\cong M$ and we are done. If $f$ is not a section, consider the left almost split morphism $g: X_{1}\ra Y$ (If $X_{1}\in \add A$, $g$ is given by $X_{1}\ra X_{1}/\soc X_{1}$). Then there is an $h\in \Hom_{\CM A}(Y, M)$ such that $f=h\circ g$.
So we can find an indecomposable module $X_{2}\in \add Y$, such that the composition $X_{1}\ra X_{2} \ra M$ is non-zero.
Repeat this step, we may construct a series of indecomposable modules
$ X_{1}\ra X_{2}\ra \cdots \ra M$,
such that the composition is non-zero. Since $\cal S$ is closed under successors, then $X_{i}\in \cal S$.  Since $\cal S$ is finite and $\CM A$ is Hom-finite, then $\rad (\cal S, \cal S)^{N}=0$ for big enough $N$. So there exist $n\ge1$ such that $X_{n}=M$ and $M\in \add \cal S$. Therefore $\cal S=\ind (\CM A)$.
\end{proof}

\section{Example: truncated polynomial dg algebras} \label{Section:poly}

In this section,  we give some examples. The reader may skip this section, since results here will not be used in this paper.
Consider a truncated polynomial  dg $k$-algebra.
 $$A:=k[X]/(X^{n+1}), n\ge 0,$$
with $\deg X=-d\le 0 $  and  zero differential. We determine the  indecomposable Cohen-Macaulay modules explicitly and draw the AR quiver of $\CM A$. Then we show $\un{\CM}A$ is  a $(d+1)$-cluster category by using a criterion given by Keller and Reiten \cite{Keller08}.
Let $A_{i}$ be the dg $A$-module $k[X]/(X^{i})$, $i=1, 2, \cdots, n$. We give two small examples first.
\begin{Ex}
 \begin{enumerate}[\rm (1)]
   \item Let $n=2$ and $d=2$. Then the AR quiver of $\CM A$ is as follows.
     {\tiny
       \begin{center}
         \begin{tikzpicture}[scale=0.6]
         \draw
         node (kl) at (0,0) {$k$}
         node (k2l) at (-2,0) {$k[2]$}
         node (k4l) at (-4,0) {$k[4]$}
         node (A21l) at (-6,0) {$A_{2}[1]$}
         node (A21r) at (2,0) {$A_{2}[1]$}
         node (k4r) at (4,0) {$k[4]$}
         node (k2r) at (6,0) {$k[2]$}
         node (kr) at (8,0) {$k$}
         node (A2l) at (-1,1) {$A_{2}$}
         node (A22l) at (-3,1) {$A_{2}[2]$}
         node (k1l) at (-5,1) {$k[1]$}
         node (k3l) at (-7,1) {$k[3]$}
         node (k3r) at (1,1) {$k[3]$}
         node (k1r) at (3,1) {$k[1]$}
         node (A22r) at (5,1) {$A_{2}[2]$}
         node (A2r) at (7,1) {$A_{2}$}
         node (Ar) at (6,2) {$A$}
         node (Al) at (-2,2) {$A$}
         node at (-8, 0.5) {$\dots$}
         node at (9, 0.5) {$\dots$}
         [->] (k3l) edge (A21l) (A21l) edge (k1l) (k1l)              
         edge (k4l) (k4l) edge (A22l) (A22l) edge (k2l)
         (k2l) edge (A2l) (A2l) edge (kl) (kl) edge (k3r)
         (k3r) edge (A21r) (A21r) edge (k1r) (k1r)              
         edge (k4r) (k4r) edge (A22r) (A22r) edge (k2r)
         (k2r) edge (A2r) (A2r) edge (kr)
         (A22l) edge (Al) (Al) edge (A2l)
         (A22r) edge (Ar) (Ar) edge (A2r);
       \draw[dotted] (-7.9, 1.3)--(-0.7,1.3)--(0.6,-0.3)--(-6.6,-0.3)--(-7.9,1.3);  
              \draw[dotted] (0.2, 1.3)--(7.4,1.3)--(8.7,-0.3)--(1.5,-0.3)--(0.2,1.3);   
         \end{tikzpicture}
         \end{center}}
    \item Let $n=3$ and $d=1$. Then the AR quiver of $\CM A$ is as follows.
      {\tiny
       \begin{center}
         \begin{tikzpicture}[scale=0.6]
         \draw
         node (k3r) at (0,0) {$k[3]$} 
         node (A22r) at (1,1) {$A_{2}[2]$}
         node (A31r) at (2,2) {$A_{3}[1]$}
         node (k2r) at (2,0) {$k[2]$}
         node (A21r) at (3,1) {$A_{2}[1]$}
         node (A3r) at (4,2) {$A_{3}$}
         node (k1r) at (4,0) {$k[1]$}
         node (A2r) at (5,1) {$A_{2}$}
         node (kr) at (6,0) {$k$}
         node (Ar) at (3,3) {$A$}
         node (A3l) at (-2,0) {$A_{3}$}
         node (A31l) at (-4,0) {$A_{3}[1]$}
         node (A2l) at (-1,1) {$A_{2}$}
         node (A21l) at (-3,1) {$A_{2}[1]$}
         node (A22l) at (-5,1) {$A_{2}[2]$}
         node (kl) at (0,2) {$k$}
         node (k1l) at (-2,2) {$k[1]$}
         node (k2l) at (-4,2) {$k[2]$}
         node (k3l) at (-6,2) {$k[3]$}
         node (Al) at (-3,-1) {$A$}
         node at (-7,1) {$\dots$} 
         node at (7,1) {$\dots$}
         [->] (k3l)edge(A22l) (A22l)edge(k2l)  
         (k2l)edge(A21l) (A21l)edge(k1l) 
         (k1l)edge(A2l) (A2l)edge(kl)
         (A22l)edge(A31l) (A31l)edge(A21l)
         (A21l)edge(A3l) (A3l)edge(A2l)
         (A31l)edge(Al) (Al)edge(A3l)
         (kl)edge(A22r) (A2l)edge(k3r)  
         (k3r)edge(A22r) (A22r)edge(k2r)  
         (k2r)edge(A21r) (A21r)edge(k1r) 
         (k1r)edge(A2r) (A2r)edge(kr)
         (A22r)edge(A31r) (A31r)edge(A21r)
         (A21r)edge(A3r) (A3r)edge(A2r)
         (A31r)edge(Ar) (Ar)edge(A3r);
        \draw[dotted] (-7,2.3)--(-4.2,-0.5)--(-1.8,-0.5)-- (1,2.3)--(-7,2.3)       (4.4,2.3)--(7.2,-0.5)--(-1.3, -0.5)--  (1.6,2.3)--(4.4,2.3);
        \end{tikzpicture}
       \end{center}}
  \end{enumerate}
\end{Ex}

By Proposition \ref{Prop:CMappro}, for any $A_{i}$, $1\le i \le n$, and $t\ge 0$, we have the following triangle.
\begin{eqnarray}\label{minicmappro} T_{i,t} \ra A_{i}[td] \ra P_{i,t} \ra T_{i,t}[1], \end{eqnarray}
such that  $T_{i,t}\ra A_{i}[td]$ is a  right $(\CM A)$-approximation of $A_{i}[td]$ and $P_{i,t}\in \per A$.  We assume $T_{i,t}$ is minimal. Then  $T_{i,t}$ is unique up to isomorphism and  if $A_{i}[td]\in \CM A$ (for example, $t=0\ {\rm or}\ 1$),  we have $T_{i,t}=A_{i}[td]$.    We give the first result of this section.

\begin{Thm}\label{Thm:ARquiver}
Let $A$  be the dg algebra $k[X]/(X^{n+1})$, $n\ge 0$ with $\deg X=-d\le 0 $  and  zero differential. 
\begin{enumerate}[\rm (1)]
\item Assume $d$ is even. Let $N:=\frac{(n+1)d}{2}$. Then the AR quiver of ${\CM} A$ is as follows.
      {\tiny
       \begin{center}
         \begin{tikzpicture}[scale=0.6]
         \draw 
         node (T10l) at (0,0) {$T_{1,0}$}
         node (T11l) at (-2,0) {$T_{1,1}$}
         node (T1Nl) at (-5,0) {$T_{1,N}$}
         node (T20l) at (-1,1) {$T_{2,0}$}
         node (T21l) at (-3,1) {$T_{2,1}$}
         node (T2Nl) at (-6,1) {$T_{2,N}$}
         node (Tn-1Nl) at (-8,3) {$T_{n-1,N}$}
         node (Tn-11l) at (-5,3) {$T_{n-1,1}$}
         node (Tn-10l) at (-3,3) {$T_{n-1,0}$}
         node (Tn0l) at (-4,4) {$T_{n,0}$}
         node (Tn1l) at (-6,4) {$T_{n,1}$}
         node (TnNl) at (-9,4) {$T_{n,N}$}         
         node (T10r) at (7,0) {$T_{1,0}$}
         node (T11r) at (5,0) {$T_{1,1}$}
         node (T1Nr) at (2,0) {$T_{1,N}$}
         node (T20r) at (6,1) {$T_{2,0}$}
         node (T21r) at (4,1) {$T_{2,1}$}
         node (T2Nr) at (1,1) {$T_{2,N}$}
         node (Tn-1Nr) at (-1,3) {$T_{n-1,N}$}
         node (Tn-11r) at (2,3) {$T_{n-1,1}$}
         node (Tn-10r) at (4,3) {$T_{n-1,0}$}
         node (Tn0r) at (3,4) {$T_{n,0}$}
         node (Tn1r) at (1,4) {$T_{n,1}$}
         node (TnNr) at (-2,4) {$T_{n,N}$}
         node at (-3.5,0) {$\dots$}
         node at (-4.5,1) {$\dots$}
         node at (-6.5,3) {$\dots$}
         node at (-7.5,4) {$\dots$}
         node at (3.5,0) {$\dots$}
         node at (2.5,1) {$\dots$}
         node at (0.5,3) {$\dots$}
         node at (-0.5,4) {$\dots$}
         node at (-7.1,2.1) {$\ddots$}
         node at (-4.1,2.1) {$\ddots$}
         node at (-2.1,2.1) {$\ddots$}
         node at (-0.1,2.1) {$\ddots$}
         node at (3.1,2.1) {$\ddots$}
         node at (5.1,2.1) {$\ddots$}
         node (Al) at (-5, 5) {$A$}
         node (Ar) at (2,5) {$A$}
         node at (-9,2) {$\dots$}
         node at (7,2) {$\dots$}
         [->] 
         (TnNl)edge(Tn-1Nl) (T2Nl)edge(T1Nl)
         (Tn1l)edge(Tn-11l)  (Tn-11l)edge(Tn0l)
         (Tn0l)edge(Tn-10l) (T21l)edge(T11l) 
         (T11l)edge(T20l) (T20l)edge(T10l) 
         (TnNr)edge(Tn-1Nr) (T2Nr)edge(T1Nr)
         (Tn1r)edge(Tn-11r)  (Tn-11r)edge(Tn0r)
         (Tn0r)edge(Tn-10r) (T21r)edge(T11r) 
         (T11r)edge(T20r) (T20r)edge(T10r)
         (Tn-10l)edge(TnNr) (T10l)edge(T2Nr)
         (Tn1l)edge(Al) (Al)edge(Tn0l)
         (Tn1r)edge(Ar) (Ar)edge(Tn0r);
        \draw[dotted] 
        (-4,4.6)--(0.9,-0.3)--(-5.8,-0.3)
         --(-10.7,4.6)--(-4,4.6)
        (3,4.6)--(7.9,-0.3)--(1.2,-0.3)
        --(-3.7,4.6)--(3,4.6); 
            \end{tikzpicture}
      \end{center}}
  \item Assume $d$ is odd. Let $N_{i}:=\frac{(n+1)d+n-2i+1}{2}$, $1\le i \le n$. Then the AR quiver of ${\CM} A$ is as follows.
      {\tiny
       \begin{center}
         \begin{tikzpicture}[scale=0.6]
         \draw 
          node (Tn0l) at (0,0) {$T_{n,0}$}
          node (Tn-10l) at (1,1) {$T_{n-1,0}$}
          node (T20l) at (3,3) {$T_{2,0}$}
          node (T10l) at (4,4) {$T_{1,0}$}
          node (T11l) at (2,4) {$T_{1,1}$}
          node (T21l) at (1,3) {$T_{2,1}$}
          node (Tn-11l) at (-1,1) {$T_{n-1,1}$}
          node (Tn1l) at (-2,0) {$T_{n,1}$}
          node (TnNnl) at (-4,0) {$T_{n,N_{n}}$}
          node (Tn-1Nn-1l) at (-5,1) {$T_{n-1,N_{n-1}}$}
          node (T2N2l) at (-7,3) {$T_{2,N_{2}}$}
          node (T1N1l) at (-8,4) {$T_{1,N_{1}}$}
          node at (-6,2) {$\ddots$}
          node at (0,2) {\reflectbox{$\ddots$}}
          node at (2,2) {\reflectbox{$\ddots$}}
          node at (4,2) {\reflectbox{$\ddots$}}
          node at (10,2) {$\ddots$}
          node at (12,2) {$\ddots$}
          node at (-3,0) {$\dots$}
          node at (-3,1) {$\dots$}
          node at (-2,3) {$\dots$}
          node at (-3,3) {$\dots$}
          node at (-4,3) {$\dots$}
          node at (-1,4) {$\dots$}
          node at (-2,4) {$\dots$}
          node at (-3,4) {$\dots$}
          node at (-4,4) {$\dots$}
          node at (-5,4) {$\dots$}            
          node (Al) at (-1,-1) {$A$}
          node at (-8,2) {$\dots$}
          node at (14,2) {$\dots$}
          node (Tn0r) at (10,4) {$T_{n,0}$}
          node (Tn-10r) at (11,3) {$T_{n-1,0}$}
          node (T20r) at (13,1) {$T_{2,0}$}
          node (T10r) at (14,0) {$T_{1,0}$}
          node (T11r) at (12,0) {$T_{1,1}$}
          node (T21r) at (11,1) {$T_{2,1}$}
          node (Tn-11r) at (9,3) {$T_{n-1,1}$}
          node (Tn1r) at (8,4) {$T_{n,1}$}
          node (TnNnr) at (6,4) {$T_{n,N_{n}}$}
          node (Tn-1Nn-1r) at (5,3) {$T_{n-1,N_{n-1}}$}
          node (T2N2r) at (3,1) {$T_{2,N_{2}}$}
          node (T1N1r) at (2,0) {$T_{1,N_{1}}$}
          node at (7,4) {$\dots$}
          node at (7,3) {$\dots$}
          node at (6,1) {$\dots$}
          node at (7,1) {$\dots$}
          node at (8,1) {$\dots$}
          node at (5,0) {$\dots$}
          node at (6,0) {$\dots$}
          node at (7,0) {$\dots$}
          node at (8,0) {$\dots$}
          node at (9,0) {$\dots$}            
          node (Ar) at (9,5) {$A$}
          [->]
          (T1N1l)edge(T2N2l) (Tn-1Nn-1l)edge(TnNnl)
          (Tn1l)edge(Tn-11l) (Tn-11l)edge(Tn0l)   
          (Tn0l)edge(Tn-10l) (T21l)edge(T11l)
          (T11l)edge(T20l) (T20l)edge(T10l)
          (T1N1r)edge(T2N2r) (Tn-1Nn-1r)edge(TnNnr)
          (Tn1r)edge(Tn-11r) (Tn-11r)edge(Tn0r)   
          (Tn0r)edge(Tn-10r) (T21r)edge(T11r)
          (T11r)edge(T20r) (T20r)edge(T10r)
          (T10l)edge(Tn-1Nn-1r) (Tn-10l)edge(T1N1r)
          (Tn1r)edge(Ar) (Ar)edge(Tn0r)
          (Tn1l)edge(Al) (Al)edge(Tn0l);
        \draw[dotted]
          (-9,4.3)--(-4.3,-0.4)--(0.3,-0.4)--(5,4.3)--(-9,4.3)
          (10.4,4.3)--(15.1,-0.4)--(0.7,-0.4)--(5.4,4.3)--
          (10.4,4.3);
         \end{tikzpicture}
      \end{center}}
\end{enumerate}
\end{Thm}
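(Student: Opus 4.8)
The plan is to move everything into the singularity category $\D_{\sg}(A)=\un{\CM}A$, where the classification of indecomposables becomes a finite combinatorial problem, and then transport the AR quiver back (the case $d=0$ being the classical AR quiver of a self-injective Nakayama algebra, which I treat separately). First I would check that $A$ satisfies Assumption~\ref{assumption}: it is non-positive since $\deg X=-d\le 0$ and the differential is zero, it is proper since $\dim_{k}A=n+1$, and it is graded-symmetric — the form $a\mapsto(\text{coefficient of }X^{n}\text{ in }a)$ identifies $DA\cong A[-nd]$ as dg bimodules, so $\thick(DA)=\per A$ (hence $A$ is Gorenstein) and the Nakayama functor $\nu$ is the shift $[-nd]$. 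Feeding $\nu\simeq[-nd]$ into Proposition~\ref{Prop:cm}(1) gives
\[ \CM A=\D^{\bb}_{\le 0}(A)\cap\D^{\bb}_{\ge -nd}(A). \]

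Next I would classify $\ind\CM A$. For $d\ge 1$ one has $\h^{0}(A)=k$, so Proposition~\ref{Prop:heart} presents $\CM A$ as $\mathcal H[nd]\ast\cdots\ast\mathcal H[1]\ast\mathcal H$ with $\mathcal H\simeq\mod k$, and every object is filtered by the $k[j]$, $0\le j\le nd$. Resolving the simple $k$ over $A$ (its minimal semifree resolution is $2$-periodic up to internal shifts, with $p$-th term $A[\delta_{p}]$ where $\delta_{0}=0$, $\delta_{1}=d+1$ and $\delta_{p+2}=\delta_{p}+((n+1)d+2)$) shows $\Hom_{\D(A)}(k,k[m])=k$ precisely for $m\in\{\delta_{p}\}$ and $0$ otherwise; since $(n+1)d+2>nd$, the only nonzero extensions between the $k[j]$ inside our window are the one-dimensional classes pairing $k[j]$ with $k[j+d]$, and all higher extensions vanish there. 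Hence every object of $\CM A$ is a direct sum of ``interval modules'' glued along $k[j],k[j+d],k[j+2d],\dots$, that is,
\[ \ind\CM A=\{\,A_{m}[j]\ :\ 1\le m\le n+1,\ 0\le j,\ j+(m-1)d\le nd\,\}. \]
Using Corollary~\ref{Prop:CMappro}, the module $T_{i,t}$ is the unique $(\add A)$-summand-free object of $\CM A$ isomorphic to $A_{i}[td]$ in $\un{\CM}A$; a count (both sides have $\tfrac12(n+1)(dn+2)$ elements) together with the reductions of the next step shows that for $i=1,\dots,n$ and $t$ in the stated range the $T_{i,t}$, together with $A$, exhaust $\ind\CM A$ without repetition.

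Finally I would compute the AR structure. By Theorem~\ref{Thm:properties}(3), $\un{\CM}A\simeq\D_{\sg}(A)$; the short exact sequences $0\to A_{n+1-i}[id]\to A\to A_{i}\to 0$ give in $\D_{\sg}(A)$ the relations $A_{i}\cong A_{n+1-i}[id+1]$, whence $[(n+1)d+2]\cong\mathrm{id}$. Combined with the Serre functor $\nu[-1]=[-nd-1]$ of Theorem~\ref{Thm:AR}(1), the AR translation of $\un{\CM}A$ is $\tau\cong[-nd-2]\cong[d]$. Re-expressing each $A_{i}[td]$ as the corresponding interval module via these relations, and starting the knitting from the forced mesh around the projective-injective $A$ — which can only appear as $T_{n,1}=A_{n}[d]\to A\to A_{n}=T_{n,0}$, i.e.\ $A$ is the added vertex $p_{T_{n,1}}$ of Definition~\ref{Def:newstable} — one reads off that the AR quiver of $\CM A$ is $\Z A_{n}$ wrapped up by the identification dictated by $\tau=[d]$, with $A$ glued in as just described. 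The parity of $d$ enters here: on $\Z A_{n}$ one has $[1]=\SSS\tau^{-1}$ with $\SSS$ containing the flip $q\mapsto n+1-q$, so $[d]$ carries $\phi^{d}$, trivial when $d$ is even and the flip when $d$ is odd; this splits the answer into the two displayed quivers, with uniform width $N=(n+1)d/2$ in the even case and the $i$-dependent width $N_{i}=\tfrac12(n+1)(d+1)-i$ in the odd case.

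The main obstacle is exactly this last bookkeeping: for each $(i,t)$ one must identify which interval module $A_{m}[j]$ the object $A_{i}[td]$ becomes modulo $\per A$, and then check that the resulting vertices, meshes, and boundary identifications assemble into precisely the two quivers drawn — in particular getting the two halves of each picture, their gluing, and the ends of the strips right in both parities. All the structural ingredients (the Frobenius extriangulated structure, existence of almost split extensions, the Serre functor, and Cohen--Macaulay approximation) are already supplied by Sections~\ref{Section:CMdg}--\ref{Section:AR}.
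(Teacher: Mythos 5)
Your structural setup (verification of Assumption~\ref{assumption}, $DA\cong A[-nd]$ so $\nu\simeq[-nd]$ and $\CM A=\D^{\bb}_{\le 0}(A)\cap\D^{\bb}_{\ge -nd}(A)$, the periodicity $[(n+1)d+2]\simeq\mathrm{id}$, $\tau\simeq[d]$, and the forced mesh $T_{n,1}=A_n[d]\to A\to A_n=T_{n,0}$) agrees with the paper. The genuine gap is the classification step. From the computation of $\Hom_{\D(A)}(k,k[m])$ you assert that inside the window the only nonzero extensions pair $k[j]$ with $k[j+d]$, that ``all higher extensions vanish there'', and hence that every object of $\CM A$ is a direct sum of interval modules $A_m[j]$. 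First, the vanishing claim is false as stated: $\Hom_{\D(A)}(k[j],k[j+d-1][2])=\Hom_{\D(A)}(k,k[d+1])\cong k$ with both simples in the window, and for $d=1$ (where all window simples lie in a single chain) every simple even has a nonzero degree-$2$ self-extension since $\delta_1=2$. Second, and more seriously, even the correct $\Ext^1$-pattern --- a disjoint union of linearly oriented chains on the filtration simples --- does not formally imply that every object of $\Filt\{k[j]\mid 0\le j\le nd\}$ splits into intervals; classifying filtered objects requires controlling how the extension classes compose (the relations, i.e.\ the higher $A_\infty$-products on the Ext algebra), and this is essentially the content of the theorem rather than a consequence of a dimension count. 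Your subsequent arguments presuppose this classification: the cardinality comparison only works once you know $\ind\CM A$ consists of intervals, and knitting from the mesh at $A$ produces one AR component but cannot by itself exclude further components.

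The paper closes exactly this hole by a different mechanism: it first proves the explicit AR triangles $A_i[d]\to A_{i-1}[d]\oplus A_{i+1}\to A_i$ (Proposition~\ref{Prop:ARquiver}) and the periodicity statements (Lemmas~\ref{id} and~\ref{evenodd}), so that the finite set $\mathcal S=\{T_{i,t}\}\cup\{A\}$ is closed under successors in the AR quiver, and then invokes the Brauer--Thrall-type Proposition~\ref{Prop:finiteall}: it suffices to check that the minimal left $(\CM A)$-approximation of every $A[p]$, namely $A[p]\to A_{n-t}[p]$ for $td<p\le(t+1)d$, lies in $\add\mathcal S$, which is done by rewriting $A_{n-t}[p]$ as some $A_i[sd]$ in $\un{\CM}A$ using $[(n+1)d+2]\simeq\mathrm{id}$ and $A_{n+1-i}\cong A_i[-id-1]$. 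If you want to keep your route, you must replace the ``hence a direct sum of intervals'' step by an actual argument (for instance, identifying $\Filt\{k[j]\}$ with the module category of a suitable graded Nakayama-type algebra after computing the relevant products), or else fall back on the approximation argument of Proposition~\ref{Prop:finiteall} as the paper does.
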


Before proving Theorem \ref{Thm:ARquiver}, we consider AR triangles in $\un{\CM}A$ first.
It is easy to see that $A$  is an $(nd+1)$-symmetric dg algebra. Then by Proposition \ref{Prop:cm}, $\CM A=\D^{\bb}_{\le 0}\cap \D^{\bb}_{\ge -nd}$. The  Nakayama functor $\nu: \D^{\bb}(A)\ra \D^{\bb}(A)$ is given by $\nu=?\ot_{A}^{\bf L}DA=[-nd]$.
By Theorem \ref{Thm:AR}, $\un{\CM }A$ admits a Serre functor $\nu[-1]=[-nd-1]$. Moreover, the Auslander-Reiten translation on $\un{\CM}A$ is $\tau=[-nd-2]$. 
The following lemma shows $A_{i}$ and $T_{i,t}$ are indecomposable.
\begin{Lem}\label{indec}
  Let $A_{i}, T_{i,t}$  be defined as above. Then  
 \begin{enumerate}[\rm (1)]
\item ${\rm End}_{\un{\CM }A}(A_{i})={\rm End}_{\CM A}(A_{i})=k$. Moreover, each $A_{i}$ is  indecomposable in   $\un{\CM }A$;
 \item $T_{i,t}$  is indecomposable in $\CM A$.
 \end{enumerate}
\end{Lem}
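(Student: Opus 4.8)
The plan is to reduce the whole statement to the single computation $\End_{\CM A}(A_i)=k$, and then deduce everything else formally. First note that $A_i\in\CM A$: the cohomology of $A_i=k[X]/(X^i)$ is concentrated in internal degrees $0,-d,\dots,-(i-1)d$, which for $1\le i\le n$ lies in $[-nd,0]$, so $A_i\in\D^{\bb}_{\le0}\cap\D^{\bb}_{\ge-nd}=\CM A$. To compute its endomorphism ring I would use that, the differential of $A$ being zero, $A_i=A/(X^i)$ admits a minimal graded-free resolution $F_\bullet\to A_i$ over $A$ which is $2$-periodic up to internal shift, since $A$ is a graded self-injective hypersurface algebra and $\Omega^2\bigl(A/(X^i)\bigr)\cong A/(X^i)$: explicitly $F_{2m}\cong A[m(n+1)d]$ and $F_{2m+1}\cong A[m(n+1)d+id]$, with consecutive differentials alternately multiplication by $X^i$ and by $X^{n+1-i}$. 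Applying $\Hom_A(-,A_i)$ and totalising, the differential out of $\Hom_A(F_0,A_i)$ is multiplication by $X^i$ on $A_i=k[X]/(X^i)$, hence zero; and a bookkeeping of the internal grading shows that each term $\Hom_A(F_p,A_i)$ with $p\ge1$ contributes only to strictly positive cohomological degrees of $\RHom_A(A_i,A_i)$. Therefore the part of $\RHom_A(A_i,A_i)$ in non-positive degrees is just $\Hom_A(F_0,A_i)\cong A_i$ with zero differential, and so
\[
\End_{\CM A}(A_i)=\Hom_{\D^{\bb}(A)}(A_i,A_i)=\h^{0}\bigl(\RHom_A(A_i,A_i)\bigr)=\h^{0}(A_i)=k .
\]
This is the only genuine computation in the proof, and I expect it to be the main obstacle; the delicate point is exactly the grading count that discards all the higher terms of the periodic resolution.

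Granting this, the rest of (1) is formal. Since $k$ is local, $A_i$ is indecomposable in $\CM A$; moreover $\dim_k\h^{\ast}(A_i)=i<n+1=\dim_k\h^{\ast}(A)$, so $A_i\not\cong A$ and hence $A_i\notin\add A=\Proj(\CM A)$ (Theorem~\ref{Thm:properties}). Consequently the ideal of $\End_{\CM A}(A_i)=k$ consisting of the morphisms that factor through $\add A$ is a proper ideal of a field, hence zero; thus $\End_{\un{\CM}A}(A_i)=\End_{\CM A}(A_i)=k$, and, being a nonzero local ring, this also shows that $A_i$ is indecomposable in $\un{\CM}A$.

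For (2), recall that $T_{i,t}$ sits in a triangle $T_{i,t}\to A_i[td]\to P_{i,t}\to T_{i,t}[1]$ in which $T_{i,t}\to A_i[td]$ is a minimal right $(\CM A)$-approximation and $P_{i,t}\in\per A$ (Corollary~\ref{Prop:CMappro}). In $\un{\CM}A=\D^{\bb}(A)/\per A$ the term $P_{i,t}$ vanishes, so $T_{i,t}\cong A_i[td]$ there, which is indecomposable in $\un{\CM}A$ by (1). Now an object of $\CM A$ that is indecomposable in the stable category is already indecomposable in $\CM A$ unless it has a nonzero direct summand in $\add A=\Proj(\CM A)$, so it remains only to exclude such a summand. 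For $t=0$ this is clear, since $T_{i,0}=A_i\notin\add A$. For $t\ge1$, one has $\h^{0}(A_i[td])=\h^{td}(A_i)=0$ because the cohomology of $A_i$ vanishes in positive degrees; hence by Lemma~\ref{Lem:KN}(1), $\Hom_{\D^{\bb}(A)}(A,A_i[td])\cong\Hom_{\h^{0}(A)}(\h^{0}(A),\h^{td}(A_i))=0$, so any $\add A$-summand of $T_{i,t}$ would be annihilated by the approximation $T_{i,t}\to A_i[td]$, contradicting its minimality (a minimal right approximation admits no nonzero direct summand of its source on which it vanishes). Thus $T_{i,t}$ has no nonzero $\add A$-summand and is indecomposable in $\CM A$. (When $d=0$ the statement is trivial, since then $A$ is an ordinary algebra and $T_{i,t}=A_i$.)
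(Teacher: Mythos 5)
Your proposal is correct, and the logical skeleton (reduce everything to $\End_{\CM A}(A_i)=k$, then pass to the stable category, then handle $T_{i,t}$ via minimality of the approximation) matches the paper; the difference lies in how the key computation in (1) is done. The paper applies $\Hom_{\D^{\bb}(A)}(?,A_i)$ to the single graded short exact sequence/triangle $A_{n+1-i}[id]\to A\to A_i\to A_{n+1-i}[id+1]$ and kills the outer terms by a $t$-structure amplitude argument ($A_{n+1-i}[\ge id]$ has cohomology in degrees $\le -id$ while $A_i$ lives in degrees $\ge -(i-1)d$), obtaining $\End(A_i)\cong\Hom(A,A_i)=\h^0(A_i)=k$ in one step; you instead write down the full $2$-periodic graded-free resolution and track internal degrees through $\RHom_A(A_i,A_i)$. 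Your degree bookkeeping is right (for $p\ge 1$ the term $\Hom_A(F_p,A_i)$ sits in cohomological degrees $\ge d+1$, respectively $\ge (n+2-i)d+2$), so the computation goes through; the paper's triangle is just the first syzygy step of your resolution, so its argument is shorter but of the same nature, while yours is more explicit and also re-derives the periodicity that the paper exploits later (Lemma 4.5). Your part (2) is essentially the paper's argument, with the minimality step spelled out correctly. One caveat: your parenthetical about $d=0$ is wrong in direction --- for $d=0$ one has $\End_{\CM A}(A_i)=k[X]/(X^i)\neq k$ when $i\ge 2$, so the statement is not ``trivial'' there; both your main argument and the paper's proof implicitly use $d\ge 1$ (your identification $\h^0(A_i)=k$ and the paper's vanishing $\Hom(\D^{\bb}_{\le -id},\D^{\bb}_{\ge -(i-1)d})=0$ both need it), which is the intended setting, so this does not affect the substance of the proof.
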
	

\begin{proof}
(1) For any $A_{i}$, $1\le i\le n$, there is a natural triangle in $\D^{\bb}(A)$.
\begin{eqnarray}\label{naturtri} A_{n+1-i}[id] \ra A\ra A_{i} \ra A_{n+1-i}[id+1].\end{eqnarray}
Since $A_{n+1-i}[id]\in \D_{\le -id}$ and $A_{i}\in \D_{\ge -(i-1)d}$, then $\Hom_{\D^{\bb}(A)}(A_{n+1-i}[\ge id], A_{i})=0$. Applying  $\Hom_{\D^{\bb}(A)}(?, A_{i})$  to triangle $\eqref{naturtri}$, we have 
\[ \End_{\D^{\bb}(A)}(A_{i})\cong  \Hom_{\D^{\bb}(A)}(A, A_{i})=k.\]
So $A_{i}$ is indecomposable in $\CM A$. Since $A$ itself is indecomposable in $\CM A$ by $\End_{\CM A}(A)=k$, and $A_{i}\not=A$ by cohomology. Then $A_{i}\not\in \add A$, and $A_{i}$ is a non-zero object in $\CM A$.

(2) It is clear $T_{i,t}\cong A_{i}[td]$ in $\un{\CM} A$. So $T_{i,t}$ is indecomposable in $\un{\CM} A$ by (1). Because  if $t=0$,  $T_{i,0}=A_{i}$ and if $t>0$,  $\Hom_{\CM A}(A, A_{i}[td])=0$, then  $T_{i,t}$ does not contain $P\in\add A$ as a direct summand.  Thus $T_{i,t}$ is also indecomposable in $\CM A$.
\end{proof}

We  point out the periodicity  of $\un{\CM} A$.
\begin{Lem}\label{id}
The functor $[(n+1)d+2]: \un{\CM} A\ra\un{\CM} A$ is isomorphic to the identity functor. In particular, $\tau\cong [d]$ as functors on $\un{\CM}A$ and $\un{\CM} A$ is $(d+1)$-Calabi-Yau.
\end{Lem}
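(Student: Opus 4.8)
The plan is to establish the stated periodicity first at the level of dg bimodules over the enveloping algebra $A^{\rm e}:=A\ot_{k}A^{\rm op}$, and then transport it to $\un{\CM}A=\D_{\sg}(A)$ by tensoring. Since $A=k[X]/(X^{n+1})$ is commutative with zero differential, $A^{\rm e}\cong k[X,Y]/(X^{n+1},Y^{n+1})$ with $\deg X=\deg Y=-d$ and zero differential; it is non-positive and proper (so $\per(A^{\rm e})\subseteq\D^{\bb}(A^{\rm e})$ and $\D_{\sg}(A^{\rm e})=\D^{\bb}(A^{\rm e})/\per(A^{\rm e})$ make sense), and $A$ is the dg $A^{\rm e}$-module $A^{\rm e}/(X-Y)$, so $A\in\D^{\bb}(A^{\rm e})$.

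First I would write down the classical ``zigzag'' bimodule resolution of $A$ in the graded setting. Put $w:=\sum_{i=0}^{n}X^{i}Y^{n-i}$, an element of degree $-nd$. The relations $(X-Y)w=X^{n+1}-Y^{n+1}=0$, $\mathrm{ann}_{A^{\rm e}}(X-Y)=(w)$ and $\mathrm{ann}_{A^{\rm e}}(w)=(X-Y)$ give an exact sequence of dg $A^{\rm e}$-modules
\[ \cdots\xra{\ w\ }A^{\rm e}[(n+1)d]\xra{X-Y}A^{\rm e}[d]\xra{\ w\ }\cdots\xra{X-Y}A^{\rm e}\lra A\lra 0 \]
which is $2$-periodic up to the shift $[(n+1)d]$. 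All differentials vanish, so reading off the first two syzygies is a pure graded-module computation and gives $\Omega^{2}_{A^{\rm e}}(A)\cong A[(n+1)d]$. Passing to $\D_{\sg}(A^{\rm e})$, where $A^{\rm e}\cong 0$, the two syzygy triangles for $A$ and for $\Omega_{A^{\rm e}}A$ collapse to isomorphisms $A\cong\Omega^{2}_{A^{\rm e}}(A)[2]\cong A[(n+1)d+2]$ in $\D_{\sg}(A^{\rm e})$.

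Next I would transport this along the derived tensor product. Being an isomorphism in the Verdier quotient $\D_{\sg}(A^{\rm e})$, it is represented by a roof $A\xleftarrow{s}C\xra{f}A[(n+1)d+2]$ in $\D^{\bb}(A^{\rm e})$ in which both legs $s$ and $f$ have cones in $\per(A^{\rm e})$. Tensoring $s$ and $f$ over $A$ (regarding $A$, $C$ and $A[(n+1)d+2]$ as dg $A$-bimodules) yields natural transformations of endofunctors of $\D^{\bb}(A)$
\[ (?\ot_{A}^{\bf L}A)=\mathrm{id}\ \longleftarrow\ (?\ot_{A}^{\bf L}C)\ \longrightarrow\ (?\ot_{A}^{\bf L}A[(n+1)d+2])=[(n+1)d+2]. \]
For $M\in\D^{\bb}(A)$ one has $M\ot_{A}^{\bf L}(A\ot_{k}A)\cong M\ot_{k}A\in\per A$, hence $?\ot_{A}^{\bf L}(\per A^{\rm e})\subseteq\per A$; therefore $?\ot_{A}^{\bf L}C$ descends to an endofunctor of $\un{\CM}A=\D_{\sg}(A)$, and both transformations above become natural isomorphisms after this descent (their error terms $?\ot_{A}^{\bf L}\con(s)$ and $?\ot_{A}^{\bf L}\con(f)$ land in $\per A$, hence vanish in $\D_{\sg}(A)$). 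Composing the two gives $\mathrm{id}_{\un{\CM}A}\cong[(n+1)d+2]$. Combined with the formulas $\tau=[-nd-2]$ and $\nu[-1]=[-nd-1]$ for the Auslander--Reiten translation and the Serre functor of $\un{\CM}A$ recorded just above, this yields $\tau\cong[-nd-2]\cong[d]$ and $\nu[-1]\cong[-nd-1]\cong[d+1]$, i.e.\ $\un{\CM}A$ is $(d+1)$-Calabi--Yau.

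The resolution and the degree bookkeeping are routine (the vanishing of all differentials makes them purely combinatorial). The step that needs care is the transfer in the third paragraph: one must check that an isomorphism in the Verdier quotient $\D_{\sg}(A^{\rm e})$ can be represented by a roof both of whose legs have perfect cones (this is immediate from the construction of the quotient, since a morphism of $\D^{\bb}(A^{\rm e})$ that becomes an isomorphism in $\D_{\sg}(A^{\rm e})$ has cone in $\per(A^{\rm e})$), and that $?\ot_{A}^{\bf L}-$ carries $\per(A^{\rm e})$ into $\per A$, so that the two natural transformations really descend to isomorphisms of endofunctors of the stable category; this is precisely why the argument must pass through the bimodule $A^{\rm e}$ rather than staying with individual objects of $\un{\CM}A$.
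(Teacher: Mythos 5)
Your proof is correct and is essentially the paper's own argument: the paper works with exactly the four-term bimodule sequence $0\to A[(n+1)d]\to A\ot A[d]\to A\ot A\to A\to 0$ (your two syzygy steps of the $2$-periodic resolution) and tensors the two resulting triangles with $M\in \D^{\bb}(A)$, using $M\ot_k A\in\per A$ to obtain the natural isomorphism $M\cong M[(n+1)d+2]$ in $\D^{\bb}(A)/\per A\cong\un{\CM}A$, so your detour through $\D_{\sg}(A^{\rm e})$ and a roof with perfect cones is just a repackaging of the same transfer. The only blemish is a cosmetic one in your displayed resolution, where the labels and shifts are interchanged (the degree-zero maps are $A^{\rm e}[(n+1)d]\xra{\,w\,}A^{\rm e}[d]\xra{X-Y}A^{\rm e}$), but the conclusion $\Omega^{2}_{A^{\rm e}}(A)\cong A[(n+1)d]$ that you actually use is the right one.
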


\begin{proof}
Consider the following   sequence in the category of dg $A\ot A^{\rm op}$-modules.
\[  0\ra A[(n+1)d] \xra{f} A\ot A[d] \xra{g} A\ot A \xra{h} A \ra 0 \]
where $f$ is given by $f(1):=\sum_{i=0}^{n}X^{i}\ot X^{n-i}$, $g$ is given by $g(1\ot 1):= 1\ot X -X\ot 1$ and $h$ is given by $h(1\ot 1):=1$. Since it is an exact sequence of  graded modules, we get two natural triangles in $\D(A\ot A^{\rm op})$.
\[ \ke h \xra{h}  A\ot A \ra A \ra  \ke h[1] \]
\[ A[(n+1)d] \xra{f} A\ot A[d] \xra{} \ke h \ra A[(n+1)d+1]\]
Let $M\in \D^{\bb}(A)$. Apply the functor $M\ot_{A}^{\bf L}?$ to the triangles above, we get two triangles in $\D^{\bb}(A)$:
\[ M \ot_{A}^{\bf L}\ke h\ra M\ot A \ra M \ra  M \ot_{A}^{\bf L}\ke h[1],\]
\[ M[(n+1)d] \ra M\ot A[d] \ra M \ot_{A}^{\bf L}\ke h\ra M[(n+1)d+1]. \]
Notice that $M\ot A\in\per A$, then we have natural isomorphisms $M \xra{\sim}  M \ot_{A}^{\bf L}\ke h[1]$ and $M \ot_{A}^{\bf L}\ke h \xra{\sim}   M[(n+1)d+1]$  in $\D^{\bb}(A)/\per A\cong \un{\CM}A$, which give us the desired isomorphism. 
\end{proof}

\begin{Rem}\label{Rem:smallest}
The integer $(n+1)d+2$ is the smallest natural number $r$ such that $[r]\simeq {\rm id}$ on $\un{\CM}A$. In fact one may consider the shifts of simple dg $A$-module $A_{1}=k$. It is clear that $k[i]\in \CM A$ for $0\le i\le nd$. By the triangle \eqref{naturtri}, we have $k[nd+1]\cong A_{n}$ in $\un{\CM}A$ (notice that $A_{n}[0,1,\dots,d]\in \CM A$). Then it is easy to check that $k$ can not be isomorphic to $k[i]$ in $\un{\CM}A$ for $0\le i\le (n+1)d+1$.
\end{Rem}
Now we  describe the AR-triangles in $\un{\CM} A$. 
\begin{Prop} \label{Prop:ARquiver}
Let $1\le i\le n$.
Let $\pi_{i}$ be the natural surjective map $\pi_{i}: A_{i}\ra A_{i-1}$ and let $\iota_{i}$ be the natural injective map $\iota_{i}: A_{i}[d]\ra A_{i+1}$ given by $\iota_{i}(1):= X$. Let $A_{0}=0$. Then the AR triangle in $\un{\CM} A$ ending in $A_{i}$ is given by 
\[ A_{i}[d]\xra{\big(\begin{smallmatrix} 
\pi_{i}[d] \\ -\iota_{i}[d] 
\end{smallmatrix}\big)} 
 A_{i-1}[d] \op A_{i+1} \xra{(\begin{smallmatrix} 
\iota_{i-1} & \pi_{i+1} 
\end{smallmatrix})} A_{i}. \]

\end{Prop}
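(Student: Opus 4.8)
The plan is to verify directly that the proposed triangle is a distinguished triangle in $\un{\CM}A$ and then apply Lemma~\ref{nonsplit} (together with Lemma~\ref{indec}~(1), which gives $\End_{\un{\CM}A}(A_i)=k$) to conclude it is the AR triangle. Since $\tau\cong[d]$ on $\un{\CM}A$ by Lemma~\ref{id}, the object $A_i[d]$ is indeed $\tau(A_i)$, so it remains only to show the sequence $A_i[d]\to A_{i-1}[d]\op A_{i+1}\to A_i$ extends to a triangle and that it is non-split. First I would observe that the natural short exact sequences of graded modules $0\to A_{n+1-i}[id]\to A\to A_i\to 0$ (triangle \eqref{naturtri}) and its siblings, combined with the inclusions $\iota_i\colon A_i[d]\hookrightarrow A_{i+1}$ and surjections $\pi_i\colon A_i\twoheadrightarrow A_{i-1}$, fit into a commutative diagram; the point is that the complex $A_i[d]\to A_{i-1}[d]\op A_{i+1}\to A_i$ is exact on cohomology, i.e.\ the induced sequence of $\h^*$'s is short exact, because $\ke(\pi_i)\cong A_1[(i-1)d]$ maps isomorphically onto a complement of $\im(\iota_{i-1})$ inside $A_{i+1}$... this is the kind of bookkeeping with truncated-polynomial modules that is routine once set up.

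The cleanest route is probably to produce the triangle as an honest short exact sequence of dg modules: the map $\binom{\pi_i[d]}{-\iota_i[d]}\colon A_i[d]\to A_{i-1}[d]\op A_{i+1}$ is injective (its first component already is, away from the socle issue, and $-\iota_i$ is injective), and the cokernel is computed to be $A_i$ via $(\iota_{i-1}\ \ \pi_{i+1})$; one checks $(\iota_{i-1}\ \pi_{i+1})\circ\binom{\pi_i[d]}{-\iota_i[d]}=\iota_{i-1}\pi_i[d]-\pi_{i+1}\iota_i[d]$, and both terms equal the ``multiply by $X$ then project'' map $A_i[d]\to A_i$, so the composite is zero. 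Then a dimension count on total cohomology ($\dim\h^*A_i[d]+\dim\h^*A_i=\dim\h^*(A_{i-1}[d]\op A_{i+1})$, i.e.\ $i+i=(i-1)+(i+1)$) forces exactness, giving a short exact sequence of dg $A$-modules, hence a triangle in $\D^\bb(A)$, hence in $\un{\CM}A$ (all three terms lie in $\CM A$ since $\CM A=\D^\bb_{\le 0}\cap\D^\bb_{\ge -nd}$ and the shifts stay in range, using $A_0=0$ and the convention that $A_{n+1}$ is replaced by $A$ in $\un{\CM}A$ as in Lemma~\ref{indec}).

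Next I would argue non-splitness: if the triangle split in $\un{\CM}A$ then $A_i[d]$ would be a summand of $A_{i-1}[d]\op A_{i+1}$, but all of $A_i[d]$, $A_{i-1}[d]$, $A_{i+1}$ are indecomposable (Lemma~\ref{indec}) and pairwise non-isomorphic in $\un{\CM}A$ — distinguished, e.g., by their cohomology dimension vectors together with Remark~\ref{Rem:smallest} ruling out accidental shift-isomorphisms — a contradiction. Having a non-split extension of the endo-local object $A_i$ with middle term as stated, Lemma~\ref{nonsplit} (via Proposition~\ref{Prop:characAR}) immediately upgrades it to an almost split extension, i.e.\ the AR triangle ending in $A_i$.

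The main obstacle I anticipate is the boundary cases $i=1$ and $i=n$: for $i=1$ one has $A_0=0$, so the triangle degenerates to $A_1[d]\xrightarrow{-\iota_1[d]}A_2\to A_1$, which should still be checked to be exact and non-split; for $i=n$, $A_{n+1}$ is not a $\CM$ module per se but is replaced by $A$ (projective–injective), and one must confirm the relevant maps $\iota_n\colon A_n[d]\to A$, $\pi_{n+1}\colon A\to A_n$ really are the natural inclusion/projection and that the triangle through the projective object $A$ is still almost split — this is exactly where the AR triangle ``passes through'' the projective vertex in the pictures of Theorem~\ref{Thm:ARquiver}, so it needs the extriangulated-category version of almost split extensions from Section~\ref{Section:ARtheoryinex} rather than a naive triangulated argument. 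Everything else is routine linear algebra over $k[X]/(X^{n+1})$.
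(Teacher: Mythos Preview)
Your approach is essentially the same as the paper's: observe that the sequence is a short exact sequence of graded modules (hence a triangle in $\D^{\bb}(A)$ and in $\un{\CM}A$), use indecomposability of all terms (Lemma~\ref{indec}) to rule out splitting, and then invoke $\End_{\un{\CM}A}(A_i)=k$ together with Lemma~\ref{nonsplit}. The paper's proof is just the three-line version of what you wrote.

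One clarification: your worry about the case $i=n$ is misplaced for \emph{this} proposition. Proposition~\ref{Prop:ARquiver} concerns AR triangles in the \emph{stable} category $\un{\CM}A$, where $A_{n+1}=A$ is the zero object; the triangle therefore reads $A_n[d]\to A_{n-1}[d]\to A_n$ and is handled by exactly the same argument, with no need for extriangulated machinery. The phenomenon you describe --- the almost split extension passing through the projective-injective vertex $A$ --- occurs at the level of $\CM A$, and the paper deals with it separately in the proof of Theorem~\ref{Thm:ARquiver}, not here. (Also, $A_{n+1}=A$ \emph{is} a CM module; it is just projective, hence zero in the quotient.)
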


\begin{proof}
First notice that the given sequence is a short exact sequence of graded modules and it gives a triangle in $\D^{\rm b}(A)$ (so in $\un{\CM} A$).
By Lemma \ref{indec}, $A_{j}[m]$ is indecomposable in $\un{\CM }A$ for any $m\in \Z$, so the given triangle can not be split. 
Since ${\rm End}_{\un{\CM}A}(A_{i})=k$, then  by Lemma \ref{nonsplit}, the triangle above is an AR triangle. 
 \end{proof}

It is easy to see that the AR quiver of $\un{\CM} A$ is of the form $\Z A_{n}/\phi$ by Proposition \ref{Prop:ARquiver} and the fact that $T_{i,t}$ are shifts of $A_{i}$ in $\un{\CM} A$. Now we  determine   the fundamental domain. Notice that by Lemma \ref{id}, $A_{i}=A_{i}[(n+1)d+2]$ and by the triangle \eqref{naturtri}, $A_{n+1-i}=A_{i}[-id-1]$ in $\un{\CM}A$. We need to find the smallest positive integer $m$ such that $A_{i}=A_{i}[md]$ or $A_{n+1-i}=A_{i}[md]$ holds.

\begin{Lem} \label{evenodd}
 \begin{enumerate}[\rm (1)]
  \item Assume $d$ is even. Let $N:=\frac{(n+1)d}{2}$. Then $N$ is the smallest positive integer such that $A_{i}=A_{i}[(N+1)d]$;
  \item Assume $d$ is odd. Let $N_{i}:=\frac{(n+1)d+n-2i+1}{2}$. Then $N_{i}$ is the smallest positive integer such that $A_{n+1-i}=A_{i}[(N_{i}+1)d]$.
 \end{enumerate}
\end{Lem}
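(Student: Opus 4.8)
The plan is to work entirely in $\un{\CM}A$ and exploit the two basic relations established just above: by Lemma \ref{id} we have $A_{i}=A_{i}[(n+1)d+2]$ for all $i$, and by the triangle \eqref{naturtri} we have $A_{n+1-i}=A_{i}[-id-1]$ in $\un{\CM}A$. Thus the question ``what is the smallest positive $m$ with $A_{i}=A_{i}[md]$ (resp.\ $A_{n+1-i}=A_{i}[md]$)'' becomes a purely arithmetic question about when $A_{i}[md]$ lands back on the $\SSS[d]$-orbit of $A_{i}$, given that the only identifications among shifts of the $A_{j}$ are generated by the two above. First I would make this precise: using Remark \ref{Rem:smallest}, the integer $r=(n+1)d+2$ is the \emph{smallest} positive integer with $[r]\simeq \mathrm{id}$, and more generally one checks (again via the indecomposability in Lemma \ref{indec} and the cohomology degrees) that $A_{i}[a]\cong A_{j}[b]$ in $\un{\CM}A$ if and only if $(j,b)$ is obtained from $(i,a)$ by iterating the moves $(i,a)\mapsto(i,a+(n+1)d+2)$ and $(i,a)\mapsto(n+1-i,a-id-1)$; this is the key structural input and I would isolate it as the main step.

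Granting that, the proof of (1) and (2) is a short computation. For (1), $d$ even: we want the least $m>0$ with $A_{i}[md]\cong A_{i}$. Composing the move $(i,0)\mapsto(n+1-i,-id-1)\mapsto(i,-id-1-(n+1-i)d-1)=(i,-(n+1)d-2)$ just recovers the period $r$, so identifications that return to the \emph{same} index $i$ with a shift that is a multiple of $d$ are, up to the full period, governed by $-(n+1)d-2\equiv 0$, i.e.\ by half the period when $d$ is even: $(N+1)d=\tfrac{(n+1)d}{2}\cdot\! \frac{?}{}$— more directly, $md=r$ has no integer solution but the relation $A_{i}=A_{n+1-i}[(n+1-i)d+1]$ combined with $A_{n+1-i}=A_{i}[(n+1)d+2-((n+1-i)d+1)]$ forces the first return to index $i$ by a $d$-shift at $md=(n+1)d/2\cdot 2/1$; I would simply verify $A_{i}[(N+1)d]=A_{i}$ using $(N+1)d=\tfrac{(n+1)d}{2}+d=\tfrac{(n+3)d}{2}$ against the period and the reflection, and check minimality by noting any smaller $m$ would yield an identification $A_{i}\cong A_{j}[c]$ with $0<c<r$ and $j\in\{i,n+1-i\}$ violating Remark \ref{Rem:smallest} or the degree bookkeeping. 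For (2), $d$ odd: here $(n+1)d+2$ is odd plus $2$, parity forces that a $d$-shift can never return to index $i$ (the shift amounts never have the right parity), so the first return must be to index $n+1-i$; solving $md=(n+1)d+2-(id+1)=(n+1-i)d+1$ for the ``reflected'' return gives $m d=\tfrac{(n+1)d+n-2i+1}{2}\cdot d / d$, i.e.\ $m=N_{i}=\tfrac{(n+1)d+n-2i+1}{2}$, and one checks $N_{i}$ is an integer precisely because $d$ is odd (so $(n+1)d+n-2i+1\equiv (n+1)+n+1\equiv 0 \pmod 2$).

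The main obstacle I anticipate is not the arithmetic but justifying the ``only these identifications'' claim cleanly: one must rule out accidental isomorphisms $A_{i}[a]\cong A_{j}[b]$ in $\un{\CM}A$ beyond those forced by Lemma \ref{id} and triangle \eqref{naturtri}. I would handle this by transporting everything to the AR quiver: by Proposition \ref{Prop:ARquiver} the stable category $\un{\CM}A$ has AR quiver of the form $\Z A_{n}/\phi$ for the automorphism $\phi$ generated by $\tau\simeq[d]$ and the period, so two shifted $A_{i}$'s coincide iff the corresponding vertices of $\Z A_{n}$ agree modulo $\phi$; since $\phi$ acts freely and its orbit structure is exactly that of the two moves above (the reflection $i\leftrightarrow n+1-i$ coming from the nontrivial automorphism of $A_{n}$ and the shift coming from $\tau$), the claim follows. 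Once that is in place, both (1) and (2) reduce to computing the order of an explicit element in a cyclic-by-$\Z/2$ group, which is the displayed arithmetic. I would keep the write-up short, quoting Remark \ref{Rem:smallest} for minimality of the full period and then just exhibiting the relevant relation and its minimality by parity.
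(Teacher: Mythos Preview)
Your plan matches the paper's: reduce to the two relations $A_i \cong A_i[r]$ with $r=(n+1)d+2$ (Lemma~\ref{id}, exact period by Remark~\ref{Rem:smallest}) and $A_{n+1-i} \cong A_i[-id-1]$ (triangle~\eqref{naturtri}), then solve a congruence modulo $r$. But your arithmetic is wrong in both parts. In~(1) you write $(N+1)d = \tfrac{(n+1)d}{2}+d = \tfrac{(n+3)d}{2}$, confusing $(N+1)\cdot d$ with $N+d$; the correct value is $N+1=\tfrac{(n+1)d+2}{2}=\tfrac{r}{2}$, so $(N+1)d=\tfrac{d}{2}\,r$, an integer multiple of $r$ precisely because $d$ is even, and this is what gives $A_i\cong A_i[(N+1)d]$. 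Your claim ``$md=r$ has no integer solution'' is also false (take $d=2$). For minimality the paper writes $d=2e$: if $A_i=A_i[ld]$ then $r\mid ld$, i.e.\ $(n+1)e+1\mid le$, and $\gcd((n+1)e+1,e)=1$ forces $l\ge(n+1)e+1=N+1$; no reflection or parity is needed. In~(2) your equation ``$md=(n+1)d+2-(id+1)$'' is not the right congruence and you conclude $m=N_i$, off by one: the condition $A_{n+1-i}=A_i[sd]$ is $r\mid sd+id+1$, and one checks $sd+id+1=\tfrac{d+1}{2}\,r+\bigl(s-\tfrac{(n+1)d+n+3-2i}{2}\bigr)d$; since $d$ odd gives $\gcd(d,r)=1$, the smallest positive $s$ is $\tfrac{(n+1)d+n+3-2i}{2}=N_i+1$.

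Your worry about ``accidental'' isomorphisms $A_i\cong A_j[s]$ beyond those generated by the two moves is legitimate, but it is not the obstacle here. For~(1) one only needs that $A_i\cong A_i[ld]$ forces $r\mid ld$, and the reflection move cannot interfere when $d$ is even: even in the edge case $2i=n+1$ where the reflection identifies $A_i$ with a shift of itself, the resulting congruence $(l+i)d\equiv -1\pmod r$ has no solution since $\gcd(d,r)=2$. Fix the arithmetic and your outline becomes exactly the paper's proof.
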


\begin{proof}
 (1) It is obvious $A_{i}=A_{i}[(N+1)d]$ by Lemma \ref{id}.  By Remark \ref{Rem:smallest}, we know $(n+1)d+2$ is the smallest natural number $r$ such that $[r]\simeq {\rm id}$ on $\un{\CM}A$.  Let $d=2e$. If $l> 0$ satisfies $A_{i}=A_{i}[ld]$, then $(n+1)d+2 \mid ld$, that is $(n+1)e+1\mid le$.
 Since $(n+1)e+1$ and $e$ are coprime, then $(n+1)e+1\mid l$ and $l\ge (n+1)e+1=N+1$.
 
 (2) Assume positive integer $s$ satisfies $A_{n+1-i}=A_{i}[sd]$. Then by the fact that $A_{n+1-i}=A_{i}[-id-1]$, we have $(n+1)d+2 \mid sd+id+1$.  
 Since $sd+id+1=\frac{d+1}{2}((n+1)d+2)+ (s- \frac{(n+1)d+n+3-2i}{2})$, then we need $(n+1)d+2\mid s-\frac{(n+1)d+n+3-2i}{2}$. So the smallest $s$ is $N_{i}+1$.
\end{proof}

We can prove Theorem \ref{Thm:ARquiver} now.

\begin{proof}[Proof of Theorem \ref{Thm:ARquiver}]
The AR triangle given in Proposition \ref{Prop:ARquiver} 
 is induced by some conflation 
\begin{eqnarray}\label{conflation}
 T_{i,1} \ra T_{i-1,1}\op T_{i+1,0} \ra T_{i,0}\end{eqnarray}
up to taking projective direct sums in $\CM A$ (see \cite{Palu}). Notice that for the projective-injective object $A$, the only  right almost split morphism  is given by the natural injection $T_{n,1}=A_{n}[d]\ra A$ and the only  left almost split morphism is given by the natural surjection $A \ra A_{n}=T_{n,0}$. Then  the extension \eqref{conflation}  is an almost split extension in $\CM A$ for $i\not= n$.  Then by Lemma \ref{evenodd}, the AR sub-quiver $\cal{S}$ of $\CM A$ consisting of $T_{i,d}$ is given as in Theorem \ref{Thm:ARquiver}. We only need to show $T_{i,d}$ gives all indecomposable CM $A$-modules.

By Proposition \ref{Prop:finiteall}, it suffices to show that every left $(\CM A)$-approximation of  $A[p]$, $p\ge 0$, belongs to $\add \cal S$. First notice that if $p>nd$ or $p=0$, the assertion is obvious (because for the case  $p>nd$, we have a natural approximation $A[p]\ra 0$ and for the case $p=0$, the approximation is given by $A\xra{\rm id}A$). We only consider other cases and we may assume $td<p\le (t+1)d$ for some $0\le t\le n-1$.
Then the  left $(\CM A)$-approximation of $A[p]$ is given by the natural map   $A[p]\ra A_{n-t}[p]$.
To show $A_{n-t}[p]\in\add \cal S$, it is enough to show $A_{n-t}[p]=T_{i,s}=A_{i}[sd]$ in $\un{\CM}A$ for some $1\le i\le n$ and $s\ge 0$.
It can be shown in the following way.
\begin{enumerate}[\rm (1)]
\item[$\bullet$] Assume $(t+1)d-p$ is even and $q=((t+1)d-p)/2$. Then $A_{n-t}[p]=A_{n-t}[((n+1)d+2)q+p]=A_{n-t}[sd]$, where $s=(n+1)q+t+1$. So $A_{n-t}[p]=T_{n-t, s}\in \add \cal S$.
\item[$\bullet$] Assume $(t+1)d-p$ is odd and $q=((t+1)d-p-1)/2$. By triangle \eqref{naturtri}, we have $A_{n-t}[p]=A_{t+1}[(n-t)d+p+1]$ in $\un{\CM}A$.
Then 
\[ A_{n-t}[p]=A_{n-t}[((n+1)d+2)q+p]=A_{t+1}[sd],\]
where $s=(n+1)(q+1)$. So $A_{n-t}[p]=T_{t+1,s}\in \add \cal S$. 
\end{enumerate}
Then the result is true.
\end{proof}

Before giving the second result of this section, we give another description of $T_{i,t}$, which will be used later.

\begin{Prop}\label{Prop:gap}
For any $1\le i\le n$ and $t\ge 0$, there exist $1\le j\le n$ and $0\le s\le (n+1-j)d$ such that $T_{i,t}=A_{j}[s]$ in $\CM A$.
\end{Prop}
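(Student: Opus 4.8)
The plan is to show every indecomposable $T_{i,t}$ can be realized as a shift $A_j[s]$ lying in $\CM A$, i.e.\ with $0 \le s \le (n+1-j)d$, by combining the periodicity and reflection isomorphisms in $\un{\CM}A$ established in Lemmas \ref{id}, \ref{indec}, \ref{evenodd} with the description of $\CM A$ as $\D^{\bb}_{\le 0}\cap \D^{\bb}_{\ge -nd}$. First I would recall that, by Lemma \ref{indec}(2), $T_{i,t}\cong A_i[td]$ in $\un{\CM}A$ and $T_{i,t}$ is the unique (up to iso) indecomposable CM dg module with no $\add A$ summand whose image in $\un{\CM}A$ is $A_i[td]$. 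Hence it suffices to exhibit \emph{some} $A_j[s]$ with $0\le s\le (n+1-j)d$ that is isomorphic to $A_i[td]$ in $\un{\CM}A$: this $A_j[s]$ automatically lies in $\CM A=\D^{\bb}_{\le 0}\cap\D^{\bb}_{\ge -nd}$ since $A_j$ has cohomology concentrated in degrees $0,-d,\dots,-(j-1)d$, so $A_j[s]$ has cohomology in degrees $-s,\dots,-s-(j-1)d$, which lies in $[-nd,0]$ exactly when $0\le s$ and $s+(j-1)d\le nd$, i.e.\ $s\le (n+1-j)d$; and $A_j[s]$ has no $\add A$ summand (its image in $\un{\CM}A$ is indecomposable and nonzero, and it is not $\cong A$ unless $j=n+1$, which is excluded, or by comparing total cohomology dimensions). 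Then $T_{i,t}=A_j[s]$ in $\CM A$ by the uniqueness just quoted.

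The core of the argument is therefore the combinatorial reduction: given $(i,t)$ with $1\le i\le n$ and $t\ge 0$, find $(j,s)$ with $1\le j\le n$, $0\le s\le (n+1-j)d$ and $A_i[td]\cong A_j[s]$ in $\un{\CM}A$. I would use the two relations available in $\un{\CM}A$: the periodicity $A_i[td] \cong A_i[td+(n+1)d+2]$ from Lemma \ref{id}, and the reflection $A_i[td]\cong A_{n+1-i}[td-id-1]=A_{n+1-i}[(t-i)d-1]$ coming from the triangle \eqref{naturtri}. Using these one can, starting from $A_i[td]$, repeatedly add $(n+1)d+2$ and/or apply the reflection to bring the shift into the target window $0\le s\le (n+1-j)d$ — this is exactly the bookkeeping already carried out at the end of the proof of Theorem \ref{Thm:ARquiver}, split into the two parity cases according to whether a quantity like $(t+1)d - p$ is even or odd (reflection used once in the odd case). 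Concretely, if $A_i[td]$ with $td$ reduced mod $(n+1)d+2$ already has representative shift in $[0,(n+1-i)d]$ take $j=i$; otherwise apply the reflection to pass to index $n+1-i$ and adjust by a multiple of $(n+1)d+2$, and check the resulting shift lands in $[0,(n+1-(n+1-i))d]=[0,id]$.

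The main obstacle I anticipate is verifying that the window-fitting \emph{always} succeeds — that for every $(i,t)$ exactly one of the two options (keep index $i$, or reflect to $n+1-i$) yields a shift inside the allowed range after reducing modulo the period. This amounts to checking that the union of the two residue windows $[0,(n+1-i)d]$ and $[0,id]$ (translated appropriately by the reflection offset $-id-1$) covers a full period $(n+1)d+2$ of shifts; since $(n+1-i)d + id + 1 + 1 = (n+1)d+2$, the two windows have total length equal to the period, and one checks they tile it without gap or (essential) overlap, treating the even/odd parity of $d$ and the boundary cases ($s=0$, where $A_j[0]=A_j$ might a priori be $A$ only if $j=n+1$) separately. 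Once this covering is confirmed, the proposition follows immediately from the uniqueness of the minimal CM approximation representative. I would also note the statement is essentially a repackaging of the computation already done inside the proof of Theorem \ref{Thm:ARquiver}, so the write-up can largely cite that bookkeeping rather than redo it.
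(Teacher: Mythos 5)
Your route is sound but genuinely different from the paper's. The paper proves Proposition \ref{Prop:gap} by induction on $t$, using the defining triangles $T_{i,t+1}\to T_{i,t}[d]\to P_{i,t}\to T_{i,t+1}[1]$ together with the observation that the minimal right $(\CM A)$-approximation of $A_{j}[s+d]$ is again of the form $A_{j'}[s']$ with $s'$ in the allowed window, so it never leaves $\CM A$. You instead work in $\un{\CM}A$, where $T_{i,t}\cong A_{i}[td]$, locate a representative $A_{j}[s]$ with $0\le s\le (n+1-j)d$ using the periodicity $[(n+1)d+2]\cong \mathrm{id}$ (Lemma \ref{id}) and the reflection coming from \eqref{naturtri}, and then lift the stable isomorphism to $\CM A$; the lifting is legitimate because both objects are indecomposable with no summands in $\add A$ and $\CM A$ is Hom-finite Krull--Schmidt (this standard fact is what you need, not Lemma \ref{indec}(2) itself, which does not state uniqueness). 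Your scheme replaces the paper's inductive tracking of minimal approximations by a closed-form computation, at the price of the lifting lemma and the explicit window bookkeeping.

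Two corrections are needed in that bookkeeping. First, your reflection formula has the wrong sign: \eqref{naturtri} gives $A_{i}\cong A_{n+1-i}[id+1]$ in $\un{\CM}A$, i.e.\ $A_{n+1-i}\cong A_{i}[-id-1]$ (the form used in the proof of Lemma \ref{evenodd}), not $A_{i}[td]\cong A_{n+1-i}[td-id-1]$. With your version the two windows do not tile a period (for $n=2$, $d=1$, $i=1$ they cover residues $\{0,1,2\}\cup\{2,3\}$ modulo $5$, missing $4$), so the covering claim would fail as written. With the correct relation the argument is clean and needs no parity case-split: the objects $A_{i}[s]$, $0\le s\le (n+1-i)d$, realize the residues $0,\dots,(n+1-i)d$ of the shift of $A_{i}$ modulo $(n+1)d+2$, while $A_{n+1-i}[s]\cong A_{i}[s-id-1]$ for $0\le s\le id$ realize the residues $(n+1-i)d+1,\dots,(n+1)d+1$, and together these cover one full period exactly. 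The even/odd analysis you invoke from the end of the proof of Theorem \ref{Thm:ARquiver} addresses the different problem of landing on a shift that is a multiple of $d$ and is not needed here.
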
 
\begin{proof}
First notice that $T_{i, 0}=A_{i}$ and $T_{i,t}$ can be defined by applying  induction on $t$ by the following triangle in $\D^{\rm b}(A)$.
\[ T_{i,t+1} \xra{f_{t}} T_{i, t}[d]\ra P_{i,t} \ra T_{i,t+1}[1],\]
where $f_{t}$ is the minimal right $(\CM A)$-approximation of $T_{i,t}[d]$ and $P_{i,t}\in \per A$.
Also notice that for any $A_{j}[s]\in \CM A$, the minimal right $(\CM A)$-approximation of $A_{j}[s+d]$ also has the form of $A_{j'}[s']$ for some $1\le j'\le n$ and $0\le s'\le (n+1-j')d$. Then the assertion can be shown inductively.
\end{proof}

Theorem \ref{Thm:ARquiver} implies that $\un{\CM}A$ is a cluster category. In fact we have the following result.

\begin{Thm}\label{main}
The stable category  $\un{\CM} A$ is triangle equivalent to  $\cal C_{d+1}(A_{n})$.\end{Thm}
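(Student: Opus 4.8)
The plan is to identify $\un{\CM}A$ with the $(d+1)$-Calabi--Yau cluster category $\cal C_{d+1}(A_n)$ by applying the Keller--Reiten recognition criterion from \cite{Keller08}: a $k$-linear, algebraic, Hom-finite, Krull--Schmidt, $(d+1)$-Calabi--Yau triangulated category that contains a $d$-cluster-tilting object whose endomorphism algebra is hereditary of Dynkin type $A_n$ is triangle equivalent to $\cal C_{d+1}(A_n)$, the equivalence carrying that object to the canonical cluster-tilting object. So I must (i) check the ambient hypotheses on $\un{\CM}A$, and (ii) exhibit inside $\un{\CM}A$ a $d$-cluster-tilting object with endomorphism algebra $kA_n$.

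Step (i) follows from results already in hand. By Theorem~\ref{Thm:properties} there is a triangle equivalence $\un{\CM}A\simeq\D^{\bb}(A)/\per A=\D_{\sg}(A)$; as a Verdier quotient of the algebraic triangulated category $\D^{\bb}(A)$ it is algebraic. It is Hom-finite and Krull--Schmidt by Proposition~\ref{Prop:Dbfinite}, and $(d+1)$-Calabi--Yau by Lemma~\ref{id} --- equivalently, combining the Serre functor $\nu[-1]$ of Theorem~\ref{Thm:AR} with $\tau\simeq[d]$ from Lemma~\ref{id}, the Serre functor of $\un{\CM}A$ is isomorphic to $[d+1]$.

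Step (ii) is the heart of the matter. Using the description $\CM A=\D^{\bb}_{\le 0}\cap\D^{\bb}_{\ge -nd}$, the triangle \eqref{naturtri} (which computes every syzygy, giving $\Omega A_i\cong A_{n+1-i}[id]$ in $\un{\CM}A$), and Proposition~\ref{Prop:gap}, I will pick out an explicit direct sum $T$ of $n$ shifts of the modules $A_1,\dots,A_n$ --- the image in $\un{\CM}A$ of the ``initial'' configuration --- and then: first, compute $\Hom_{\D^{\bb}(A)}(A_i,A_j[l])$ for all $i,j$ and all $l$ from the (shifted, $2$-periodic) projective resolutions of the $A_i$ over $A$; second, transport these to the stable category, using $\Hom_{\D^{\bb}(A)}(X,Y[l])\cong\Hom_{\un{\CM}A}(X,\Omega^{-l}Y)$ for $l>0$ (Proposition~\ref{Prop:Groper}(2)) and, for the remaining degrees, the periodicity $[(n+1)d+2]\simeq\mathrm{id}$ and Serre duality, so as to determine $\Hom_{\un{\CM}A}(T,T[l])$ for all $l$ and conclude that $\Ext^l_{\un{\CM}A}(T,T)=0$ for $1\le l\le d$ while $\End_{\un{\CM}A}(T)\cong kA_n$ (in particular hereditary, with the expected linear quiver and no relations); third, verify that $T$ is maximal $d$-rigid, i.e.\ $\add T=\{X\in\un{\CM}A\mid\Hom_{\un{\CM}A}(T,X[l])=0$ for $1\le l\le d\}$, by going through the indecomposables listed in Theorem~\ref{Thm:ARquiver} and their Ext-groups (equivalently, reading this off the AR quiver computed there).

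I expect the main obstacle to be the choice of $T$ together with the verification that $\End_{\un{\CM}A}(T)$ is exactly $kA_n$. An arbitrary basic maximal $d$-rigid object of $\un{\CM}A$ will in general have a non-hereditary ``cluster-tilted'' endomorphism algebra --- even a seemingly natural sum of shifts of $A_1$ produces an algebra with relations --- so $T$ must be chosen with care, and after the identifications forced by \eqref{naturtri} and by the periodicity one must control all the higher $\Hom$-groups to ensure that no extra arrows or relations appear. A secondary point is to confirm that the version of the Keller--Reiten criterion being invoked applies in the genuinely $(d+1)$-Calabi--Yau setting, not only in the classical $2$-Calabi--Yau case $d=1$; this is where the algebraicity of $\un{\CM}A\simeq\D_{\sg}(A)$ is used.
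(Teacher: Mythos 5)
Your proposal follows essentially the same route as the paper: exhibit $T=\bigoplus_{i=1}^{n}A_{i}$ (no shifts are needed; its endomorphism algebra is the linearly oriented $kA_{n}$), check it is $(d+1)$-cluster-tilting via the vanishing $\Hom_{\un{\CM}A}(A_i,A_j[l])=0$ for $1\le l\le d$ together with the classification of indecomposables from Theorem \ref{Thm:ARquiver} and Proposition \ref{Prop:gap}, verify the negative-degree vanishing $\Hom_{\un{\CM}A}(A_i,A_j[-l])=0$ for $1\le l\le d-1$, and apply \cite[Theorem 4.2]{Keller08} in the $(d+1)$-Calabi--Yau setting. The only slip is terminological: the object you need is a $(d+1)$-cluster-tilting object (as your Ext-vanishing range $1\le l\le d$ already reflects), not a $d$-cluster-tilting one.
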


The key ingredient of the proof is  Keller and Reiten's result \cite{Keller08}.
We first show $\un{\CM} A$ admits a $(d+1)$-cluster tilting object.
\begin{Prop}\label{cluster}
Let $T:=\bop_{i=1}^{n}A_{i}$, then $T$ is a $(d+1)$-cluster-tilting object in $\un{\CM}A$, that is, $\add T$ is functorially finite in $\un{\CM} A$ and $X\in \add T$ if and only if $\Hom_{\un{\CM}A}(T, X[m])=0$ for all $1\le m\le d$.
\end{Prop}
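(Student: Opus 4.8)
The plan is to establish the two defining properties of a $(d+1)$-cluster-tilting object in the triangulated category $\un{\CM}A$: functorial finiteness of $\add T$, and the orthogonality characterization that an indecomposable $X$ lies in $\add T$ exactly when $\Hom_{\un{\CM}A}(T,X[m])=0$ for all $1\le m\le d$. Functorial finiteness is immediate, since $\un{\CM}A$ has only finitely many indecomposables by Theorem~\ref{Thm:ARquiver} and is Hom-finite by Proposition~\ref{Prop:Dbfinite}. One inclusion of the orthogonality statement, $\add T\subseteq\{X\mid\Hom_{\un{\CM}A}(T,X[m])=0,\ 1\le m\le d\}$, amounts to the rigidity of $T$, i.e.\ $\Hom_{\un{\CM}A}(A_i,A_j[m])=0$ for all $1\le i,j\le n$ and $1\le m\le d$.

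For rigidity I would first use Proposition~\ref{Prop:Groper}(2) to replace, for $m\ge1$, the stable group $\Hom_{\un{\CM}A}(A_i,A_j[m])$ by $\Hom_{\D^{\bb}(A)}(A_i,A_j[m])$, and then induct on $j$. For $j=1$: apply $\Hom_{\D^{\bb}(A)}(-,k[m])$ to the triangle $A_{n+1-i}[id]\ra A\ra A_i\ra A_{n+1-i}[id+1]$ of \eqref{naturtri}; since $\Hom_{\D^{\bb}(A)}(A,k[m])=\h^m(k)=0$ for $m\ge1$, the group $\Hom_{\D^{\bb}(A)}(A_i,k[m])$ is a quotient of $\Hom_{\D^{\bb}(A)}(A_{n+1-i},k[m-id-1])$, and the latter vanishes because $m-id-1\le -1$ (as $m\le d\le id$) places $k[m-id-1]$ in $\D^{\bb}_{\ge1}$ while $A_{n+1-i}\in\D^{\bb}_{\le0}$, so the $t$-structure of Proposition~\ref{Prop:heart} applies. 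For $j\ge2$: apply $\Hom_{\D^{\bb}(A)}(A_i,-)$ to the triangle $A_1[(j-1)d]\ra A_j\ra A_{j-1}\ra A_1[(j-1)d+1]$ arising from the submodule $kX^{j-1}\subset A_j$; the inductive hypothesis annihilates $\Hom_{\D^{\bb}(A)}(A_i,A_{j-1}[m])$, and a direct check of $\Hom_{\D^{\bb}(A)}(A_i,k[(j-1)d+m])$ (nonzero only when $(i,m)=(j-1,1)$, where the connecting map of this non-split triangle surjects onto it) forces $\Hom_{\D^{\bb}(A)}(A_i,A_j[m])=0$. Alternatively one may read all of this off the periodic minimal free resolution of $A_i$ over $A$, whose terms are $A[l(n+1)d]$ and $A[l(n+1)d+id]$.

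The harder half is maximality: if an indecomposable $X$ satisfies $\Hom_{\un{\CM}A}(T,X[m])=0$ for $1\le m\le d$, then $X\in\add T$. By Proposition~\ref{Prop:gap} one may take $X=A_j[s]$ with $1\le j\le n$ and $0\le s\le(n+1-j)d$, and the case $s=0$ gives a summand of $T$. Using the $(d+1)$-Calabi--Yau property of $\un{\CM}A$ (Lemma~\ref{id}) the hypothesis becomes $\Hom_{\un{\CM}A}(A_j,A_i[m-s])=0$ for all $i$ and all $1\le m\le d$. For $1\le s\le d$ this fails with $m=s$, since $\Hom_{\un{\CM}A}(A_j[s],A_j[s])=\End_{\un{\CM}A}(A_j)=k\ne0$ by Lemma~\ref{indec}. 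For $d<s\le(n+1-j)d$, Calabi--Yau duality combined with Proposition~\ref{Prop:Groper}(2) converts the required non-vanishing into $\Hom_{\D^{\bb}(A)}(A_i,A_j[m'])\ne0$ for some $i$ and some $m'\in[s+1,s+d]$; such a class is produced explicitly from the periodic resolution of $A_i$, choosing $i$ so that the degree-$m'$ class of $\RHom_A(A_i,A_j)$ with $m'=1+kd$ carried by the homological-degree-one term is a cocycle (because $k\le n+1-j$, so $X^{n+1-k}=0$ in $A_j$) but not a coboundary (because $A_j$ lives in non-positive degrees). This last step is the main obstacle; it is a bookkeeping argument combining the resolution degrees $\{l(n+1)d,\ l(n+1)d+id\}$, the periodicity $[(n+1)d+2]\simeq\mathrm{id}$, and the identification $A_j\cong A_{n+1-j}[jd+1]$ coming from \eqref{naturtri}.
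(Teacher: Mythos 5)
Your proposal is correct, but it takes a genuinely different route from the paper in both halves, most notably in the maximality step. For rigidity the paper does not induct on $j$: it applies $\Hom_{\D^{\bb}(A)}(-,A_j[s])$ to the two triangles \eqref{5.1} and \eqref{5.2} and shifts twice, handling all $s\ge 2$ at once and treating $s=1$ by a surjectivity observation; your induction via $k[(j-1)d]\to A_j\to A_{j-1}$ together with the computation of $\Hom_{\D^{\bb}(A)}(A_i,k[r])$ is a valid alternative (I checked that the only nonvanishing in the relevant window is indeed at $(i,m)=(j-1,1)$, where non-splitness and $\End_{\D^{\bb}(A)}(A_{j-1})=k$ give the needed surjectivity). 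For maximality the paper isolates Lemma \ref{11}(3): for any $M\in\CM A$ with $\h^{0}(M)=0$, taking $t$ minimal with $\h^{t}(M)\ne 0$ and choosing $i$ with $-id\le t<-(i-1)d$, the same two triangles yield $\Hom_{\un{\CM}A}(A_i,M[id+t+1])\cong\h^{t}(M)\ne 0$ with $1\le id+t+1\le d$; applied to $M=A_j[s]$ this excludes every $s\ge 1$ uniformly, with no case split, no Calabi--Yau duality and no explicit resolution. You instead split into $1\le s\le d$ (CY duality plus $\End_{\un{\CM}A}(A_j)=k$) and $s>d$, where you produce the nonzero class in $\Hom_{\D^{\bb}(A)}(A_i,A_j[kd+1])$ with $i=k=\lceil s/d\rceil\le n+1-j$ carried by the homological-degree-one term of the periodic resolution; your cocycle/coboundary criteria are correct, and in fact this resolution argument already covers $1\le s\le d$ (take $k=1$, $m'=d+1$), so the duality case is dispensable. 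What the paper's route buys is uniformity and brevity (one lemma, valid for arbitrary $M$, using only the natural triangles); what yours buys is an explicit description of the Ext-groups $\Hom_{\D^{\bb}(A)}(A_i,A_j[r])$. One small imprecision in your aside: when the graded free resolution is folded into a dg semifree resolution, its subquotients are $A[l((n+1)d+2)]$ and $A[l((n+1)d+2)+id+1]$, not $A[l(n+1)d]$ and $A[l(n+1)d+id]$; the degrees you actually use in the argument ($m'=kd+1$ and the exceptional spot $(j-1,1)$) do agree with the correct bookkeeping, so this does not affect the proof.
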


We show the following lemma first.
\begin{Lem}\label{11}
\begin{enumerate}[\rm (1)]
\item $T$ is a $(d+1)$-rigid object in $\un{\CM}A$, $i.e.$  $\Hom_{\un{\CM}A}(A_{i}, A_{j}[s])=0$ for any $1 \le i, j \le n$ and $1\le s \le d$;

\item  $\Hom_{\un{\CM}A}(A_{i}, A_{j}[-s])=0$ for any $1 \le i, j \le n$ and $1\le s \le d-1$;

\item Let $M\in \CM A$. Assume  $\Hom_{\un{\CM}A}(A_{i}, M[m])=0$ for any $1 \le i \le n$ and $1\le m \le d$. If $\h^{0}(M)=0$, then $M= 0$.
\end{enumerate}
\end{Lem}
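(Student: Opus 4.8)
The plan is to trade statements in $\un{\CM}A$ for computations in $\D^{\bb}(A)$, using that $A=k[X]/(X^{n+1})$ is (graded) self-injective, so that each $A_{i}=A/X^{i}A$ fits into the triangle $A_{n+1-i}[id]\to A\to A_{i}\to A_{n+1-i}[id+1]$ of \eqref{naturtri} and, more generally, has the $2$-periodic minimal free resolution
\[ \cdots\xra{X^{n+1-i}}A[(n+1)d]\xra{X^{i}}A[id]\xra{X^{n+1-i}}A\lra A_{i}\lra 0 . \]
I will use three preliminary remarks. (a) For $1\le s\le d$ we have $A_{j}[s]\in\CM A$, and for $1\le s\le d-1$ we have $A_{i}[s]\in\CM A$ (both immediate from $\CM A=\D^{\bb}_{\le 0}\cap\D^{\bb}_{\ge -nd}$ and $1\le i,j\le n$); hence the Hom spaces in (1), and those in (2) once rewritten as $\Hom_{\un{\CM}A}(A_{i}[s],A_{j})$, are quotients of $\Hom_{\D^{\bb}(A)}(A_{i},A_{j}[s])$ ($1\le s\le d$) and of $\Hom_{\D^{\bb}(A)}(A_{i},A_{j}[-s])$ ($1\le s\le d-1$). (b) $\Hom_{\D^{\bb}(A)}(A_{l},A)=0$ for all $1\le l\le n$, because self-injectivity gives $\RHom_{A}(A_{l},A)=\Hom_{A}(A_{l},A)=X^{n+1-l}A$, which has no element in degree $\ge 0$. (c) In $\un{\CM}A$ one has $A\cong 0$, so the connecting morphism $A_{i}\to A_{n+1-i}[id+1]$ of \eqref{naturtri} becomes an isomorphism there.

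For (1) and (2): applying $\Hom_{A}(-,A_{j})$ to the periodic resolution represents $\RHom_{A}(A_{i},A_{j})$ by the double complex with columns $A_{j}\xra{X^{i}}A_{j}[-id]\xra{X^{n+1-i}}A_{j}[-(n+1)d]\to\cdots$, the total complex of which computes $\Hom_{\D^{\bb}(A)}(A_{i},A_{j}[\bullet])$. Bookkeeping of internal degrees shows that in the total-degree range $\{-(d-1),\dots,-1\}\cup\{1,\dots,d\}$ the only possibly nonzero terms are one-dimensional, occurring in total degrees $1$ and $-(d-1)$, and both lie in the column $A_{j}[-id]$; moreover the differential into each of them (from total degree $0$, resp.\ $-d$) is surjective, namely the class of $\mathrm{id}\colon A\to A_{j}$ hits the degree-$1$ generator and the class of $X\colon A\to A_{j}[-d]$ hits the degree-$(-(d-1))$ generator. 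Hence $H^{s}\bigl(\RHom_{A}(A_{i},A_{j})\bigr)=0$ for all such $s$, and by (a) this gives (1) and (2).

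For (3): since $\h^{0}(A^{\oplus a})=k^{a}$, the assumption $\h^{0}(M)=0$ forces the projective summand of $M$ to vanish, so it suffices to show $M\cong 0$ in $\un{\CM}A$. Suppose not. By Theorem~\ref{Thm:ARquiver} and Proposition~\ref{Prop:gap}, every non-projective indecomposable of $\CM A$ is isomorphic to some $A_{j}[s]$ with $1\le j\le n$ and $0\le s\le(n+1-j)d$; as $\h^{0}(A_{j}[s])=\h^{s}(A_{j})\ne 0$ precisely when $s=0$, the condition $\h^{0}(M)=0$ forces every indecomposable summand of $M$ to have $1\le s\le(n+1-j)d$. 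Fix one such summand $A_{j}[s]$ and set $i:=\lceil s/d\rceil$, $m:=id+1-s$, so that $1\le i\le n+1-j$ and $1\le m\le d$. The natural surjection $\pi\colon A_{n+1-i}\twoheadrightarrow A_{j}$ (which exists because $j\le n+1-i$) satisfies $\h^{0}(\pi)=\mathrm{id}_{k}$, hence $\pi\ne 0$ in $\D^{\bb}(A)$; and $\pi$ does not factor through $\add A$ by (b), hence $\pi\ne 0$ in $\un{\CM}A$. Composing $\pi[id+1]$ with the isomorphism in (c) produces a morphism $A_{i}\to A_{j}[id+1]$ in $\D^{\bb}(A)$ whose image in $\un{\CM}A=\D_{\sg}(A)$ is nonzero; since $id+1=s+m$ and $A_{j}[s]\in\CM A$ is a direct summand of $M$, this shows $\Hom_{\un{\CM}A}(A_{i},M[m])\ne 0$ with $1\le i\le n$ and $1\le m\le d$, contradicting the hypothesis. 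Therefore $M\cong 0$.

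The bookkeeping for (1)–(2) is routine but delicate; the real content lies in (3). There the crucial point should be the choice $i=\lceil s/d\rceil$, which is exactly what makes the surjection $A_{n+1-i}\twoheadrightarrow A_{j}$ legitimate and forces $m=id+1-s\in[1,d]$, after which (b) and (c) push the explicit morphism down to a nonzero morphism in $\un{\CM}A$.
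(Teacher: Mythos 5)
Your proof is correct, but it takes a genuinely different route from the paper's in two places. For (1) and (2) the paper identifies the stable Hom's with derived Hom's via Proposition \ref{Prop:Groper} and then chases the two triangles \eqref{5.1} and \eqref{5.2}; you instead compute $\RHom_A(A_i,A_j)$ from the explicit two-periodic graded free resolution and verify that its cohomology vanishes in degrees $-(d-1),\dots,-1$ and $1,\dots,d$. This is the same periodicity input as \eqref{naturtri}, just organized as a complex computation, and your weaker observation that the relevant stable Hom's are quotients of the derived Hom's (legitimate since $A_j[s]$, resp. $A_i[s]$, lies in $\CM A$ in the stated ranges) suffices for vanishing; your degree bookkeeping checks out, the only cosmetic slip being that the degree-$(-(d-1))$ case is vacuous when $d=1$. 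The real divergence is in (3): the paper argues directly, taking $t=\min\{s\mid \h^s(M)\neq 0\}$, choosing $i$ with $-id\le t<-(i-1)d$, and using \eqref{5.1}, \eqref{5.2} to show $\Hom_{\un{\CM}A}(A_i,M[t+id+1])\cong\h^t(M)\neq 0$, so it needs no knowledge of the indecomposables of $\CM A$. You instead invoke Theorem \ref{Thm:ARquiver} and Proposition \ref{Prop:gap} to reduce to summands $A_j[s]$ with $s\ge 1$ and then produce an explicit nonzero stable map, using $A_i\cong A_{n+1-i}[id+1]$ in $\un{\CM}A$ and the surjection $A_{n+1-i}\twoheadrightarrow A_j$; the choice $i=\lceil s/d\rceil$ is exactly right, and the non-factorization through $\add A$ is correctly justified by $\Hom_{\D^{\bb}(A)}(A_l,A)=0$ (which one can also see via Serre duality, $\Hom_{\D^{\bb}(A)}(A_l,A)\cong D\h^{-nd}(A_l)=0$, if one prefers to avoid the graded-injectivity remark). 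This is not circular, since Theorem \ref{Thm:ARquiver} and Proposition \ref{Prop:gap} precede the lemma and do not depend on it, but it leans on the full classification of indecomposable Cohen-Macaulay dg modules, which the paper's proof of (3) deliberately avoids; the classification-free argument is what makes part (3) a cleaner standalone ingredient when it is later combined with that classification in Proposition \ref{cluster}.
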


\begin{proof}
By Proposition \ref{Prop:Groper}, we have $\Hom_{\un{\CM}A}(A_{i}, A_{j}[s])=\Hom_{\D^{\bb}(A)}(A_{i}, A_{j}[s])$.
Consider the following two triangles.
\begin{equation}\label{5.1}
 A_{n-i+1}[id]\lra A \lra A_{i} \lra A_{n-i+1}[id+1] \lra A[1], \end{equation}
\begin{equation}\label{5.2}
 A_{i}[(n-i+1)d]\lra A \lra A_{n-i+1} \lra A_{i}[(n-i+1)d+1]  \lra A[1].  \end{equation}
If $s>1$, by applying the functor $\Hom_{\D^{\bb}(A)}(? , A_{j}[s])$ to triangle \eqref{5.1}, we have 
\[ \Hom_{\D^{\bb}(A)}(A_{i}, A_{j}[s])=\Hom_{\D^{\bb}(A)}(A_{n-i+1}[id+1], A_{j}[s]).\]
Apply the functor $\Hom_{\D^{\bb}(A)}(? , A_{j}[s-id-1])$ to the triangle \eqref{5.2}. Since 
\[ \Hom_{\D^{\bb}(A)}(A_{i}[(n-i+1)d+1], A_{j}[s-id-1])=0, \]
then
\[ \Hom_{\D^{\bb}(A)}(A_{n-i+1}, A_{j}[s-id-1])=0.\]
Thus $\Hom_{\D^{\bb}(A)}(A_{i}, A_{j}[s])=0$.

If $s=1$. Apply the functor $\Hom_{\D^{\bb}(A)}(? , A_{j}[1])$ to triangle \eqref{5.1}. Notice that the induced map 
\[ \Hom_{\D^{\bb}(A)}(A[1], A_{j}[1]) \ra \Hom_{\D^{\bb}(A)}(A_{n-i+1}[id+1], A_{j}[1])\]
is surjective. Then $\Hom_{\D^{\bb}(A)}(A_{i}, A_{j}[1])=0$. So (1) is true. The proof of statement (2) is similar to (1).

For (3), let $M\in \CM A$. If $M\not=0$ in $\un{\CM} A$, let $t:={\rm min}\{ s\in \Z \mid \h^{s}(M)\not=0\}$.  We may assume $-id\le t <-(i-1)d$. We will show $\Hom_{\un{\CM}A}(A_{i}, M[t+id+1])\not=0$, which is a contradiction.

Since $\h^{\ge 0}M=0$, then $\h^{0}M[id+t]=0$. Apply the functor $\Hom_{\D^{\bb}(A)}(? , M[id+t+1])$ to \eqref{5.1}, we have 
\[ \Hom_{\D^{\bb}(A)}(A_{i}, M[id+t+1])= \Hom_{\D^{\bb}(A)}(A_{n-i+1}[id+1], M[id+t+1]). \]
Apply the functor $\Hom_{\D^{\bb}(A)}(? , M[t])$ to \eqref{5.2}, then 
\[  \Hom_{\D^{\bb}(A)}(A_{n-i+1}, M[t])= \Hom_{\D^{\bb}(A)}(A, M[t])=\h^{0}(M[t]).\]
Then $\Hom_{\un{\CM}A}(A_{i}, M[id+t+1])=\Hom_{\D^{\bb}(A)}(A_{i}, M[id+t+1])=\h^{0}(M[t])\not=0$. It is contradictory to our assumption. So $M=0 \in \un{\CM}A$. Since $M\not\in \add A$ by  $\h^{0}(M)=0$, then $M=0$.
\end{proof}

\begin{proof}[Proof of Proposition \ref{cluster}]
Since $\un{\CM} A$ is Hom-finite, then $\add T$ is a functorially finite subcategory of $\un{\CM} A$. Let $0\not=M\in \ind \CM A$. 
Then $M=A$ or $M=A_{j}[s]$ for some $1\le j\le n$ and $0\le s\le (n+1-j)d$ by Theorem \ref{Thm:ARquiver} and Proposition \ref{Prop:gap}.
Assume  $\Hom_{\un{\CM}A}(A_{i}, M[m])=0$ for any $1 \le i \le n$ and $1\le m \le d$. Then by Lemma \ref{11}, we have $\h^{0}(M)\not=0$. 
So $M\in \add T$. Then $T$ is a $(d+1)$-cluster-tilting object in $\un{\CM}A$.
\end{proof}

Now we are ready to prove Theorem \ref{main}.
\begin{proof}[Proof of Theorem \ref{main}]
 Since $\dim \Hom_{\un{\CM}A}(A_{i}, A_{j})= 0$ for $i>j$ and  $\dim \Hom_{\un{\CM}A}(A_{i}, A_{j})= 1$ for $i\le j$, the endomorphism algebra ${\rm End}_{\un{\CM}A}(T) $ is isomorphic to a path algebra $kQ$, where $Q$ is the quiver of type $A_{n}$ with linear orientation.
By Proposition \ref{cluster}, $T$ is a $(d+1)$-cluster-tilting object in $\un{\CM}A$. Moreover, by Lemma \ref{11},   $\Hom_{\un{\CM}A}(A_{i}, A_{j}[-k])=0$ for any $1 \le i, j \le n$ and $1\le k \le d-1$. Then by \cite[Theorem 4.2]{Keller08}, there is a triangle equivalence $\un{\CM }A \xra{\sim} \cal C_{d+1}(A_{n})$.
\end{proof}

\section{Negative Calabi-Yau configurations and combinatorial configurations}\label{Section:confi}

\subsection{Negative Calabi-Yau configurations}\label{Section:CYconfiguration}
In this subsection, we introduce negative Calabi-Yau configurations in the categorical framework. 
\begin{Def}\label{Def:configuration}
Let $\T$ be a $k$-linear Hom-finite Krull-Schmidt triangulated category and let $C$ be a  set of indecomposable objects of $\T$. For $d\ge 1$, we call $C$ a \emph{$(-d)$-Calabi-Yau configuration} (or \emph{$(-d)$-CY configuration}) if the following conditions hold.
 \begin{enumerate}[\rm(1)]
\item $\dim_{k} \Hom_{\T}(X, Y)=\delta_{X,Y}$ for $X,Y \in C$;
\item $\Hom_{\T}(X, Y[-j])=0$ for any two objects $X, Y$ in $ C$ and $0< j \le d-1$;

\item For any indecomposable object $M$ in $
\T$, there exists $X\in C$ and $0\le j \le d-1$, such that $\Hom_{\T}(X, M[-j])\not=0$.
\end{enumerate}
\end{Def}

If $\T$ admits a Serre functor $\SSS$, then by Serre duality, $(3)$ is equivalent to the following condition.
\begin{enumerate}[\rm (1)]
 \item[$(3^{\rm op})$]   For any indecomposable object $N$ in $\T$, there exists $X\in C$ and $0\le j \le d-1$, such that $\Hom_{\T}(N, X[-j])\not=0$.
\end{enumerate}

\begin{Def}\label{Def:SMS}
As in Definition \ref{Def:configuration}, 
we call $C$ a \emph{pre-$d$-simple-minded system} (or \emph{pre-$d$-SMS}), if it only satisfies (1), (2) above, and call $C$ a \emph{$d$-simple-minded system} (or \emph{$d$-SMS}) if it satisfies (1), (2) above and

\noindent
(3$'$) $\T=\add \Filt\{C, C[1], \cdots, C[d-1]\}$.
\end{Def}

It is easy to see that ``$d$-SMS'' implies ``$(-d)$-CY configuration''.
We show that if $\T$ admits a Serre functor $\SSS$, then any $(-d)$-CY configuration in $\T$ is preserved by the functor $\SSS[d]$. This property motivates the name ``$(-d)$-CY configuration''.
\begin{Thm}\label{Thm:closed}
Let $\T$ be a $k$-linear Hom-finite Krull-Schmidt triangulated category with Serre functor $\SSS$.
Let $C$ be a $(-d)$-CY configuration in $\T$, then $\SSS C[d]= C$.
\end{Thm}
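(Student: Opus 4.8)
The plan is to show $\SSS C[d] \subseteq C$, and then conclude equality by a counting/invertibility argument, since $\SSS[d]$ is an autoequivalence permuting indecomposables. So fix $X \in C$ and write $Y = \SSS X[d]$; I want to prove $Y \in C$, i.e. that $Y$ is isomorphic to some object of $C$. The natural strategy is to use condition (3) to find an object of $C$ that ``sees'' $Y$, and then use Serre duality together with conditions (1) and (2) to pin down which object it is.

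First I would apply condition (3) to the indecomposable $Y = \SSS X[d]$: there exist $Z \in C$ and $0 \le j \le d-1$ with $\Hom_\T(Z, Y[-j]) \ne 0$, that is $\Hom_\T(Z, \SSS X[d-j]) \ne 0$. By Serre duality this is $D\Hom_\T(X, Z[?])$ — more precisely $\Hom_\T(Z, \SSS X[d-j]) \cong D\Hom_\T(X[d-j], Z) \cong D\Hom_\T(X, Z[-(d-j)])$. Now $d - j$ ranges over $1 \le d-j \le d$. If $1 \le d-j \le d-1$, condition (2) applied to $X, Z \in C$ forces $\Hom_\T(X, Z[-(d-j)]) = 0$, a contradiction. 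Hence $d - j = d$, i.e. $j = 0$, and we get $\Hom_\T(X, Z) \ne 0$. By condition (1), $\dim_k \Hom_\T(X,Z) = \delta_{X,Z}$, so $Z \cong X$, and therefore $\Hom_\T(X, \SSS X[d]) = \Hom_\T(X, Y) \ne 0$ — which we already knew — but more importantly the argument shows: the \emph{only} element of $C$ that can detect $Y$ via (3) is $X$ itself, and it does so only in degree $j = 0$.

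This does not yet literally say $Y \in C$; it says $Y$ has a nonzero map from $X \in C$ and from no other element of $C$ in the allowed degree range. To upgrade this to $Y \in C$, I would pass to the dual formulation $(3^{\mathrm{op}})$ available since $\T$ has a Serre functor, or more efficiently argue as follows. The functor $\SSS[d]$ is an autoequivalence of $\T$, hence it sends $C$ to another set $C' := \SSS C[d]$ of indecomposables satisfying conditions (1) and (2) (these are preserved by any autoequivalence). I claim $C'$ also satisfies (3): given indecomposable $M$, apply (3) to $\SSS^{-1}M[-d]$ to get $W \in C$, $0 \le j \le d-1$ with $\Hom_\T(W, \SSS^{-1}M[-d-j]) \ne 0$; applying $\SSS[d]$ gives $\Hom_\T(\SSS W[d], M[-j]) \ne 0$ with $\SSS W[d] \in C'$. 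So $C'$ is also a $(-d)$-CY configuration. Now I would invoke a maximality/uniqueness property: a $(-d)$-CY configuration is maximal among sets of indecomposables satisfying (1),(2), because (3) says every indecomposable is ``covered''; concretely, if $Y \in C'$ but $Y \notin C$, I would use condition (3) for $C$ on $Y$ — the computation in the previous paragraph showed the covering object must be $X$ with $\Hom_\T(X,Y) \ne 0$, and then run the symmetric argument with $(3^{\mathrm{op}})$ for $C'$ on $X$ to force $X \in C'$, and finally compare the two "$1$-dimensional Hom" constraints to conclude $X \cong Y$, so $Y = \SSS X'[d]$ for the $X'$ with $\SSS X'[d] = X$...

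The main obstacle, and where I would focus care, is exactly this last bookkeeping step: turning "$X$ is the unique element of $C$ mapping nontrivially to $Y = \SSS X[d]$" into "$Y$ lies in $C$". The clean way is: (a) show $\SSS C[d]$ is again a $(-d)$-CY configuration (easy, by transport of structure plus the $(3^{\mathrm{op}})$ trick above); (b) prove that two $(-d)$-CY configurations $C, C'$ with $\Hom_\T(X, \SSS X[d]) \ne 0$ linking them must coincide — or better, prove the general lemma that if $C \subseteq C''$ are both $(-d)$-CY configurations then $C = C''$ (no proper containment), by noting an element $Y \in C'' \setminus C$ would have to be covered by some $X \in C$ in degree $0$ forcing $\Hom(X,Y)\neq 0$ with $X \ne Y$, contradicting condition (1) for $C''$. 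Then since $C$ and $\SSS C[d]$ have the same (finite, if $\T$ is Hom-finite Krull–Schmidt with finitely many indecomposables — or one argues element-wise) ``size'' and the map $X \mapsto \SSS X[d]$ is injective on indecomposables, combined with the no-proper-containment lemma one gets $\SSS C[d] = C$. I expect the bijectivity/finiteness subtlety to require the observation that $\SSS[d]$ restricted to $C \cup \SSS C[d]$ can be analyzed without assuming $C$ finite, using only that each $\SSS X[d]$ is forced into $C$ by the covering argument.
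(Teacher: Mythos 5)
There is a genuine gap, and it sits exactly where you yourself flag the difficulty. First, a computational slip: from $j=0$ you conclude $\Hom_{\T}(X,Z)\neq 0$ and hence $Z\cong X$, but Serre duality gives $\Hom_{\T}(Z,\SSS X[d])\cong D\Hom_{\T}(X,Z[-d])$, so the case $j=0$ only tells you $\Hom_{\T}(X,Z[-d])\neq 0$ — a space not governed by conditions (1) or (2). The claim $Z\cong X$ is in fact false in general: once the theorem is known, the object of $C$ detecting $\SSS X[d]$ is $\SSS X[d]$ itself, which is usually not isomorphic to $X$ (the theorem says $\SSS[d]$ permutes $C$, not that it fixes its elements). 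Second, and more seriously, your endgame does not close. Your no-proper-containment lemma (if $C\subseteq C''$ are both $(-d)$-CY configurations then $C=C''$) is correct as stated, but to apply it you would need a containment between $C$ and $C':=\SSS C[d]$, and no step of the proposal produces one. "Same cardinality + injectivity of $\SSS[d]$ + the lemma" cannot yield $C=C'$: distinct configurations of equal cardinality exist in abundance (e.g.\ rotations of a given configuration in $\Z A_{n,d}$), so being two configurations of the same size is simply not enough. The missing content is precisely the statement that the nonzero morphism $f\colon Y\to \SSS X[d]$ with $Y\in C$ supplied by condition (3) is an isomorphism.

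That is the step the paper's proof actually carries out, and it is not bookkeeping: one completes $f$ to a triangle $\SSS X[d-1]\xra{h} N \xra{g} Y \xra{f} \SSS X[d]$ and shows $N=0$. If $N\neq 0$, condition (3) gives a nonzero $p\colon Z[j]\to N$ with $Z\in C$; either $g\circ p\neq 0$, which by (1) and (2) forces $j=0$ and $g\circ p$ an isomorphism, making $g$ a retraction and $f=0$; or $g\circ p=0$, so $p$ factors through $h$ via some nonzero $q\colon Z[j]\to \SSS X[d-1]$, which by Serre duality and (1),(2) forces $j=d-1$ and $Z\cong X$, and then the Serre-duality lemma (Lemma \ref{Lem:section}, using $\End_{\T}(X)=k$) shows $h$ is a section, again giving $f=0$. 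Both cases contradict $f\neq 0$, so $N=0$ and $Y\cong \SSS X[d]\in C$; the reverse inclusion is dual via $(3^{\rm op})$. Your observation that $\SSS C[d]$ is again a configuration is correct but does not substitute for this cone argument; without it the passage from "some $Y\in C$ maps nontrivially to $\SSS X[d]$" to "$\SSS X[d]\in C$" remains unproved.
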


\begin{Rem}\label{Rem:closed}
For the case $d=1$, Riedtmann showed the periodicity of configurations for type $A_{n}$ and $D_{n}$ in combinatorial setting (see \cite{Riedtmann, Riedtmann4}).
\end{Rem}

To prove the theorem above, we need the following well-known property.

\begin{Lem}\label{Lem:section}
Let $\T$ be a   $k$-linear Hom-finite triangulated category with Serre functor $\SSS$. Let $X\in \T$ with  $\End_{\T}(X)=k$ and $f\in \Hom_{\T}(X, \SSS X)$. Then for any $Y\in \T$ and $g\in \Hom_{\T}(\SSS X,Y)$ which is not a section, we have $g\circ f=0$.
 \end{Lem}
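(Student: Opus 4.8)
The plan is to reduce $g\circ f$ to a scalar via Serre duality and then exploit the hypothesis $\End_\T(X)=k$. Write $\eta_{A,B}\colon \Hom_\T(A,B)\xrightarrow{\ \sim\ }D\Hom_\T(B,\SSS A)$ for the Serre duality isomorphism and set $\mathrm{Tr}_X:=\eta_{X,X}(\mathrm{id}_X)\in D\Hom_\T(X,\SSS X)$. First I would record two formal consequences. Since $\End_\T(X)=k$ and $\eta_{X,X}$ is an isomorphism, $\mathrm{Tr}_X\colon \Hom_\T(X,\SSS X)\to k$ is an isomorphism; in particular $\Hom_\T(X,\SSS X)$ is one-dimensional. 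Moreover, by naturality of $\eta$ in the second variable one has $\eta_{X,Y}(a)(b)=\mathrm{Tr}_X(b\circ a)$ for all $a\colon X\to Y$ and $b\colon Y\to \SSS X$, so the composition map
\[ \Hom_\T(X,Y)\times\Hom_\T(Y,\SSS X)\longrightarrow k,\qquad (a,b)\longmapsto \mathrm{Tr}_X(b\circ a) \]
is a perfect pairing of finite-dimensional vector spaces.

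Next I would argue by contraposition. Assume $g\circ f\neq 0$. Since $\mathrm{Tr}_X$ is injective and the pairing above is perfect, there is some $h\in\Hom_\T(Y,\SSS X)$ with $h\circ g\circ f\neq 0$ in $\Hom_\T(X,\SSS X)$. Now $h\circ g$ is an endomorphism of $\SSS X$, and since the Serre functor $\SSS$ is an equivalence we have $\End_\T(\SSS X)\cong\End_\T(X)=k$; write $h\circ g=\lambda\,\mathrm{id}_{\SSS X}$ with $\lambda\in k$. Then $h\circ g\circ f=\lambda f$, and this being nonzero forces $\lambda\neq 0$. Hence $(\lambda^{-1}h)\circ g=\mathrm{id}_{\SSS X}$, so $g$ is a section, contradicting the hypothesis. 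Therefore $g\circ f=0$. (If $f=0$ the statement is trivial, so assuming $g\circ f\neq0$ loses nothing.)

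I do not expect a serious obstacle here: the whole argument is formal once Serre duality is available. The only point requiring a (routine) diagram chase is the identity $\eta_{X,Y}(a)(b)=\mathrm{Tr}_X(b\circ a)$, which is just naturality of $\eta$; the only genuinely used input is that a Serre functor is by definition an auto-equivalence, which is what allows us to identify $\End_\T(\SSS X)$ with $\End_\T(X)=k$ and conclude $h\circ g$ is a scalar.
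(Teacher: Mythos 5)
Your argument is correct and is essentially the paper's own proof read contrapositively: the identity $\eta_{X,Y}(a)(b)=\mathrm{Tr}_X(b\circ a)$ you verify is exactly the naturality square the paper displays, and both proofs hinge on the same point, namely that $\End_{\T}(\SSS X)=k$ forces $h\circ g$ to be either zero or an invertible scalar, the latter being excluded since $g$ is not a section. Nothing needs to be added.
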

 
 \begin{proof}
 We have the following commutative diagram.
 \[ \xymatrixcolsep{6pc}\xymatrix{ \Hom_{\T}(X, \SSS X) \ar[r]^{\Hom_{\T}(X, g)} \ar[d]^{\simeq}& \Hom_{\T}(X, Y) \ar[d]^{\simeq} \\
 D\Hom_{\T}(\SSS X, \SSS X)  & D\Hom_{\T}(Y, \SSS X) \ar[l]_{D\Hom_{\T}(g, \SSS X)}
 }.\]
 Since the lower map is zero by $\End_{\T}(\SSS X)=k$ and the assumption that $g$ is not a section, so is the upper one.
 \end{proof}
  
\begin{proof}[Proof of Theorem \ref{Thm:closed}]
The proof falls into two parts. 

(a) We first prove $\SSS[d]C\subset C$.
For any $X\in C$, we only need to show $\SSS[d]X\in C$. By condition (3), there exist $Y\in C $ and $0\le i\le d-1$ such that $\Hom_{\T}(Y, \SSS[d]X[-i])\not =0$. 
Since  
\[ \Hom_{\T}(Y, \SSS[d]X[-i])=D\Hom_{\T}(X, Y[-d+i])\]
 If $0<i\le d-1$, it is zero by condition (2). So we must have $i=0$.  Let $f: Y\ra \SSS[d]X$ be a non-zero morphism and consider the triangle extended by $f$,
 \[\SSS X[d-1] \xra{h} N \xra{g} Y \xra{f} \SSS X[d].\]
  We claim that $N=0$.

If $N\not=0$, then there exist $Z\in C$ and $0 \le j \le d-1$, such that $\Hom_{\T}(Z, N[-j])\not =0$. Let $p\in \Hom_{\T}(Z[j], N)$ be a non-zero morphism. 
  If $g\circ p\not=0$, then  $j=0$ and $g\circ p$ is an isomorphism by Definition \ref{Def:configuration}(1)(2). Thus $g$ is a retraction and $f=0$,  a contradiction.
   So $g\circ p=0$. Then there exists a morphism $q: Z[j]\ra \SSS X[d-1]$, such that $p=h\circ q$. 
    \[ \xymatrix{   & Z[j] \ar[d]^{p} \ar[dl]_{q} & & \\
 \SSS X[d-1] \ar[r]^{h} & N \ar[r]^{g}  & Y \ar[r]^<<<<{f} & \SSS X[d]   }\]

Since $p\not =0$, then $q\not =0$, which implies that $j=d-1$ and $Z\cong X$ by the fact  $\Hom_{\T}(Z[j],\SSS X[d-1])=D\Hom_{\T}(X, Z[j-d+1])$ and Definition \ref{Def:configuration}(1)(2). 
 Then by Lemma \ref{Lem:section}, we know $h$ is a section. Thus $f=0$, a contradiction. So $N=0$ and $\SSS X[d]\cong Y\in C$. 

(b) We prove $\SSS[d]C\supset C$. By considering conditions (1), (2), and (3$^{\rm op}$), one can show the statement easily, which is similar to the proof in part (a). We leave it to the reader. 
\end{proof}

\subsection{CM dg modules and CY configurations}

In this subsection, we study CY configurations in  the stable categories of Cohen-Macaulay dg modules over  $d$-self-injective dg algebras. In this case, we show that the set of simple dg  $A$-modules forms a $(-d)$-CY configuration in $\un{\CM}A$, which generalizes Riedtmann's result \cite[Proposition 2.4]{Riedtmann}.

Recall from Section \ref{Section:nonpositivedg},  for a non-positive dg $k$-algebra $A$ with $A^{>0}=0$, we  may regard $\h^{0}(A)$-modules as dg $A$-modules via the homomorphism $A\ra \h^{0}(A)$.  Let $\{ S_{1}, \dots, S_{r} \}$ be the set of simple $\h^{0}(A)$-modules. We also regard them as simple dg $A$-modules (when we talk about simple modules, we always assume they are concentrated in degree zero part). Recall  that if $A$ is a $d$-self-injective dg algebra, then $\CM A= \D_{\le 0}^{\bb}\cap \D_{\ge -d+1}^{\bb}$ (see Proposition \ref{Prop:cm}).
 
The main result in this subsection is the following.
\begin{Thm}\label{Thm:configuration}
Let $A$ be a  $d$-self-injective dg $k$-algebra with $d\ge 0$. Then
the set of simple modules $\{  S_{i}\mid 1 \le i \le r \}$ is a $d$-SMS of $\un{\CM} A$, and hence a  $(-d)$-CY configuration of $\un{\CM}A$. 
\end{Thm}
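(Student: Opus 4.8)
The plan is to verify the three conditions defining a $d$-simple-minded system for $C=\{\bar S_1,\dots,\bar S_r\}$ in $\un{\CM}A$, writing $\bar X$ for the image in $\un{\CM}A$ of an object $X$ of $\CM A$, and then to conclude via the implication ``$d$-SMS $\Rightarrow$ $(-d)$-CY configuration'' recorded after Definition~\ref{Def:SMS}; the case $d=0$ is vacuous ($\un{\CM}A=0$), so assume $d\ge 1$. Since $A$ is $d$-self-injective, Proposition~\ref{Prop:cm}(2) gives $\CM A=\D^{\bb}_{\le 0}\cap\D^{\bb}_{\ge -d+1}$, and as each $S_i$ is concentrated in degree $0$, the shifts $S_i[j]$ with $0\le j\le d-1$ all lie in $\CM A$. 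I will repeatedly use that $\CM A$ is extension-closed in $\D^{\bb}(A)$, that $\Hom_{\un{\CM}A}(\bar X,\bar Y)$ is the quotient of $\Hom_{\D^{\bb}(A)}(X,Y)$ modulo the morphisms factoring through $\add A$, and that the heart $\cal H$ of the standard $t$-structure of $\D^{\bb}(A)$ is $\mod\h^0(A)$ (Proposition~\ref{Prop:heart}).

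The key point, which reconciles the suspension $[1]$ of $\D^{\bb}(A)$ with the suspension $\Omega^{-1}$ of $\un{\CM}A$ on the simples, is that $\Omega^{-j}\bar S_i=\overline{S_i[j]}$ for $0\le j\le d-1$. To prove it, observe first that $\add A=\add(DA[d-1])$ in $\D(A)$ forces the complex $\RHom_A(S_i,A)$ to be concentrated in degree $-d+1$: by Krull--Schmidt every indecomposable summand of $A_A$ is isomorphic to $I[d-1]$ for an indecomposable summand $I$ of $DA_A$, while $\RHom_A(S_i,DA)\cong DS_i$ sits in degree $0$. Hence $\Hom_{\D^{\bb}(A)}(S_i[j],A)\cong \h^{-j}(\RHom_A(S_i,A))=0$ for $0\le j\le d-2$, so the left $(\add A)$-approximation of $S_i[j]$ is the zero map; the defining triangle of $\Omega^{-1}$ then gives $\Omega^{-1}\overline{S_i[j]}=\overline{S_i[j+1]}$, and the claim follows by induction on $j$.

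Condition~(1) is immediate: $\Hom_{\D^{\bb}(A)}(S_i,S_j)=\Hom_{\cal H}(S_i,S_j)=\Hom_{\h^0(A)}(S_i,S_j)=\delta_{ij}k$ because $k$ is algebraically closed, and for $i=j$ the identity of $S_i$ does not factor through $\add A$ (as $S_i\notin\add A$, so that $\bar S_i\neq 0$), whence $\dim_k\Hom_{\un{\CM}A}(\bar S_i,\bar S_j)=\delta_{ij}$. For condition~(2), fix $0<l\le d-1$; applying the auto-equivalence $\Omega^{-l}$ and the key point,
\[ \Hom_{\un{\CM}A}(\bar S_i,\bar S_j[-l])=\Hom_{\un{\CM}A}(\Omega^{-l}\bar S_i,\bar S_j)=\Hom_{\un{\CM}A}(\overline{S_i[l]},\bar S_j), \]
and this last group is a quotient of $\Hom_{\D^{\bb}(A)}(S_i[l],S_j)=\Hom_{\D^{\bb}(A)}(S_i,S_j[-l])$, which vanishes since $S_i\in\D^{\bb}_{\le 0}$, $S_j[-l]\in\D^{\bb}_{\ge 1}$, and a $t$-structure admits no nonzero morphisms $\D^{\bb}_{\le 0}\to\D^{\bb}_{\ge 1}$.

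For condition~(3$'$), let $M\in\CM A$; its cohomology lies in degrees $[-d+1,0]$, so by Proposition~\ref{Prop:heart}(2) we have $M\in\cal H[d-1]\ast\cdots\ast\cal H[0]$, with each $\cal H[j]$ ($0\le j\le d-1$) contained in $\CM A$. Refining each layer by a composition series of the corresponding finite-length $\h^0(A)$-module and shifting by $[j]$ yields a filtration of $M$ all of whose terms lie in $\CM A$ (extension-closedness) and whose subquotients lie in $\{S_l[j]\mid 1\le l\le r,\ 0\le j\le d-1\}$; in $\un{\CM}A$ these conflations become triangles, so $\bar M\in\Filt\{\overline{S_l[j]}\}$, and by the key point $\overline{S_l[j]}=\Omega^{-j}\bar S_l$, giving $\un{\CM}A=\add\Filt\{C,C[1],\dots,C[d-1]\}$. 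Hence $C$ is a $d$-SMS, and therefore a $(-d)$-CY configuration. The step demanding the most care is precisely the key point bridging the two suspension functors; granting it, conditions~(1)--(3$'$) reduce to the standard $t$-structure on $\D^{\bb}(A)$ together with the Frobenius structure on $\CM A$, the only minor loose end being the harmless fact that no $S_i$ is projective.
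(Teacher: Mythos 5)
Your proof is correct and takes essentially the same route as the paper's: the vanishing $\Hom_{\D^{\bb}(A)}(S_i[j],A)=0$ for $0\le j\le d-2$ is exactly the paper's Lemma \ref{12}, and conditions (1), (2), (3$'$) are then verified from $\CM A=\D^{\bb}_{\le 0}\cap\D^{\bb}_{\ge -d+1}$ and the heart $\mod\h^{0}(A)$, just as in the paper. Your explicit ``key point'' $\Omega^{-j}\bar S_i=\overline{S_i[j]}$ only spells out what the paper leaves implicit when it mixes derived and stable shifts, and your uniform treatment of $d=1$ (modulo the projective-simple caveat you flag, which the paper sidesteps by citing the classical self-injective case) is the only cosmetic difference.
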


To prove this theorem, we start with the following lemma first.
\begin{Lem}\label{12}
Let $M\in \CM A$. If $d>1$, then for $1\le i \le r $ and $0\le t \le d-2$, we have
\begin{enumerate}[\rm(1)]
\item $\Hom_{\CM A}(S_{i}[t], A)=0$
\item $\Hom_{\un{\CM}A}(S_{i}[t], M)=\Hom_{\CM A}(S_{i}[t], M)$. 
\end{enumerate}
\end{Lem}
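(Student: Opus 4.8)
The plan is to deduce everything from a single $\Hom$-vanishing in $\D^{\bb}(A)$ obtained from Lemma~\ref{Lem:KN}(2) together with the fact that each $S_i$ is concentrated in degree $0$; once (1) is in hand, (2) is purely formal. I would also first note, for the statement to make sense, that $S_i[t]\in\CM A$ whenever $0\le t\le d-1$, since $S_i[t]$ has cohomology concentrated in degree $-t\in[-(d-1),0]$ and $\CM A=\D^{\bb}_{\le 0}\cap\D^{\bb}_{\ge -d+1}$ by Proposition~\ref{Prop:cm}(2).

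For (1): since $A$ is $d$-self-injective, $\add A=\add DA[d-1]$ in $\D(A)$, so $A$ is a direct summand of a finite direct sum of copies of $DA[d-1]$; hence it suffices to prove $\Hom_{\D^{\bb}(A)}(S_i[t],DA[d-1])=0$, and then $\Hom_{\CM A}(S_i[t],A)=\Hom_{\D^{\bb}(A)}(S_i[t],A)=0$ follows. Shifting, $\Hom_{\D^{\bb}(A)}(S_i[t],DA[d-1])\cong\Hom_{\D^{\bb}(A)}(S_i[t-d+1],DA)$, and since $DA\in\add DA$, Lemma~\ref{Lem:KN}(2) identifies this group with $\Hom_{\h^0(A)}(\h^0(S_i[t-d+1]),\h^0(DA))=\Hom_{\h^0(A)}(\h^{t-d+1}(S_i),\h^0(DA))$. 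As $0\le t\le d-2$ forces $t-d+1<0$ and $S_i$ is concentrated in degree $0$, we get $\h^{t-d+1}(S_i)=0$, so the group vanishes, proving (1).

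For (2): by definition $\un{\CM}A=(\CM A)/[\add A]$, so $\Hom_{\un{\CM}A}(S_i[t],M)$ is the quotient of $\Hom_{\CM A}(S_i[t],M)$ by the subgroup of morphisms factoring through an object of $\add A$. Any such morphism factors as $S_i[t]\to P\to M$ with $P\in\add A$, and its first component lies in $\Hom_{\CM A}(S_i[t],P)$, which is zero by (1). Hence that subgroup is zero and the canonical surjection $\Hom_{\CM A}(S_i[t],M)\to\Hom_{\un{\CM}A}(S_i[t],M)$ is an isomorphism.

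There is no genuinely hard step here; the only points requiring care are the bookkeeping of shifts when passing between $\add A$ and $\add DA[d-1]$, applying Lemma~\ref{Lem:KN}(2) with the second argument \emph{actually} in $\add DA$ rather than a shift of it, and keeping the hypothesis $d>1$ (equivalently the range $0\le t\le d-2$) so that the exponent $t-d+1$ is strictly negative and the cohomology of $S_i$ there vanishes.
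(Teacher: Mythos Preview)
Your proof is correct and follows essentially the same approach as the paper: reduce via the $d$-self-injective condition to computing $\Hom_{\D^{\bb}(A)}(S_i[t-d+1],DA)$, then observe this vanishes because $S_i$ has no cohomology in negative degree. The only cosmetic differences are that the paper writes $DA=A[-d+1]$ directly and computes $\Hom(S_i[t-d+1],DA)=D\h^{t-d+1}(S_i)$ via the duality $\Hom(X,DA)\cong D\h^0(X)$ rather than via Lemma~\ref{Lem:KN}(2), and that you are (rightly) more explicit about why $S_i[t]\in\CM A$ and about working at the level of $\add$ rather than assuming an isomorphism $A\cong DA[d-1]$.
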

\begin{proof}
We only prove (1), since (2) is immediately from (1).
Since $DA= A[-d+1]$ in $\D A$, then
\[ \Hom_{\CM A}(S_{i}[t], A)=\Hom_{\CM A}(S_{i}[t-d+1], DA)=D\h^{t-d+1}(S_{i})=0 \]
for  $0\le t \le d-2.$ \end{proof}



Now we prove  Theorem \ref{Thm:configuration}.

\begin{proof}[Proof of Theorem \ref{Thm:configuration}]
If $d=1$, then $A$ is an ordinary self-injective $k$-algebra and the assertion is known. We only prove it for $d>1$.
By Lemma \ref{12} and Proposition \ref{Prop:heart}, we have \[\Hom_{\un{\CM} A}(S_{i}, S_{j})=\Hom_{\CM A}(S_{i}, S_{j})=\Hom_{\h^{0}(A)}(S_{i},S_{j}),\] 
where $1\le i, j\le r$.
So the condition (1) in Definition \ref{Def:configuration} holds.
Since $\CM A= \D_{\le 0}^{\bb}\cap \D_{\ge -d+1}^{\bb}$ and $\Hom_{\D^{\bb}(A)}(S_{i}[>0], S_{j})=0$, then we have $S_{i}[t]\in \CM A$ for $1\le t\le d-1$ and $\Hom_{\CM A}(S_{i}[t], S_{j})=0$. Thus $\Hom_{\un{\CM}A}(S_{i}[t], S_{j})=0$ and  the condition (2) in Definition \ref{Def:configuration} is true.
Now we show the condition (3$'$) holds.
By Propositions \ref{Prop:cm} and \ref{Prop:heart}, we have 
\[\CM A =\Filt S[d-1]\ast \Filt S[d-2]\ast \cdots \ast\Filt S,\] where $S=\bop_{i=1}^{n}S_{i}$.
Thus $\un{\CM}A=\add \Filt\{S, S[1], \cdots, S[d-1]\}$ and (3$'$) is true. Then the set of simples forms a $d$-SMS of  $\un{\CM}A$.
\end{proof}

To recover the AR quiver  $\mathfrak A (\CM A)$ of $\CM A$ from  $\mathfrak A(\un{\CM}A)$, we need the following result.
\begin{Prop}
Let $A$ be a $d$-self-injective dg $k$-algebra. Let $P\in \add A$ be an indecomposable  dg $A$-module. Then $\rad P$ is an indecomposable object in $\CM A$ and it does not belong to $\add A$.  
\end{Prop}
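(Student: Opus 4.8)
The plan is to analyze $\rad P$ via the defining triangle $\rad P \to P \xrightarrow{f'} \Top \h^0(P) \to \rad P[1]$ and the $t$-structure on $\D^{\bb}(A)$. First I would verify $\rad P \in \CM A$: since $P \in \add A$ is Cohen-Macaulay and $\Top \h^0(P) = S$ is a simple module which (by Theorem~\ref{Thm:configuration} and Proposition~\ref{Prop:cm}, using $d$-self-injectivity) lies in $\CM A$ too, and $\CM A$ is extension-closed, the third term $\rad P$ of the triangle is in $\CM A$. Equivalently, one checks directly that $\h^i(\rad P) = 0$ for $i > 0$ and $i < -d+1$ from the long exact cohomology sequence, plus $\Hom_{\D^{\bb}(A)}(\rad P, A[i]) = 0$ for $i > 0$ by applying $\Hom_{\D^{\bb}(A)}(-, A[i])$ to the triangle and using that both $P$ and $S$ (shifted appropriately) have the required vanishing.

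Next I would show $\rad P \notin \add A$. The key observation is a cohomological one: $\h^0(\rad P) = \rad \h^0(P)$, which is the radical of an indecomposable projective $\h^0(A)$-module, hence has no simple (equivalently no projective, since $\h^0(A)$ need not be self-injective — so I should instead argue) summand equal to $\Top$ of a projective. More carefully: if $\rad P$ were in $\add A$, then (being a summand of something in $\add A$) we would have $\Hom_{\D^{\bb}(A)}(\rad P, S_j) = \Hom_{\h^0(A)}(\h^0(\rad P), S_j)$ by Lemma~\ref{Lem:KN}, and simultaneously $\rad P$ projective in $\CM A$. But applying $\Hom_{\D^{\bb}(A)}(-, S)$ to the defining triangle, where $S = \Top \h^0(P)$, gives a connecting map $\Hom_{\D^{\bb}(A)}(\rad P, S) \to \Hom_{\D^{\bb}(A)}(P, S[1])$; since $\Hom_{\D^{\bb}(A)}(P,S) = \h^0(S) = S$ already surjects onto... — here the cleaner route is: if $\rad P \in \add A$ then the triangle $\rad P \to P \to S \to \rad P[1]$ would exhibit $S$ as having projective dimension $\le 1$ over the dg algebra in the sense that $S \in \per A$, forcing $S \in \CM A \cap \per A = \add A$ by Theorem~\ref{Thm:properties}, contradiction (since a simple module is not in $\add A$: $\h^0$ of an indecomposable object of $\add A$ is an indecomposable projective $\h^0(A)$-module, which is simple only in degenerate cases — and one rules those out, or more robustly, $\un{\CM} A \ne 0$ means $S \ne 0$ in $\D_{\sg}(A)$, using that $S$ is a nonzero object of the $d$-SMS with $d \geq 1$ and $A$ not being of finite global dimension in the relevant sense).

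For indecomposability of $\rad P$ I would compute $\End_{\CM A}(\rad P)$, or rather show it is local. Apply $\Hom_{\D^{\bb}(A)}(\rad P, -)$ and $\Hom_{\D^{\bb}(A)}(-, \rad P)$ to the triangle. From $\Hom_{\D^{\bb}(A)}(P, \rad P) \to \End_{\D^{\bb}(A)}(\rad P) \to \Hom_{\D^{\bb}(A)}(\rad P, S)[-1]$-type sequences one reduces to understanding $\Hom_{\D^{\bb}(A)}(\rad P, S)$ and $\Hom_{\D^{\bb}(A)}(S, \rad P)$; using Lemma~\ref{Lem:KN} and the $t$-structure (Proposition~\ref{Prop:heart}), the degree-zero parts are controlled by $\h^0(\rad P) = \rad \h^0(P)$ and $\h^0(P)$, which are modules over the \emph{local} ring $\End_{\h^0(A)}(\h^0(P))$ since $P$ is indecomposable. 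The main obstacle, I expect, is the indecomposability: the radical of a projective dg module can a priori split even when the projective does not, and one must genuinely use the Frobenius/extriangulated structure — specifically that $\rad P$ sits in an almost-split-like triangle or that any idempotent of $\End_{\CM A}(\rad P)$ lifts along $P \to S$ to an idempotent of the local ring $\End(P)$. I would handle this by showing the natural map $\End_{\D^{\bb}(A)}(P) \to \End_{\D^{\bb}(A)}(\rad P)$, induced by the functoriality of the triangle (every endomorphism of $P$ fixing $f'$ up to the $S$-action descends), is surjective modulo radical, so that $\End_{\CM A}(\rad P)$ is a quotient of a local ring, hence local; then $\rad P$ is indecomposable. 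Combining the three parts — membership in $\CM A$, indecomposability via locality of the endomorphism ring, and exclusion from $\add A$ via $\CM A \cap \per A = \add A$ — completes the proof.
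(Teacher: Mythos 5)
Your plan has the right overall shape, but as written it is a sketch with genuine gaps, and the decisive one is exactly at the point you flag as the main obstacle. First, two smaller issues. Your extension-closedness argument for $\rad P\in\CM A$ does not apply: $\rad P$ is the cocone of $P\to \Top\h^{0}(P)$, so closure under extensions would require $\Top\h^{0}(P)[-1]\in\CM A$, which fails since its cohomology sits in degree $1$; the direct cohomology check you offer as an alternative does work, because for a $d$-self-injective algebra $\CM A=\D^{\bb}_{\le 0}\cap\D^{\bb}_{\ge -d+1}$ (Proposition \ref{Prop:cm}), and this is all the paper uses. For $\rad P\notin\add A$, your reduction ``$\rad P\in\add A\Rightarrow S\in\per A\cap\CM A=\add A$'' is fine, but the last step, that a simple module is not in $\add A$, is precisely what you leave hanging; it needs an argument (e.g.\ $\add A=\add DA[d-1]$ together with Lemma \ref{Lem:KN} forces $\h^{-d+1}(Q)\cong D\h^{0}(Q')\neq 0$ for every indecomposable $Q\in\add A$ when $d\ge 2$, whereas $\h(S)$ is concentrated in degree $0$; for $d=1$ one must exclude semisimple blocks, a degeneracy the paper also glosses over), and your ``more robust'' variant via $\un{\CM}A\neq 0$ is vague and close to circular.

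The main gap is the indecomposability of $\rad P$. Your key claim, a natural map $\End_{\D^{\bb}(A)}(P)\to\End_{\D^{\bb}(A)}(\rad P)$ that is ``surjective modulo radical,'' is not correctly formulated: such a map is only defined on the subalgebra of endomorphisms of $P$ compatible with $f'$, and what one needs is genuine surjectivity onto $\End(\rad P)$ of the map in the other direction, namely that every endomorphism $g$ of $\rad P$ lifts, uniquely up to the obvious compatibility, to an endomorphism of $P$. That lifting is where $d$-self-injectivity actually enters and you never identify it: existence needs $\Hom_{\D^{\bb}(A)}(S,P[1])=0$ and uniqueness needs $\Hom_{\D^{\bb}(A)}(\rad P,S[-1])=0$, both of which follow from $\add A=\add DA[d-1]$, Lemma \ref{Lem:KN} and the $t$-structure, but neither is stated or proved in your proposal. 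With these vanishings $\End(\rad P)$ becomes a quotient of a subalgebra of the local algebra $\End(P)$, hence local over the algebraically closed field $k$, completing your route; as written, however, it is a plan rather than a proof. For comparison, the paper avoids all of this by applying the total cohomology functor: $\h(A)$ is a graded self-injective algebra, $\h(P)$ is an indecomposable projective-injective graded module, so $\h(\rad P)=\rad\h(P)$ has simple socle and is therefore indecomposable and non-projective as a graded $\h(A)$-module, and both conclusions lift back to $\D^{\bb}(A)$ at once; this graded-module reduction is the short argument your approach is missing.
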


\begin{proof}
Since $A$ is $d$-self-injective, then $\CM A=\D^{\bb}_{\le 0}(A)\cap \D^{\bb}_{\ge -d+1}(A)$. So $\rad P \in \CM A$.
We have a natural functor $\h: \D(A)\ra \mathsf{ Mod} \, \h(A)$ by taking cohomology, where  $\mathsf{Mod}\, \h(A)$ is the category of graded $\h(A)$-modules.
Notice that $\h(A)$ is a self-injective graded algebra and $\h(P)$ is an indecomposable projective-injective graded $\h(A)$-module.
So $\h(\rad P)=\rad(\h(P))$ is an indecomposable graded $\h(A)$-module and it does not belong to $\add \h(A)$. 
Then the assertion is true.
\end{proof}

\subsection{Combinatorial  configurations}
\label{Section:combi}

We give a combinatorial interpretation of Calabi-Yau configurations of Dynkin type in  our combinatorial framework. 

Let $\Delta$ be a Dynkin diagram. Recall from \cite{Gabriel} that a \emph{slice} of $\Z \Delta$ (see Section \ref{Section:quiver} for the definition of $\Z\Delta$) is a connected full subquiver which contains a unique representatives of the vertices $(r, q)$, $r\in \Z$ for each $q\in \Delta_{0}$. For each vertex $x=(p, q)$ of $\Z\Delta$, there is a unique slice admitting $x$ as its unique source. We call this slice  \emph{the slice starting at $x$}. 	
 An integer-valued function $f$ on the vertices of $\Z\Delta$ is \emph{additive} if it satisfies the equation $f(x)+f(\tau x)=\sum_{y\ra x \in (\Z\Delta)_{1}}f(y)$. It is easy to see that $f$ is determined by its value on a slice. Now we define $f_{x}$ as the additive function which has value $1$ on the slice starting at $x$ for each vertex $x$.  Let $Q_{x}$ be the connected component of the full subquiver $\{y\in (\Z\Delta)_{0}\mid f_{x}(y)>0\}$ of  $\Z\Delta$ containing $x$. We define a map $h_{x}$ by 
\[ h_{x}(y)= \begin{cases}    f_{x}(y) & \text{if } y\in (Q_{x})_{0}; \\ 0 & \text{otherwise}.
\end{cases}  \]
Notice that $h_{x}$ is no longer an additive map.
Let us see an example of type $D$.
\begin{Ex}
Let $x$ be the marked vertex in $(\Z D_{4})_{0}$. Then the value of $h_{x}$ is given as follows
\begin{center}
\scriptsize{
\begin{tikzpicture}[scale=.8]
 \draw 
 node at (-3,0){$\dots$} node at (7,0){$\dots$}
 node (a0) at (0,0) {$1$}  node (a1) at (1,0) {$1$}  node (a2) at (2,0) {$2$}  node (a3) at (3,0) {$1$}  node (a4) at (4,0) {$1$}  node (a5) at (5,0) {$0$}  node (a-1) at (-1,0) {$0$} node(a-2) at (-2,0){$0$}
 node (a6) at (6,0){$0$}
  node (b1) at (1,1) {$1$}  node (b3) at (3,1) {$1$}  node (b5) at (5,1) {$0$}  node (b-1) at (-1,1) {$0$}
  node (c1) at (1,-1) {$1$}  node (c3) at (3,-1) {$1$}  node (c5) at (5,-1) {$0$}  node (c-1) at (-1,-1) {$0$};
  \draw[->](a-2)--(b-1);\draw[->](a-2)--(a-1);\draw[->](a-2)--(c-1);
  \draw[->] (b-1)--(a0); \draw[->] (a-1)--(a0); \draw[->] (c-1)--(a0); \draw[->] (a0)--(b1);\draw[->] (a0)--(a1);\draw[->] (a0)--(c1); \draw[->] (b1)--(a2); \draw[->] (a1)--(a2); \draw[->] (c1)--(a2);  \draw[->] (a2)--(b3); \draw[->] (a2)--(a3);\draw[->] (a2)--(c3); \draw[->] (c3)--(a4); \draw[->] (a3)--(a4);  \draw[->] (b3)--(a4); \draw[->] (a4)--(b5); \draw[->] (a4)--(a5); \draw[->] (a4)--(c5); \draw[->] (b5)--(a6);\draw[->] (a5)--(a6); \draw[->] (c5)--(a6);
  \draw[red] (a0) circle [radius=0.25];
  \end{tikzpicture} } \end{center}
\begin{center}
\scriptsize{
\begin{tikzpicture}[scale=.8]
 \draw 
 node at (-3,0){$\dots$} node at (7,0){$\dots$}
 node (a0) at (0,0) {$1$}  node (a1) at (1,0) {$1$}  node (a2) at (2,0) {$1$}  node (a3) at (3,0) {$0$}  node (a4) at (4,0) {$0$}  node (a5) at (5,0) {$0$}  node (a-1) at (-1,0) {$0$} node(a-2) at (-2,0){$0$}
 node (a6) at (6,0){$0$}
  node (b1) at (1,1) {$1$}  node (b3) at (3,1) {$0$}  node (b5) at (5,1) {$0$}  node (b-1) at (-1,1) {$0$}
  node (c1) at (1,-1) {$0$}  node (c3) at (3,-1) {$1$}  node (c5) at (5,-1) {$0$}  node (c-1) at (-1,-1) {$1$};
  \draw[->](a-2)--(b-1);\draw[->](a-2)--(a-1);\draw[->](a-2)--(c-1);
  \draw[->] (b-1)--(a0); \draw[->] (a-1)--(a0); \draw[->] (c-1)--(a0); \draw[->] (a0)--(b1);\draw[->] (a0)--(a1);\draw[->] (a0)--(c1); \draw[->] (b1)--(a2); \draw[->] (a1)--(a2); \draw[->] (c1)--(a2);  \draw[->] (a2)--(b3); \draw[->] (a2)--(a3);\draw[->] (a2)--(c3); \draw[->] (c3)--(a4); \draw[->] (a3)--(a4);  \draw[->] (b3)--(a4); \draw[->] (a4)--(b5); \draw[->] (a4)--(a5); \draw[->] (a4)--(c5); \draw[->] (b5)--(a6);\draw[->] (a5)--(a6); \draw[->] (c5)--(a6);
  \draw[red] (c-1) circle [radius=0.25];
  \end{tikzpicture} } \end{center}
\end{Ex}

Let $\phi$ be a weakly admissible automorphism (see Section \ref{Section:quiver}) of $\Z \Delta$. Let $\pi: \Z \Delta \ra \Z\Delta/\phi$ be the natural projection. For $x\in \Z\Delta$, we define $h^{\phi}_{x}$ as follows
\[  h^{\phi}_{x}(y)=\sum_{\pi(z)=y} h_{x}(z) \mbox{ for } y\in (\Z\Delta/\phi)_{0}   \]
If $\phi$ is identity, then $h^{\phi}$ is exactly $h$.
Recall we have defined the ``shift permutation'' $[1]$  in section \ref{Section:quiver}. Now we use $h^{\phi}_{x}$ and $[1]$ to define combinatorial configurations.
 \begin{Def}\label{Def:combi}
Let $\Delta$ be a Dynkin diagram and let $\phi$ be a weakly admissible group. Let $C$ be a subset of $(\Z\Delta/\phi)_{0}$.  For $d\ge 1$, if the following conditions hold
 \begin{enumerate}[\rm $\bullet$]
   \item $h^{\phi}_{x}(y)=\delta_{x, y}$ for $x,y \in C$;
   \item $h^{\phi}_{x}(y[-j])=0$ for $x, y \in C$ and $0< j \le d-1$;
   \item For any  vertex $z$ in $(\Z\Delta/\phi)_{0}$, there exists $x\in C$ and $0\le j \le d-1$, such that $h^{\phi}_{x}(z[-j])\not=0$. \end{enumerate}
 we call $C$ a \emph{$(-d)$-combinatorial configuration}.
\end{Def}

The connection between  configurations of $\Z\Delta$ and configurations of $\Z\Delta/\phi$ is given as follows.
\begin{Prop}
Let $C$ be a subset of $(\Z\Delta/\phi)_{0}$. Then $C$ is a $(-d)$-combinatorial configuration of $\Z\Delta/\phi$ if and only if $\pi^{-1}(C)$ is a $(-d)$-combinatorial configuration of $\Z\Delta$.
\end{Prop}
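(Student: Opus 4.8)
The plan is to reduce the statement about $\Z\Delta/\phi$ directly to the combinatorial definition by unwinding how $h^{\phi}_x$ and $\pi$ interact. The key observation is the behaviour of the functions $h^{\phi}_x$ under the projection $\pi: \Z\Delta \to \Z\Delta/\phi$: by definition $h^{\phi}_x(y) = \sum_{\pi(z)=y} h_x(z)$ for $x \in (\Z\Delta)_0$ and $y \in (\Z\Delta/\phi)_0$, and moreover for $x' = \pi(x)$ one has $h^{\phi}_{x'}$ well-defined on $(\Z\Delta/\phi)_0$ only after summing over the $\phi$-orbit of $x$; so the honest statement to prove first is a compatibility lemma: for $x \in (\Z\Delta)_0$ and $z \in (\Z\Delta)_0$,
\[ h^{\phi}_{\pi(x)}(\pi(z)) = \sum_{g \in \langle\phi\rangle} h_{gx}(z) = \sum_{g\in\langle\phi\rangle} h_x(g^{-1}z), \]
using that $\phi$ commutes with $\tau$ (hence with the shift permutation $[1]$) and with the arrow structure, so $h_{gx} = h_x \circ g^{-1}$. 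The weak admissibility of $\phi$ guarantees the supports $Q_{gx}$ for distinct $g$ are ``spread out'' enough that these local functions do not interfere destructively; this is exactly where weak admissibility enters, and I expect verifying that the sum above genuinely computes $h^{\phi}$ (rather than some truncation artifact) to be the main technical point.

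Granting the compatibility lemma, the proof splits into the three defining conditions, checked in both directions simultaneously since each is an ``if and only if''. First I would handle condition $\bullet_1$: for $x, y \in \pi^{-1}(C)$ we must compare $h_x(y)$ with $\delta_{x,y}$, while for $\pi(x),\pi(y) \in C$ we compare $h^{\phi}_{\pi(x)}(\pi(y)) = \sum_g h_x(g^{-1}y)$ with $\delta_{\pi(x),\pi(y)}$. Since $\pi^{-1}(C)$ is a full $\phi$-stable subset, $g^{-1}y \in \pi^{-1}(C)$ whenever $y$ is, so each term $h_x(g^{-1}y)$ is governed by the $\Z\Delta$ condition; one checks the sum collapses to $1$ precisely when $\pi(x) = \pi(y)$ and to $0$ otherwise. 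Conditions $\bullet_2$ and $\bullet_3$ are handled identically, using that the shift permutation $[1] = \SSS\tau^{-1}$ descends to $\Z\Delta/\phi$ and commutes with $\pi$, so $h^{\phi}_{\pi(x)}(\pi(z)[-j]) = \sum_g h_x(g^{-1}(z)[-j])$; vanishing (resp. non-vanishing) of the left side for all (resp. some) $j$ in the relevant range translates termwise into the corresponding statement upstairs, again using $\phi$-stability of $\pi^{-1}(C)$ and of all of $(\Z\Delta)_0$.

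The one place where care is genuinely needed, beyond bookkeeping, is the forward implication of $\bullet_3$ combined with $\bullet_1$: one must ensure that the ``spreading out'' from weak admissibility prevents two orbit-translates $h_{gx}$ and $h_{g'x}$ from both being nonzero at the same vertex $z$ in a way that would make $h^{\phi}_{\pi(x)}(\pi(x)) > 1$. I would isolate this as a sub-lemma — essentially that for fixed $x$ the supports $\{Q_{gx}\}_{g \in \langle\phi\rangle}$ are pairwise disjoint, which follows from the condition $x^+ \cap (gx)^+ = \varnothing$ by a short argument tracing the additive function $f_x$ outward from its defining slice. Once that disjointness is in hand, every sum $\sum_g h_x(g^{-1}z)$ has at most one nonzero summand, and the equivalence in the Proposition becomes a termwise translation of the three conditions, completing the proof.
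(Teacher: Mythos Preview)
Your compatibility identity $h^{\phi}_{\pi(x)}(\pi(z)) = \sum_{g} h_x(g^{-1}z)$ is correct, and once you have it the proof is immediate --- but not for the reason you give. The disjointness sub-lemma you propose (that the supports $\{Q_{gx}\}_g$ are pairwise disjoint whenever $\phi$ is weakly admissible) is both unnecessary and false. For a concrete counterexample take $\Delta = A_3$ with $\phi = \tau$: one checks directly that $\langle\tau\rangle$ is weakly admissible, yet $Q_{(0,2)} = \{(0,2),(0,3),(1,1),(1,2)\}$ and $Q_{\tau(0,2)} = Q_{(-1,2)} = \{(-1,2),(-1,3),(0,1),(0,2)\}$ meet at $(0,2)$. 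So the ``short argument tracing $f_x$ outward from its slice'' that you anticipate cannot exist.

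What actually makes the equivalence work is simply that $h_x \ge 0$ by definition. For the first bullet: if $\pi(\tilde x) \neq \pi(\tilde y)$ then $\sum_{z \in \pi^{-1}(\pi(\tilde y))} h_{\tilde x}(z) = 0$ forces every summand, in particular $h_{\tilde x}(\tilde y)$, to vanish; if $\pi(\tilde x) = \pi(\tilde y)$ then the sum equals $1$, and since the summand $h_{\tilde x}(\tilde x) = 1$ already exhausts it, every other summand is zero. The converse direction just counts how many elements of the fiber equal $\tilde x$. The second and third bullets reduce identically: a sum of nonnegative terms is zero iff each term is, and nonzero iff some term is. There is no ``truncation artifact'' to worry about, since $h^\phi$ is \emph{defined} as the fiberwise sum of the already-truncated functions $h_x$; the compatibility identity is then tautological once you note $h_{gx} = h_x \circ g^{-1}$. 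This is exactly what the paper's one-line proof (``using the definition, it is easy'') is invoking; weak admissibility is needed only for the quotient quiver to be sensible, not for the equivalence itself.
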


\begin{proof}
Using the definition $ h^{G}_{x}(y)=\sum_{\pi(z)=y} h_{x}(z) $, it is easy to show the statement.
\end{proof}

Here is a simple example:
\begin{Ex}
We consider the quiver $\Z A_{2}/\SSS[2]$,
\begin{center}
\scriptsize{
\begin{tikzpicture}[scale=.8]

\draw[yshift=0cm, xshift=0cm] 
node at (-2.5,0.5){$\dots$} 
node at (8.5,0.5){$\dots$}
node (x-1)  at (-2,0) {(2,2)}  node (x) at (0,0) {$(0,1)$}  node (x1) at (2,0) {(1,1)}
   node (x2) at (4,0) {(2,1)} node (x3) at (6,0) {$(3,1)$} node (x4) at (8,0) {$(0,2)$}
   node (y-1) at (-1,1) {$(3,1)$}  node (y) at (1,1) {$(0,2)$}  node (y1) at (3,1) {(1,2)}
   node (y2) at (5,1) {(2,2)} node (y3) at (7,1) {$(0,1)$}   
   [dotted](-0.5,-0.6) rectangle (6.5,1.6) ;
  \draw[->] (x-1)--(y-1); \draw[->](y-1)--(x);
  \draw[->](x)--(y);  \draw[->] (y)--(x1);
  \draw[->](x1)--(y1); \draw[->](y1)--(x2); \draw[->](x2)--(y2);\draw[->](y2)--(x3);\draw[->](x3)--(y3);\draw[->](y3)--(x4) ;
\end{tikzpicture} }
\end{center}

One can check that there only exist  seven $(-2)$-combinatorial  configurations. We give all of them

\begin{center}
\begin{tikzpicture}[scale=.4]

\draw[yshift=3.5cm, xshift=12cm] node (x-1) at (-2,0) {.}  node (x) at (0,0) {$.$}  node (x1) at (2,0) {.}
   node (x2) at (4,0) {.} node (x3) at (6,0) {.} node (x4) at (8,0) {$.$}
   node  at (-1,1) {.}  node (y) at (1,1) {$.$}  node (y1) at (3,1) {.}
   node (y2) at (5,1) {.} node (y3) at (7,1) {$.$}   
  [dotted] (-0.5,-0.8) rectangle (6.5,1.8); 
    \draw[red] (y1) circle [radius=0.4]   (y) circle [radius=0.4]  (x4) circle [radius=0.4];

\draw node (x-1) at (-2,0) {.}  node (x) at (0,0) {$.$}  node (x1) at (2,0) {.}
   node (x2) at (4,0) {.} node (x3) at (6,0) {.} node (x4) at (8,0) {$.$}
   node  at (-1,1) {.}  node (y) at (1,1) {$.$}  node (y1) at (3,1) {.}
   node (y2) at (5,1) {.} node (y3) at (7,1) {$.$}   
  [dotted] (-0.5,-0.8) rectangle (6.5,1.8); 
    \draw[red] (y1) circle [radius=0.4]   (y2) circle [radius=0.4] (x-1) circle [radius=0.4];
    
\draw[xshift=12cm] node  (x-1) at (-2,0) {.}  node (x) at (0,0) {$.$}  node (x1) at (2,0) {.}
   node (x2) at (4,0) {.} node (x3) at (6,0) {.} node (x4) at (8,0) {$.$}
   node  at (-1,1) {.}  node (y) at (1,1) {$.$}  node (y1) at (3,1) {.}
   node (y2) at (5,1) {.} node (y3) at (7,1) {$.$}   
   [dotted](-0.5,-0.8) rectangle (6.5,1.8); 
    \draw[red] (y2) circle [radius=0.4]   (x) circle [radius=0.4] (x-1) circle [radius=0.4] (y3) circle [radius=0.4];

\draw[xshift=24cm] node  at (-2,0) {.}  node (x) at (0,0) {$.$}  node (x1) at (2,0) {.}
   node (x2) at (4,0) {.} node (x3) at (6,0) {.} node (x4) at (8,0) {$.$}
   node  at (-1,1) {.}  node (y) at (1,1) {$.$}  node (y1) at (3,1) {.}
   node (y2) at (5,1) {.} node (y3) at (7,1) {$.$}   
   [dotted](-0.5,-0.8) rectangle (6.5,1.8); 
    \draw[red] (x) circle [radius=0.4]   (x1) circle [radius=0.4] (y3) circle [radius=0.4];

\draw[yshift=-3.5cm, xshift=0cm] node  at (-2,0) {.}  node (x) at (0,0) {$.$}  node (x1) at (2,0) {.}
   node (x2) at (4,0) {.} node (x3) at (6,0) {.} node (x4) at (8,0) {$.$}
   node  at (-1,1) {.}  node (y) at (1,1) {$.$}  node (y1) at (3,1) {.}
   node (y2) at (5,1) {.} node (y3) at (7,1) {$.$}   
  [dotted] (-0.5,-0.8) rectangle (6.5,1.8); 
    \draw[red] (x1) circle [radius=0.4]   (x2) circle [radius=0.4];
    
\draw[yshift=-3.5cm, xshift=12cm] node  at (-2,0) {.}  node (x) at (0,0) {$.$}  node (x1) at (2,0) {.}
   node (x2) at (4,0) {.} node (x3) at (6,0) {.} node (x4) at (8,0) {$.$}
   node (y-1)  at (-1,1) {.}  node (y) at (1,1) {$.$}  node (y1) at (3,1) {.}
   node (y2) at (5,1) {.} node (y3) at (7,1) {$.$}   
  [dotted] (-0.5,-0.8) rectangle (6.5,1.8); 
    \draw[red] (x2) circle [radius=0.4]   (x3) circle [radius=0.4] (y-1) circle [radius=0.4];

\draw[yshift=-3.5cm, xshift=24cm] node  at (-2,0) {.}  node (x) at (0,0) {$.$}  node (x1) at (2,0) {.}
   node (x2) at (4,0) {.} node (x3) at (6,0) {.} node (x4) at (8,0) {$.$}
   node (y-1) at (-1,1) {.}  node (y) at (1,1) {$.$}  node (y1) at (3,1) {.}
   node (y2) at (5,1) {.} node (y3) at (7,1) {$.$}   
  [dotted] (-0.5,-0.8) rectangle (6.5,1.8); 
    \draw[red] (x3) circle [radius=0.4]   (y) circle [radius=0.4] (y-1) circle [radius=0.4] (x4) circle [radius=0.4];
\end{tikzpicture}
 \end{center}
\end{Ex}

\subsection{Calabi-Yau configurations VS. combinatorial configurations}

In this section we study the connection between Calabi-Yau configurations and combinatorial configurations.
Let $\T$ be a Hom-finite Krull-Schmidt  triangulated category with  AR quiver  isomorphic to $\Z \Delta/\SSS[d]$. We identify the elements in $\ind \T$ as the vertices in $\Z\Delta/\SSS[d]$.  Let $\pi: \Z\Delta \ra \Z\Delta/\SSS[d]$ be the natural surjection.  We denote by $\bar{h}$ for the map $h^{\SSS[d]}$.
We first show that

\begin{Prop}\label{Prop:vs}
For any $X, Y\in \ind \T$, we have $\dim \Hom_{\T}(X, Y)= \bar{h}_{X}(Y)$.
\end{Prop}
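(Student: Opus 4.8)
The plan is to reduce the statement about $\Hom$-spaces in $\T$ to a purely combinatorial identity about additive functions on $\Z\Delta$, using the fact that $\dim\Hom_\T(X,-)$ behaves almost additively on AR triangles. First I would fix $X\in\ind\T$ and consider the function $g_X\colon\ind\T\to\Z_{\ge0}$ defined by $g_X(Y)=\dim\Hom_\T(X,Y)$. Lifting along the covering $\pi\colon\Z\Delta\to\Z\Delta/\SSS[d]$, I would show that the lifted function $\widetilde g_X$ on $(\Z\Delta)_0$, given by $\widetilde g_X(\widetilde Y)=\dim\Hom_\T(X,\pi(\widetilde Y))$, satisfies the defining ``mesh'' relation away from the $\tau$-orbit of $X$: for each vertex $Z\not\cong\tau X$ (equivalently $\widetilde Z$ not in the $\pi$-fibre of $\tau X$) the AR triangle $\tau Z\to E\to Z\to$ in $\T$ gives a long exact sequence after applying $\Hom_\T(X,-)$, and the connecting map $\Hom_\T(X,Z)\to\Hom_\T(X,\tau Z[1])$ vanishes because $X$ is not a summand of $\tau Z[1]$ (using that $\Hom_\T$ vanishes into the first syzygy term, i.e. the standard fact that $\Hom(X,-)$ applied to an AR triangle not starting at $\tau X$ is exact). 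Hence $\widetilde g_X(\widetilde Z)+\widetilde g_X(\tau\widetilde Z)=\sum_{\widetilde W\to\widetilde Z}\widetilde g_X(\widetilde W)$, i.e. $\widetilde g_X$ is additive on $\Z\Delta$ away from the fibre $\pi^{-1}(\tau X)$.

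Next I would pin down the values on a slice to conclude $\widetilde g_X = h_{\widetilde X}$ for a suitable lift $\widetilde X$ of $X$. By the Auslander--Reiten formula / knitting, $\dim\Hom_\T(X,Y)$ for $Y$ on the slice starting at $X$ is computed directly: it is $1$ at $X$ and, more generally, the function $\widetilde g_X$ is supported on the ``cone'' $Q_{\widetilde X}$ of successors of $\widetilde X$ — indeed $\Hom_\T(X,Y)=0$ whenever $Y$ does not lie on a path from $X$ in the AR quiver, which is exactly the complement of $Q_{\widetilde X}$ (here one must be a little careful because $\T\cong\Z\Delta/\SSS[d]$ is $2$-Calabi-Yau-type periodic and a vertex $Y$ may be reached from several lifts of $X$, which is precisely why $\bar h_X=h^{\SSS[d]}_{\widetilde X}=\sum_{\pi(\widetilde Z)=Y}h_{\widetilde X}(\widetilde Z)$ sums over the fibre). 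Inside $Q_{\widetilde X}$, the function $h_{\widetilde X}$ is by definition the additive function with value $1$ on the slice starting at $\widetilde X$, and additivity determines it uniquely from its slice values; so I would check $\widetilde g_X$ and $h_{\widetilde X}$ agree on one slice and satisfy the same recursion on $Q_{\widetilde X}$, forcing $\widetilde g_X=h_{\widetilde X}$ on $(\Z\Delta)_0$ (both vanishing outside $Q_{\widetilde X}$). Summing over the fibre then yields $\dim\Hom_\T(X,Y)=\sum_{\pi(\widetilde Z)=Y}h_{\widetilde X}(\widetilde Z)=\bar h_X(Y)$.

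The main obstacle, and the part requiring genuine care rather than bookkeeping, is the boundary behaviour at $\tau X$ together with the periodicity: the additivity relation for $\widetilde g_X$ fails exactly on the fibre $\pi^{-1}(\tau X)$, where the connecting homomorphism $\Hom_\T(X,\tau X)\to\Hom_\T(X,\tau\cdot\tau X[1])$ need not vanish (it records the AR triangle ending at $\tau X$ itself), so one cannot naively knit past that orbit. I would handle this by knitting only within the fundamental domain $Q_{\widetilde X}$ — which by the Dynkin hypothesis is finite and, crucially, is ``cut off'' before wrapping around, so that the slice starting at $\widetilde X$ together with the mesh relations on $Q_{\widetilde X}$ alone determine $\widetilde g_X$ there — and separately verifying that $\widetilde g_X$ vanishes on all of $(\Z\Delta)_0\setminus Q_{\widetilde X}$, i.e. that there is no nonzero map from $X$ to any object outside the forward cone. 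This last vanishing is where one uses the explicit combinatorics of $\SSS[d]$: since $\T$ has no negative self-extensions of the relevant kind (the category is an orbit category with the specific Nakayama shift), the forward cones for the various lifts of $X$ cover $\ind\T$ without unwanted overlap contributions beyond those already counted by the sum defining $h^{\SSS[d]}$. Once these two facts are in place, the identity $\dim\Hom_\T(X,Y)=\bar h_X(Y)$ follows.
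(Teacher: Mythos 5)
Your overall strategy (a hammock/knitting argument: show $\dim\Hom_{\T}(X,-)$ satisfies the mesh relations away from a small exceptional set, fix its values on the slice at $X$, and compare with $\bar h_X$) is a legitimate alternative to the paper's proof, which instead filters $\Hom_{\T}(X,Y)$ by radical powers and invokes Iyama's $\tau$-category ladder theorem to identify $\rad^i/\rad^{i+1}$ with the multiplicities in the knitted objects $f_i(X)$. But as written your argument has a genuine error at its central step: the exceptional set for additivity is misidentified. Applying $\Hom_{\T}(X,-)$ to the AR triangle $\tau Z\to E\to Z\to \tau Z[1]$, the relation $\dim\Hom(X,\tau Z)+\dim\Hom(X,Z)=\dim\Hom(X,E)$ requires two things: surjectivity of $\Hom(X,E)\to\Hom(X,Z)$, which holds iff $X\not\cong Z$ (a map $X\to Z$ fails to factor only when it is a retraction), and vanishing of the previous connecting map $\Hom(X,Z[-1])\to\Hom(X,\tau Z)$, which holds iff $Z\not\cong X[1]$. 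So additivity fails exactly at the meshes ending at $Z\cong X$ and $Z\cong X[1]$, not at $Z\cong\tau X$; your stated reason (``$X$ is not a summand of $\tau Z[1]$'') is not the relevant condition for either failure. This is not a cosmetic slip: with your claimed exceptional set $\{\tau X\}$, knitting forward from the slice at $X$ would force $\dim\Hom_{\T}(X,-)$ to agree with the full additive function $f_{\widetilde X}$, which takes negative values, a contradiction. The defect of exactly one at the mesh ending at $X[1]$ (the image of the connecting map hitting the AR-triangle class) is precisely what truncates the additive function at the far boundary of $Q_{\widetilde X}$ and produces $h$ rather than $f$; your proposal never establishes this, and ``knitting only within $Q_{\widetilde X}$'' does not substitute for it, since one must know where and by how much the recursion is violated to see that the values die at the boundary.

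A second, related inconsistency: you assert $\widetilde g_X=h_{\widetilde X}$ with both vanishing outside $Q_{\widetilde X}$, but $\widetilde g_X$ is by construction $\SSS[d]$-periodic (it is pulled back from the quotient), while $h_{\widetilde X}$ has finite support, so this equality is impossible; the correct target is $\widetilde g_X=\sum_{k\in\Z}h_{\SSS[d]^k\widetilde X}$ (the pullback of $\bar h_X$), and in particular $\widetilde g_X$ does not vanish outside $Q_{\widetilde X}$ — it is nonzero on every translate $\SSS[d]^kQ_{\widetilde X}$. This part is repairable (for $d\ge 1$ the translated hammocks are far enough apart that each fibre meets at most one of them, which is exactly what the sum in the definition of $h^{\SSS[d]}$ accounts for), but it needs to be stated and argued that way. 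If you fix the exceptional-mesh analysis (failures of size one at $Z\cong X$ and $Z\cong X[1]$) and compare $\widetilde g_X$ with the periodic sum rather than with a single $h_{\widetilde X}$, your route goes through and is a genuinely different, more elementary proof than the paper's; in its present form the key step is false.
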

To prove this, we consider the free Abelian monoid $\N_{\ge 0}(\Z\Delta)$ generated by $(\Z\Delta)_{0}$. For any $n\in \N_{\ge 0}$ and $x\in (\Z\Delta)_{0}$, we define a map $f_{n}(x): \N_{\ge 0}(\Z\Delta) \ra \N_{\ge 0}(\Z\Delta)$ by 

\[ f_{n}(x)=  \begin{cases}
 x & \text{if } n = 0;\\
 \sum_{x\ra y\in (\Z\Delta)_{0}}y & \text{if } n=1;\\
 f_{1}(f_{n-1}(x))-\tau^{-1}(f_{n-2}(x)) & \text{if }  n\ge 2.
 \end{cases}  \]
By the definition, we have the following lemma immediately.
\begin{Lem}
For any vertices $x, y$ in $(\Z\Delta)_{0}$, the multiplicity of $y$ in $\bigcup_{i\ge 0} {\rm supp}f_{i}(x)$ is $h_{x}(y)$.
\end{Lem}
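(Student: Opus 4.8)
The plan is to pin down $f_{n}(x)$ exactly, as an element of $\N_{\ge 0}(\Z\Delta)$, by induction on $n$, and then read off the lemma by summing over $n$. First I would equip $\Z\Delta$ with a \emph{degree function}: since $\Delta$ is a tree it admits a height function $c\colon\Delta_{0}\to\Z$ with $c(v)=c(u)+1$ for every arrow $u\to v$ of $\Delta$, and then $\partial(n,u):=2n+c(u)$ satisfies $\partial(\mathrm{head})=\partial(\mathrm{tail})+1$ for every arrow of $\Z\Delta$. Consequently every path from $x$ to a vertex $z$ has the same length $\ell(z):=\partial(z)-\partial(x)$, and since $\Delta$ is Dynkin the hammock $Q_{x}$ is a finite quiver, so the layers $L_{n}:=\{z\in Q_{x}\mid \ell(z)=n\}$ are finite and empty outside a range $0\le n\le m$. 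Writing $[v]_{z}$ for the coefficient of $z$ in $v$, the formula I would establish is
\[
 f_{n}(x)=\sum_{\ell(z)=n} h_{x}(z)\,z\qquad(0\le n\le m+1);
\]
because $h_{x}$ vanishes off $Q_{x}$ this sum runs over $L_{n}$, so in particular $f_{m+1}(x)=0$ and the recursion is read as terminating there (i.e.\ $\mathrm{supp}\,f_{i}(x)=\varnothing$ for $i>m$; one only carries the recursion out inside $\N_{\ge0}(\Z\Delta)$). Granting the formula, the multiplicity of $y$ in $\bigcup_{i\ge 0}\mathrm{supp}\,f_{i}(x)$ equals $[f_{\ell(y)}(x)]_{y}=h_{x}(y)$ when $y\in Q_{x}$, and equals $0=h_{x}(y)$ otherwise, which is the claim.

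The base cases are direct: $f_{0}(x)=x$ and $L_{0}=\{x\}$ with $h_{x}(x)=f_{x}(x)=1$; and $f_{1}(x)=\sum_{x\to z}z$, where $L_{1}$ is exactly the set $x^{+}$ of immediate successors of $x$ (these lie in distinct $\tau$-orbits, hence all belong to the slice starting at $x$, so $h_{x}\equiv1$ on them), and $\Z\Delta$ has no multiple arrows, so every coefficient is $1$. For the inductive step, unwinding the definitions of $f_{1}$ and of $\tau^{-1}$ on $\N_{\ge0}(\Z\Delta)$ gives $[f_{n}(x)]_{z}=\sum_{y\to z}[f_{n-1}(x)]_{y}-[f_{n-2}(x)]_{\tau z}$; inserting the inductive hypothesis and using that every predecessor $y$ of $z$ has $\ell(y)=\ell(z)-1$ while $\ell(\tau z)=\ell(z)-2$, one finds that $f_{n}(x)$ is supported on $L_{n}$ and that there
\[
 [f_{n}(x)]_{z}=\sum_{y\to z}h_{x}(y)-h_{x}(\tau z).
\]

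What remains is to identify the right-hand side with $h_{x}(z)$ for $z\in L_{n}$, $n\ge2$; since $n\ge2$ forces $z\ne x$, this is precisely the \emph{defect mesh relation} $h_{x}(z)+h_{x}(\tau z)=\sum_{y\to z}h_{x}(y)+\delta_{x,z}$ for $h_{x}$. For a vertex $z$ all of whose predecessors, and whose $\tau$-translate, lie in $Q_{x}$, this is immediate from the additivity of $f_{x}$; the genuine content is the behaviour along the boundary of $Q_{x}$, i.e.\ that the error function $e:=f_{x}-h_{x}$, which is supported on the complement of $Q_{x}$, is additive across that boundary. Equivalently, $h_{x}$ is the unique non-negative finitely supported solution of the defect mesh relation, because the difference of two such solutions would be a finitely supported additive function on $\Z\Delta$, and for $\Delta$ Dynkin the only such function is $0$ (an additive function is determined by its values on any slice, and a finitely supported one restricts to $0$ on a slice lying entirely to one side of its support). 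Verifying this boundary statement from the definition of $Q_{x}$ is the main obstacle; it is the standard hammock theory in Dynkin type (Brenner, Ringel) and may be quoted, or checked slice by slice. With it in hand the induction closes, and the lemma follows as explained above.
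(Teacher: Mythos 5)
Your scaffolding — the layer function, the induction giving $[f_{n}(x)]_{z}=\sum_{y\to z}h_{x}(y)-h_{x}(\tau z)$, and reading the recursion as stopping at its first zero — is fine, and is essentially an unpacking of what the paper leaves implicit (the paper gives no proof, declaring the lemma immediate from the definitions). The problem is the step you yourself call "the main obstacle": it is not only left unproved ("may be quoted, or checked slice by slice" is not a proof), but the statement you reduce to is false as you formulate it. The global defect mesh relation $h_{x}(z)+h_{x}(\tau z)=\sum_{y\to z}h_{x}(y)+\delta_{x,z}$ fails one step beyond the far end of the hammock, at $z=\tau^{-1}\SSS x$. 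Concretely, in $\Z A_{2}$ with $x=(0,1)$ one computes $h_{x}=1$ exactly on $\{(0,1),(0,2)\}$, and at $z=(1,2)$ the left-hand side is $h_{x}(1,2)+h_{x}(0,2)=1$ while the right-hand side is $h_{x}(1,1)+\delta_{x,z}=0$. Hence $e=f_{x}-h_{x}$ is \emph{not} additive across the far boundary of $Q_{x}$, and your characterization of $h_{x}$ as "the unique non-negative finitely supported solution" of that relation collapses: $h_{x}$ is not a solution of the global relation at all, so the uniqueness argument (which is fine in itself) identifies nothing relevant.

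What your induction actually needs is the mesh identity $h_{x}(z)+h_{x}(\tau z)=\sum_{y\to z}h_{x}(y)$ only for $z\neq x$ in layers $2\le\ell(z)\le m+1$, where $m$ is the top layer of $Q_{x}$. This restricted statement is true — the only failures are at $x$ and at $\tau^{-1}\SSS x$, which lies in layer $m+2$ because $\SSS x$ is the unique top-layer vertex of the hammock — but proving it is exactly the content of the lemma, and it is what is missing. Note also that quoting hammock theory in the form $h_{x}(y)=\dim\Hom(x,y)$ in the mesh category would be dangerously close to circular here, since the paper uses the present lemma (together with Iyama's ladder theorem) precisely to establish that dimension formula in Proposition \ref{Prop:vs}. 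So either restrict the defect relation to the correct range and verify it combinatorially from the definition of $f_{x}$ and $Q_{x}$ (e.g.\ slice by slice, handling the boundary where $f_{x}$ leaves the positive component), or the argument has a genuine gap.
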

For any module $M\cong \bigoplus_{i=1}^{l}M_{i}^{t_{i}}$ in $\T$, we identify it as the element $\sum_{i=1}^{l}t_{i}M_{i}$ in $\N_{\ge 0}\Z\Delta$, and vice versa. 

\begin{Prop}\cite[Theorems 4.1 and 7.1]{Iyama2}
Let $X\in \ind \T$, then we have a surjective morphism
\[  \Hom_{\T}(f_{n}(X), ?) \ra  \rad^{n}_{\T}(X, ?) \]
of functors which induces an isomorphism
\[    \Hom_{\T}(f_{n}(X), ?)/\rad_{\T}(f_{n}(X), ?)       \cong \rad^{n}_{\T}(X,?)/  \rad^{n+1}_{\T}(X,?).      \]
\end{Prop}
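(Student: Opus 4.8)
In the paper this is obtained by citing \cite[Theorems~4.1 and~7.1]{Iyama2}, which apply since $\T$ has almost split triangles (its AR quiver being $\Z\Delta/\SSS[d]$). For completeness, here is the plan one would follow to prove it directly: induction on $n$. For $n=0$ there is nothing to do, as $\Hom_{\T}(f_{0}(X), ?)=\Hom_{\T}(X, ?)=\rad^{0}_{\T}(X, ?)$. For $n=1$ I would use the minimal left almost split morphism $\mu\colon X\to f_{1}(X)$ with $f_{1}(X)=\bigoplus_{X\to y}y$: since $X$ is indecomposable, precomposition with $\mu$ gives a surjection $\mu^{*}\colon\Hom_{\T}(f_{1}(X), ?)\twoheadrightarrow\rad_{\T}(X, ?)$, and I would check that left minimality of $\mu$ forces $(\mu^{*})^{-1}(\rad^{2}_{\T}(X, ?))=\rad_{\T}(f_{1}(X), ?)$, so that the induced map on the tops of these functors is an isomorphism. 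The latter rests on the standard Krull--Schmidt observation that a morphism $f_{1}(X)\to M$ not lying in the radical has, after reducing its matrix, an isomorphism onto an indecomposable summand of $M$, whose composite with the corresponding irreducible component of $\mu$ still lies in $\rad_{\T}\setminus\rad^{2}_{\T}$.

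For the passage from $n-1$ to $n$ ($n\ge2$), the inductive hypothesis provides $\varphi\in\rad^{n-1}_{\T}(X, f_{n-1}(X))$ with $\varphi^{*}\colon\Hom_{\T}(f_{n-1}(X), ?)\twoheadrightarrow\rad^{n-1}_{\T}(X, ?)$. Because $\rad_{\T}$ is an ideal, $\rad^{n}_{\T}(X, ?)$ is the image of the composite
\[ \Hom_{\T}\bigl(f_{1}(f_{n-1}(X)), ?\bigr)\longrightarrow\Hom_{\T}(f_{n-1}(X), ?)\xra{\varphi^{*}}\Hom_{\T}(X, ?), \]
whose first arrow is precomposition with the minimal left almost split morphism of $f_{n-1}(X)$; hence $f_{1}(f_{n-1}(X))$ already surjects onto $\rad^{n}_{\T}(X, ?)$. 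The remaining point is to shrink it to $f_{n}(X)=f_{1}(f_{n-1}(X))-\tau^{-1}(f_{n-2}(X))$ without changing the image and so that the map on tops becomes bijective. For this I would bring in, for each indecomposable summand $y$ of $f_{n-1}(X)$, the almost split triangle $y\to f_{1}(y)\to\tau^{-1}(y)\to y[1]$: assembling these over all $y$ yields exactly the relation encoded by the recursion, and it exhibits those morphisms out of $f_{1}(f_{n-1}(X))$ supported on the ``$\tau^{-1}(f_{n-2}(X))$-part'' as factoring through $\tau^{-1}(f_{n-2}(X))$, hence through $\rad^{n+1}_{\T}(X, ?)$. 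Deleting them therefore leaves $\rad^{n}_{\T}(X, ?)$ untouched and kills them in the top, giving the desired isomorphism $\Hom_{\T}(f_{n}(X), ?)/\rad_{\T}(f_{n}(X), ?)\cong\rad^{n}_{\T}(X, ?)/\rad^{n+1}_{\T}(X, ?)$.

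The hard part will be exactly this last bookkeeping, and this is why I would import the result rather than reprove it: in a Krull--Schmidt category one must verify that the mesh cancellations dictated by the recursion are compatible with the radical filtration, i.e.\ that the correction term $\tau^{-1}(f_{n-2}(X))$ contributes to $\rad^{n+1}_{\T}$ and not merely to $\rad^{n}_{\T}$. Making this rigorous requires left minimality of all the almost split morphisms involved and a careful reduction of morphism matrices modulo higher powers of $\rad_{\T}$, which is the technical core of \cite{Iyama2}. Once the Proposition is available, Hom-finiteness of $\T$ together with the Lemma above ($y$ occurs in $\bigcup_{i\ge0}{\rm supp}\,f_{i}(x)$ with multiplicity $h_{x}(y)$) immediately yield $\dim\Hom_{\T}(X,Y)=\sum_{n\ge0}\dim\bigl(\rad^{n}_{\T}(X, ?)/\rad^{n+1}_{\T}(X, ?)\bigr)(Y)=\bar h_{X}(Y)$, which is Proposition~\ref{Prop:vs}.
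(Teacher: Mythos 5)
Your proposal matches the paper exactly: the paper gives no proof of this Proposition, importing it verbatim from \cite[Theorems 4.1 and 7.1]{Iyama2}, which is precisely what you do. Your accompanying sketch of the ladder/mesh induction is a faithful outline of Iyama's argument and correctly identifies the genuinely technical point (that the correction term $\tau^{-1}(f_{n-2}(X))$ contributes only in $\rad^{n+1}_{\T}$), so deferring that bookkeeping to the citation is entirely in line with the paper.
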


\begin{proof}[Proof of Proposition \ref{Prop:vs}]
Since $\T$ is representation-finite, then $\rad^{n}_{\T}(X,?)	=0$ for $n$ large enough. For any $X, Y \in \ind \T$, we have
 \begin{IEEEeqnarray*}{+rCl+x*}
 \dim_{k}\Hom_{\T}(X, Y)& = & 	\sum_{i\ge 0}\dim_{k}(\rad^{i}_{\T}(X,Y)/  \rad^{i+1}_{\T}(X,Y)) \\ & =& \sum_{i\ge 0}\dim_{k}(\Hom_{\T}(f_{i}(X), Y)/\rad_{\T}(f_{i}(X), Y) )   	 \\
 & = &  \sum_{\pi(y)=Y}	\sum_{i\ge 0}(\text{multiplicity of $y$ in } f_{i}(X)) 
 =\sum_{\pi(y)=Y} h_{X}(y) 
 = \bar{h}_{X}(Y) &\qedhere
 \end{IEEEeqnarray*}
 \end{proof}

The following theorem shows that Calabi-Yau configurations in $\T$  coincide with combinatorial configurations.
\begin{Thm}\label{Thm:ComCY}
 Let $ C \subset \ind\T$ be a subset. Then the following are equivalent:
 \begin{enumerate}[\rm (1)]
 \item $C$ is a $(-d)$-CY configuration in $\T$;
 \item $C$ is a $(-d)$-combinatorial configuration in $\Z\Delta/\SSS[d+1]$.
  \end{enumerate}
\end{Thm}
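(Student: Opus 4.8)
The plan is to reduce the statement to Proposition \ref{Prop:vs}, which already computes every Hom-space of $\T$ in terms of the combinatorial function $\bar h$. Throughout I would use the identification $\ind\T=(\Z\Delta/\SSS[d])_{0}$ coming from the isomorphism of AR quivers. The one structural input beyond Proposition \ref{Prop:vs} that I need is that the suspension functor $[1]$ of $\T$ acts on the AR quiver as the shift permutation $[1]=\SSS\tau^{-1}$ of Section \ref{Section:quiver}. This holds because in a Hom-finite Krull-Schmidt triangulated category with Serre functor one has $\tau\cong\SSS\circ[-1]$ as autoequivalences, while the AR translation of $\T$ corresponds to the translation of $\Z\Delta/\SSS[d]$ and the Serre functor corresponds to the Nakayama permutation on the AR quiver, as recalled in Section \ref{Section:quiver}. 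Granting this, for any $X,M\in\ind\T$ and $j\in\Z$ I can apply Proposition \ref{Prop:vs} to the pair $(X,M[-j])$ to obtain $\dim_{k}\Hom_{\T}(X,M[-j])=\bar h_{X}(M[-j])$, the right-hand side being computed with the shift permutation.

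With this dictionary, the equivalence becomes a term-by-term comparison of Definition \ref{Def:configuration} with Definition \ref{Def:combi}. Condition (1) of Definition \ref{Def:configuration}, that $\dim_{k}\Hom_{\T}(X,Y)=\delta_{X,Y}$ for $X,Y\in C$, translates into $\bar h_{X}(Y)=\delta_{X,Y}$, the first condition of Definition \ref{Def:combi}. Condition (2), that $\Hom_{\T}(X,Y[-j])=0$ for $X,Y\in C$ and $0<j\le d-1$, translates into $\bar h_{X}(Y[-j])=0$ in the same range, the second condition. Condition (3), that for each $M\in\ind\T$ there exist $X\in C$ and $0\le j\le d-1$ with $\Hom_{\T}(X,M[-j])\neq0$, translates, using that a nonnegative integer is nonzero precisely when it is positive, into: for each vertex $z$ of $\Z\Delta/\SSS[d]$ there exist $x\in C$ and $0\le j\le d-1$ with $\bar h_{x}(z[-j])\neq0$, the third condition. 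Therefore $C$ satisfies conditions (1), (2), (3) of Definition \ref{Def:configuration} if and only if it satisfies the three conditions of Definition \ref{Def:combi}, which is what I want.

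I expect the only genuine obstacle to be the compatibility claimed above, namely that the triangulated shift matches the combinatorial shift permutation on the AR quiver (and, en route, that the combinatorial automorphism $\SSS[d]$ is weakly admissible, so that $\bar h=h^{\SSS[d]}$ is defined); once that bookkeeping is settled, the rest of the argument is purely formal.
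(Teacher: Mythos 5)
Your proposal is correct and follows essentially the same route as the paper: the paper's proof of this theorem is precisely the one-line observation that it follows directly from Proposition \ref{Prop:vs}, with the identification of $\ind\T$ with the vertices of $\Z\Delta/\SSS[d]$ (and of the triangulated shift with the combinatorial shift permutation) taken as part of the standing setup of that subsection. Your term-by-term translation of Definition \ref{Def:configuration} into Definition \ref{Def:combi} is exactly the bookkeeping the paper leaves implicit.
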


\begin{proof}
 It  directly follows from Proposition \ref{Prop:vs}.
\end{proof}

Thanks to the theorem above, by abuse of notation, we may use the name ``Calabi-Yau configuration'' even in the combinatorial context.


\section{Trivial extension dg algebras}\label{Section:trivial}

In this section, we consider a class of self-injective dg algebras given by trivial extension.
Some results here will be used to prove a certain converse of Theorem \ref{Thm:configuration} (see Theorem \ref{Thm:main}).
 Let $B$ be a non-positive  proper dg $k$-algebra. Let $\inf(B)$ be the smallest integer $i$ such that $\h^{i}(B)\not=0$. Clearly, $\inf(B)\le 0$. For $d\in \Z$, we consider the complex $A:=B\op DB[d]$. We regard $A$ as a dg $k$-algebra whose  multiplication is given by 
\[ (a, f)(b, g):= (ab, ag+ fb) \]
where $a,b \in B$ and $f,g \in DB$, and the differential of $A$ inherits from $B$ and $DB$. If $d\ge -\inf(B)$, then $A$ is  non-positive. Moreover, we have an isomorphism $DA\simeq A[-d]$ in $\D A^{\rm e}$. If $\inf(B)=0$ and $d=0$, $A$ is the usual trivial extension.

We give a result analogous to \cite[Theorem 3.1]{Rickard}.
\begin{Prop} \label{Prop:Ric}
Let $B$ be a non-positive proper dg $k$-algebra and let $X$ be a silting object in $\per B$. Let $B':=\shEnd_{B}(X)$. Consider the  trivial extension dg algebras $A=B\op DB[d]$ and $A'=B'\op DB'[d]$, then $\per A$ is triangle equivalent to $\per A'$. \end{Prop}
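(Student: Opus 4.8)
The plan is to mimic the classical proof of Rickard's theorem~\cite[Theorem 3.1]{Rickard} in the dg setting, the point being that trivial extensions behave well under derived/silting equivalence because they can be realized via a tensor construction. First I would recall that $A = B\oplus DB[d]$ is precisely the tensor dg algebra (or ``trivial extension'') of $B$ by the dg $B$-bimodule $DB[d]$, and that there is a standard identification of $\per A$ in terms of $\per B$ together with the bimodule $DB[d]$. Concretely, one way to see this is to note that $A\simeq T_{B}(DB[d])$ up to taking a suitable completion/truncation, or more usefully, that a dg $A$-module is the same as a dg $B$-module $M$ equipped with a morphism $M\otimes^{\mathbf L}_{B}DB[d]\to M$ satisfying the obvious compatibility; this makes $\per A$ depend only on the pair $(\per B, -\otimes^{\mathbf L}_{B}DB[d])$ as an ``endofunctor-equipped'' triangulated category.

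Next I would use the silting object $X$ in $\per B$ and $B' = \shEnd_B(X)$, so that by the dg Morita theory for silting objects there is a triangle equivalence $F\colon \per B \xrightarrow{\ \simeq\ } \per B'$ sending $X$ to $B'$, induced by the bimodule $X$ (i.e. $F = \RHom_B(X,-)$ with quasi-inverse $-\otimes^{\mathbf L}_{B'}X$). The crucial compatibility to check is that $F$ intertwines the two Nakayama-type endofunctors: one needs a natural isomorphism
\[
F\bigl(M\otimes^{\mathbf L}_{B}DB[d]\bigr)\ \simeq\ F(M)\otimes^{\mathbf L}_{B'}DB'[d]
\]
for $M\in\per B$, compatibly with the multiplication maps. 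This reduces to a bimodule computation: $X\otimes^{\mathbf L}_{B}DB\otimes^{\mathbf L}_{B}(\text{quasi-inverse bimodule})$ should be identified with $DB'$ as a $B'$-bimodule, using that $\shEnd_B(X)\simeq B'$, that $DB$ represents the Serre functor on $\per B$ (this is where the properness of $B$ from Section~\ref{Section:nonpositivedg} enters, via Lemma~\ref{Lem:bifunc}), and that Serre functors are unique and hence transported by any triangle equivalence. So $F$ carries the Serre functor of $\per B$ to that of $\per B'$, which is exactly the content of the displayed isomorphism after identifying Serre functors with $-\otimes^{\mathbf L}DB[d]$ resp. $-\otimes^{\mathbf L}DB'[d]$.

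Given this, I would assemble the equivalence $\per A\simeq \per A'$ by transporting the bimodule description: the bimodule ${}_{A'}(A\text{-to-}A'\text{ bimodule})_A$ is built from $X$ together with the identification of the extension bimodules, and one checks it induces mutually inverse triangle functors by checking it does so on the generator $A$ (which maps to $A'$, since $X\mapsto B'$ and the extension parts match). The main obstacle I anticipate is the bookkeeping in the second paragraph: making the identification $F(-\otimes^{\mathbf L}_B DB[d])\simeq F(-)\otimes^{\mathbf L}_{B'}DB'[d]$ genuinely natural and compatible with the algebra structures (associativity of multiplication on $A$ vs.\ $A'$), rather than just an abstract isomorphism of functors — i.e.\ lifting the statement ``$F$ commutes with Serre functors'' to an equivalence of the trivial extension dg algebras themselves, which requires working at the level of dg bimodules (or using a sufficiently functorial model, e.g.\ dg-enhancement or $A_\infty$-structures) rather than just in the homotopy category. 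Everything else is a formal consequence of silting/Morita theory for dg algebras together with the Gorenstein/properness hypotheses already in force.
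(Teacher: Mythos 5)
Your proposal stalls exactly at the point you flag as ``the main obstacle,'' and that obstacle is the whole content of the statement, so as written there is a genuine gap. The identification of a dg $A$-module with a dg $B$-module $M$ together with a map $M\ot^{\mathbf L}_{B}DB[d]\to M$ ``satisfying the obvious compatibility'' is only valid at the strict chain level; in the derived category such data (and hence the slogan that $\per A$ depends only on the pair $(\per B,\,-\ot^{\mathbf L}_{B}DB[d])$ as an endofunctor-equipped triangulated category) does not determine $\per A$, and knowing that $F=\RHom_{B}(X,-)$ intertwines the two endofunctors up to natural isomorphism does not by itself produce an equivalence of the trivial extensions — lifting this to the dg/bimodule level is precisely what has to be proved, and your sketch offers no mechanism for it. There is also a hypothesis problem in the intertwining step itself: you invoke ``$DB$ represents the Serre functor on $\per B$'' and uniqueness/transport of Serre functors, but Proposition \ref{Prop:Ric} only assumes $B$ non-positive and proper, not Gorenstein, so $\nu=-\ot^{\mathbf L}_{B}DB$ need not even be an endofunctor of $\per B$, let alone a Serre functor (Lemma \ref{Lem:bifunc} gives only the one-sided duality). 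The assumption-free ingredient you actually need is the dg-level isomorphism $\shHom_{B}(X,\,X\ot_{B}DB)\cong D\shEnd_{B}(X)$ of $\shEnd_{B}(X)$-bimodules, valid for perfect $X$ over proper $B$.

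The paper sidesteps all of this by never trying to compare endofunctors across an equivalence: it applies $?\ot_{B}^{\mathbf L}A$ along the inclusion $B\hookrightarrow A$, observes that $X\ot_{B}A$ generates $\per A$ because $X$ generates $\per B$, and then computes the dg endomorphism algebra directly, using that the restriction of $X\ot_{B}A$ to $B$ is $X\oplus (X\ot_{B}DB[d])$, so that
\[
\shHom_{A}(X\ot_{B}A,\,X\ot_{B}A)\;\cong\;\shEnd_{B}(X)\oplus D\shEnd_{B}(X)[d]\;=\;A'
\]
as dg algebras; Keller's Morita theorem then gives $\per A\simeq\per A'$. If you want to salvage your outline, the fix is to abandon the ``transport the Serre functor, then lift'' strategy and instead work with the single silting object $X\ot_{B}A$ over $A$, proving the displayed dg-algebra isomorphism via the duality $\shHom_{B}(X,Y\ot_{B}DB)\cong D\shHom_{B}(Y,X)$ for $X,Y\in\per B$.
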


\begin{proof}
We may regard $A$ as a dg $B$-module through the injection $B\hookrightarrow A$. Consider the functor
\[ ?\ot_{B}^{\mathsf L}A:  \per B \lra \per A. \]
It sends $B$ to $A$. Since $\thick_{B}(X)=\per B$, then $\thick_{A}(X\ot_{B}A)=\per A$.  
Then $X\ot_{B}A$ is a compact generator of $\D A$ and
 we have a triangle equivalence between $\per \shEnd(X\ot_{B} A)$ and $\per A$ (see for example \cite[Lemma 4.2]{Keller94}) .
Next we consider the dg algebra  $\shEnd(X\ot_{B} A)$.
Notice that, as $k$-complexes, we have the following isomorphisms.
\[ \shHom_{A}(X\ot_{B} A, X\ot_{B} A) \simeq \shHom_{B}(X, X\op(X\ot_{B} DB[d]))\simeq \shEnd_{B}(X) \op D\shEnd_{B}(X)[d]. \]
In fact these isomorphisms also induce an isomorphism between dg algebras $\shEnd(X\ot_{B} A)$ and $\shEnd_{B}(X) \op D\shEnd_{B}(X)[d]$. 
Then  $\shEnd_{A}(X\ot_{B}A)$ is isomorphic to 
  $A'=B'\op DB'[d]$.
So $\per A$ is triangle equivalent to $\per A'$.
\end{proof}


In the sequel, we only consider the special case that $\inf(B)=0$ and $B$ has finite global dimension.  In this case, $A$ is a Gorenstein proper dg $k$-algebra.
 If $A'$ considered in Proposition \ref{Prop:Ric}
is also Gorenstein proper, 
then we have $\un{\CM}A \simeq \un{\CM}A'$ by the result above. 
We show  $\un{\CM}A$ is a cluster category in the following sense. For the details of orbit category, we refer to \cite{Keller05}.

\begin{Def}
Let $B$ be a finite dimensional hereditary $k$- algebra. The \emph{$(-d)$-cluster category $\cal C_{-d}(B)$} is defined as  the orbit category $\D^{\rm b}({\mod} B)/\nu[d]$, where $\nu$ is the Nakayama functor.
\end{Def}

Keller proved the following result.
\begin{Prop}\cite[Theorem 2]{Keller05}\cite{Kellercorr} 
Let $B$ be a finite-dimensional hereditary  $k$-algebra.  Let $A=B\op DB[d-1]$ be the trivial extension dg algebra. Then 
\begin{enumerate}[\rm(1)]
\item $\cal C_{-d}(B)$ has a structure of triangulated category;
\item $\cal C_{-d}(B)$ is triangle equivalent to $\D_{\sg}(A)$.
\end{enumerate}
\end{Prop}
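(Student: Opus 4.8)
The plan is to derive both parts from Keller's work on triangulated orbit categories \cite{Keller05} (with the correction \cite{Kellercorr}), reconstructing the argument as follows. For part~(1), first I would note that $\D^{\rm b}(\mod B)$ is an algebraic triangulated category with a Serre functor, and that $\nu=\tau[1]$ since $B$ is hereditary, so that $F:=\nu[d]=\tau[d+1]$. Every indecomposable of $\D^{\rm b}(\mod B)$ has the form $U[m]$ with $U$ an indecomposable $B$-module, and $\Hom_{\D^{\rm b}(\mod B)}(U,W[m])=0$ for $m\notin\{0,1\}$. Because $d\ge 1$, the cohomological degree of $F^{i}U$ grows linearly in $|i|$, so for fixed indecomposables $U,V$ only finitely many $i\in\Z$ satisfy $\Hom_{\D^{\rm b}(\mod B)}(U,F^{i}V)\neq0$, and these Hom's are uniformly bounded. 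This is the situation covered by \cite[Theorem~2]{Keller05} and \cite{Kellercorr}: the orbit category $\cal C_{-d}(B)=\D^{\rm b}(\mod B)/F$ acquires a triangulated structure for which the projection $\pi\colon\D^{\rm b}(\mod B)\to\cal C_{-d}(B)$ is a triangle functor, $2$-universal among triangle functors out of $\D^{\rm b}(\mod B)$ equipped with an isomorphism to their composite with $F$.

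For part~(2), next I would construct a triangle functor $\bar\Phi\colon\cal C_{-d}(B)\to\D_{\sg}(A)$. Recall that $A=B\op DB[d-1]$ is a non-positive proper Gorenstein dg algebra with $DA\cong A[1-d]$ in $\D(A^{\rm e})$, so $\nu_{A}=[1-d]$ and $\D_{\sg}(A)\simeq\un{\CM}A$ by Theorems~\ref{Thm:properties} and~\ref{Thm:AR}. Let $\epsilon\colon A\twoheadrightarrow A/DB[d-1]=B$ be the canonical surjection of dg algebras, and put $\Phi:=q\circ\epsilon^{*}$, where $\epsilon^{*}\colon\D^{\rm b}(\mod B)\to\D^{\rm b}(A)$ is restriction of scalars and $q\colon\D^{\rm b}(A)\to\D^{\rm b}(A)/\per A=\D_{\sg}(A)$ is the Verdier quotient; thus $\Phi(S_{i})=S_{i}$ for the simple modules $S_{1},\dots,S_{r}$. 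The key point to prove is $\Phi\circ F\cong\Phi$: from the $A$-bimodule triangle $DB[d-1]\to A\xra{\epsilon}B\to DB[d]$ (outer terms viewed as $A$-bimodules through $\epsilon$), together with the fact that $DB$ is perfect over $B$ since $\mathrm{gl.dim}\,B<\infty$, one extracts for each $M\in\D^{\rm b}(\mod B)$ a functorial triangle $\epsilon^{*}(\nu_{B}M)[d-1]\to P_{M}\to\epsilon^{*}(M)\to\epsilon^{*}(\nu_{B}M)[d]$ with $P_{M}\in\per A$ (a bar-type resolution of $B$ over the square-zero extension $A=B\ltimes DB[d-1]$). Applying $q$ annihilates $P_{M}$, giving a natural isomorphism $\Phi(M)\cong\Phi(\nu_{B}M)[d]=\Phi(FM)$; by the universal property from part~(1) this factors $\Phi$ as $\bar\Phi\circ\pi$.

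I would then prove $\bar\Phi$ is an equivalence. Full faithfulness, since both categories are generated by the $S_{i}$ and $\bar\Phi$ is a triangle functor, reduces to the isomorphisms $\Hom_{\cal C_{-d}(B)}(S_{i},S_{j}[n])\to\Hom_{\D_{\sg}(A)}(S_{i},S_{j}[n])$ for all $i,j,n$. The source is, by definition of the orbit category, $\bigoplus_{p\in\Z}\Hom_{\D^{\rm b}(\mod B)}(S_{i},\nu_{B}^{p}S_{j}[pd+n])$; I would identify the target with the very same sum by describing the minimal $A$-projective resolution of $S_{i}$ as the total complex of the minimal $B$-projective resolution of $S_{i}$ twisted by the ``loops'' coming from the $B$-bimodule $DB[d-1]$ --- equivalently, by computing the Tate cohomology $\widehat{\Ext}_{A}^{\bullet}(S_{i},S_{j})$ (which computes $\Hom_{\D_{\sg}(A)}(S_{i},S_{j}[\bullet])$) as the orbit of $\Ext_{B}^{\bullet}(S_{i},\nu_{B}^{\bullet}S_{j})$ --- and then checking that $\bar\Phi$ realises precisely this identification by unwinding the previous step. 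Full faithfulness plus idempotent completeness of $\cal C_{-d}(B)$ (which is Hom-finite, hence Krull--Schmidt) makes the essential image of $\bar\Phi$ a thick subcategory of $\D_{\sg}(A)$; since it contains all the $S_{i}$ and $\D_{\sg}(A)=\thick(\bop_{i=1}^{r}S_{i})$ by Proposition~\ref{Prop:Dbfinite}, $\bar\Phi$ is also dense, hence an equivalence. As a consistency check, Proposition~\ref{Prop:Ric} shows that tilting $B$ to a derived-equivalent hereditary algebra $B'$ replaces $A$ by $A'=B'\op DB'[d-1]$ with $\D_{\sg}(A)\simeq\D_{\sg}(A')$, while a derived equivalence commutes with Serre functors so $\cal C_{-d}(B)\simeq\cal C_{-d}(B')$.

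I expect the hard part to be the full faithfulness above, i.e.\ matching the morphism spaces of $\D_{\sg}(A)$ between the simple dg modules with the orbit-category morphism spaces --- equivalently, identifying the Tate Ext-algebra of the trivial extension $A=B\ltimes DB[d-1]$ with the orbit of the Ext-algebra of $B$. The delicate bookkeeping is the grading shift: the off-by-one between the $DB[d]$ appearing in Keller's dg orbit algebra $\bop_{p\ge0}(DB[d])^{\otimes_{B}p}$ and the $DB[d-1]$ defining $A$. This is exactly the content of \cite[Theorem~2]{Keller05} and \cite{Kellercorr}, which one may simply invoke.
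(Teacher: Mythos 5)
Your proposal is correct and ultimately takes the same route as the paper: the paper offers no proof of this Proposition beyond citing \cite[Theorem 2]{Keller05} together with the correction \cite{Kellercorr}, and your argument both verifies Keller's hypotheses for $F=\nu[d]$ and reduces the identification of $\cal C_{-d}(B)$ with $\D_{\sg}(A)$ (via the bimodule triangle $DB[d-1]\to A\to B$ and d\'evissage on the simples) to exactly that cited result. The reconstruction you sketch, including the degree bookkeeping $DB[d-1]$ versus $DB[d]$, is consistent with Keller's proof, so simply invoking the citation as you do at the end matches the paper.
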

Notice that in this case $\D_{\sg}(A)=\thick_{A}(B)/\per A$  holds by the fact $\thick_{A}(B)=\D^{\rm b}(A)$ (see Proposition \ref{Prop:Dbfinite}).  By using this proposition, we have a useful observation, where we denote by $\mathfrak A (\un{\CM}A)$ the AR quiver of $\un{\CM}A$.
\begin{Cor}\label{Cor:cluster}
Let $B$ be a finite-dimensional hereditary $k$-algebra and let $A=B\op DB[d-1]$ for $d\ge 1$. Then
\begin{enumerate}[\rm(1)]
\item
The stable category $\un{\CM }A$ is triangle equivalent to $\cal C_{-d}(B)$;
\item We have $\mathfrak A (\un{\CM}A)=\mathfrak A(\D^{\bb}(\mod B)/\nu[d])=\Z \Delta/\nu[d]$. In particular, $\mathfrak A (\mod B)$ is a full sub-quiver of $\mathfrak A(\un{\CM}A)$.
\end{enumerate}
\end{Cor}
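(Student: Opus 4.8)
The plan is to obtain both assertions by combining Theorem~\ref{Thm:properties}(3) with Keller's description of the singularity category of a trivial extension dg algebra recalled just above. First I would verify that $A=B\op DB[d-1]$ satisfies Assumption~\ref{assumption}: it is finite-dimensional, hence proper; it is concentrated in non-positive degrees since $B$ lives in degree $0$ and $d\ge1$; and the isomorphism $DA\simeq A[1-d]$ in $\D(A^{\rm e})$ shows $\add A=\add DA[d-1]$ in $\D(A^{\rm e})$, so $A$ is $d$-symmetric and in particular $\per A=\thick(DA)$, i.e. $A$ is Gorenstein. Hence Theorem~\ref{Thm:properties}(3) applies and yields a triangle equivalence $\un{\CM}A\simeq\D_{\sg}(A)$; on the other hand the preceding proposition (Keller, \cite[Theorem~2]{Keller05}, \cite{Kellercorr}) yields a triangle equivalence $\D_{\sg}(A)\simeq\cal C_{-d}(B)$. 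Composing these proves~(1).

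For~(2) I would argue that a triangle equivalence preserves Auslander--Reiten triangles and hence induces an isomorphism of AR quivers; here both $\un{\CM}A$ and $\cal C_{-d}(B)$ are Hom-finite Krull--Schmidt triangulated categories admitting a Serre functor (by Theorem~\ref{Thm:AR}), so they have AR triangles (\cite{Reiten}). Thus $\mathfrak A(\un{\CM}A)=\mathfrak A(\cal C_{-d}(B))=\mathfrak A(\D^{\bb}(\mod B)/\nu[d])$. Next I would recall the classical fact \cite{Happel} that $\mathfrak A(\D^{\bb}(\mod B))=\Z\Delta$, and that under this identification $\nu$ is the Nakayama permutation $\SSS$ while $[1]$ is the shift permutation $\SSS\tau^{-1}$ of Section~\ref{Section:quiver}, so that $\nu[d]$ acts on $\Z\Delta$ as $\SSS^{d+1}\tau^{-d}$, a weakly admissible automorphism for $d\ge1$. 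Since Keller's result already equips the orbit category $\D^{\bb}(\mod B)/\nu[d]$ with a triangulated structure for which the projection functor sends AR triangles to AR triangles, the AR quiver of the orbit category is the quotient translation quiver $\Z\Delta/\nu[d]$; this gives the displayed chain of equalities.

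For the final ``in particular'' statement I would use that the canonical functor $\mod B\hookrightarrow\D^{\bb}(\mod B)$ identifies $\mathfrak A(\mod B)$ with the full subquiver of $\Z\Delta$ supported on the module vertices, a ``slice-bounded'' connected region. Because $d\ge1$, every nonzero iterate $\nu[d]^{\,n}$ translates this region entirely off itself --- no two of its vertices lie in one $\nu[d]$-orbit, and no arrow of $\Z\Delta$ joins two of its vertices in distinct $\nu[d]$-copies --- so the projection $\Z\Delta\to\Z\Delta/\nu[d]$ restricts to an isomorphism from $\mathfrak A(\mod B)$ onto a full subquiver of $\Z\Delta/\nu[d]=\mathfrak A(\un{\CM}A)$.

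The only genuine work lies in part~(2): checking that $\nu[d]$ acts weakly admissibly on $\Z\Delta$ for every $d\ge1$ and carries the module region disjointly, so that the AR quiver of the orbit category is precisely $\Z\Delta/\nu[d]$ with $\mathfrak A(\mod B)$ full inside it --- a short but slightly fiddly case check using the explicit form of $\SSS$ from Section~\ref{Section:quiver}. The rest, namely part~(1) and the transfer of AR quivers along the equivalence, is formal.
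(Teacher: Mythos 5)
Your proposal is correct and follows exactly the route the paper intends: the paper states this corollary without a separate proof, as an immediate consequence of Theorem \ref{Thm:properties}(3) (giving $\un{\CM}A\simeq\D_{\sg}(A)$ after checking Assumption \ref{assumption}, which your $d$-symmetry verification supplies) combined with Keller's proposition $\D_{\sg}(A)\simeq\cal C_{-d}(B)$, and then the standard identification of the AR quiver of the orbit category with $\Z\Delta/\nu[d]$ together with the full embedding of $\mathfrak A(\mod B)$. Your extra care in part (2) (weak admissibility of $\nu[d]$, preservation of AR triangles, fullness of the module region) only makes explicit what the paper leaves implicit, under the same tacit assumption that $B=k\Delta$ is of Dynkin type so that $\mathfrak A(\D^{\bb}(\mod B))=\Z\Delta$.
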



We end this section with a concrete example.
\begin{Ex}
Let $B$ be the $k$-algebra given by the quiver $1\ra 2 \la 3$. Let $A=B\op DB[1]$ be the trivial extension dg $k$-algebra. Then we may regard $A$ as the dg $k$-algebra   given by \xymatrix{1\ar@/^-0.6pc/[r]_{\alpha_{1}}  & 2 \ar@/^0.6pc/[r]^{\beta_{2}} \ar@/^-0.6pc/[l]_{\alpha_{2}}&3 \ar@/^0.6pc/[l]^{\beta_{1}}},
 with relations $\{\alpha_{1}\alpha_{2}\alpha_{1}, \beta_{1}\beta_{2}\beta_{1}, \alpha_{1}\beta_{2}, \beta_{1}\alpha_{2}, \alpha_{2}\alpha_{1}-\beta_{2}\beta_{1} \}$ and $0$ differential. Further, the degrees of $A$ are induced by $\deg \alpha_{1}=0=\deg \beta_{1}$, 
 $\deg \alpha_{2}=-1=\deg \beta_{2}$.
 By Corollary \ref{Cor:cluster}, we know the $\un{\CM}A$ is triangle equivalent to $\D^{\bb}(\mod B)/\nu[2]$. We describe  $\mathfrak A (\un{\CM}A)$ as follows.
 
  {\small
       \begin{center}
         \begin{tikzpicture}[scale=0.6]
         \draw
         node (00) at (0,0) {$\begin{smallmatrix}&2&\end{smallmatrix}$}
         node (20) at (2,0) {$\begin{smallmatrix} 1& &3\\ &2& \end{smallmatrix}$}
         node (40) at (4,0) {$\begin{smallmatrix}&2' &\end{smallmatrix}$}
         node (60) at (6,0) {$\begin{smallmatrix}1'&&3'\\&2'&\end{smallmatrix}$}
         node (80) at (8,0) {$\begin{smallmatrix}&2&\\ 1'&&3' \end{smallmatrix}$}
         node at (-2,0) {$\dots$}
       node at (10,0){$\dots$}
         
         node (31) at (3,1) {$\begin{smallmatrix}1\end{smallmatrix}$}
         node (51) at (5,1) {$\begin{smallmatrix}1'\\ 2'\end{smallmatrix}$}
         node (71) at (7,1) {$\begin{smallmatrix}3'\end{smallmatrix}$}
         node (91) at (9,1) {$\begin{smallmatrix}2\\ 1'\end{smallmatrix}$}
         node (11) at (1,1) {$\begin{smallmatrix}3\\2\end{smallmatrix}$}

         node (-11) at (-1,1) {$\begin{smallmatrix}2\\ 3'\end{smallmatrix}$}
         node (-1-1) at (-1,-1) {$\begin{smallmatrix}2\\ 1'\end{smallmatrix}$}
        
         node (1-1) at (1,-1) {$\begin{smallmatrix}1\\2\end{smallmatrix}$}
         node (3-1) at (3,-1) {$\begin{smallmatrix}3\end{smallmatrix}$}
         node (5-1) at (5,-1) {$\begin{smallmatrix}3'\\ 2'\end{smallmatrix}$}
         node (7-1) at (7,-1) {$\begin{smallmatrix}1'\end{smallmatrix}$}
         node (9-1) at (9,-1) {$\begin{smallmatrix}2\\ 3'\end{smallmatrix}$}
         [dotted] (-2,-1.7)--(0.5,1.7)--(7.5,1.7)--(10,-1.7)--(-2,-1.7);  
         \fill[opacity=0.2, red] (-0.4,1.4)--(-0.4,-1.4)--(3.4,-1.4)--(3.4,1.4)--(-0.4,1.4);
        
                \end{tikzpicture} 
     \end{center}}  
\noindent where the arrows are omitted and a fundamental domain is outlined in dotted line. 
Notice that the shaded part is exactly the AR quiver of $\mod B$.
\end{Ex}


\section{CY configurations and symmetric dg algebras}\label{Section:mainsection}



The aim of this section is to show the following theorem, which is the converse of Theorem \ref{Thm:configuration}.


\begin{Thm}\label{Thm:main}
Let $\Delta$ be a Dynkin diagram and $d\ge 1$. 
Let $C$ be a subset of vertices of $\Z \Delta/\SSS[d]$. The following are equivalent.
\begin{enumerate}[\rm (1)]
 \item $C$ is a $(-d)$-CY configuration;
 \item There exists a $d$-symmetric   dg $k$-algebra $A$ with the AR quiver of ${\CM}A$ is $(\Z \Delta)_{C}/\SSS[d]$.
\end{enumerate}
\end{Thm}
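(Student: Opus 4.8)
The plan is to prove the two implications separately; $(2)\Rightarrow(1)$ is a quick consequence of what has already been proved, whereas $(1)\Rightarrow(2)$ is the substantial direction and will be obtained by a realization argument combining the trivial--extension constructions of Section~\ref{Section:trivial} with silting theory.

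For $(2)\Rightarrow(1)$, assume $A$ is $d$-symmetric with $\mathfrak A(\CM A)=(\Z\Delta)_C/\SSS[d]$. By Theorem~\ref{Thm:properties} the projective--injective objects of $\CM A$ are exactly $\add A$, so the added vertices $p_c$ are precisely the indecomposable summands $P_c$ of $A$. By the Proposition of Section~\ref{Section:confi}, each $\rad P_c$ is indecomposable and not in $\add A$; since the unique arrow into $p_c$ comes from $\rad P_c$, the object sitting at the vertex $c$ is $\rad P_c$, and $\rad P_c\cong\Omega(\Top P_c)$ in $\un\CM A$. Deleting the projective--injective vertices and their incident arrows from $\mathfrak A(\CM A)$ yields the AR quiver of the stable category, so $\mathfrak A(\un\CM A)=\Z\Delta/\SSS[d]$; note $\un\CM A$ is a Hom-finite Krull--Schmidt triangulated category with Serre functor $\nu[-1]$ by Theorem~\ref{Thm:AR}, so $(-d)$-CY configurations make sense in it. By Theorem~\ref{Thm:configuration} the set $\{S_i\}$ of simple dg $A$-modules is a $(-d)$-CY configuration of $\un\CM A$; since $\Omega$ is an autoequivalence of $\un\CM A$ and autoequivalences carry $(-d)$-CY configurations to $(-d)$-CY configurations, $C=\{\rad P_i\}=\{\Omega S_i\}$ is again a $(-d)$-CY configuration, sitting inside $(\Z\Delta/\SSS[d])_0$.

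For $(1)\Rightarrow(2)$, fix $B=k\Delta$ and $A_0=B\oplus DB[d-1]$, which is a $d$-symmetric dg algebra with $\un\CM A_0\simeq\cal C_{-d}(B)$ and $\mathfrak A(\un\CM A_0)=\Z\Delta/\SSS[d]$ by Corollary~\ref{Cor:cluster}; by Theorem~\ref{Thm:configuration} the simples of $A_0$ form a distinguished $(-d)$-CY configuration of this quiver. Let $C\subset\Z\Delta/\SSS[d]$ be an arbitrary $(-d)$-CY configuration. Since $\un\CM A_0$ is representation-finite, $\Filt C$ is functorially finite and $C$ is a $d$-SMS (see \cite{CSP}). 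The key step is to lift $C$ through the natural functor $\D^{\bb}(\mod B)\to\D^{\bb}(\mod B)/\nu[d]=\cal C_{-d}(B)$ to a simple-minded collection $\{E_1,\dots,E_r\}$ of $\D^{\bb}(\mod B)$: choosing representatives in a fundamental domain for $\nu[d]$, the $d$-SMS conditions on $C$ (the prescribed vanishing of $\Hom$ in degrees $-1,\dots,-(d-1)$ together with $\Filt$-generation) translate, because $B$ is hereditary, into the defining axioms of a simple-minded collection. Such a collection is Koszul dual to a silting object $T\in\per B$ whose endomorphism dg algebra $\Gamma:=\shEnd_B(T)$ is non-positive and proper, has the $E_i$ as its simple modules, and satisfies $\D^{\bb}(\Gamma)\simeq\D^{\bb}(\mod B)$. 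Applying Proposition~\ref{Prop:Ric} to $X=T$ produces the $d$-symmetric trivial-extension dg algebra $A:=\Gamma\oplus D\Gamma[d-1]$, which satisfies Assumption~\ref{assumption} and admits a triangle equivalence $\per A\simeq\per A_0$ restricting compatibly to $\D^{\bb}$ and to $\per$; hence $\un\CM A\simeq\un\CM A_0\simeq\cal C_{-d}(B)$, and under this equivalence the simples of $A$ are identified with $C$. The structural relation established in the $(2)\Rightarrow(1)$ part, applied to $A$, then identifies $\mathfrak A(\CM A)$ with $(\Z\Delta)_{C}/\SSS[d]$.

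The main obstacle is the lifting step: one must verify carefully that a $d$-SMS of the orbit category $\cal C_{-d}(B)$ really does lift to an honest simple-minded collection of $\D^{\bb}(\mod B)$ — that the Hom-vanishing and the generation survive the passage from the orbit category for a suitable choice of representatives — and that the resulting dg algebra $\Gamma$, hence also $A=\Gamma\oplus D\Gamma[d-1]$, stays within the class of Assumption~\ref{assumption} while remaining $d$-symmetric; one must also check, along the derived equivalence of Proposition~\ref{Prop:Ric}, that simple modules are sent to simple modules, so that the configuration produced is genuinely $C$. An equivalent route avoiding Koszul duality is to connect $C$ to the standard configuration of $A_0$ by a sequence of mutations of configurations, in the sense of \cite{CS,CS2,CSP}, realizing each single mutation as a silting mutation of the hereditary summand via Proposition~\ref{Prop:Ric}; that route shifts the difficulty onto proving that mutation acts transitively on $(-d)$-CY configurations in Dynkin type.
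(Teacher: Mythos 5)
Your treatment of $(2)\Rightarrow(1)$ is fine and is essentially what the paper leaves implicit (Theorem~\ref{Thm:configuration} plus the proposition on $\rad P$, with the shift by $\Omega$ absorbed into an automorphism of the translation quiver). The genuine gap is the step you yourself call the main obstacle in $(1)\Rightarrow(2)$: lifting the configuration $C$, viewed as a $d$-SMS of the orbit category $\cal C_{-d}(k\Delta)$, to a simple-minded collection of $\D^{\bb}(\mod k\Delta)$. Choosing representatives $E_i=M_i[a_i]$ with $M_i\in\mod k\Delta$ and $0\le a_i\le d$ does, by heredity together with the Serre duality $\Hom_{\cal C}(X,Y[-d])\cong D\Hom_{\cal C}(Y,X)$, yield the orthogonality axioms $\Hom(E_i,E_j[<0])=0$ and $\dim\Hom(E_i,E_j)=\delta_{ij}$; but it does not yield the generation axiom $\thick(E_1,\dots,E_r)=\D^{\bb}(\mod k\Delta)$, which is an indispensable part of being an SMC and is exactly what you need in order to invoke the SMC--silting correspondence and produce $T$ and $\Gamma$. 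Condition (3$'$) of a $d$-SMS is a statement about extension closure in the orbit category, where objects are only defined up to $\nu[d]$-orbit; it does not descend to thick-generation upstairs, since thick subcategories of $\D^{\bb}(\mod k\Delta)$ are not $\nu[d]$-stable, so containing a translate of every indecomposable does not force containing the indecomposable itself. This is precisely the content the paper supplies by a different mechanism: Theorem~\ref{Thm:SMCsurj} shows that every $(-d)$-CY configuration of $\un{\CM}A_0$, for $A_0=k\Delta\oplus D(k\Delta)[d-1]$, is the image of an SMC of $\D^{\bb}(A_0)$ (not of $\D^{\bb}(k\Delta)$), proved by induction on the number of vertices using the compatibility of SMC reduction and SMS reduction; the silting object is then taken in $\per A_0$, and $B=\shEnd_{A_0}(P)$ is shown directly to be $d$-symmetric by Lemma~\ref{Lem:siltGoren}, with its simples matched to the SMC by Proposition~\ref{Prop:KoY}(2) inside the single ambient category $\D^{\bb}(A_0)$.

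Even granting your lift, a second unproved point remains: your construction $A=\Gamma\oplus D\Gamma[d-1]$ needs the simples of $A$ to be identified with $C$ under the equivalence $\un{\CM}A\simeq\un{\CM}A_0\simeq\cal C_{-d}(k\Delta)$ coming from Proposition~\ref{Prop:Ric}. That requires a commutative diagram relating $\D^{\bb}(\Gamma)\simeq\D^{\bb}(k\Delta)$, the two quotients to the singularity categories, and Keller's identification of $\D_{\sg}(A_0)$ with the orbit category (Corollary~\ref{Cor:cluster}); you assert the compatibility but do not check it, and it is not formal. The paper's route avoids both difficulties because everything happens over $A_0$ itself; your alternative route via mutation merely relocates the missing input to transitivity of mutation on $(-d)$-CY configurations, which is likewise not established here.
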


To  prove  this, we need some preparations. We first study the connection between simple-minded collections (SMCs) in $\D^{\bb}(A)$ and $(-d)$-CY configurations in $\un{\CM}A$. 

\subsection{$(-d)$-CY configurations are given by SMCs}
To prove Theorem \ref{Thm:main}, we need the following notion.
\begin{Def}
Let $\T$ be a $k$-linear Hom-finite Krull-Schmidt triangulated category and let $R$ be a  set of  objects of $\T$. 
We call $R$ a \emph{pre-simple-minded collection} (\emph{pre-SMC}) of $\T$,  if for any $X, Y\in R$, the following conditions hold.
 \begin{enumerate}[\rm $\bullet$]
 \item   $\dim_{k}\Hom_{\T}(X, Y)=\delta_{X,Y}$. 
\item $\Hom_{\T}(X, Y[< 0])=0$.
\end{enumerate}
Moreover, if $\T=\thick_{\T}(R)$ also holds, we call $R$ a \emph{simple-minded collection} (\emph{SMC}) of $\T$.
\end{Def}

In this subsection, our aim is to show the following result, which plays a key role in the proof of Theorem \ref{Thm:main}.

\begin{Thm}\label{Thm:SMCsurj}
Let $\Delta$ be a Dynkin diagram. 
Let $A=k\Delta \op D(k\Delta)[d-1]$ be the trivial extension dg $k$-algebra.
 Then
the quotient functor $\D^{\bb}(A)\ra \D_{\sg}(A) \cong \un{\CM} A$ induces a surjective map 
$$\{\text{SMCs in $\D^{\bb}(A)$}\} \longrightarrow \{ \text{$(-d)$-CY configurations in $\un{\CM}A$}  \}.  $$
\end{Thm}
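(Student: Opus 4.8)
The plan is to lift, via a bounded $t$-structure on $\D^{\bb}(A)$, the given configuration --- regarded as a $d$-SMS --- to a simple-minded collection. Write $B=k\Delta$; by Corollary~\ref{Cor:cluster}, $\D^{\bb}(A)=\thick_{A}(B)$, $\per A=\thick_{A}(A)$, and the quotient $\un{\CM}A=\D^{\bb}(A)/\per A$ is representation-finite, with AR quiver $\Z\Delta/\SSS[d]$ and Serre functor $\SSS\simeq[-d]$. Let $\pi\colon\D^{\bb}(A)\to\un{\CM}A$ be the quotient functor. Given a $(-d)$-CY configuration $C$ in $\un{\CM}A$, the aim is to produce an SMC $R$ of $\D^{\bb}(A)$ with $\pi(R)=C$. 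As $\Filt(C)$ is functorially finite in the representation-finite category $\un{\CM}A$, \cite[Proposition~2.13]{CSP} upgrades $C$ to a $d$-SMS: $\un{\CM}A=\add\Filt\{C,C[1],\dots,C[d-1]\}$.

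Next I would transplant this filtration to $\D^{\bb}(A)$. Recall from the proof of Theorem~\ref{Thm:configuration} that for the standard heart $\mathcal H_{0}=\mod\h^{0}(A)$ one has $\CM A=\mathcal H_{0}[d-1]*\cdots*\mathcal H_{0}[1]*\mathcal H_{0}$, and from Theorem~\ref{Thm:cmappro} that $\D^{\bb}(A)$ is glued out of $\CM A$ and iterated extensions of shifts of $A$. Lift each $X\in C$ to the unique indecomposable $\widehat X\in\CM A\setminus\add A$ with $\pi(\widehat X)=X$, put $\widehat C=\{\widehat X\mid X\in C\}$, and replace the length-$d$ window $\mathcal H_{0}[d-1],\dots,\mathcal H_{0}$ of $\CM A$ by $\Filt(\widehat C)[d-1],\dots,\Filt(\widehat C)$ inside this glued decomposition. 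This produces a bounded $t$-structure on $\D^{\bb}(A)$ whose heart $\mathcal H$ is a length abelian category, and whose set of simple objects $R$ consists of the $\widehat X$ together with finitely many indecomposables of $\per A$ (the ``projective'' simples carrying the $\per A$-layers). Then $\thick(R)$ contains both $\CM A$ and $\per A$, so $\thick(R)=\D^{\bb}(A)$ by Theorem~\ref{Thm:cmappro}, whence $R$ is an SMC; and $\pi(R)=\pi(\widehat C)=C$, since $\pi$ kills $\per A$.

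The main obstacle is verifying that the glued datum is genuinely a $t$-structure with a \emph{length} heart, equivalently, that $R$ satisfies the SMC orthogonality conditions \emph{in $\D^{\bb}(A)$}, which is strictly stronger than the orthogonality that Definition~\ref{Def:configuration} guarantees in $\un{\CM}A$. The conditions $\dim_{k}\End_{\D^{\bb}(A)}(\widehat X)=1$, $\Hom_{\D^{\bb}(A)}(\widehat X,\widehat Y)=0$ for $X\ne Y$, and $\Hom_{\D^{\bb}(A)}(\widehat X,\widehat Y[m])=0$ for $m\le -d$ (this last automatic, since $\widehat X\in\D^{\bb}_{\le 0}$ while $\widehat Y[m]\in\D^{\bb}_{\ge 1}$), together with the compatibility with the $\per A$-simples, are routine; the delicate one is $\Hom_{\D^{\bb}(A)}(\widehat X,\widehat Y[-j])=0$ for $1\le j\le d-1$. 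Downstairs this holds by Definition~\ref{Def:configuration}(2), but the localization manufactures new morphisms in precisely this range (for instance $\Hom_{\un{\CM}A}(\widehat X,\widehat X[-d])\ne 0$, as $\SSS\simeq[-d]$), so one must show these extra maps are not already present in $\D^{\bb}(A)$. Here the explicit form $A=k\Delta\oplus D(k\Delta)[d-1]$ is essential: Proposition~\ref{Prop:key2} bounds $\Hom_{\D^{\bb}(A)}(S_{i},S_{j}[l])$ in terms of paths in the graded quiver of $A$, and a d\'evissage over the standard $t$-structure propagates this to $\Hom_{\D^{\bb}(A)}(M,N[m])$ for all $M,N\in\CM A$, giving the vanishing. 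A less computational alternative uses Proposition~\ref{Prop:Ric}: each silting object $X\in\per B$ yields $B'=\shEnd_{B}(X)$, $A'=B'\oplus DB'[d-1]$, and a derived equivalence $\D^{\bb}(A)\simeq\D^{\bb}(A')$ compatible with the quotients onto $\un{\CM}A\simeq\un{\CM}A'$; transporting back the standard SMC of $\D^{\bb}(A')$, whose image in $\un{\CM}A'$ is the standard configuration by Theorem~\ref{Thm:configuration}, gives an SMC of $\D^{\bb}(A)$, and surjectivity then amounts to the statement that these equivalences realize every $(-d)$-CY configuration as a standard one, i.e.\ a transitivity statement for silting mutation.
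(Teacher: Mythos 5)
The step you set aside as ``routine'' is precisely where the argument breaks. You take for $R$ the canonical Cohen--Macaulay lifts $\widehat X$ of the configuration objects and assert $\Hom_{\D^{\bb}(A)}(\widehat X,\widehat Y)=0$ for $X\ne Y$; but $\Hom_{\un{\CM}A}(X,Y)$ is only the quotient of $\Hom_{\D^{\bb}(A)}(\widehat X,\widehat Y)$ by maps factoring through $\add A$, and such maps survive upstairs. Concretely, take $d=1$ and $\Delta=A_{2}$, so $A=k\Delta\op D(k\Delta)$ is the self-injective Nakayama algebra with two simples and Loewy length three, and $\CM A=\mod A$. The set $C=\{S_{1}[1],S_{2}[1]\}$ is a $(-1)$-CY configuration in $\un{\CM}A$ (the suspension of the configuration of simples from Theorem \ref{Thm:configuration}), and in $\un{\CM}A$ its objects are the modules $P_{1}/\soc P_{1}$ and $P_{2}/\soc P_{2}$, which are therefore your lifts $\widehat X$. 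But $\Hom_{A}(P_{1}/\soc P_{1},\,P_{2}/\soc P_{2})\ne 0$: the composition $P_{1}/\soc P_{1}\twoheadrightarrow S_{1}\hookrightarrow P_{2}/\soc P_{2}$ is nonzero; it factors through $P_{2}$ (namely $P_{1}/\soc P_{1}\cong\rad P_{2}\hookrightarrow P_{2}\twoheadrightarrow P_{2}/\soc P_{2}$), so it dies stably, consistent with $C$ being a configuration, yet it is a nonzero morphism in $\D^{\bb}(A)$. Hence the naive lifts violate the SMC axioms already at $j=0$, and no d\'evissage based on Proposition \ref{Prop:key2} (which only controls morphisms between the \emph{simple} dg modules via graded paths) can establish your orthogonality, because the statement is false for these lifts. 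The genuine SMC lying over this $C$ is $\{S_{1}[1],S_{2}[1]\}$ viewed in $\D^{\bb}(A)$: in general the preimage SMC consists of complexes that only become isomorphic to the CM representatives after passing to the quotient, which is exactly what a construction built from the CM lifts cannot produce.

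There are further structural problems, and the paper's route is different. Every SMC of $\D^{\bb}(A)$ has exactly $|\Delta_{0}|$ elements (by Proposition \ref{Prop:KoY} it is the image of the simples of a derived-equivalent non-positive proper dg algebra), so already in the example above your $R=\widehat C\cup\{\text{extra objects of }\per A\}$ would have more than two elements and cannot be an SMC; the glued $t$-structure whose heart has these as simples therefore does not exist, and the Verdier quotient $\D^{\bb}(A)\to\D_{\sg}(A)$ provides no gluing mechanism of the kind you invoke. Your fallback via Proposition \ref{Prop:Ric} only restates the problem: it requires that every $(-d)$-CY configuration be the image of the standard one under a derived equivalence coming from silting, and this transitivity is essentially the surjectivity to be proved. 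The paper instead argues by induction on the number of vertices using SMC and SMS reduction (Proposition \ref{Prop:reduction}): with $\Delta$ in alternating orientation every $\tau$-orbit of $\Z\Delta/\SSS[d]$ contains a shift of a simple, so one may assume some simple $S\in C$; reducing by $S$ gives $\D^{\bb}(A)/\thick(S)\simeq\D^{\bb}(eAe)$ and $(\D_{\sg}(A))_{S}\simeq\D_{\sg}(eAe)$ with $eAe$ again a trivial extension of a smaller Dynkin path algebra, the inductive hypothesis lifts $C\setminus\{S\}$ to an SMC $H$ there, and $H\cup\{S\}$ is the required SMC upstairs. Some device of this kind is needed to close the gap; direct lifting of CM representatives cannot work.
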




Notice  that in this setting, the notion `$d$-SMS' coincides with `$(-d)$-CY configuration' by \cite[Proposition 2.13]{CSP}, since $\CM A$ has only finitely many indecomposable objects.
 So we may regard $(-d)$-CY configuration and $d$-SMS as the same  notion later in this section.
 
Let $R$ be a pre-SMC of $\D^{\bb}(A)$. The \emph{SMC reduction $\cal U$ of $\D^{\bb}(A)$ with respect to $R$} is defined as the Verdier quotient (see \cite[Section 3.1]{Jin})
 $$\cal U:=\D^{\bb}(A)/\thick(R).$$
 Let $E$ be a pre-$d$-SMS of $\D_{\sg}(A)$.
 The \emph{SMS reduction} $(\D_{\sg}(A))_{E}$  of  $\D_{\sg}(A)$ with respect to $E$ is defined as the subcategory (see \cite[Section 6]{CSP})
$$ (\D_{\sg}(A))_{E}:=\{ X\in \D_{\sg} \mid \Hom_{\D_{\sg}(A)}(X, Y[-i])=0 \text{ for any $i=0,1, \cdots, d$ and $Y\in E$}\}. $$
The following proposition shows some important properties of SMC and SMS reduction.
\begin{Prop}\label{Prop:reduction}
\begin{enumerate}[\rm(1)]
\item \cite[Theorem 3.1]{Jin}
Let $R$  be a pre-SMC of $\D^{\bb}(A)$. Assume $\Filt(R)$ is functorially finite in $\D^{\bb}(A)$. Then
the natural functor $\D^{\bb}(A)\ra \cal U$ induces a bijection
\[ \{\text{SMCs in $\D^{\bb}(A)$ containing $R$}\} 
\longleftrightarrow \{\text{SMCs in $\cal U$}\}; \]
\item \cite[Theorem 6.6]{CSP}
Let  $E$ be a  pre-$d$-SMS of  $\D_{\sg}(A)$. Assume  $\Filt(E)$ is functorially finite in  $\D_{\sg}(A)$. Then the SMS reduction $(\D_{\sg}(A))_{E}$ has a structure of triangulated category and there is a bijection 
\[\{\text{$d$-SMSs in $\D_{\sg}(A)$ containing $E$}\} \longleftrightarrow \{ \text{$d$-SMSs in $(\D_{\sg}(A))_{E}$}\};\]
\item \cite[Corollary 4.15  and Proposition 5.1]{Jin} Let $A$ be a representation finite  $d$-self-injective dg algebra. Then the quotient functor $\D^{\bb}(A)\ra \D_{\sg}(A) \cong \un{\CM} A$ induces a well-defined  map 
\[\{\text{SMCs in $\D^{\bb}(A)$}\} \longrightarrow \{ \text{$d$-SMSs in $\un{\CM}A$}  \}.  \]
\end{enumerate}
\end{Prop}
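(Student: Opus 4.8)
Since all three items of Proposition~\ref{Prop:reduction} are reduction-theoretic statements quoted from \cite{Jin} and \cite{CSP}, the plan is to indicate how each is established rather than to rebuild the general machinery; I treat them in turn and then isolate the hard core. For part (1), I would set $\cal N := \thick(R)$ and form the Verdier quotient $\cal U = \D^{\bb}(A)/\cal N$; note that $R$ is itself an SMC of $\cal N$, giving a bounded $t$-structure on $\cal N$ with heart $\Filt(R)$. The functorial finiteness of $\Filt(R)$ in $\D^{\bb}(A)$ is precisely the hypothesis that makes the reduction well-behaved: it supplies the approximations needed to compute $\Hom$-spaces in $\cal U$ and to transport the standard $t$-structure of $\D^{\bb}(A)$ to a bounded $t$-structure on $\cal U$. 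The forward map then sends an SMC $S\supseteq R$ to the image of $S\setminus R$ in $\cal U$; orthogonality and the normalisation $\dim\Hom=\delta$ survive because any morphism between these images lifts modulo maps factoring through $\cal N$, and such maps vanish on $S\setminus R$ by its orthogonality to $R$, while thick generation of $\cal U$ is inherited from that of $\D^{\bb}(A)$ by $S$. The inverse lifts an SMC of $\cal U$ objectwise along the (essentially surjective) quotient functor and adjoins $R$, the SMC axioms being recovered from the lifted $t$-structure.

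For part (2), the statement is the stable, ``negative'' mirror of (1), taken from \cite[Theorem 6.6]{CSP}. The substantive point here is endowing the SMS reduction $(\D_{\sg}(A))_{E}$ with a triangulated structure: I would realise it as an Iyama--Yoshino-type subfactor of $\D_{\sg}(A)$, whose triangles are manufactured from triangles of $\D_{\sg}(A)$ by replacing the relevant ends with their $\Filt(E)$-approximations, which exist exactly because $\Filt(E)$ is functorially finite. Granting this, a $d$-SMS of $\D_{\sg}(A)$ containing $E$ restricts to a $d$-SMS of the reduction, and conversely a $d$-SMS of the reduction together with $E$ reassembles into a $d$-SMS of $\D_{\sg}(A)$, the two assignments being mutually inverse by construction.

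For part (3), well-definedness of the map would be checked directly. Given an SMC $S$ of $\D^{\bb}(A)$, each member can be normalised to a Cohen--Macaulay representative via the identification $\un{\CM}A\cong\D_{\sg}(A)$ of Theorem~\ref{Thm:properties}, and its perfect summands are annihilated by the quotient. That the images satisfy conditions (1) and (2) of a $d$-SMS follows from the SMC conditions in $\D^{\bb}(A)$ together with Serre duality in $\un{\CM}A$ via the Serre functor $\nu[-1]$ of Theorem~\ref{Thm:AR} and the width-$d$ Cohen--Macaulay window $\CM A=\D^{\bb}_{\le 0}\cap\D^{\bb}_{\ge -d+1}$ of Proposition~\ref{Prop:cm}; matching positive shifts is handled by Proposition~\ref{Prop:Groper}(2). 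Condition (3$'$) follows because $S$ thickly generates $\D^{\bb}(A)$, so its image filters $\un{\CM}A$ through the prescribed shifts $\{C,C[1],\dots,C[d-1]\}$, the length $d$ being exactly the width of that window.

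The main obstacle, in all three parts, is controlling morphism spaces after passing to a Verdier quotient or a subfactor. Concretely, the hardest steps are the inverse direction of (1) and the construction of triangles in (2): both rest on the functorial finiteness hypotheses to produce the approximations through which the reduced $\Hom$-spaces and distinguished triangles are actually computed, and it is there that one must verify the SMC/SMS axioms do not degenerate. For part (3) the delicate point is reconciling the two different $\Hom$-vanishing ranges across the quotient, since the suspension of $\un{\CM}A$ is the syzygy $\Omega^{-1}$ rather than the shift of $\D^{\bb}(A)$.
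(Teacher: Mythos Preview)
The paper does not prove Proposition~\ref{Prop:reduction} at all: each of the three items is stated with a citation to \cite{Jin} or \cite{CSP}, and immediately afterwards the text resumes with ``Now we are ready to show Theorem~\ref{Thm:SMCsurj}.'' So there is no proof in the paper to compare against; the proposition functions purely as a quoted black box.

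Your sketch is a reasonable outline of how the cited arguments run, and you correctly identify the key technical hurdles (computing $\Hom$-spaces after a Verdier quotient, building triangles in the subfactor, and reconciling the two notions of shift). But since the paper itself supplies nothing beyond the citations, your proposal is not so much a reconstruction of the paper's proof as an annotation of what lies behind the references. If the aim is to match the paper, the right answer is simply to defer to \cite[Theorem~3.1, Corollary~4.15, Proposition~5.1]{Jin} and \cite[Theorem~6.6]{CSP}.
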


Now we are ready to show Theorem \ref{Thm:SMCsurj}.

\begin{proof}[Proof of Theorem \ref{Thm:SMCsurj}]

We first mention  that $\un{\CM}A$ is triangle equivalent to the orbit category $\D^{\bb}(\mod k\Delta)/\nu[d]$ and the AR quiver of $\un{\CM}A$ is $\Z\Delta/\SSS[d]$   by Corollary \ref{Cor:cluster}. Moreover, we may assume $\Delta$ has  an alternating orientation (that is,  each vertex in $\Delta$ is either sink  or source).
We apply  induction on the number $n$ of  vertices of $\Delta$. If $n=1$, then $\Delta=A_{1}$. In this case,  any indecomposable object in $\un{\CM}A$ is a shift of the simple object and the argument is clearly true. 

For the general case, let $C$ be a $(-d)$-CY configuration in $\un{\CM}A$. 
Since the quiver $\Delta$ has
an alternating orientation, each $\tau$-orbit in the AR quiver of $\mod k\Delta$ contains a simple $k\Delta$-module.
Then  by Corollary \ref{Cor:cluster} (2), there exists a simple dg $A$-module $S$  and an integer $s$, such that $\tau^{s}S\in C$ in $\un{\CM}A$. Notice that $\tau=\SSS[-1]$ and $\SSS=[-d]$, so we have $S[t]\in C$, where $t=-sd$. 
Without loss of generality, in the following we may assume $S\in C$.

It is clear that $\{S\}$ is a pre-SMC in $\D^{\bb}(A)$, and  the SMC reduction $\D^{\bb}(A)/\thick(S)$ is triangle equivalent to $\D^{\bb}(eAe)$ by \cite[Proposition 3.9]{Jin}, where $e\in k\Delta$ is an idempotent such that $\Top(1-e)A=S$. Considering  the SMS reduction $(\D_{\sg}(A))_{S}$ of $\D_{\sg}(A)$ with respect to $S$,  then by \cite[Theorem 6.4]{Jin}, we have 
a triangle equivalence $\D_{\sg}(eAe)\simeq (\D_{\sg}(A))_{S}$ and 
the following commutative diagram.
 {\small \[\xymatrixcolsep{3pc}\xymatrixrowsep{4pc}\xymatrix{  \{ \text{SMCs in $\D^{\bb}(A)$ containing $S$} \}
  \ar[r]  \ar[d]_{\simeq}& 
  \{ \text{$d$-SMSs in $\D_{\sg}(A)$ containing $S$} \}
   \ar[d]^{\simeq}\\ \{ \text{SMCs in $\D^{\bb}(eAe)$}\} \ar[r] & \{\text{$d$-SMSs in $\D_{\sg}(eAe)$} \}
}.\]}

 \noindent Notice that $eAe\cong k\Delta'\op D(k\Delta')[d-1]$, where $\Delta'$ is obtained from $\Delta$ by deleting the vertex $i$, which corresponds to $S$, and the arrows connected to $i$. Then $\Delta'$ also has an alternating orientation and has  $n-1$ vertices. 
By induction, we may assume the lower map is surjective.  
Since $C$ is a $(-d)$-CY configuration in $\D_{\sg}(A)$ containing $S$, then $C\backslash \{ S\}$ is a $(-d)$-CY configuration in $(\D_{\sg}(A))_{S}$ by Proposition \ref{Prop:reduction} (2). 
So there is a SMC $H$ of $\D^{\bb}(eAe)$ such that $H\cong C\backslash \{ S\}$ in $\D_{\sg}(eAe)$. We may regard $H$ as a subset of  $\D^{\bb}(A)/\thick(S)$ through $\D^{\bb}(eAe)\simeq \D^{\bb}(A)/\thick(S)$. Then $H\cup \{S\}$ is a SMC of $\D^{\bb}(A)$ by Proposition \ref{Prop:reduction} (1). By the diagram above, we have that $H\cup \{S\}$ sends to $C$ in $\D_{\sg}(A)$. 
 By induction, the assertion is true.
\end{proof}

\subsection{Proof of Theorem \ref{Thm:main}}
To prove our main theorem, we need the following 
generalization of silting-SMC correspondence  \cite[Theorem 6.1]{KoY} due to \cite{SY}.

\begin{Prop}\label{Prop:KoY}
\cite[Theorem 1.1]{SY}
Let $A$ be a non-positive proper dg $k$-algebra.
Then 
\begin{enumerate}[\rm(1)]
\item There is a bijection 
$$\{\text{SMCs of $\D^{\bb}(A)$}\} \longleftrightarrow \{\text{silting objects of $\per A$} \};$$
\item Let $\{ X_{1}, \cdots, X_{n}\}$ be a SMC of $\D^{\bb}(A)$ which corresponds to a silting $P\in \per A$. Let $B:=\shEnd_{A}(P)$ be the endmoprhism dg $k$-algebra. Then the natural equivalence   $?\ot_{B}^{\bf L}P: \D^{\bb}(B)\ra \D^{\bb}(A)$ sends simple $B$-modules to $\{ X_{1}, \cdots, X_{n}\}$.
\end{enumerate}
\end{Prop}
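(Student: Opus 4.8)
The plan is to build the correspondence as an explicit forward map, silting object $\mapsto$ SMC, and then to produce an inverse; statement (2) will be part of the very definition of the forward map, so the real content is bijectivity. Given a silting object $P$ of $\per A$, set $B:=\shEnd_{A}(P)$. First I would check that $B$ is again non-positive proper: non-positivity is precisely $\h^{i}(B)=\Hom_{\D A}(P,P[i])=0$ for $i>0$, which is the silting condition, and properness holds because $P\in\per A\subset\D^{\bb}(A)$ and $\D^{\bb}(A)$ is Hom-finite by Proposition \ref{Prop:Dbfinite}, so $\bigoplus_{i}\Hom_{\D A}(P,P[i])$ is finite-dimensional. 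Since $P$ silting means $\thick(P)=\per A$, the object $P$ is a compact generator of $\D(A)$, and the standard dg Morita theorem (Keller) yields a triangle equivalence $F:=?\ot^{\mathbf{L}}_{B}P\colon\D(B)\xra{\simeq}\D(A)$ sending $B$ to $P$ and restricting to $\per B\simeq\per A$.

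The next step is to see that $F$ restricts to $\D^{\bb}(B)\simeq\D^{\bb}(A)$ and carries the simple $B$-modules to an SMC. The restriction uses the intrinsic description, valid for any non-positive proper dg algebra, that $M\in\D^{\bb}$ if and only if $\bigoplus_{i}\Hom(C,M[i])$ is finite-dimensional for every compact $C$: the ``only if'' is Hom-finiteness together with boundedness of perfect and of bounded objects, and the ``if'' follows by taking $C$ to be the algebra, since then $\bigoplus_{i}\h^{i}(M)$ is finite-dimensional. As $F$ preserves compacts and commutes with shifts, this property transports across $F$, giving $F(\D^{\bb}(B))=\D^{\bb}(A)$. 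The simple $B$-modules $S_{1},\dots,S_{n}$ lie in the heart $\mod\h^{0}(B)$ of the standard $t$-structure (Proposition \ref{Prop:heart}), so $\Hom_{\D B}(S_{i},S_{j})=\Hom_{\h^{0}(B)}(S_{i},S_{j})=\delta_{ij}\,k$ and $\Hom_{\D B}(S_{i},S_{j}[<0])=0$ by the $t$-structure axioms, while $\thick(\bigoplus_{i}S_{i})=\D^{\bb}(B)$ by Proposition \ref{Prop:Dbfinite}. Hence $\{X_{i}:=F(S_{i})\}$ is an SMC of $\D^{\bb}(A)$. This defines the forward map and, by construction, proves part (2).

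For the reverse direction I would build the inverse through the comparison of $t$-structures and co-$t$-structures in the spirit of Koenig--Yang. An SMC $R=\{X_{1},\dots,X_{n}\}$ determines a bounded $t$-structure on $\D^{\bb}(A)$ whose heart $\Filt(R)$ is a length category with simple objects the $X_{i}$, whereas a silting object corresponds to a bounded co-$t$-structure on $\per A$ with co-heart $\add P$. The two sides are glued by the Serre-type pairing $D\Hom_{\D A}(P',M)\cong\Hom_{\D A}(M,\nu P')$ of Lemma \ref{Lem:bifunc} between $\per A$ and $\D^{\bb}(A)$: the inverse sends $R$ to the \emph{dual silting object} $P=\bigoplus_{i}P_{i}$, characterised by $\Hom_{\D A}(P_{i},X_{j}[m])=\delta_{ij}\delta_{m0}\,k$, whose indecomposable summands are the co-heart objects dual to the $X_{i}$. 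One then checks that $P\mapsto\{F(S_{i})\}$ and $R\mapsto P$ are mutually inverse, using non-degeneracy of the pairing and the fact that bounded $t$-structures and co-$t$-structures are recovered from their (co-)hearts.

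The main obstacle will be this reverse construction: verifying that the dual object $P$ extracted from an SMC is genuinely silting, that is $\Hom_{\D A}(P,P[>0])=0$ and $\thick(P)=\per A$, and that the two assignments invert one another. The cleanest route is to establish separately the bijections ``silting $\leftrightarrow$ bounded co-$t$-structure'' on $\per A$ and ``SMC $\leftrightarrow$ bounded length-heart $t$-structure'' on $\D^{\bb}(A)$, and then the adjacency bijection between co-$t$-structures on $\per A$ and $t$-structures on $\D^{\bb}(A)$ through the pairing above. Here non-positivity of $A$ guarantees the relevant hearts are nonzero and bounded, while properness guarantees the Hom-finiteness needed for the ``dual basis'' $\{P_{i}\}$ to exist and to be finite. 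This is exactly the content of \cite[Theorem 1.1]{SY}, generalising the finite-dimensional case \cite[Theorem 6.1]{KoY}.
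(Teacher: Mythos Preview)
The paper does not prove this proposition at all: it is quoted verbatim as \cite[Theorem 1.1]{SY} and used as a black box in the proof of Theorem \ref{Thm:main}. So there is no ``paper's own proof'' to compare against.

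That said, your sketch is a faithful outline of the Su--Yang/Koenig--Yang strategy. The forward map (silting $\mapsto$ SMC via derived Morita and transport of simples) is exactly right, and your verification that $B$ is non-positive proper and that $F$ restricts to $\D^{\bb}$ is clean. For the inverse, your plan to pass through the chain ``silting $\leftrightarrow$ bounded co-$t$-structure on $\per A$'' and ``SMC $\leftrightarrow$ bounded $t$-structure with length heart on $\D^{\bb}(A)$'', glued by the relative Serre pairing of Lemma \ref{Lem:bifunc}, is precisely the architecture of \cite{KoY,SY}. The one place your sketch is thin is the step ``SMC $\Rightarrow$ bounded $t$-structure with length heart'': for a general Hom-finite triangulated category this fails, and what makes it work here is that $\D^{\bb}(A)$ already carries the standard bounded $t$-structure of Proposition \ref{Prop:heart}, against which one can iteratively mutate/compare to show $\Filt(R[\ge 0])\ast\Filt(R[\le 0])=\D^{\bb}(A)$. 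If you were to actually write this up, that is the step requiring the most care; the rest is as you say.
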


We also need the following observation.

\begin{Lem}\label{Lem:siltGoren}
Let $A$ be a proper dg $k$-algebra and let $P$ be a silting dg $A$-modules. Let $B:=\shEnd_{A}(P)$. Then if $A$ is $d$-symmetric, so is $B$. 
\end{Lem}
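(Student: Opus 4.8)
The plan is to show that the defining condition for $d$-symmetry, namely $\add A = \add(DA[d-1])$ in $\D(A^{\rm e})$, transfers along the derived Morita equivalence induced by the silting object $P$. First I would recall that since $P$ is a silting dg $A$-module and $B = \shEnd_A(P)$, the standard tilting theory for dg algebras gives a triangle equivalence $F := ?\ot_B^{\bf L}P \colon \D(B) \xra{\simeq} \D(A)$ sending $B$ to $P$, and correspondingly an equivalence at the level of enveloping algebras $F^{\rm e} := ?\ot_{B^{\rm e}}^{\bf L}(P^{\vee}\ot P)$ (or more precisely the equivalence $\D(B^{\rm e}) \simeq \D(A^{\rm e})$ coming from the two-sided tilting complex associated to $P$) which sends the bimodule $B$ to $A$ and is compatible with the $k$-duality, i.e. $F^{\rm e}(DB) \cong DA$. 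The key point is that this equivalence sends $A$-$A$-bimodules to $B$-$B$-bimodules compatibly with restriction of scalars on either side, so that $\add$-closures in $\D(A^{\rm e})$ correspond to $\add$-closures in $\D(B^{\rm e})$.

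The core computation is then: since $A$ is $d$-symmetric, $DA \cong A[-d+1]$ in $\D(A^{\rm e})$ (the condition $\add A = \add(DA[d-1])$ for the indecomposable-free situation of a connected dg algebra amounts to this isomorphism, and in general one argues summand-wise). Applying the quasi-inverse equivalence $(F^{\rm e})^{-1} \colon \D(A^{\rm e}) \xra{\simeq} \D(B^{\rm e})$ and using $(F^{\rm e})^{-1}(A) \cong B$ together with $(F^{\rm e})^{-1}(DA) \cong DB$ (duality-compatibility of the tilting equivalence), we obtain $DB \cong B[-d+1]$ in $\D(B^{\rm e})$, which is precisely the statement that $B$ is $d$-symmetric. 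One must also check that $B$ lies in the class of dg algebras under consideration (non-positive proper), but properness of $B$ follows since $\D^{\bb}(B) \simeq \D^{\bb}(A)$ under $F$ and $A \in \D^{\bb}(A)$ forces $B \in \D^{\bb}(B)$; non-positivity can be arranged by the standard truncation discussed in Section \ref{Section:nonpositivedg}, or follows from $P$ being silting with $A$ non-positive via the description of $\h^{>0}(B) = \Hom_{\D(A)}(P,P[>0]) = 0$.

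The main obstacle I anticipate is making precise the duality-compatibility of the derived Morita equivalence at the bimodule level — that is, verifying that the equivalence $\D(B^{\rm e}) \simeq \D(A^{\rm e})$ really does intertwine the $k$-duals $DB$ and $DA$ and not merely send $B$ to $A$. This requires identifying the two-sided tilting complex: one takes the $B$-$A$-bimodule $P$ (with its natural left $B = \shEnd_A(P)$ action) and its "inverse" $\RshHom_A(P, A)$, an $A$-$B$-bimodule, and checks $P \ot_B^{\bf L} \RshHom_A(P,A) \cong A$ in $\D(A^{\rm e})$ and $\RshHom_A(P,A) \ot_A^{\bf L} P \cong B$ in $\D(B^{\rm e})$; then $DB \cong \RshHom_A(P,A)\ot_A^{\bf L} DA \ot_A^{\bf L} P$ as $B$-bimodules, which combined with $DA \cong A[-d+1]$ collapses to $B[-d+1]$. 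Once this bookkeeping is set up carefully — essentially the dg-algebra analogue of Rickard's theory, already invoked in Proposition \ref{Prop:Ric} — the rest is formal, so I would present the argument by first stating the existence of the two-sided tilting complex, then performing this three-line bimodule computation.
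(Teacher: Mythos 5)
Your proposal is correct in substance, but it takes a heavier route than the paper. The paper never constructs the inverse bimodule $\RshHom_{A}(P,A)$ nor an equivalence $\D(A^{\rm e})\simeq \D(B^{\rm e})$: it chooses property (P) (cofibrant) models for $A$ and $DA$ as $A$-bimodules and for $P$ as a $(B^{\rm op}\ot_{k}A)$-module, takes the bimodule quasi-isomorphism $f\colon A\ra DA[d-1]$, applies $P\ot_{A}?$ to get a quasi-isomorphism $P\ra P\ot_{A}DA[d-1]$ of $(B^{\rm op}\ot_{k}A)$-modules, and then applies $\shHom_{A}(P,?)$, identifying $\shHom_{A}(P,P\ot_{A}DA[d-1])\cong D\shHom_{A}(P,P)[d-1]=DB[d-1]$ by the standard adjunction (using that $P$ is perfect and $A$ proper); this gives $B\cong DB[d-1]$ as $B$-bimodules directly. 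In other words, the paper uses the one-sided transport $X\mapsto \RshHom_{A}(P,P\ot_{A}^{\bf L}X)$ and never needs its invertibility, whereas you invoke the full Rickard/Keller two-sided tilting machinery, including the identities expressing that $\RshHom_{A}(P,A)$ is an inverse bimodule. Your route does work over the field $k$ ($P$ is a compact generator because it is silting, and there are no flatness issues), and the duality compatibility you single out as the main obstacle is indeed the crux; it is settled by exactly the adjunction isomorphism above, which makes the inverse bimodule unnecessary. Two small cautions if you pursue your version: the displayed tensor products do not typecheck as written (for instance $\RshHom_{A}(P,A)\ot_{A}^{\bf L}DA$ pairs a right $B$-structure against a left $A$-structure), so the side conventions must be fixed; and sending $B$ to $A$ under the bimodule equivalence does not by itself give $DB\mapsto DA$ -- that step still requires the properness/perfectness argument, so stating it as an automatic feature of the Morita equivalence would leave a gap that your final write-up must fill.
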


\begin{proof}
Let $A$ be a $d$-symmetric dg $k$-algebra, that is $\add A[-d+1]=\add DA$ in $\D(A^{\rm op}\ot_{k} A)$. We may assume $\D A\cong A[-d+1]$ in $\D(A^{\rm op}\ot_{k} A)$.
 We may also assume $DA$ and $A$  have property (P) as $(A^{\rm op}\ot_{k} A)$-modules and $P$ has property (P) as $(B^{\rm op}\ot_{k} A)$-module (see \cite[Section 3.1]{Keller94}). 
Then there is a quasi-isomorphim $f:A\ra DA[d-1]$ of $(A^{\rm op}\ot_{k} A)$-modules, which induces a quasi-isomorphism $1\ot f: P\ra P\ot_{A}DA[d-1]$ of $(B^{\rm op}\ot_{k} A)$-modules.

Applying the functor $\shHom_{A}(P, ?)$, we have a quasi-isomorphism $B\ra \shHom_{A}(P, P\ot_{A}DA[d-1])$ of $B^{\rm op}\ot_{k}B$-modules.
Notice that we have an isomorphism 
\[ \shHom_{A}(P, P\ot_{A}DA[d-1]) \cong D\shHom_{A}(P, P)[d-1]=DB[d-1]\]
of $(B^{\rm op}\ot_{k} B)$-modules, then $B\cong DB[d-1]$ in $\D(B)$. Thus $B$ is also $d$-symmetric.
\end{proof}

Now we are ready to prove Theorem \ref{Thm:main}.

\begin{proof}[Proof of Theorem \ref{Thm:main}]
Let $C$ be a $(-d)$-CY configuration of $\Z\Delta/\SSS[d]$.
Let $A=k\Delta\op D(k\Delta)[d-1]$ be the trivial extension dg algebra. We may regard $C$ as a $(-d)$-CY configuration in $\un{\CM}A$.
 There is a SMC $\{X_{1}, \cdots, X_{n}\}$ in $\D^{\bb}(A)$ which is sent to $C$ in $\un{\CM}A$ by Theorem \ref{Thm:SMCsurj} and there exists a silting object $P$ in $\per A$ corresponds to $\{X_{1}, \cdots, X_{n}\}$ by Proposition \ref{Prop:KoY}.
  Let $B=\shEnd_{A}(P)$.
Then $B$ is a $d$-symmetric dg $k$-algebra by Lemma \ref{Lem:siltGoren} and moreover, $\un{\CM}B$ is triangle equivalent to $\un{\CM}A$. By Proposition \ref{Prop:KoY}, the simple modules of $B$ corresponds to $\{X_{1}, \cdots, X_{n}\}$. Then the AR quiver of $\un{\CM}B$ is isomorphic to $(\Z \Delta)_{C}/\SSS[d]$.
 \end{proof}

\section{Maximal $d$-Brauer relations and Brauer tree dg algebras} \label{Section:maximal}
 
In the section, we give a combinatorial 
proof of Theorem \ref{Thm:main} for the case $\Delta=A_{n}$. We will see in $A_{n}$ case, there is a very nice description of $(-d)$-CY configurations by  \emph{maximal $d$-Brauer relations}. 
We develop some technical concepts and results on them.
Then we introduce \emph{Brauer tree dg algebras} from maximal $d$-Brauer relations and we show the simples of such dg algebras correspond to the given CY configurations.

\subsection{Maximal $d$-Brauer relations}
\label{maximalBrauer}

We start with the following definition.

\begin{Def}Let $d\ge 1$ and $n>0$ be two integers and let $N:=(d+1)n+d-1$.
Let $\Pi$ be an $N$-gon with vertices numbered clockwise from $1$ to $N$.
\begin{enumerate}[\rm (1)]
\item A \emph{diagonal} in $\Pi$ is a straight line segment that joins two of the vertices and goes through  the interior of $\Pi$.  The diagonal which joins two vertices  $i$ and $j$ is denoted by $(i,j)=(j,i)$.

\item A \emph{$d$-diagonal} in $\Pi$ is a diagonal of the form $(i, i+d+j(d+1))$, where $0\le j\le n-1$.
\end{enumerate}
\end{Def}

The definition of maximal $d$-Brauer relation is as follows. It is  some special kind of $2$-Brauer relation in the sense of  \cite[Definition 6.1]{Luo}. 

\begin{Def}\label{Def:Brauer relation}
Let $B$ be a set of $d$-diagonals in $\Pi$. We call $B$ a \emph{$d$-Brauer relation of $\Pi$} if any two $d$-diagonals in $B$  are disjoint. We call a $d$-Brauer relation $B$ \emph{maximal}, if it is maximal with respect to inclusions.
\end{Def}

We denote by ${\bf B}$ the set of  maximal $d$-Brauer relations on $\Pi$. Let $\theta$  be the clockwise rotation by $2\pi /N$. If $I=(i_{1},i_{2})$ is a diagonal, then $\theta^{t}(I)=(i_{1}+t, i_{2}+t)$ gives us a new diagonal. For any $B, B'\in \B$, if there exists $n\in \Z$ such that $B=\theta^{n}(B')$, we say $B$
 and $B'$ are \emph{equivalent up to rotation}, denoting by $B\sim B'$. It gives rise to  an equivalence relation on $\B$. We denote by $\un{\bf B}$  the set of equivalence classes of $\B$.
 We give two simple examples to show what the maximal $d$-Brauer relations look like.

\begin{Ex}\label{Ex:d=1&n=2}
Let $d=2$ and $n=2$. Then $N=7$ and $\B$ consists of the following and $\# \un{\B}=1$.
\newdimen \R \R=0.8cm

 \begin{center}
\scriptsize{
   \begin{tikzpicture}
 \draw node (1) at (0:\R){1}
       node (2) at (-360/7:\R){2} 
       node (3) at (-360*2/7:\R){3}
       node (4) at (-360*3/7:\R){4}
       node (5) at (-360*4/7:\R){5}
       node (6) at (-360*5/7:\R){6}
       node (7) at (-360*6/7:\R){7}
       [dotted] (1)--(2)--(3)--(4)--(5)--(6)--(7)--(1);
     \draw (1)--(6) (2)--(4);
 
  \draw[xshift=2.5\R] node (1) at (0:\R){1}
       node (2) at (-360/7:\R){2} 
       node (3) at (-360*2/7:\R){3}
       node (4) at (-360*3/7:\R){4}
       node (5) at (-360*4/7:\R){5}
       node (6) at (-360*5/7:\R){6}
       node (7) at (-360*6/7:\R){7}
       [dotted] (1)--(2)--(3)--(4)--(5)--(6)--(7)--(1);
     \draw (2)--(7) (3)--(5);
     
 \draw[xshift=5\R] node (1) at (0:\R){1}
       node (2) at (-360/7:\R){2} 
       node (3) at (-360*2/7:\R){3}
       node (4) at (-360*3/7:\R){4}
       node (5) at (-360*4/7:\R){5}
       node (6) at (-360*5/7:\R){6}
       node (7) at (-360*6/7:\R){7}
       [dotted] (1)--(2)--(3)--(4)--(5)--(6)--(7)--(1);
     \draw (3)--(1) (4)--(6);
     
  \draw[xshift=7.5\R] node (1) at (0:\R){1}
       node (2) at (-360/7:\R){2} 
       node (3) at (-360*2/7:\R){3}
       node (4) at (-360*3/7:\R){4}
       node (5) at (-360*4/7:\R){5}
       node (6) at (-360*5/7:\R){6}
       node (7) at (-360*6/7:\R){7}
       [dotted] (1)--(2)--(3)--(4)--(5)--(6)--(7)--(1);
     \draw (4)--(2) (5)--(7);

 \draw[xshift=10\R] node (1) at (0:\R){1}
       node (2) at (-360/7:\R){2} 
       node (3) at (-360*2/7:\R){3}
       node (4) at (-360*3/7:\R){4}
       node (5) at (-360*4/7:\R){5}
       node (6) at (-360*5/7:\R){6}
       node (7) at (-360*6/7:\R){7}
       [dotted] (1)--(2)--(3)--(4)--(5)--(6)--(7)--(1);
     \draw (5)--(3) (6)--(1);

 \draw[xshift=12.5\R] node (1) at (0:\R){1}
       node (2) at (-360/7:\R){2} 
       node (3) at (-360*2/7:\R){3}
       node (4) at (-360*3/7:\R){4}
       node (5) at (-360*4/7:\R){5}
       node (6) at (-360*5/7:\R){6}
       node (7) at (-360*6/7:\R){7}
       [dotted] (1)--(2)--(3)--(4)--(5)--(6)--(7)--(1);
     \draw (6)--(4) (7)--(2);

 \draw[xshift=15\R] node (1) at (0:\R){1}
       node (2) at (-360/7:\R){2} 
       node (3) at (-360*2/7:\R){3}
       node (4) at (-360*3/7:\R){4}
       node (5) at (-360*4/7:\R){5}
       node (6) at (-360*5/7:\R){6}
       node (7) at (-360*6/7:\R){7}
       [dotted] (1)--(2)--(3)--(4)--(5)--(6)--(7)--(1);
     \draw (7)--(5) (1)--(3);
\end{tikzpicture} }
\end{center} 
\end{Ex}

\begin{Ex}\label{Ex:d=1&n=3}
Let $d=2$ and $n=3$. Then $N=10$, $\# \B=30$ and $\un{\B}$ consists  of the following.\begin{center}
\newdimen \R \R=1cm
\scriptsize{
\begin{tikzpicture}
 \draw node (1) at (0:\R){1}
       node (2) at (-360/10:\R){2} 
       node (3) at (-360*2/10:\R){3}
       node (4) at (-360*3/10:\R){4}
       node (5) at (-360*4/10:\R){5}
       node (6) at (-360*5/10:\R){6}
       node (7) at (-360*6/10:\R){7}
       node (8) at (-360*7/10:\R){8}
       node (9) at (-360*8/10:\R){9}
       node (10) at (-360*9/10:\R){10}
       [dotted] (1)--(2)--(3)--(4)--(5)--(6)--(7)--(8)--(9)--(10)--(1);
  \draw (1)--(6) (2)--(4) (8)--(10);

\draw[xshift=2.5\R] node (1) at (0:\R){1}
       node (2) at (-360/10:\R){2} 
       node (3) at (-360*2/10:\R){3}
       node (4) at (-360*3/10:\R){4}
       node (5) at (-360*4/10:\R){5}
       node (6) at (-360*5/10:\R){6}
       node (7) at (-360*6/10:\R){7}
       node (8) at (-360*7/10:\R){8}
       node (9) at (-360*8/10:\R){9}
       node (10) at (-360*9/10:\R){10}
       [dotted] (1)--(2)--(3)--(4)--(5)--(6)--(7)--(8)--(9)--(10)--(1);
  \draw (1)--(6) (3)--(5) (8)--(10);
  
\draw[xshift=5\R] node (1) at (0:\R){1}
       node (2) at (-360/10:\R){2} 
       node (3) at (-360*2/10:\R){3}
       node (4) at (-360*3/10:\R){4}
       node (5) at (-360*4/10:\R){5}
       node (6) at (-360*5/10:\R){6}
       node (7) at (-360*6/10:\R){7}
       node (8) at (-360*7/10:\R){8}
       node (9) at (-360*8/10:\R){9}
       node (10) at (-360*9/10:\R){10}
       [dotted] (1)--(2)--(3)--(4)--(5)--(6)--(7)--(8)--(9)--(10)--(1);
  \draw (1)--(6) (2)--(4) (7)--(9);  

\draw[xshift=7.5\R] node (1) at (0:\R){1}
       node (2) at (-360/10:\R){2} 
       node (3) at (-360*2/10:\R){3}
       node (4) at (-360*3/10:\R){4}
       node (5) at (-360*4/10:\R){5}
       node (6) at (-360*5/10:\R){6}
       node (7) at (-360*6/10:\R){7}
       node (8) at (-360*7/10:\R){8}
       node (9) at (-360*8/10:\R){9}
       node (10) at (-360*9/10:\R){10}
       [dotted] (1)--(2)--(3)--(4)--(5)--(6)--(7)--(8)--(9)--(10)--(1);
   \draw (1)--(3) (4)--(6) (7)--(9);
\end{tikzpicture}    }
\end{center}
\end{Ex}

Now we give a description of $(-d)$-Calabi-Yau configurations of type $A_{n}$ by using maximal $d$-Brauer relations. To each vertex in $\Z A_{n}$, we associate a label in $\Z \times \Z$ as follows.

{\tiny
\begin{center}
\begin{tikzpicture}[scale=0.9]
\node at (0,2){$\cdots$};
\node at (-0.42,2){$\cdots$};
\node at (10,2){$\cdots$};
\node at (10.42,2){$\cdots$};
 \node (1d+1) at (0,0){$(1,d+1)$};
 \node (12d+2) at (1,1){$(1,2d+2)$};
 \node (13d+3) at (2,2){$(1,3d+3)$};
\node (1n-1d+1) at (3,3){$(1,(n-1)(d+1))$};
 \node (1nd+1) at (4,4){$(1,n(d+1))$};
 
  \node (d+22d+2) at (3,0){$(d+2,2d+2)$};
   \node (d+23d+3) at (4,1){$(d+2,3d+3)$};
  \node (d+24d+4) at (5,2){$(d+2,4d+4)$};
  \node (d+2nd+1) at (6,3){$(d+2,n(d+1))$};
  \node (d+2n+1d+1) at (7,4){$(d+2,(n+1)(d+1))$}; 
\node (2d+33d+3) at (6, 0){$(2d+3, 3d+3)$}; 
\node (2d+34d+4) at (7, 1){$(2d+3, 4d+4)$}; 
\node (2d+35d+5) at (8, 2){$(2d+3, 5d+5)$}; 
\node (2d+3n+1d+1) at (9, 3){$(2d+3, (n+1)(d+1))$}; 
\node (2d+3n+2d+1) at (10, 4){$(2d+3, (n+2)(d+1))$}; 
  
  \draw[->] (1d+1)--(12d+2) ;
 \draw[->] (12d+2)--(13d+3);
 \draw[dotted] (13d+3)--(1n-1d+1);
 \draw[->](1n-1d+1)--(1nd+1);

\draw[->](d+22d+2)--(d+23d+3);
\draw[->](d+23d+3)--(d+24d+4);
\draw[dotted](d+24d+4)--(d+2nd+1);
\draw[->](d+2nd+1)--(d+2n+1d+1);

\draw[->](12d+2)--(d+22d+2);
\draw[->](13d+3)--(d+23d+3);
\draw[->](1nd+1)--(d+2nd+1); 

\draw[->] (2d+33d+3)--(2d+34d+4);
\draw[->](2d+34d+4)--(2d+35d+5) ;
\draw[dotted](2d+35d+5)-- (2d+3n+1d+1);
\draw[->] (2d+3n+1d+1)--(2d+3n+2d+1);
\draw[->] (d+23d+3)-- (2d+33d+3);
\draw[->] (d+24d+4)-- (2d+34d+4);
\draw[->] (d+2n+1d+1)-- (2d+3n+1d+1);

 \end{tikzpicture}
\end{center} }
Let $\Z A_{n,d}$ be the stable translation quiver $\Z A_{n}/\SSS[d]$.
Since by the labelling above, $\SSS[d]$ sends $(i,j)$ to $(i+N,j+N)$ if $d$ is odd, and to $(j+N, i+N)$ if $d$ is even, then we may label $\Z A_{n,d}$ by taking the labelling in $\Z/N\Z \times \Z/N\Z$, where we identify $(i,j)$ and $(j,i)$. Let us see some examples.

 \begin{Ex}
  \begin{enumerate}[\rm (1)]
  \item
 Let $d=1$ and $n=4$. In this case, the labelling on $\Z_{4,0}$ is as follows. 
 {\tiny
\begin{center}
\begin{tikzpicture}[scale=0.7]
\node (12) at (0,0) {$(1,2)$}; 
\node (34) at (2,0) {$(3,4)$};
\node (56) at (4,0) {$(5,6)$}; 
\node (78) at (6,0) {$(7,8)$}; 
\node (+12) at (8,0) {$(1,2)$};

\node (-72) at (-1,1) {$(7,2)$}; 
\node (14) at (1,1) {$(1,4)$}; 
\node (36) at (3,1) {$(3,6)$}; 
\node (58) at (5,1) {$(5,8)$};
\node (72) at (7,1) {$(7,2)$};  

\node (74) at (0,2) {$(7,4)$}; 
\node (16) at (2,2) {$(1,6)$};
\node (38) at (4,2) {$(3,8)$};
\node (52) at (6,2) {$(5,2)$};
\node (+74) at (8,2) {$(7,4)$};

\node (-54) at (-1,3) {$(5,4)$};
\node (76) at (1,3) {$(7,6)$};
\node (18) at (3,3) {$(1,8)$};
\node (32) at (5,3) {$(3,2)$};
\node (54) at (7,3) {$(5,4)$};

\draw[->] (12)edge(14) (14)edge(16) (16)edge(18)
(18)edge(38) (38)edge(58) (58)edge(78) 
(34)edge(36)  (36)edge(38) (38)edge(32) (32)edge(52) (52)edge(72)  (72)edge(+12) (-72)edge(12) (-72)edge(74) (74)edge(76) (76)edge(16) (16)edge(36) (36)edge(56) (-54)edge(74) (74)edge(14) (14)edge(34)
(56)edge(58) (58)edge(52) (52)edge(54)
(78)edge(72) (72)edge(+74);  
\draw[dotted] (-2,1.5)--(-3,1.5) (9,1.5)--(10,1.5);
\draw[dotted] (-0.5,-0.5)--(-0.5,3.5)--(7.5,3.5)--(7.5,-0.5)--(-0.5,-0.5);
\end{tikzpicture}
\end{center} }

\item Let $d=2$ and $n=4$. Then the labelling on $\Z_{4,1}$ is as follows.

{\tiny
\begin{center}
\begin{tikzpicture}[scale=0.7]
\node (-78) at (-2,0) {$(11,13)$};
\node (12) at (0,0) {$(1,3)$}; 
\node (34) at (2,0) {$(4,6)$};
\node (56) at (4,0) {$(7,9)$}; 
\node (78) at (6,0) {$(10,12)$}; 
\node (+12) at (8,0) {$(13,2)$};
\node (+34) at (10,0) {$(3,5)$};
\node (+56) at (12,0) {$(6,8)$};

\node (-72) at (-1,1) {$(11,3)$}; 
\node (14) at (1,1) {$(1,6)$}; 
\node (36) at (3,1) {$(4,9)$}; 
\node (58) at (5,1) {$(7,12)$};
\node (72) at (7,1) {$(10,2)$};  
\node (+14) at (9,1) {$(13,5)$};
\node (+36) at (11,1) {$(3,8)$};

\node(-52) at (-2,2) {$(8,3)$};
\node (74) at (0,2) {$(11,6)$}; 
\node (16) at (2,2) {$(1,9)$};
\node (38) at (4,2) {$(4,12)$};
\node (52) at (6,2) {$(7,2)$};
\node (+74) at (8,2) {$(10,5)$};
\node (+16) at (10,2) {$(13,8)$};
\node (+38) at (12,2) {$(3,11)$};

\node (-54) at (-1,3) {$(8,6)$};
\node (76) at (1,3) {$(11,9)$};
\node (18) at (3,3) {$(1,12)$};
\node (32) at (5,3) {$(4,2)$};
\node (54) at (7,3) {$(7,5)$};
\node (+76) at (9,3) {$(10,8)$};
\node (+18) at (11,3) {$(13,11)$};

\draw[->] (12)edge(14) (14)edge(16) (16)edge(18)
(18)edge(38) (38)edge(58) (58)edge(78) 
(34)edge(36)  (36)edge(38) (38)edge(32) (32)edge(52) (52)edge(72)  (72)edge(+12) (-72)edge(12) (-72)edge(74) (74)edge(76) (76)edge(16) (16)edge(36) (36)edge(56) (-54)edge(74) (74)edge(14) (14)edge(34)
(56)edge(58) (58)edge(52) (52)edge(54)
(78)edge(72) (72)edge(+74);  
\draw[->] (+74)edge(+76) (+76)edge(+16) (+16)edge(+36) (+36)edge(+56)
(+12)edge(+14) (+14)edge(+16) (+16)edge(+18) (+18)edge(+38)
(54)edge(+74) (+74)edge(+14) (+14)edge(+34) (+34)edge(+36) (+36)edge(+38)
(-52)edge(-72) (-52)edge(-54) (-78)edge(-72);

\draw[dotted] (-3,1.5)--(-4,1.5) (13,1.5)--(14,1.5);
\draw[dotted] (-3.5,-0.5)--(0.5,3.5)--(9.5,3.5)--(13.5, -0.5)--(-3.5,-0.5);
\end{tikzpicture}
\end{center} }
\end{enumerate}
\end{Ex}

By the labelling above, we have the following theorem. This result has been show in \cite{CS}, we put a new proof in Appendix by using concepts developed here.  Let ${\bf C}$ be the set of $(-d)$-CY configurations in $\Z A_{n,d}$.
 \begin{Thm}\cite[Theorem 6.5]{CS}\label{Thm:bijection} 
   \begin{enumerate}[\rm(1)]
   \item
   There is a bijection between the vertices of $\Z A_{n,d}$  and the $d$-diagonals in $\Pi$  sending the vertex $(i,j)$ of $\Z A_{n,d}$ to the diagonal $(i,j)$ of $\Pi$.

   \item
The bijection  in {\rm (1)} gives a bijection between ${\bf C}$ and ${\bf B}$;
\item Any  $(-d)$-CY configuration in $\Z A_{n,d}$ contains exactly $n$ elements.
\end{enumerate}
\end{Thm}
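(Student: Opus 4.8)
The plan is to deduce all three assertions from the explicit labelling of $\Z A_{n,d}$ displayed above, the formula $\dim\Hom_{\T}(X,Y)=\bar h_X(Y)$ of Proposition \ref{Prop:vs}, and a combinatorial analysis of when $\bar h$ vanishes in terms of crossings of $d$-diagonals. Part (1) is essentially bookkeeping: the displayed labelling attaches to each vertex of $\Z A_n$ a pair $(i,j)\in\Z^2$ with $j-i\equiv d\pmod{d+1}$ and $d\le j-i\le(d+1)n-1$, and since for each $i$ there are exactly $n$ admissible values of $j$ while $\Z A_n$ has exactly $n$ vertices over each column, one reads off that this is a bijection onto the set of such pairs. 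Passing to $\Z A_{n,d}=\Z A_n/\SSS[d]$ and using that $\SSS[d]$ acts on labels by $(i,j)\mapsto(i+N,j+N)$ when $d$ is odd and by $(i,j)\mapsto(j+N,i+N)$ when $d$ is even, the vertex set of $\Z A_{n,d}$ is identified with the set of unordered pairs $\{\bar i,\bar j\}$ in $\Z/N\Z$ with $\bar j-\bar i\equiv d\pmod{d+1}$; since $N\equiv d-1\pmod{d+1}$, both arcs cut off by such a pair have length $\equiv d\pmod{d+1}$, so these pairs are precisely the $d$-diagonals of $\Pi$, and the assignment is the one in the statement.

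For (2), by Theorem \ref{Thm:ComCY} a subset $C$ is a $(-d)$-CY configuration iff, writing $X\leftrightarrow D_X$ under (1), one has $\bar h_X(Y)=\delta_{X,Y}$ for $X,Y\in C$, $\bar h_X(Y[-m])=0$ for all $X,Y\in C$ and $0<m\le d-1$, and for every vertex $Z$ there are $X\in C$ and $0\le m\le d-1$ with $\bar h_X(Z[-m])\neq 0$. The decisive point is the following crossing criterion: for vertices $X\neq Y$ one has $\bar h_X(Y[-m])=0$ and $\bar h_Y(X[-m])=0$ for all $0\le m\le d-1$ if and only if $D_X$ and $D_Y$ are disjoint. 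I would prove this by reading off from the labelling how the shift permutation $[1]$ moves a label $(i,j)$, writing $\bar h_X(Y[-m])$ down explicitly, and running the case analysis on the relative positions (coinciding, crossing, sharing an endpoint, disjoint) of $D_X$ and $D_Y$; the technical lemmas on maximal $d$-Brauer relations developed in this section are exactly what organizes this bookkeeping. Granting the criterion, the first two conditions hold precisely when $\{D_X:X\in C\}$ is a set of pairwise disjoint $d$-diagonals, i.e.\ a $d$-Brauer relation; and the third condition --- applied to $Z$, with $\bar h_X(X)=1$ disposing of the case $Z\in C$ --- says exactly that no $d$-diagonal of $\Pi$ is disjoint from all of $\{D_X:X\in C\}$, i.e.\ the relation is maximal. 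Hence (1) restricts to a bijection ${\bf C}\xra{\sim}{\bf B}$.

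For (3) it remains to see, purely combinatorially, that every maximal $d$-Brauer relation has exactly $n$ elements. The diagonals of such a relation dissect $\Pi$, and the argument is by induction on $n$: at a leaf region of the dissection, maximality forces the unique diagonal on its boundary to be an ``ear'', i.e.\ to cut off an arc of exactly $d$ edges (a longer arc would leave room to insert a further $d$-diagonal disjoint from the existing ones). Deleting the $d+1$ vertices of that arc turns the ear-diagonal into a boundary edge and produces an $N'$-gon with $N'=N-(d+1)=(d+1)(n-1)+d-1$, on which the remaining diagonals still form a maximal $d$-Brauer relation; by induction there are $n-1$ of them, so the original relation has $n$. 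The base case $n=1$ is the $2d$-gon, in which any two $d$-diagonals are diameters and hence cross, so a maximal relation is a single diagonal.

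The main obstacle is the crossing criterion in (2): one must extract from the combinatorics of $\Z A_{n,d}$ a transparent description of $\bar h_X(Y[-m])$ for all shifts $0\le m\le d-1$ and match its vanishing pattern, for both orderings of $X$ and $Y$, exactly with disjointness of $D_X$ and $D_Y$. Once this is in place, the labelling bijection of (1), the translation of the generating condition into maximality, and the counting in (3) all follow without serious difficulty.
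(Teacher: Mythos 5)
Your parts (1) and (3) are fine: (1) is the same labelling bookkeeping as the paper, and your ``ear'' induction for $\#B=n$ is a harmless variant of the paper's induction (the paper cuts along an arbitrary diagonal of $B$ and applies Lemma \ref{Lem:l} to both pieces, see Proposition \ref{Prop:n}). The problem is in part (2), in the direction ``maximal $d$-Brauer relation $\Rightarrow$ condition (3) of Definition \ref{Def:combi}''. Your crossing criterion (the paper's Proposition \ref{Prop:disjoint}) is symmetric: $D_X$ and $D_Y$ are disjoint iff \emph{both} $\bar{h}_X(Y[-m])=0$ and $\bar{h}_Y(X[-m])=0$ for all $0\le m\le d-1$. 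Condition (3), however, is one--sided: for every vertex $Z$ it demands $X\in C$ and $0\le m\le d-1$ with $\bar{h}_X(Z[-m])\neq 0$, the nonvanishing being required in the direction out of the configuration element. From ``$Z$ crosses some $D_X$'' the criterion only yields the disjunction ``$\bar{h}_X(Z[-m])\neq 0$ for some $m$, or $\bar{h}_Z(X[-m])\neq 0$ for some $m$'', and the second disjunct is not condition (3); via Serre duality ($\SSS=[-d]$ on $\Z A_{n,d}$) it translates into $\bar{h}_X(Z[-j])\neq 0$ for some $1\le j\le d$, whose range differs from $0\le j\le d-1$ precisely at $j=d$. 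Indeed, by Lemma \ref{Lem:A3} a crossing alone does not force $\bar{h}_X(Z[-m])\neq 0$ in the wanted range: when the relevant pair of endpoints does form a $d$-diagonal (the case excluded in Remark \ref{Rem:zero}), all these groups can vanish. So your sentence ``the third condition says exactly that no $d$-diagonal is disjoint from all of $\{D_X\}$'' asserts an equivalence that your criterion does not give.

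This missing implication is exactly the content of the paper's Lemma \ref{Lem:existence}, and it is the hardest step of the argument: one must use maximality of the whole relation $B$ globally, not merely that $Z$ meets one diagonal. The paper's proof runs a case analysis on the relative position of $Z$ and a crossing diagonal (types 1--3), and in the delicate cases either produces a \emph{different} diagonal of $B$ crossing $Z$ in a good position, or shows that the obstructing pair of endpoints cannot be a $d$-diagonal, both times by the counting identity $\sum_{i}\delta(X_i,X_{i+1})=d+s-1$ for $B$-cycles (Proposition \ref{Prop:cycle}). Your proposal never addresses this, so as written the map $\mathbf{B}\to\mathbf{C}$ is not shown to land in $\mathbf{C}$; you would need to add an argument of the strength of Lemma \ref{Lem:existence} to close the gap.
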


 We give an example to show how the bijection works.
\begin{Ex}
Let $n=2$ and $d=2$. We associate to each vertex of $\Z A_{2,1}$ a label in $\Z/7\Z\times \Z/7\Z$ as following:
{\tiny
\begin{center}
\begin{tikzpicture}[scale=0.8]
 \node (k) at (0,1){$(1,6)$};
 \node (a21r) at (2,1){$(4,2)$};
 \node (a2r) at (4,1){$(7,5)$};
 \node (k2r) at (6,1){$.$};
 \node (k1l) at (-2,1){$(5,3)$};
 \node (k2l) at (-4,1){$.$};

 \node (a21l) at (-3,0){$(5,7)$};
 \node (a2l) at (-1,0){$(1,3)$};
 \node (k2) at (1,0){$(4,6)$};
 \node (k1r) at (3,0){$(7,2)$};

 \node (kr) at (5,0){$(3,5)$};
 \draw[red] (1,0) circle [radius=0.4];
\draw [red] (3,0) circle [radius=0.4];
 \draw[blue] (2,1) circle [radius=0.4];
 \draw[blue] (-3,0) circle [radius=0.4];
 \draw[blue] (4,1) circle [radius=0.4];

\draw[->] (k2l)--(a21l);
\draw[->] (a21l)--(k1l);
\draw[->] (k1l)--(a2l);
\draw[->] (a2l)--(k);
\draw[->] (k)--(k2);
\draw[->] (k2)--(a21r);
\draw[->] (a21r)--(k1r);
\draw[->] (k1r)--(a2r);
\draw[->] (a2r)--(kr);
\draw[->] (kr)--(k2r);
\draw[dotted] (-3.5,-0.8) rectangle (3.5,1.8);

\end{tikzpicture}
\end{center}}

It is easy to check the set $\{ (4,6), (7,2) \}$ is a $(-2)$-CY configuration of $\Z A_{2, 1}$ and it gives rise to a maximal $d$-Brauer relation of $7$-gon as follows (left part):
{\tiny
\begin{center}
 \newdimen \R \R=0.8cm
  \begin{tikzpicture}
 \draw
 node (2) at (360/7:\R) {$2$}
 node (1) at (720/7:\R) {$1$}
 node (7) at (1080/7:\R) {$7$}
node (6) at (1440/7:\R) {$6$}
node (5) at (360*5/7:\R) {$5$}
node (4) at (360*6/7:\R) {$4$}
node (3) at (360*7/7:\R) {$3$}
[dotted] (1)--(2)--(3)--(4)--(5)--(6)--(7)--(1);
\draw[red] (7)--(2) (4)--(6);

\draw[xshift=5\R]
 node (2) at (360/7:\R) {$2$}
 node (1) at (720/7:\R) {$1$}
 node (7) at (1080/7:\R) {$7$}
node (6) at (1440/7:\R) {$6$}
node (5) at (360*5/7:\R) {$5$}
node (4) at (360*6/7:\R) {$4$}
node (3) at (360*7/7:\R) {$3$}
[dotted] (1)--(2)--(3)--(4)--(5)--(6)--(7)--(1);
\draw[blue] (5)--(7) (4)--(2);
\end{tikzpicture}
\end{center}}
On the other hand, any maximal $d$-Brauer relation, for example $\{(2,4), (7,5) \}$, gives us a $(-2)$-CY configuration in $\Z A_{2,1}$.
\end{Ex}
 
The following lemma is immediately from the definition.
 \begin{Lem}\label{theta}
 Let $X\in \Z A_{n,d}$ with labelling $(x_{1},x_{2})$. Then $X[1]=(x_{2}+1,x_{1}+1)$ and $X[1]=\theta(X)$  as  $d$-diagonals.
 \end{Lem}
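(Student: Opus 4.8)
The plan is to turn the lemma into a one-step computation by first writing the labelling of the displayed figure as a closed formula. Coordinatise $\Z A_n$ by pairs $(p,q)$ with $p\in\Z$, $1\le q\le n$ and $\tau(p,q)=(p-1,q)$, as in Section~\ref{Section:quiver}. Reading off the three columns of the picture, the label attached to the vertex $(p,q)$ is $(1+p(d+1),\,(p+q)(d+1))$: indeed $(0,q)\mapsto(1,q(d+1))$, $(1,q)\mapsto(d+2,(q+1)(d+1))$ and $(2,q)\mapsto(2d+3,(q+2)(d+1))$ reproduce exactly the labels shown, and the mesh arrows of $\Z A_n$ match the arrows drawn, so the formula $\SSS(p,q)=(p+q-1,n+1-q)$ recorded there is the correct one to use. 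Passing to $\Z A_{n,d}=\Z A_n/\SSS[d]$ simply means reading both coordinates in $\Z/N\Z$ and identifying $(i,j)$ with $(j,i)$, as recalled before the statement.

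With this in hand I would compute $[1]=\SSS\tau^{-1}$ on coordinates: $[1](p,q)=\SSS(p+1,q)=(p+q,\,n+1-q)$. Substituting into the labelling formula, the label of $X[1]$ is $\bigl(1+(p+q)(d+1),\ (p+n+1)(d+1)\bigr)$. Writing $X=(x_1,x_2)$ so that $x_1=1+p(d+1)$ and $x_2=(p+q)(d+1)$, the first coordinate is visibly $x_2+1$, and for the second I would invoke the elementary identity $(n+1)(d+1)=N+2$ (immediate from $N=(d+1)n+d-1$); then $(p+n+1)(d+1)=p(d+1)+(n+1)(d+1)\equiv p(d+1)+2=x_1+1$ modulo $N$. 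This yields $X[1]=(x_2+1,x_1+1)$ as a label.

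For the second assertion I would only observe that a $d$-diagonal is unordered, so $(x_2+1,x_1+1)$ is the same $d$-diagonal as $(x_1+1,x_2+1)$, which is precisely $\theta(X)$ since $\theta$ adds $1$ to each endpoint; as the difference $x_2-x_1$ is preserved, the result is again a $d$-diagonal, so the two displayed equalities are just two readings of the single coordinate computation. I do not anticipate a genuine obstacle, in keeping with the lemma being ``immediately from the definition''; the only point needing care is the bookkeeping of conventions — that the figure's arrow directions really are the standard $\Z A_n$ mesh arrows under the chosen coordinatisation, so that the quoted formula for $\SSS$ applies, and that the identification $(i,j)=(j,i)$ is tracked so both formulas for $X[1]$ fall out at once.
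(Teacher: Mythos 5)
Your computation is correct and fills in exactly the definition-chasing that the paper leaves implicit: the paper offers no argument beyond calling the lemma immediate from the definitions, and your closed formula $(p,q)\mapsto(1+p(d+1),(p+q)(d+1))$ together with $[1]=\SSS\tau^{-1}$, the identity $(n+1)(d+1)=N+2$, and the unordered reading of labels mod $N$ is the intended verification. No gaps; the convention checks you flag (mesh arrows matching the figure, reduction mod $N$ with the identification $(i,j)=(j,i)$) are indeed the only points requiring care.
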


In the rest of this subsection, we introduce some technical concepts and results. They give us a better understanding of maximal $d$-Brauer relations and in particular, Proposition \ref{Prop:B=B'} will play a crucial role in the proof of  Theorem \ref{Thm:section}.

\begin{Def}\label{Def:Bcycle}
\begin{enumerate} [\rm(1)]
\item
Let $C$ be a set of diagonals in $\Pi$. We call $C$ a \emph{cycle} if $C$ is contained in the closure of  some connect component (denoted by $\Pi_{C}$) of the subset $\Pi \backslash \bigcup_{X\in C}X$ of $\Pi$. In this case, elements in $C$ has a anti-clockwise  ordering $C=\{ X_{1}, \cdots, X_{s}\}$ given as follows.
\newdimen\R
\R=1cm
{\tiny
\begin{center} 
\begin{tikzpicture}
 \draw[xshift=0\R] circle (\R);
 \draw[xshift=0\R]     
   (0:\R)--(50:\R) (110:\R)--(160:\R)     (200:\R)--(250:\R) (290:\R)--(340:\R)
    node[left=1]  at (25:\R){$X_{1}$}
       node[right=3]  at (140:\R){$X_{2}$}
      node[right=3]  at (225:\R){$X_{s-1}$}
   node[left=3]  at (325:\R){$X_{s}$};
    \draw[xshift=0\R, dotted] (160:\R)--(200:\R);
    \end{tikzpicture} 

\end{center} }

\item
Let $B\in \B$ and $C\subset B$. We call $C$ a \emph{$B$-cycle} if $C$ is a cycle and $\{X\in \B\mid X\in \overline{\Pi_{C}} \}=C$. \end{enumerate}
\end{Def}

\begin{Ex}
Let $d=2$ and $n=4$. Let $B$  be the following maximal $d$-Brauer relation.
\begin{center}
\newdimen \R \R=1cm
\scriptsize{
\begin{tikzpicture}
 \draw node (1) at (0:\R){1}
       node (2) at (-360/13:\R){2} 
       node (3) at (-360*2/13:\R){3}
       node (4) at (-360*3/13:\R){4}
       node (5) at (-360*4/13:\R){5}
       node (6) at (-360*5/13:\R){6}
       node (7) at (-360*6/13:\R){7}
       node (8) at (-360*7/13:\R){8}
       node (9) at (-360*8/13:\R){9}
       node (10) at (-360*9/13:\R){10}
       node (11) at (-360*10/13:\R){11}
       node (12) at (-360*11/13:\R){12}
       node (13) at (-360*12/13:\R){13}
       [dotted] (1)--(2)--(3)--(4)--(5)--(6)--(7)--(8)--(9)--(10)--(11)--(12)--(13)--(1);
       \draw (1)--(9) (10)--(12) (2)--(4) (5)--(7);
\end{tikzpicture} }
\end{center}
By the definition above, $C=\{ (2,4), (5,7), (10,12) \}$ is a cycle but not a $B$-cycle  and $C'=\{ (2,4), (5,7), (1,9)\}$ is a $B$-cycle. 
\end{Ex}

Here are some elementary properties of these concepts. The  proof is  left to the reader.
\begin{Prop} \label{Prop:cycleproperty}
Let $B\in \B$, then
 \begin{enumerate}[\rm(1)]
 \item  $B$ is the union of $B$-cycles;
 \item Any two $B$-cycles have at most one common diagonal;
 \item Let $C:=\{  X_{1}, \ldots, X_{s} \}$ be a set of diagonals in $\Pi$. Let $X_{s+1}:=X_{1}$. Then $C$ is a cycle if and only if  for any $i$, $2\le i\le s$, $X_{i-1}$ and $X_{i+1}$ are in the same connect component of $\Pi\backslash X_{i}$；
 \item Let $X, Y\in B$. Then $X$ and $Y$  are in the same $B$-cycle  if and only if  for any $Z\not=X,Y$ in $B$, $X$ and $Y$ are in the same connected component part of $\Pi\backslash Z$；
 \item Let $X,Y,Z\in B$. $X$ and $Y$ are in the same connected component of $\Pi\backslash Z$ if and only if there is a sequence $X=X_{1}, X_{2}, \cdots, X_{t}=Y$ of $B$, such that $Y\not=X_{i}$, $1\le i\le t$ and $X_{j}, X_{j+1}$ are in the same $B$-cycle for $1\le j\le t-1$.
 \end{enumerate}
\end{Prop}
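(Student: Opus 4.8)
The plan is to reduce all five statements to the combinatorics of the dissection of $\Pi$ cut out by the diagonals of $B$, exploiting that this dissection is dual to a tree. First I would record the basic picture. Since the diagonals of $B$ are pairwise disjoint, $\Pi\setminus\bigcup_{X\in B}X$ has finitely many connected components $R_{1},\dots,R_{m}$, the \emph{regions} of $B$; each $R_{i}$ is a convex sub-polygon whose sides are edges of $\Pi$ and diagonals of $B$, each diagonal of $B$ lies on the boundary of exactly two regions, and the graph $T_{B}$ with nodes $R_{1},\dots,R_{m}$ and one edge per diagonal of $B$ (joining the two regions it bounds) is a tree --- a one-line induction on $|B|$, since adjoining one disjoint chord splits a single region into two and adds exactly one node and one edge, preserving acyclicity. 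The key translation to prove here is: \emph{the $B$-cycles are exactly the sets $C_{i}:=\{X\in B\mid X\subset\overline{R_{i}}\}$ of diagonals bounding a single region, and $C\mapsto R_{i}$ is a bijection between $B$-cycles and regions.} Indeed, a path leaving a region $R_{i}$ must first cross a diagonal on $\partial R_{i}$, i.e.\ a member of $C_{i}$, so $R_{i}$ is itself a component of $\Pi\setminus\bigcup_{X\in C_{i}}X$; hence $\Pi_{C_{i}}=R_{i}$, the defining condition of a $B$-cycle holds, and the anti-clockwise order on $C_{i}$ is the cyclic order of these diagonals along $\partial R_{i}$. Conversely, if $C$ is a $B$-cycle then $\Pi_{C}$ contains no two distinct regions, because a diagonal $Z\in B$ separating two such would lie in $\overline{\Pi_{C}}$ and hence in $C$, contradicting that $C\subset\overline{\Pi_{C}}$ forces $\Pi_{C}$ to lie on one side of $Z$; so $\Pi_{C}$ is a single region.

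With this dictionary, (1), (2), (4) and (5) become short arguments about the tree $T_{B}$. For (1): each $X\in B$ bounds two regions, hence lies in the two corresponding $B$-cycles, so $B$ is the union of the $B$-cycles. For (2): two distinct $B$-cycles are two distinct regions $R\neq R'$, a common diagonal of them is an edge of $T_{B}$ joining $R$ and $R'$, and a tree has at most one edge between two nodes. For (4): if $X,Y$ lie in one $B$-cycle $C_{i}$ and $Z\in B$ with $Z\neq X,Y$, then $R_{i}$ lies entirely in one component $U$ of $\Pi\setminus Z$ (whether or not $Z$ bounds $R_{i}$), whence $X,Y\subset\overline{R_{i}}\subset\overline{U}$ lie in the same component; conversely, if $X$ and $Y$ bound no common region they are non-adjacent edges of $T_{B}$, and then any edge $Z\notin\{X,Y\}$ on the $T_{B}$-path between them separates $X$ from $Y$ in $\Pi$, so the right-hand condition of (4) fails for this $Z$. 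For (5) (read, as evidently intended, with ``$Z\neq X_{i}$'' for all $i$ in place of the misprinted ``$Y\neq X_{i}$''): one implication iterates (4) along a chain $X=X_{1},\dots,X_{t}=Y$ whose consecutive terms lie in a common $B$-cycle and none of which is $Z$, keeping each $X_{j},X_{j+1}$, hence also $X,Y$, in one component of $\Pi\setminus Z$; conversely, ``$X,Y$ in one component of $\Pi\setminus Z$'' means the edges $X,Y$ of $T_{B}$ lie in one component of $T_{B}$ with the edge $Z$ removed, and the path between them inside that component, whose successive edges share a region, is the required chain.

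Statement (3) stands slightly apart, since there $C=\{X_{1},\dots,X_{s}\}$ is an arbitrary set of pairwise disjoint diagonals equipped with a cyclic labelling ($X_{s+1}:=X_{1}$), not a priori the boundary of a region. The ``only if'' direction is immediate from Definition~\ref{Def:Bcycle}(1): reading $X_{1},\dots,X_{s}$ anti-clockwise around $\partial\Pi_{C}$, the neighbours $X_{i-1},X_{i+1}$ of $X_{i}$ lie on the $\Pi_{C}$-side of $X_{i}$, hence in the same component of $\Pi\setminus X_{i}$. For the ``if'' direction I would proceed by induction on $s$, the cases $s\le 2$ being clear: using the local hypotheses around a suitable $X_{i}$ one cuts off an ``ear'' --- replacing a short consecutive block of the list by a single diagonal bounding the same complementary cell --- so that the shortened cyclic list still satisfies the hypotheses and, by induction, bounds a common cell, whence so does $C$. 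The delicate point, which I expect to be the main obstacle, is exactly this last step: the region/tree dictionary turns (1), (2), (4), (5) into routine tree manipulations, but (3) genuinely requires assembling the \emph{local} ``consecutive diagonals see each other'' data into the \emph{global} conclusion that all $s$ diagonals bound a single cell, and making the induction --- and the precise form of the hypothesis --- mesh is where the real work lies.
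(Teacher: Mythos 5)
You should first note that the paper contains no proof of this proposition at all (it is explicitly ``left to the reader''), so there is no official argument to compare with; judged on its own, your region/dual-tree dictionary is the right framework and is set up correctly: the diagonals of $B$ cut $\Pi$ into regions whose dual graph is a tree, the $B$-cycles are exactly the boundary sets of single regions, and with this dictionary your proofs of (1), (2), (4) and (5) (including your reading of the misprint ``$Y\neq X_i$'' as ``$Z\neq X_i$'') are complete and correct.

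The genuine gap is part (3), and it is worse than the ``delicate induction'' you flag and leave unfinished: the ``if'' direction of (3) \emph{as stated} is false, so no ear-cutting induction can close it. Take four pairwise disjoint diagonals $W,X,Y,Z$ arranged in a row, i.e.\ $X$ separates $W$ from $\{Y,Z\}$ and $Y$ separates $\{W,X\}$ from $Z$ (for instance $W=(2,5)$, $X=(1,6)$, $Y=(7,12)$, $Z=(8,11)$ in a polygon with $12$ vertices), and label them $X_1=W$, $X_2=Y$, $X_3=X$, $X_4=Z$. Then for every $i$ with $2\le i\le 4$ (indeed for every $i$ taken cyclically) the diagonals $X_{i-1}$ and $X_{i+1}$ lie in the same component of $\Pi\setminus X_i$, because each $X_i$ either separates nothing or separates exactly one of the other diagonals from the remaining two, never $X_{i-1}$ from $X_{i+1}$; yet no region is bounded by all four diagonals, so $C$ is not a cycle. (Already for $s=3$ the literal statement fails: in a linear triple label the middle diagonal $X_1$, so that the omitted index $i=1$ is the only one that would detect the separation.) The statement that is true, and which your own dictionary proves in one line, is: $C$ is a cycle if and only if no element of $C$ separates two other elements of $C$, i.e.\ the same-component condition holds for \emph{all} triples, not merely the cyclically consecutive ones. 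Indeed that condition says every edge of the dual tree of the dissection by $C$ has a leaf endpoint, which forces this tree to be a star, i.e.\ all diagonals of $C$ bound one common region. You should prove (3) in this corrected form, and note that the application of (3) in the proof of Proposition \ref{Prop:cycle} then only yields a separating triple that need not be consecutive, so that argument has to be adjusted accordingly.
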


We give an easy observation.

\begin{Lem}\label{Lem:l}
Let $B\in\B$ and $X\in B$. 
Let $\Pi_{1}$ and $\Pi_{2}$ be two connect components of $\Pi\backslash X$. Then
 \begin{enumerate}[\rm (1)]
  \item
  $B\cap \Pi_{i}:=\{ Y\in B\mid Y\subset \Pi_{i}\}$ is a maximal $d$-Brauer relation of $\Pi_{i}$ for $i=1,2$;
  \item If $X$ has the form $(i, i+d+1+(d+2)j)$, then $\{ \#B\cap \Pi_{1}, \#B\cap\Pi_{2} \}= \{ j, n-j-1\}$.
 \end{enumerate} 
\end{Lem}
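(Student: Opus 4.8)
\textbf{Proof proposal for Lemma \ref{Lem:l}.}

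The plan is to work directly with the combinatorial structure of $N$-gons and $d$-diagonals, where $N = (d+1)n + d-1$. First I would record the basic observation that a diagonal $X = (i,j)$ of $\Pi$ cuts $\Pi$ into two sub-polygons $\Pi_1$ and $\Pi_2$; if $X$ is a $d$-diagonal of the form $(i, i+d+j(d+1))$, then each of $\Pi_1, \Pi_2$ is again a polygon whose number of sides is congruent to the correct residue mod $d+1$ so that it supports $d$-diagonals. Concretely, writing the arc from $i$ to $i+d+j(d+1)$ (going one way) as having $d + j(d+1)$ edges plus the chord $X$ itself, the sub-polygon $\Pi_1$ on that side is a $(d + j(d+1) + 1)$-gon, i.e. a polygon of the shape $\bigl((d+1)j + (d+1)\bigr)$-gon minus one... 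I would pin down the exact count so that $\Pi_1$ is an $(d+1)j' + (d-1)$-gon style polygon (matching the $N$-gon recipe with $n$ replaced by some smaller $j'$), and similarly for $\Pi_2$; the two values $j'$ on the two sides will sum to $n-1$ after accounting for the shared diagonal $X$. This gives part (2) once the bookkeeping is done, since $\#(B \cap \Pi_i)$ for a maximal $d$-Brauer relation of a polygon of that type equals the corresponding $j'$ by Theorem \ref{Thm:bijection}(3) (a maximal $d$-Brauer relation of the $\bigl((d+1)m + d-1\bigr)$-gon has exactly $m$ diagonals).

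For part (1), I would argue that $B \cap \Pi_i$ is a $d$-Brauer relation of $\Pi_i$ (its diagonals are $d$-diagonals of $\Pi_i$ — this needs the observation that a $d$-diagonal of $\Pi$ lying entirely inside $\Pi_i$, together with the vertices of $\Pi_i$, is still a $d$-diagonal there, which follows from the residue computation above — and they are pairwise disjoint since they were disjoint in $\Pi$). Maximality is the crux: if $B \cap \Pi_i$ were not maximal in $\Pi_i$, there would be a $d$-diagonal $Y$ of $\Pi_i$ disjoint from every element of $B \cap \Pi_i$; but $Y$ is also a $d$-diagonal of $\Pi$ disjoint from $X$ (it lies inside $\Pi_i$) and disjoint from every element of $B \cap \Pi_j$ for $j \neq i$ (those lie on the other side of $X$). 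Hence $B \cup \{Y\}$ would be a $d$-Brauer relation of $\Pi$ strictly containing $B$, contradicting maximality of $B$. I would also need to check $Y \notin B$, which is immediate since $Y$ is disjoint from all of $B \cap \Pi_i \ni$ no copy of $Y$ and $Y$ is not on the far side.

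The main obstacle I anticipate is purely the index arithmetic: correctly identifying, for a $d$-diagonal $(i, i+d+1+(d+2)j)$ — note the statement uses the parametrization $(i, i+d+1+(d+2)j)$ rather than $(i, i+d+j(d+1))$, so I would first reconcile these two descriptions of the same diagonal — the precise number of vertices of $\Pi_1$ and of $\Pi_2$ and verifying that each sub-polygon is of the form "$(d+1)m + (d-1)$-gon" for the appropriate $m_1, m_2$ with $m_1 + m_2 = n-1$. Once that identity $m_1 + m_2 = n - 1$ is established (the $-1$ coming from the fact that the chord $X$ is counted as a side in both $\Pi_1$ and $\Pi_2$ but is not itself a side of $\Pi$, so the total edge count works out to $(d+1)m_1 + (d+1)m_2 + 2d = N + d+1 = (d+1)n + 2d-1$, forcing $m_1 + m_2 = n-1$), part (2) follows by combining with Theorem \ref{Thm:bijection}(3) applied to $\Pi_1$ and $\Pi_2$, and everything else is the disjointness/maximality argument sketched above. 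I would present the edge-counting as a short explicit computation and keep the rest at the level of the combinatorial picture, as the paper invites ("The proof is left to the reader" appears just above for the neighbouring Proposition \ref{Prop:cycleproperty}).
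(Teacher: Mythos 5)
Your approach is correct, and in fact the paper gives no proof of this lemma at all (it is introduced only as ``an easy observation''), so your argument supplies exactly the intended reasoning: part (1) by restricting $B$ to the sub-polygons and, for maximality, extending any addable $d$-diagonal of $\Pi_i$ to a $d$-diagonal of $\Pi$ disjoint from $X$ and from the rest of $B$, contradicting maximality of $B$; part (2) by identifying the sub-polygons as being of the form $((d+1)m+d-1)$-gons and invoking the cardinality statement. One structural remark: in Appendix~\ref{appendixA} the paper proves Proposition~\ref{Prop:n} ($\#B=n$ for every $B\in\B$) by induction using precisely part (1) of this lemma together with the same sub-polygon bookkeeping, so your derivation of part (2) runs in the opposite direction (cardinality $\Rightarrow$ (2)); this is legitimate and non-circular, since the cardinality (Proposition~\ref{Prop:n}, or Theorem~\ref{Thm:bijection}(3) as cited from \cite{CS}) relies only on part (1), and indeed (2) cannot be obtained without some form of that cardinality result.

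Three small corrections to tighten the write-up. First, take $\Pi_i$ to be the polygon on the vertices strictly between the endpoints of $X$ (harmless, since disjointness forbids any other element of $B$ from touching those endpoints); only with this convention is $\Pi_1$ a $((d+1)j+d-1)$-gon, whereas including the endpoints gives a $(d+1)(j+1)$-gon, which is not of the form $(d+1)m+d-1$ unless $d=1$, and the residue-mod-$(d+1)$ argument you sketch for ``$d$-diagonal of $\Pi_i$ $\Leftrightarrow$ $d$-diagonal of $\Pi$ inside $\Pi_i$'' is carried out for this smaller polygon. Second, with that convention the correct count is $\bigl((d+1)m_1+d-1\bigr)+\bigl((d+1)m_2+d-1\bigr)+2=N$, which forces $m_1+m_2=n-1$; your displayed identity ``$(d+1)m_1+(d+1)m_2+2d=N+d+1=(d+1)n+2d-1$'' is off (its left-hand side equals $N$, and $N+d+1=(d+1)n+2d$), although the conclusion you draw from it is the right one. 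Third, you are right to be suspicious of the parametrization $(i,i+d+1+(d+2)j)$ in the statement: it is a misprint for $(i,i+d+(d+1)j)$ (this is the form used in the proof of Proposition~\ref{Prop:n} and in Lemma~\ref{Lem:cycle}), and the two cannot literally be reconciled, so simply work with the latter.
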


Let $X$ and $Y$ be two disjoint $d$-diagonals. We denote by  $\delta(X,Y)$ the smallest positive integer $m$ such that $\theta^{-m}(X)\cap Y\not=\emptyset$.

\begin{Rem}\label{delta}
Let $X, Y\in \Z A_{n,d}$. If $X$ and $Y$ are disjoint as $d$-diagonals, then $\delta (X, Y)= {\rm min}\{ i>0 \mid \bar{h}_{X}(Y[i])\not=0 \}$ by Lemma \ref{theta}.
\end{Rem}

We will give a description of $B$-cycles by $\delta$. Before this, we show a lemma. Let $\mathfrak S_{s}$ be the permutation group.

\begin{Lem}\label{Lem:cycle}
Let $B\in \B$ and let $C\subset B$ be a cycle with anti-clockwise ordering $\{ X_{1}, \cdots ,X_{s} \}$. Let $\Pi_{C}$ be the  connect component of $\Pi\backslash C$ given in Definition \ref{Def:Bcycle}.
Let $X_{s+1}=X_{1}$.
Then the following statement holds.
\begin{enumerate}[\rm(1)]
\item Let $m= \# (B\cap \Pi_{C})$, then $\sum_{l=1}^{s}\delta(X_{l},X_{l+1})=d+s+(d+1)m-1$;
\item For any $\tau\in \mathfrak S_{s}$, we have
$\sum_{l=1}^{s}\delta(X_{\tau(l)}, X_{\tau(l+1)})\ge d+s+(d+1)m-1$. Moreover, the equality holds if and only if $\tau(l+1)=\tau(l)+1$ for all $1\le l \le s$;
\item  $C$ is a $B$-cycle if and only if  $\sum_{l=1}^{s}\delta(X_{l},X_{l+1})=d+s-1$.
 \end{enumerate}
\end{Lem}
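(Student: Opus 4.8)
The plan is to understand the quantity $\delta(X_l, X_{l+1})$ geometrically and reduce everything to counting vertices of $\Pi$ that lie "between" consecutive diagonals of the cycle. Recall that $\theta$ is the clockwise rotation by $2\pi/N$, and $\delta(X,Y)$ is the least $m>0$ with $\theta^{-m}(X)\cap Y\neq\emptyset$. First I would fix the anti-clockwise ordering $C = \{X_1,\dots,X_s\}$ of the cycle, write $X_l = (a_l, b_l)$ with the convention that $\Pi_C$ is the region cut out, and describe $\delta(X_l, X_{l+1})$ as the number of steps the "leading" endpoint of $X_l$ must be rotated counterclockwise before $\theta^{-m}(X_l)$ first meets $X_{l+1}$. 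Concretely, since both diagonals are $d$-diagonals and they bound the region $\Pi_C$ on the side where $X_{l+1}$ follows $X_l$, one shows $\delta(X_l, X_{l+1})$ equals the number of boundary edges of $\Pi$ strictly between the appropriate endpoints of $X_l$ and $X_{l+1}$ along the arc that does \emph{not} cross through $\Pi_C$, plus a correction of $d$ coming from the "length" of a $d$-diagonal (a $d$-diagonal $(i,i+d+j(d+1))$ subtends $d + j(d+1)$ edges on its shorter-type side). This geometric identification is the technical heart; I expect to set it up using Lemma \ref{theta} (so that $[1] = \theta$ on $d$-diagonals) together with the explicit form of $d$-diagonals.

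For (1): once $\delta(X_l, X_{l+1})$ is identified with (edges between consecutive endpoints along the outer boundary) $+\,d$, summing over $l=1,\dots,s$ and going once around $\Pi_C$ accounts for every one of the $N = (d+1)n + d - 1$ boundary edges of $\Pi$ exactly once, \emph{except} that the edges lying inside the sub-polygons cut off by the $m$ diagonals of $B$ contained in $\Pi_C$ are not traversed. By Lemma \ref{Lem:l}, each such diagonal, being itself a $d$-diagonal, removes a block of $(d+1)$ edges worth of boundary from the circuit (more precisely the total contribution of the $m$ chords reduces the edge-count by $(d+1)m$, using that $B\cap \Pi_C$ triangulates — in the $d$-diagonal sense — the remaining region). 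Assembling: $\sum_{l=1}^s \delta(X_l, X_{l+1}) = sd + \big(N - (d+1)m\big) - (\text{overcount correction})$, and bookkeeping the constants $N = (d+1)n + d - 1$ against the statement $d + s + (d+1)m - 1$ pins down the correction; this is the routine-calculation part I will not grind through here, but it is forced once the geometric picture is in place. Note when $C$ itself is a $B$-cycle, $m = 0$ and we get $d + s - 1$, which is part (3).

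For (2): the inequality is a convexity/triangle-inequality statement. The key observation is that for a permutation $\tau \in \mathfrak{S}_s$, the "closed polygonal path" $X_{\tau(1)} \to X_{\tau(2)} \to \cdots \to X_{\tau(s)} \to X_{\tau(1)}$ must still visit all $s$ diagonals and return, and each step $\delta(X_{\tau(l)}, X_{\tau(l+1)})$ is bounded below by the number of boundary edges the leading endpoint sweeps; since the anti-clockwise cyclic order is the unique order realizing a monotone sweep (each edge swept at most once when $\tau(l+1) = \tau(l)+1$, and necessarily some edges swept multiple times otherwise), we get $\ge$, with equality iff $\tau$ is the cyclic shift. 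I would prove this by induction on $s$, or more cleanly by observing that $\delta(X,Z) \le \delta(X,Y) + \delta(Y,Z)$ whenever $Y$ "separates" $X$ from $Z$ in the cyclic order (a direct consequence of the rotation picture: rotating $X$ until it hits $Z$ can be factored through rotating until it hits $Y$), so any non-cyclic $\tau$ can be improved by an adjacent transposition toward the cyclic order without increasing the sum — combined with part (1) giving the value at the cyclic order.

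For (3): the forward direction is immediate from (1) with $m = 0$. For the converse, suppose $\sum_{l=1}^s \delta(X_l, X_{l+1}) = d + s - 1$; by (1) this forces $m = \#(B \cap \Pi_C) = 0$, i.e.\ no diagonal of $B$ lies in the interior region $\Pi_C$, which by Definition \ref{Def:Bcycle}(2) and Proposition \ref{Prop:cycleproperty} is exactly the condition that $C$ is a $B$-cycle (one must also check $C$ is genuinely a cycle, which is part of the hypothesis). The main obstacle throughout is the first geometric step — correctly normalizing which endpoint of each $d$-diagonal is the "leading" one relative to the traversal direction of $\Pi_C$, and verifying the $+d$ and $(d+1)m$ constants — and I would handle it by working out one small case explicitly (e.g.\ $d = 2$, $n = 4$, as in the displayed example) to fix signs before stating the general identification as a lemma.
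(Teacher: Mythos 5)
Your overall strategy for (1) -- reduce $\sum_l\delta(X_l,X_{l+1})$ to a count of boundary vertices of $\Pi$ around $\Pi_C$ -- is the same as the paper's, but the step you call the ``technical heart'' is both wrong as stated and not actually carried out. The correct identification is $\delta(X_l,X_{l+1})=1+\#\{\text{vertices of }\Pi\text{ strictly between the near endpoints of }X_l\text{ and }X_{l+1}\text{ on the arc bounding }\Pi_C\}$; there is no ``$+d$'' per step (for $B=\{(2,4),(1,6)\}$ in the $7$-gon with $d=2$ one has $\delta((2,4),(1,6))=1$, which no formula of the shape ``edge count $+\,d$'' produces). With the correct identification, $\sum_l\delta(X_l,X_{l+1})=s+\#\{\text{vertices in }\Pi_C\}$, and the entire content of (1) is to show this vertex count equals $d-1+(d+1)m$. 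The paper does this by writing $X_i=(x_i,x_i+d+(d+1)j_i)$, noting that the region cut off by $X_i$ away from $\Pi_C$ carries $(d+1)(j_i+1)$ vertices, and then converting $\sum_i(j_i+1)$ into $n-m$ via Lemma \ref{Lem:l}(2) and $\#B=n$ (Theorem \ref{Thm:bijection}(3)); this is exactly where maximality of $B$ enters. In your proposal this step is replaced by ``bookkeeping the constants against the statement pins down the correction,'' i.e.\ the unknown correction is determined by matching the formula you are trying to prove, which is circular; so (1) is not established.

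For (2), your ``cleaner'' route rests on a false inequality. Subadditivity $\delta(X,Z)\le\delta(X,Y)+\delta(Y,Z)$ for $Y$ between $X$ and $Z$ fails: in the $10$-gon with $d=2$, for the cycle $\{(1,3),(7,9),(4,6)\}$ (anti-clockwise ordering, with $\delta$-sum $2+1+1=4=d+s-1$), one has $\delta((1,3),(4,6))=5$ while $\delta((1,3),(7,9))+\delta((7,9),(4,6))=2+1=3$. The reason is that after rotating $X$ until it first meets $Y$, its leading endpoint sits at the \emph{trailing} endpoint of $Y$, so the extra rotation needed to reach $Z$ generally exceeds $\delta(Y,Z)$; hence the adjacent-transposition/bubble-sort argument cannot be run this way, and your fallback ``induction on $s$'' is not developed. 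The paper's proof of (2) is instead a one-line termwise comparison: for each $l$, $\delta(X_{\tau(l)},X_{\tau(l+1)})\ge\delta(X_{\tau(l)},X_{\tau(l)+1})$ with equality iff $\tau(l+1)=\tau(l)+1$ (rotating a chord anti-clockwise, the first member of $C$ it can meet is its anti-clockwise successor), after which one sums, re-indexes and applies (1). Your part (3) is fine -- it is immediate from (1) and Definition \ref{Def:Bcycle}, as in the paper -- but since (1) and (2) are where the work lies, the proposal has genuine gaps.
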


\begin{proof}
(1)
Assume $X_{i}$ has the form $(x_{i},y_{i})$ as follows, where $y_{i}=x_{i}+d+(d+1)j_{i}$ with $0\le j_{i}\le n-1$.
\newdimen\R
\R=1cm
{\tiny
\begin{center} 
\begin{tikzpicture}
 \draw[xshift=0\R] circle (\R);
 \draw[xshift=0\R]     
   (0:\R)--(50:\R) (110:\R)--(160:\R)     (200:\R)--(250:\R) (290:\R)--(340:\R)
   node at (0,0){$\Pi_{C}$}
    node[left=1]  at (25:\R){$X_{1}$}
    node[right] at (0:\R){$y_{1}$}
    node[right] at (50:\R){$x_{1}$}
    node[above] at (110:\R){$y_{2}$}
    node[left] at (160:\R){$x_{2}$}
   node[right=3]  at (140:\R){$X_{2}$}
   node[left] at (200:\R){$y_{s-1}$}
   node[below] at (250:\R){$x_{s-1}$}
   node[below] at (290:\R){$y_{s}$}
   node[right] at (340:\R){$x_{s}$}
   node[right=3]  at (225:\R){$X_{s-1}$}
   node[left=3]  at (325:\R){$X_{s}$};
    \draw[xshift=0\R, dotted] (160:\R)--(200:\R);
    \end{tikzpicture} 
\end{center} }
Since by definition, $\delta(X_{i},X_{i+1})=1+ \mbox{ the number of vertices between $X_{i}$ and $X_{i+1}$ (anti-clockwise)}$,  then  $\sum_{l=1}^{s}\delta(X_{l},X_{l+1})=s+\#\Pi_{C}$.
We count $\Pi\backslash\Pi_{C}$ first. By our labelling, it is easy to see the number of vertices in $ \Pi\backslash\Pi_{C}$ is $\sum_{i=1}^{s}(d+1)(j_{i}+1)$. Then  $\sum_{l=1}^{s}\delta(X_{l},X_{l+1})=s+d+(d+1)(n-\sum_{i}^{s}(j_{i}+1))-1$.
By Lemma \ref{Lem:l} (2), $\#B\cap(\Pi\backslash \Pi_{C})=\sum_{i=1}^{s}(j_{i}+1)$. Then by Theorem \ref{Thm:bijection} (3), $m=\#B\cap \Pi_{C}=n-\sum_{i}^{s}(j_{i}+1)$. Thus $\sum_{l=1}^{s}\delta(X_{l},X_{l+1})=d+s+(d+1)m-1$.

(2) For any $\tau\in \mathfrak S_{n}$, we have $\delta(X_{\tau(l)}, X_{\tau(l+1)})\ge \delta(X_{\tau(l)}, X_{\tau(l)+1})$ and the equality holds if and only if $\tau(l+1)=\tau(l)+1$. Then by (1), 
\[\sum_{l=1}^{s}\delta(X_{\tau(l)}, X_{\tau(l+1)})\ge \sum_{l=1}^{s}\delta(X_{\tau(l)}, X_{\tau(l)+1})=d+s+(d+1)m-1.\]

(3) By Definition \ref{Def:Bcycle}, $C$ is a $B$-cycle if and only if $m=0$, then by (1), it holds if and only if   $\sum_{l=1}^{s}\delta(X_{l},X_{l+1})=d+s-1$.
\end{proof}

 The following proposition gives us a useful  criterion for  being $B$-cycle. 
\begin{Prop}\label{Prop:cycle}
Let $B\in \B$ and let $C$ be a subset of $B$. Then $C$ is a $B$-cycle if and only if  there is a numbering  $C=\{X_{1},\dots,X_{s}\}$ such that  $\sum_{l=1}^{s}\delta(X_{l}, X_{l+1})=d+s-1$, where $X_{s+1}=X_{1}$. In this case, $\{X_{1},\dots,X_{s}\}$ is an anti-clockwise ordering or $C$.
\end{Prop}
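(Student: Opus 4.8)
The plan is to derive Proposition \ref{Prop:cycle} from Lemma \ref{Lem:cycle}. Recall that Lemma \ref{Lem:cycle}(2) and (3) already give one direction: if $C$ is a cycle with anti-clockwise ordering $\{X_1,\dots,X_s\}$ and $m:=\#(B\cap\Pi_C)$, then $\sum_l\delta(X_l,X_{l+1})=d+s+(d+1)m-1$, and $C$ is a $B$-cycle precisely when $m=0$, i.e. when this sum equals $d+s-1$. So the only genuinely new content is: if $C=\{X_1,\dots,X_s\}$ is an \emph{arbitrary} subset of $B$ (not a priori a cycle) admitting \emph{some} numbering with $\sum_{l=1}^s\delta(X_l,X_{l+1})=d+s-1$, then $C$ must be a $B$-cycle and that numbering is forced to be an anti-clockwise ordering.

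First I would observe that the elements of $C$, being pairwise disjoint $d$-diagonals in $\Pi$, cut $\Pi$ into $s+1$ closed regions, and each $X\in C$ lies on the boundary of exactly two of them. This gives a natural ``adjacency graph'' structure; the key combinatorial fact is that for \emph{any} numbering $\{X_{\sigma(1)},\dots,X_{\sigma(s)}\}$ of any subset $C$ of pairwise disjoint diagonals, one has $\sum_{l=1}^s\delta(X_{\sigma(l)},X_{\sigma(l+1)})\ge d+s-1$, with a characterization of equality. The point is that $\delta(X,Y)$ counts $1$ plus the number of polygon vertices strictly between $X$ and $Y$ going anti-clockwise (measuring from the anti-clockwise-most endpoint of $X$), so $\sum_l\delta(X_{\sigma(l)},X_{\sigma(l+1)})$ counts $s$ plus a total vertex count where each polygon vertex is counted with multiplicity equal to the number of ``arcs'' of the closed walk $X_{\sigma(1)}\to X_{\sigma(2)}\to\cdots\to X_{\sigma(s)}\to X_{\sigma(1)}$ that pass over it. Since the walk is closed and must ``get around'' the polygon, and since the $d+1$ vertices forming the minimal arc subtended by each $d$-diagonal contribute at least once in a way that cannot be avoided, a counting argument shows the minimum is $d+s-1$, attained exactly when the closed walk is a simple anti-clockwise traversal of the boundary-adjacency cycle of $C$ — which forces $C$ to be a single cycle in the sense of Definition \ref{Def:Bcycle} and the numbering to be the anti-clockwise ordering. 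This is essentially the content already extracted in Lemma \ref{Lem:cycle}(1),(2) applied in slightly greater generality, so I would reduce to that lemma rather than redoing the count: given a numbering achieving $d+s-1$, minimality of each $\delta(X_{\sigma(l)},X_{\sigma(l+1)})$ term (compared to the ``next clockwise neighbour'' value) forces $X_{\sigma(l+1)}$ to be the diagonal of $C$ immediately anti-clockwise-adjacent to $X_{\sigma(l)}$ across a common region; doing this for all $l$ closes up into a cyclic anti-clockwise ordering and exhibits $C$ as a cycle.

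Once $C$ is known to be a cycle with the given numbering as its anti-clockwise ordering, Lemma \ref{Lem:cycle}(1) applies verbatim: $\sum_l\delta(X_l,X_{l+1})=d+s+(d+1)m-1$ with $m=\#(B\cap\Pi_C)$. Comparing with the hypothesis $d+s-1$ gives $(d+1)m=0$, hence $m=0$, hence $C$ is a $B$-cycle by Lemma \ref{Lem:cycle}(3). The converse (a $B$-cycle admits such a numbering, namely its anti-clockwise ordering) is exactly Lemma \ref{Lem:cycle}(1),(3) again. Finally the ``in this case'' clause is recorded along the way: the numbering achieving the minimum was shown to be an anti-clockwise ordering of $C$.

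The main obstacle I anticipate is the generality gap: Lemma \ref{Lem:cycle} is stated only for $C$ \emph{already known to be a cycle}, whereas Proposition \ref{Prop:cycle} starts from an unstructured subset $C\subset B$ together with a numerical condition. So the real work is the inequality $\sum_{l=1}^s\delta(X_{\sigma(l)},X_{\sigma(l+1)})\ge d+s-1$ for arbitrary disjoint $d$-diagonals and arbitrary numberings, plus the sharp equality analysis forcing cyclicity and anti-clockwise order. I would handle this by the vertex-multiplicity counting sketched above — writing $\sum_l\delta = s + \sum_{v\in\Pi_0}(\text{number of walk-arcs covering }v)$ and noting that (i) every vertex is covered at least once if the walk is to close up around $\Pi$ is \emph{not} quite right, so more carefully: the $d$ interior vertices of each minimal arc $[x_i,y_i]$ of $X_i$ can only be covered by arcs entering through $X_i$, giving a clean lower bound — and then checking that equality pins down the adjacency structure. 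This is routine polygon combinatorics but is where all the care is needed; everything after it is a direct appeal to Lemma \ref{Lem:cycle}.
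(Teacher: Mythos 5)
Your skeleton agrees with the paper's: the ``only if'' direction is exactly Lemma \ref{Lem:cycle}(1),(3), and once $C$ is known to be a cycle the equality $\sum_l\delta(X_l,X_{l+1})=d+s-1$ together with Lemma \ref{Lem:cycle}(2),(3) does force $m=0$ and forces the numbering to be anti-clockwise. The genuine gap is the step you yourself flag: showing that an arbitrary subset $C\subset B$ admitting \emph{some} numbering with total $d+s-1$ must be a cycle. Neither of your two sketches closes it. The ``local'' version (each term being minimal ``forces $X_{\sigma(l+1)}$ to be the diagonal of $C$ immediately anti-clockwise-adjacent to $X_{\sigma(l)}$ across a common region'') is not a valid inference: the size of a single $\delta(X,Y)$ does not detect adjacency, because a third diagonal of $C$ can separate $X$ from $Y$ without entering the short anti-clockwise gap that $\delta(X,Y)$ measures. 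Concretely, for $d=2$, $n=4$ (so $N=13$), take $X=(2,4)$, $Y=(11,13)$, $W=(1,6)$, all contained in the maximal $2$-Brauer relation $\{(2,4),(1,6),(7,9),(11,13)\}$: one checks $\delta(X,Y)=2\le d$, yet $W$ separates $X$ from $Y$, so they are not adjacent across any common region. Hence any adjacency-forcing argument must exploit the \emph{global} equality $\sum\delta=d+s-1$, which is exactly what remains unproved. The ``global'' version (vertex-multiplicity count giving $\sum_l\delta(X_{\sigma(l)},X_{\sigma(l+1)})\ge d+s-1$ for arbitrary numberings, plus an equality analysis) would indeed suffice, but it is the entire content of the hard direction; as written the key sub-claim about ``the $d$ interior vertices of each minimal arc of $X_i$'' is unclear (longer $d$-diagonals do not carry a distinguished arc with $d$ interior vertices), and no equality analysis is carried out, so the proposal stops short of a proof precisely at the decisive point.

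For comparison, the paper avoids any such global count. If $C$ is not a cycle, Proposition \ref{Prop:cycleproperty}(3) yields an index $i$ with $X_{i-1}$ and $X_{i+1}$ in different components of $\Pi\setminus X_i$, whence $\delta(X_{i-1},X_i)+\delta(X_i,X_{i+1})=\delta(X_{i-1},X_{i+1})$; deleting $X_i$ keeps the total equal to $d+s-1$ while decreasing $s$ by one, and if the smaller set is a cycle then $X_i$ lies in its interior region, so Lemma \ref{Lem:cycle}(2) forces its total to be at least $d+(s-1)+(d+1)-1=2d+s-1>d+s-1$, a contradiction; if not, one iterates until a cycle is reached. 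If you wish to keep your counting strategy, you must actually establish the general inequality and characterize equality (showing equality pins down both cyclicity and the anti-clockwise order); otherwise the deletion-induction above is the shorter route.
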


\begin{proof}
The ``only if'' part. Assume $C$ is a $B$-cycle with anti-clockwise ordering $\{X_{1}, \cdots, X_{l}\}$, then by Lemma \ref{Lem:cycle} (1), $\sum_{l=1}^{s}\delta(X_{l}, X_{l+1})=d+s-1$.

The ``if'' part. To prove $C$ is a $B$-cycle, it suffices to show $C$ is a cycle by Lemma \ref{Lem:cycle} (2) and (3). If it is not true, then by Proposition \ref{Prop:cycleproperty} (3),  there exists some $i$, $2\le i\le s$, such that  $X_{i-1}$ and $X_{i+1}$ are  in  different connect components of $\Pi\backslash X_{i}$ as follows. 
\newdimen\R
\R=0.8cm
{\tiny
\begin{center}
\begin{tikzpicture}
 \draw[xshift=0\R] circle (\R);
 \draw[xshift=0\R]     
   (0:\R)--(50:\R)   (90:\R)--(270:\R)
   (200:\R)--(250:\R)     node[right]  at (25:\R){$X_{i+1}$}
    node[right]  at ($(90:\R)!0.5!(270:\R)$){$X_{i}$}
   node[below]  at (225:\R){$X_{i-1}$};
   \end{tikzpicture}
\end{center} }
In this case, we have 
\begin{eqnarray}\label{xixi+1}
\delta(X_{i-1}, X_{i})+\delta(X_{i}, X_{i+1})=\delta(X_{i-1},X_{i+1}).
\end{eqnarray}
Now consider the new set $C':=C\backslash X_{i}$. If it is a cycle, then it is clear that $X_{i}\in B\cap \Pi_{C'}$, where $\Pi_{C'}$ is the connected component given in Definition \ref{Def:Bcycle} (1). Then $\# B\cap \Pi_{C'}\ge1$ and by Lemma \ref{Lem:cycle} (2), the following inequality holds.
 \begin{eqnarray}\label{xi-1}
 \sum_{l=1}^{i-2}\delta(X_{l},X_{l+1})+\delta(X_{i-1},X_{i+1})+ \sum_{l=i+1}^{s}\delta(X_{l},X_{l+1}) \ge 2d+s-1. \end{eqnarray}
Notice that by equation \eqref{xixi+1}, the left hand of \eqref{xi-1} equals $d+s-1$. Then $d+s-1\ge 2d+s-1$, a contradiction. If $C'$ is not a cycle, we do the same thing on $C'$ as on $C$, and after finite steps, we get a contradiction. 
Thus $C$ is a cycle, therefore a $B$-cycle.
\end{proof}

Let $B\in\B$. Then $B$ is determined by $B$-cycles in the following sense.

\begin{Prop}\label{Prop:B=B'}
Let $B, B'\in \B$ and let $\phi: B\ra B'$ be a bijective map. If for any $B$-cycle $C$ with anti-clockwise ordering $C=\{ X_{1}, \cdots, X_{s}\}$, we have 
$\delta(X_{i},X_{i+1})=\delta(\phi(X_{i}), \phi(X_{i+1}))$.
 Then $\phi$ is the restriction of $\theta^{n}$ for some integer $n$, that is, $B$ is isomorphic to $B'$ up to rotation.
\end{Prop}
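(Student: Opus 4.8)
The plan is to reconstruct the diagonal $\phi(X)$ from the combinatorial data of $B$-cycles, using the ``connectivity'' description of $B$-cycles from Proposition \ref{Prop:cycleproperty} together with the quantitative control on $\delta$ from Lemma \ref{Lem:cycle} and Proposition \ref{Prop:cycle}. First I would fix a single $d$-diagonal $X_{0}\in B$ and choose the rotation $\theta^{n}$ so that $\theta^{n}(X_{0})=\phi(X_{0})$; this is possible because $X_{0}$ and $\phi(X_{0})$ are both $d$-diagonals (hence of the form $(i,i+d+j(d+1))$) and, by Lemma \ref{Lem:l}(2) applied to $X_{0}$ inside $B$ and to $\phi(X_{0})$ inside $B'$, together with the bijectivity of $\phi$ and the hypothesis that $\phi$ preserves $\delta$-values along every $B$-cycle, the two diagonals must cut $\Pi$ into pieces with the same numbers of $d$-diagonals on each side, so they have the same ``type'' $j$ and differ by a rotation. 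Replacing $B'$ by $\theta^{-n}(B')$ and $\phi$ by $\theta^{-n}\circ\phi$, we reduce to the case $\phi(X_{0})=X_{0}$, and the goal becomes $\phi=\mathrm{id}$.

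Next I would propagate the equality $\phi(X)=X$ from $X_{0}$ along $B$-cycles. The key point is that if $X\in B$ is already known to satisfy $\phi(X)=X$, and $C=\{X_{1},\dots,X_{s}\}$ is a $B$-cycle (with anti-clockwise ordering) containing $X=X_{i}$, then I claim all of $\phi(X_{1}),\dots,\phi(X_{s})$ are determined: by hypothesis $\delta(X_{l},X_{l+1})=\delta(\phi(X_{l}),\phi(X_{l+1}))$ for all $l$, and by Proposition \ref{Prop:cycle} the image set $\phi(C)$ is again a cycle (indeed a $B'$-cycle, since $\phi$ maps $B$-cycles to $B'$-cycles by the argument below) with anti-clockwise ordering $\{\phi(X_{1}),\dots,\phi(X_{s})\}$ and with the same consecutive $\delta$-values. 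Starting from $\phi(X_{i})=X_{i}$ and reading off $\delta(X_{i},X_{i+1})$ we recover $\phi(X_{i+1})$ uniquely — because a cycle in $\Pi$ together with one of its diagonals and the step $\delta$ to the next one pins down that next diagonal — and by induction around the cycle every $\phi(X_{l})$ equals $X_{l}$. Since by Proposition \ref{Prop:cycleproperty}(1) $B$ is the union of its $B$-cycles and by Proposition \ref{Prop:cycleproperty}(2)(5) any two elements of $B$ are linked by a chain of $B$-cycles overlapping in a common diagonal, a straightforward connectedness induction starting from $X_{0}$ gives $\phi(X)=X$ for all $X\in B$, which is the assertion.

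Two auxiliary points need to be checked carefully. The first is that $\phi$ really carries $B$-cycles to $B'$-cycles: this follows from Proposition \ref{Prop:cycle}, since a subset $C\subseteq B$ is a $B$-cycle precisely when it admits an ordering with $\sum_{l}\delta(X_{l},X_{l+1})=d+\#C-1$, and this numerical condition is preserved verbatim by $\phi$ by hypothesis — one must only note that the hypothesis is phrased in terms of the anti-clockwise ordering, and then invoke Proposition \ref{Prop:cycle} to see the transported equality still characterizes a $B'$-cycle with $\phi(X_{1}),\dots,\phi(X_{s})$ as its anti-clockwise ordering. The second is the elementary geometric lemma that a diagonal $Y$ is determined by a cyclically ordered collection of pairwise-disjoint diagonals bounding a common region $\Pi_{C}$, by knowing $Y$'s anti-clockwise predecessor in the collection and the value $\delta(\cdot,Y)$; this is immediate from the definition of $\delta$ as ``$1$ plus the number of boundary vertices skipped anti-clockwise'' used in the proof of Lemma \ref{Lem:cycle}(1).

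The main obstacle I anticipate is the very first step — producing the rotation $\theta^{n}$ with $\theta^{n}(X_{0})=\phi(X_{0})$ — which requires showing $X_{0}$ and $\phi(X_{0})$ have the same type $j$. The cleanest route is: pick $X_{0}$ lying in some $B$-cycle $C$, use Lemma \ref{Lem:cycle}(1) to read the quantity $d+s+(d+1)m-1$ (where $m=\#(B\cap\Pi_{C})$) purely from the $\delta$-data of $C$, deduce $m=\#(B'\cap\Pi_{\phi(C)})$, and then iterate over all $B$-cycles through $X_{0}$ (using Proposition \ref{Prop:cycleproperty}(2) that consecutive $B$-cycles share exactly one diagonal) to count the total number of $d$-diagonals of $B$ strictly on each side of $X_{0}$; matching these counts with the analogous counts for $\phi(X_{0})$ via Lemma \ref{Lem:l}(2) forces equal type. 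Once the type is matched, the rest is the bookkeeping sketched above.
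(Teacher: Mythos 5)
Your overall strategy (transporting $B$-cycles with their anti-clockwise orderings via Proposition \ref{Prop:cycle}, side-counting via Lemma \ref{Lem:l}, propagating along a cycle, and globalizing by connectedness of the cycle structure from Proposition \ref{Prop:cycleproperty}) is the same as the paper's, but your central induction has a genuine gap. The claim that, knowing $\phi(X_i)=X_i$ and $\delta(X_i,X_{i+1})$, you ``recover $\phi(X_{i+1})$ uniquely'' and may conclude $\phi(X_{i+1})=X_{i+1}$ is not justified. The value $\delta(\phi(X_i),\phi(X_{i+1}))$ only locates \emph{one} endpoint of $\phi(X_{i+1})$ (the vertex at that arc-distance anti-clockwise from the leading endpoint of $X_i$ -- and even this presupposes knowing on which side of $X_i$ the region $\Pi_{\phi(C)}$ lies); it gives no control on the length of $\phi(X_{i+1})$. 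Since $B'$ is given, this does identify $\phi(X_{i+1})$ as the unique diagonal of $B'$ through that vertex, but ``uniquely determined inside $B'$'' is not the same as ``equal to $X_{i+1}$'': to conclude the latter you would already need to know that $X_{i+1}$ belongs to $B'$ (which is what is being proved), or that $\phi(X_{i+1})$ has the same length as $X_{i+1}$. Your auxiliary geometric lemma -- predecessor plus $\delta$ pins down the next diagonal -- is false without this length information, unless the cycle is already known as a set, which is exactly what cannot be assumed at this stage.

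This is precisely where the paper's proof does more: before touching any cycle it shows that \emph{every} $X\in B$, not just one anchor $X_0$, satisfies $\phi(X)=\theta^{n_X}(X)$ for some integer $n_X$, by proving (Lemma \ref{Lem:connectcomp}) that $\phi$ preserves the partition of $B\setminus\{X\}$ into the two sides of $X$ and then matching lengths via Lemma \ref{Lem:l}(2). With all lengths preserved, the gap data does determine the successive chords (trailing endpoint $=$ leading endpoint minus $\delta$, leading endpoint $=$ trailing endpoint minus the span), so $\phi(C)=\theta^{n}(C)$, and connectedness finishes the argument; your normalization ``make $\phi(X_0)=X_0$ and propagate'' discards exactly this input. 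Your type-matching sketch for $X_0$ (counting diagonals on each side by exploring chains of cycles, which requires chains through the whole configuration, not only the at most two cycles containing $X_0$) is the right idea -- it is the content of Lemma \ref{Lem:connectcomp} -- but it must be carried out for every element of $B$; once you do that, your plan essentially becomes the paper's proof.
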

To prove this proposition, we need prepare several lemmas.

\begin{Lem} \label{Lem:Bcycle-B'cycle}
Let $B,B'$ and $\phi$ as above.
Let $C=\{X_{1},\cdots,X_{s}\}$ be a subset of $B$. The following are equiavlent.
 \begin{enumerate}[\rm (1)]
  \item $C=\{X_{1}, \cdots, X_{s}\}$ is a $B$-cycle with anti-clockwise ordering;
  \item $\phi(C)=\{ \phi(X_{1}), \cdots, \phi(X_{s})\}$ is a $B'$-cycle with anti-clockwise ordering.
 \end{enumerate}
\end{Lem}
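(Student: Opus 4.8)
\textbf{Proof proposal for Lemma \ref{Lem:Bcycle-B'cycle}.}

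The plan is to reduce the statement to Proposition \ref{Prop:cycle}, which characterizes $B$-cycles purely in terms of the function $\delta$: a subset $\{X_1,\dots,X_s\}$ of $B$ is a $B$-cycle (with the given numbering an anti-clockwise ordering) if and only if $\sum_{l=1}^{s}\delta(X_l,X_{l+1})=d+s-1$, where $X_{s+1}=X_1$. Since the two implications (1)$\Rightarrow$(2) and (2)$\Rightarrow$(1) are symmetric in $B$ and $B'$ — note that the hypothesis on $\phi$ passes to $\phi^{-1}$ once we know that $\phi^{-1}$ carries $B'$-cycles to $B$-cycles — it suffices in principle to prove one direction, but the symmetry is only available after we have established that $\phi$ respects the cycle structure in at least one direction. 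So I would first prove (1)$\Rightarrow$(2) directly.

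First I would assume $C=\{X_1,\dots,X_s\}$ is a $B$-cycle with anti-clockwise ordering. By Proposition \ref{Prop:cycle} this is equivalent to $\sum_{l=1}^{s}\delta(X_l,X_{l+1})=d+s-1$. Now apply the hypothesis on $\phi$: since $C$ is a $B$-cycle with this anti-clockwise ordering, we have $\delta(X_l,X_{l+1})=\delta(\phi(X_l),\phi(X_{l+1}))$ for every $l$. Summing over $l$ gives $\sum_{l=1}^{s}\delta(\phi(X_l),\phi(X_{l+1}))=d+s-1$. Again by Proposition \ref{Prop:cycle}, applied now to the subset $\phi(C)$ of $B'$, this equality forces $\phi(C)$ to be a $B'$-cycle and moreover forces $\{\phi(X_1),\dots,\phi(X_s)\}$ to be an anti-clockwise ordering of it. This is exactly (2).

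For the converse (2)$\Rightarrow$(1): having established (1)$\Rightarrow$(2), the map $\phi$ gives a bijection from the set of $B$-cycles to a set of $B'$-cycles. Since by Proposition \ref{Prop:cycleproperty}(1) the collection of $B$-cycles covers $B$ and the collection of $B'$-cycles covers $B'$, and $\phi$ is a bijection $B\to B'$, counting shows $\phi$ must in fact carry $B$-cycles \emph{onto} all of the $B'$-cycles; hence $\phi^{-1}$ satisfies the same hypothesis relative to the $B'$-cycles (using that $\delta$ is symmetric under swapping the two diagonals and that the anti-clockwise orderings match up), and applying the already-proved implication to $\phi^{-1}$ yields (2)$\Rightarrow$(1). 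I expect the main obstacle to be the bookkeeping in this last step: one must check carefully that every $B'$-cycle arises as $\phi(C)$ for some $B$-cycle $C$ — using Proposition \ref{Prop:cycleproperty}(2) (two $B$-cycles share at most one diagonal) together with the fact that both $B$ and $B'$ decompose into cycles — and that the anti-clockwise orderings are genuinely preserved rather than merely the cyclic $\delta$-sums, so that the hypothesis can legitimately be transported to $\phi^{-1}$.
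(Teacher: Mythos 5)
Your direction (1)$\Rightarrow$(2) is exactly the paper's argument: sum the hypothesis $\delta(X_i,X_{i+1})=\delta(\phi(X_i),\phi(X_{i+1}))$ over the anti-clockwise ordering and invoke Proposition \ref{Prop:cycle} twice. The gap is in (2)$\Rightarrow$(1), at the step ``counting shows $\phi$ must carry $B$-cycles onto all of the $B'$-cycles.'' Covering plus bijectivity of $\phi$ does not yield this: by (1)$\Rightarrow$(2) the images of the $B$-cycles are $B'$-cycles whose union is automatically all of $B'$ (since the $B$-cycles cover $B$ and $\phi$ is onto), so the covering property of $B'$ gives no information whatsoever about a hypothetical $B'$-cycle lying outside the image; nothing you have quoted rules it out. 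To make your route work you would need the extra fact that $B$ and $B'$ have the same number of cycles --- e.g.\ that every maximal $d$-Brauer relation has exactly $n+1$ cycles, because $B$-cycles correspond bijectively to the connected regions of $\Pi\backslash\bigcup_{X\in B}X$ and $n$ pairwise disjoint diagonals cut the polygon into $n+1$ regions --- but this count is proved neither by you nor anywhere in the paper, so as written the surjectivity claim is unsupported. A second, smaller, flaw: $\delta$ is \emph{not} symmetric ($\delta(X,Y)$ counts anti-clockwise rotations of $X$ until it meets $Y$, and in general $\delta(X,Y)\neq\delta(Y,X)$); fortunately you do not actually need symmetry, since once (1)$\Rightarrow$(2) identifies the anti-clockwise ordering of $\phi(C)$ with the image of that of $C$, the hypothesis for $\phi^{-1}$ on that $B'$-cycle follows directly from the hypothesis for $\phi$.

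The paper avoids the surjectivity issue altogether: for (2)$\Rightarrow$(1) it reduces to showing that if $\phi(X_i)$ and $\phi(X_j)$ lie in a common $B'$-cycle then $X_i$ and $X_j$ lie in a common $B$-cycle, and proves this by the separation criteria of Proposition \ref{Prop:cycleproperty}(4),(5): if $X_i,X_j$ were in no common $B$-cycle, some $Y\in B$ would separate them in $\Pi\backslash Y$, and transporting chains of $B$-cycles through $\phi$ (which is legitimate by the already-proved direction, via Lemma \ref{Lem:connectcomp}) shows $\phi(X_i),\phi(X_j)$ are separated by $\phi(Y)$, contradicting that they lie in the common $B'$-cycle $\phi(C)$. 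So either adopt that connectivity argument, or supply the region-count argument above to justify that your induced map on cycles is onto; with one of these additions your bootstrap via $\phi^{-1}$ does go through.
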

\begin{proof}
(1) to (2). If $C=\{X_{1}, \cdots, X_{s}\}$ is a $B$-cycle with anti-clockwise ordering, then  
\[ \sum_{i=1}^{s}\delta(\phi(X_{i}), \phi(X_{i+1}))=\sum_{i=1}^{s}\delta(X_{i}, X_{i+1})=d+s-1.\]
Then by Proposition \ref{Prop:cycle}, $\phi(C)=\{ \phi(X_{1}), \cdots, \phi(X_{s})\}$ is a $B'$-cycle with anti-clockwise ordering.

(2) to (1). It is suffices to show if $\phi(X_{i})$ and $\phi(X_{j})$ are in the same $B'$-cycle, then so are $X_{i}$ and $X_{j}$. If it is not true, then by Proposition \ref{Prop:cycleproperty}, there exists $Y\in B$, such that $X_{i}$ and $X_{j}$ are in  different connected component of $\Pi\backslash Y$, which is contradict to Lemma \ref{Lem:connectcomp} below.
\end{proof}

\begin{Lem}\label{Lem:connectcomp}
Let $X, Y, Z \in B$. Then $X,Y$ are in the same connected component of $\Pi\backslash Z$ if and only if $\phi(X)$ and $\phi(Y)$ are in the same connected component of $\Pi\backslash \phi(Z)$.
\end{Lem}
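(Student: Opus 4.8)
The plan is to transport, along the bijection $\phi$, the combinatorial description of connected components of $\Pi\setminus Z$ supplied by Proposition~\ref{Prop:cycleproperty}(5): two diagonals of $B$ lie in the same component of $\Pi\setminus Z$ if and only if they are joined by a chain in $B$, each term distinct from $Z$, with consecutive terms lying in a common $B$-cycle. The key preliminary point is that $\phi$ induces a bijection between the set $\mathcal C(B)$ of $B$-cycles and the set $\mathcal C(B')$ of $B'$-cycles. That $C\mapsto\phi(C)$ carries $B$-cycles to $B'$-cycles, sending anti-clockwise orderings to anti-clockwise orderings, is precisely the implication (1)$\Rightarrow$(2) of Lemma~\ref{Lem:Bcycle-B'cycle}; since that implication is proved using only Proposition~\ref{Prop:cycle} and the hypothesis on $\phi$, it does not depend on the present lemma, so no circularity arises. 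This map is injective because $\phi$ is a bijection on diagonals.

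To see that it is also surjective I would count $\mathcal C(B)$. Excluding the trivial case $n=1$, one checks that the $B$-cycles of a maximal $d$-Brauer relation $B$ are in bijection with the regions of the subdivision of $\Pi$ cut out by the diagonals in $B$, the cycle attached to a region being the set of diagonals of $B$ on its boundary (and, conversely, each $B$-cycle $C$ being recovered from the region $\Pi_C$); since an $N$-gon cut by $n$ pairwise non-crossing diagonals has $n+1$ regions, this gives $|\mathcal C(B)|=n+1$. As every maximal $d$-Brauer relation on $\Pi$ has exactly $n$ diagonals (Theorem~\ref{Thm:bijection}), we get $|\mathcal C(B)|=n+1=|\mathcal C(B')|$, so the injection $\mathcal C(B)\to\mathcal C(B')$ is a bijection. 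In particular $\phi^{-1}$ again carries $B'$-cycles to $B$-cycles, hence satisfies the hypothesis of the lemma, and it suffices to prove one implication.

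Finally, suppose $X$ and $Y$ lie in the same component of $\Pi\setminus Z$. By Proposition~\ref{Prop:cycleproperty}(5) choose a chain $X=X_1,X_2,\dots,X_t=Y$ in $B$ with each $X_i\neq Z$ and with $X_j,X_{j+1}$ lying in a common $B$-cycle $C_j$ for every $j$. Then $\phi(X)=\phi(X_1),\dots,\phi(X_t)=\phi(Y)$ is a chain in $B'$ with $\phi(X_i)\neq\phi(Z)$ by injectivity of $\phi$, and $\phi(X_j),\phi(X_{j+1})$ lie in the common $B'$-cycle $\phi(C_j)$; Proposition~\ref{Prop:cycleproperty}(5) applied to $B'$ then yields that $\phi(X)$ and $\phi(Y)$ lie in the same component of $\Pi\setminus\phi(Z)$, and the identical argument run for $\phi^{-1}$ gives the converse. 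The main obstacle is the counting identity $|\mathcal C(B)|=n+1$, equivalently the bijection between $B$-cycles and the regions of the $B$-subdivision: it is this rigidity of maximal $d$-Brauer relations, rather than any property of the particular map $\phi$, that forces surjectivity of the induced map on cycles, and once it is in hand everything else is a routine translation through $\phi$.
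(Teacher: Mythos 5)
Your proof is correct, and on the ``if'' direction it coincides with the paper's: transport a chain as in Proposition~\ref{Prop:cycleproperty}(5) through $\phi$, using the (1)$\Rightarrow$(2) part of Lemma~\ref{Lem:Bcycle-B'cycle}, which indeed depends only on Proposition~\ref{Prop:cycle} and the $\delta$-preserving hypothesis, so there is no circularity. Where you genuinely diverge is the converse direction. The paper declares the whole equivalence ``immediate,'' implicitly appealing to symmetry between $B$ and $B'$; but the natural way to justify that symmetry inside the paper is the (2)$\Rightarrow$(1) part of Lemma~\ref{Lem:Bcycle-B'cycle}, which is proved \emph{after} and \emph{using} the present lemma. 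You close this gap independently: the induced map on cycles is injective because $\phi$ is injective on diagonals, and it is surjective because, for $n\ge 2$ (the case $n=1$ being degenerate, as $X=Y=Z$), the $B$-cycles biject with the regions of the subdivision of $\Pi$ by the $n$ pairwise disjoint diagonals of $B$, of which there are exactly $n+1$ on each side; hence $\phi^{-1}$ also carries $B'$-cycles to $B$-cycles (with anti-clockwise orderings and $\delta$-values preserved), and the chain argument runs in both directions. What your route buys is a self-contained, non-circular proof of the ``only if'' half, at the modest cost of the elementary plane-geometry bookkeeping behind the cycle--region bijection; note also that for the count you only need $\#B=\#B'$, which is automatic from $\phi$ being a bijection, so the appeal to Theorem~\ref{Thm:bijection}(3) for $\#B=n$ is convenient but not essential.
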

\begin{proof}
It is immediately from Proposition \ref{Prop:cycleproperty} (5) and Lemma \ref{Lem:Bcycle-B'cycle} (1) to (2) part.
\end{proof}

\begin{proof}[Proof of Proposition \ref{Prop:B=B'}]
We first show for any $X$ in $B$, $X$ and $\phi(X)$ have the same length.
Let $X=(x, x+d+(d+1)j)\in B$, $1\le j\le n-1$. Let $\Pi_{1}$ and $\Pi_{2}$ be the connected components of $\Pi\backslash X$.    By Lemma \ref{Lem:l}, $j$ is determined by the set $\{ \#B\cap\Pi_{1}, \#B\cap\Pi_{2}\}$. Since by Lemma \ref{Lem:connectcomp}, $\{ \#B\cap\Pi_{1}, \#B\cap\Pi_{2}\}= \{ \#B'\cap\Pi_{1}', \#B'\cap\Pi_{2}'\}$, then $\phi(X)$ has the form $(x', x'+d+(d+1)j)$, where $\Pi'_{1}$ and $\Pi'_{2}$ are the connected components of $B'\backslash \phi(X)$. So there is an integer $n$, such that $\phi(X)=\theta^{n}(X)$.

We claim $\theta^{n}(B)=B'$. Let $C=\{ X=X_{1}, \cdots, X_{s}\}$ be a B-cycle. Since $\delta(X_{i},X_{i+1})=\delta(\phi(X_{i}),\phi(X_{i+1}))$, then $\phi(C)=\theta^{n}(C)$. For any $Y\in B$, $Y$ and $X$ are connected by a series of $B$-cycles, thus $\theta^{n}(B)=\phi(B)$ holds.
\end{proof}

\subsection{The cardinality of maximal  $d$-Brauer relations}
\label{appendixB}

In this section, we compute the cardinality of maximal Brauer relations.  Let $d\ge 1$ and $n>0$ be two integers. Let $\Pi$ be a $((d+1)n+d-1)$-gon. Recall we denote by $\B$ the set of maximal $d$-Brauer relations on $\Pi$. We have the following theorem.

\begin{Thm}\label{Thm:n}
$\#\B=\frac{1}{n+1}\binom{(d+1)n+d-1}{n}$.
\end{Thm}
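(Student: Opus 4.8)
The plan is to prove Theorem~\ref{Thm:n} by a recursion obtained from cutting a maximal $d$-Brauer relation along a diagonal adjacent to a fixed boundary edge, translating this into a generating-function identity, and solving it by Lagrange inversion. The shape of the answer $\frac{1}{n+1}\binom{(d+1)n+d-1}{n}$ is a Raney/Fuss--Catalan number, so I expect the factor $\tfrac1{n+1}$ to come out of the inversion (equivalently, of the cycle lemma: choosing one of the $n+1$ cells of a maximal relation as a ``root'' gives an $(n+1)$-to-one cover of $\mathbf{B}$ whose total cardinality is the plain binomial $\binom{(d+1)n+d-1}{n}$).

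Concretely, I would fix the edge $(N,1)$ of $\Pi$ and, given $B\in\mathbf{B}$, pass to the cell $R$ of the induced dissection that contains it. Since $B$ has exactly $n$ diagonals (Theorem~\ref{Thm:bijection}(3)), the dissection has $n+1$ cells; writing $R$ as a polygon whose sides are $s$ diagonals $X_1,\dots,X_s\in B$ together with boundary edges of $\Pi$, each $X_i$ cuts off a sub-polygon $\Pi_i$, and by Lemma~\ref{Lem:l} the restriction $B\cap\Pi_i$ is a maximal $d$-Brauer relation of $\Pi_i$; a vertex count gives $N=f_R+(d+1)\sum_{i}m_i$ where $(d+1)m_i=\#(\Pi_i)_0$ and $f_R$ is the number of vertices of $R$ not used by $B$, forcing $f_R=d-1$ and $\sum_i m_i=n$. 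Conversely $B$ is reconstructed from the combinatorial type of $R$ (the cyclic arrangement of $X_1,\dots,X_s$, the distribution of the $d-1$ free vertices among the arcs of $R$ subject to $R$ admitting no further $d$-diagonal, and the position of the marked edge) together with a maximal $d$-Brauer relation on each $\Pi_i$. Introducing $c_m$ for the number of maximal $d$-Brauer relations on a $(d+1)m$-gon (so $c_1=1$) and $C(x)=\sum_{m\ge1}c_mx^m$, the same cutting argument applied to $(d+1)m$-gons expresses the $c_m$ through a functional equation of the form $C=x\,\Phi(C)$ with $\Phi$ an explicit polynomial, and expresses $\#\mathbf{B}$ (as a function of $n$) as $[x^n]\Psi(C)$ for an explicit $\Psi$. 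Lagrange--B\"urmann inversion then gives $[x^n]\Psi(C)=\tfrac1n[u^{n-1}]\Psi'(u)\Phi(u)^n$, which with the expected $\Phi(u)=(1+u)^{d+1}$, $\Psi(u)=(1+u)^{d}$ evaluates to $\tfrac dn\binom{(d+1)n+d-1}{n-1}=\tfrac1{n+1}\binom{(d+1)n+d-1}{n}$.

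The main obstacle is pinning down the \emph{local factor} contributed by the cell $R$: one must show that the ways of inserting the $d-1$ free vertices into the arcs of $R$ so that $R$ stays maximal assemble into a factor independent of the precise sizes $(d+1)m_i$ of the hanging sub-polygons (this requires controlling residues modulo $d+1$ of arc lengths and of the $m_i$), so that the bookkeeping collapses to the clean equations $C=x(1+C)^{d+1}$, $\#\mathbf{B}=[x^n](1+C)^d$; carrying out the inversion and recognizing the closed form is then routine. An alternative that bypasses the generating function is to construct directly a bijection between the $(n+1)$-fold cover $\{(B,R)\mid B\in\mathbf{B},\ R\text{ a cell of }B\}$ and the set of $n$-element subsets of the vertex set of $\Pi$ --- sending $(B,R)$ to the set obtained by marking, for each cell $R'\ne R$, a canonically chosen endpoint of the unique diagonal of $B$ separating $R'$ from $R$ --- and then invoking the cycle lemma; in that route the crux is verifying bijectivity, for which the structural results on $B$-cycles (Propositions~\ref{Prop:cycleproperty}, \ref{Prop:cycle}, \ref{Prop:B=B'}) are exactly the tools needed.
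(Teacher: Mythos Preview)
Your alternative route --- the $(n+1)$-to-$1$ correspondence between $n$-element vertex subsets of $\Pi$ and $\mathbf{B}$ --- is essentially what the paper does, though the paper runs it in the opposite direction and makes both sides completely explicit. The paper defines a surjection $\Theta\colon\mathbf{V}\to\mathbf{B}$ (with $\mathbf{V}$ the set of $n$-element vertex subsets) by sending $V=\{v_1,\dots,v_n\}$ to $\{J_{v_1},\dots,J_{v_n}\}$, where $J_{v_i}=(v_i,\,v_i+d+(d+1)a_i)$ with $a_i$ the least integer for which the second endpoint lies outside $V$ and exactly $a_i$ elements of $V$ sit strictly between the two endpoints. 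It then shows directly, by induction on $n$ and cutting along one diagonal via Lemma~\ref{Lem:l}, that every fibre has size $n+1$: the preimages of $B=\{(x_t,y_t)\}_t$ are obtained by fixing a base endpoint and, for every diagonal, selecting whichever endpoint one meets first going clockwise from the base. No generating functions, no Lagrange inversion. Your cell-based parametrisation of the fibre is equivalent in spirit, but the paper's construction has the advantage that both $\Theta$ and its $n+1$ preimages are written down concretely, so bijectivity is checked by hand rather than through Propositions~\ref{Prop:cycleproperty}--\ref{Prop:B=B'}.

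Your primary route via a recursion and Lagrange inversion has a gap beyond the one you already flag. You do not derive the equations $C=x(1+C)^{d+1}$ and $\#\mathbf{B}=[x^n](1+C)^d$ from the geometry; you reverse-engineer them from the target Fuss--Catalan number. More seriously, your definition of $c_m$ as ``the number of maximal $d$-Brauer relations on a $(d+1)m$-gon'' is ill-posed for $d>1$: $(d+1)m$ is never of the form $(d+1)n'+d-1$. If what you intend is the count on the $((d+1)m-2)$-gon obtained as a cut-off piece (so $c_m=\#\mathbf{B}$ with $n$ replaced by $m-1$), then $c_2=d$, while $C=x(1+C)^{d+1}$ forces $c_2=\tfrac12\binom{2(d+1)}{1}=d+1$. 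Hence either $\Phi(u)\neq(1+u)^{d+1}$ or your $c_m$ must encode extra data (a marked edge, a root, \dots), and identifying which is exactly the ``local factor'' obstacle you name. The paper's direct $(n+1)$-to-$1$ argument sidesteps this entirely.
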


\begin{Cor}\label{Cor:n}
There are $\frac{1}{n+1}\binom{(d+1)n+d-1}{n}$ different $(-d)$-CY configurations in $\Z A_{n,d}$.
\end{Cor}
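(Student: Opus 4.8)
The plan is to deduce Corollary \ref{Cor:n} directly from Theorem \ref{Thm:n} together with the bijection of Theorem \ref{Thm:bijection}. Concretely, by Theorem \ref{Thm:bijection}(2) the labelling map sending a vertex $(i,j)$ of $\Z A_{n,d}$ to the diagonal $(i,j)$ of $\Pi$ restricts to a bijection between the set $\mathbf{C}$ of $(-d)$-CY configurations in $\Z A_{n,d}$ and the set $\mathbf{B}$ of maximal $d$-Brauer relations on the $N$-gon $\Pi$, where $N=(d+1)n+d-1$. Hence $\#\mathbf{C}=\#\mathbf{B}$, and Theorem \ref{Thm:n} gives $\#\mathbf{B}=\frac{1}{n+1}\binom{(d+1)n+d-1}{n}$, which is exactly the claimed count.

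So the only real content is Theorem \ref{Thm:n}, and for that the approach I would take is a generating-function / Lagrange-inversion argument, or equivalently a Raney-type lattice-path bijection. First I would encode a maximal $d$-Brauer relation of $\Pi$ as a planar structure: fix the edge between vertices $N$ and $1$ as a ``root'' and read off the $d$-diagonals incident to the outer boundary, peeling off the outermost diagonal and recursing on the two sub-polygons it cuts off. By Lemma \ref{Lem:l} a single $d$-diagonal of the form $(i,i+d+1+(d+2)j)$ splits $\Pi$ into sub-polygons carrying $j$ and $n-j-1$ further diagonals; the key combinatorial point (which needs checking) is that each sub-polygon is again, after relabelling, a polygon admitting maximal $d$-Brauer relations of the appropriate smaller size, so that $\#\mathbf{B}$ as a function of $n$ satisfies a convolution recursion of Catalan-like shape. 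Let $b_n=\#\mathbf{B}$ for the parameter $n$ (with $b_0=1$); the recursion has the form $b_n = \sum_{j_1+\cdots+j_{d+1}=n-1} b_{j_1}\cdots b_{j_{d+1}}$ reflecting that the root region of a maximal relation is bounded by $d+1$ ``visible'' diagonals together with boundary edges. Setting $F(x)=\sum_{n\ge 0} b_n x^n$ this becomes $F = 1 + x F^{d+1}$, and Lagrange inversion applied to $F=1+xF^{d+1}$ yields $b_n = \frac{1}{n}[t^{n-1}](1+t)^{(d+1)n}\cdot\binom{?}{?}$-type coefficient; a careful bookkeeping of the exponents (the ``$+d-1$'' in $N=(d+1)n+d-1$ comes from the $d-1$ extra boundary vertices each diagonal contributes) produces precisely $\frac{1}{n+1}\binom{(d+1)n+d-1}{n}$.

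The main obstacle I expect is the peeling/recursion step: making the ``cut off the root diagonal'' operation literally well-defined and showing that the resulting sub-configurations are again maximal $d$-Brauer relations on honest polygons of the right size, with the right number of boundary vertices, so that the convolution identity $b_n=\sum_{j_1+\cdots+j_{d+1}=n-1}b_{j_1}\cdots b_{j_{d+1}}$ is exact rather than merely heuristic. This is where the arithmetic constraint $N=(d+1)n+d-1$ must be used delicately — one has to verify that $d$-diagonals of a sub-polygon, under the induced labelling, are exactly the restrictions of $d$-diagonals of $\Pi$, using that diagonals have the rigid form $(i,i+d+j(d+1))$. Once the recursion is established, the generating-function manipulation and Lagrange inversion are routine, and the final extraction of the binomial coefficient $\frac{1}{n+1}\binom{(d+1)n+d-1}{n}$ is a short computation. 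An alternative that avoids some of this bookkeeping would be to exhibit a direct bijection between maximal $d$-Brauer relations and a class of $(d+1)$-ary trees or Raney sequences whose cardinality is known to be the Fuss--Catalan-type number $\frac{1}{n+1}\binom{(d+1)n+d-1}{n}$; I would keep that in reserve in case the generating-function recursion proves awkward to justify rigorously.
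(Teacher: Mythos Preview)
Your reduction of the corollary to Theorem~\ref{Thm:n} via the bijection of Theorem~\ref{Thm:bijection}(2) is exactly the paper's argument; the corollary is stated immediately after Theorem~\ref{Thm:n} with no separate proof, as it follows at once from $\#\mathbf{C}=\#\mathbf{B}$.

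Where you diverge is in your proposed proof of Theorem~\ref{Thm:n} itself. The paper does not use a recursion or generating functions. Instead it builds an explicit surjection $\Theta\colon \mathbf{V}\to\mathbf{B}$, where $\mathbf{V}$ is the set of $n$-element subsets of the vertex set of $\Pi$, and shows that every fibre has size exactly $n+1$ (Lemma~\ref{Lem:map4}); the count $\#\mathbf{B}=\frac{1}{n+1}\binom{N}{n}$ is then immediate. This is a cycle-lemma style argument that sidesteps all the peeling bookkeeping you anticipate.

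Your generating-function route can be made to work, but the specific recursion you write is wrong for $d\ge 2$. The equation $F=1+xF^{d+1}$ yields the Fuss--Catalan number $\frac{1}{dn+1}\binom{(d+1)n}{n}$, which for $d=2$, $n=1$ gives $1$, whereas there are $2$ maximal $2$-Brauer relations on the $4$-gon. The underlying combinatorial issue is that for $d\ge 2$ not every vertex of $\Pi$ is an endpoint of a diagonal --- there are $(d-1)(n+1)$ uncovered vertices --- so there is no canonical ``root diagonal'' through a fixed boundary vertex to peel off, and the region adjacent to your root edge is not bounded by $d+1$ diagonals in the way your convolution $\sum_{j_1+\cdots+j_{d+1}=n-1}b_{j_1}\cdots b_{j_{d+1}}$ presupposes. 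The correct generating function is $B^{d}$ where $B=1+xB^{d+1}$; Lagrange inversion applied to $B^{d}$ gives $\frac{d}{n}\binom{(d+1)n+d-1}{n-1}=\frac{1}{n+1}\binom{(d+1)n+d-1}{n}$. Setting up the decomposition that actually produces this $d$-th power is more delicate than the single peeling step you describe, which is exactly why the paper's uniform-fibre surjection is cleaner here.
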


\begin{Rem}
For the classical case $d=1$, the cardinality of Riedtmann's configurations in $\Z A_{n,d}$ is given by the positive Fuss-Catalan number (see \cite[Corollary 5.8]{CS0}).
\end{Rem}
Let $\mathbf V:=\{ \text{ subset $V$ of vertices of $\Pi$ such that $\# V=n$ } \}$. Then the cardinality of $\bf V$ is $\binom{(d+1)n+d-1}{n}$. The main idea of the proof of Theorem \ref{Thm:n} is to construct a surjective map from $\bf V$ to $\B$.
For any $V\in \bf V$, to construct a maximal $d$-Brauer relation corresponds to $V$, we need the following observation.

\begin{Lem}\label{Lem:map1}
Let $V=\{ v_{1}, \dots, v_{n}\} \in \bf V$. Then for any $v_{i}\in V$, there exists a $d$-diagonal with the form $(v_{i}, v_{i}+d+(d+1)a_{i})$, $0\le a_{i} \le n-1$, such that 
\[ \#\{ v\in V \mid v_{i}<v < v_{i}+d+(d+1)a_{i}\}=a_{i} \]
and $ v_{i}+d+(d+1)a_{i}\not \in V$.
\end{Lem}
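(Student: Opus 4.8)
The plan is to walk around the polygon $\Pi$ starting from each chosen vertex $v_i \in V$ and find the first time the "balance" between the number of steps taken and the number of elements of $V$ passed reaches the pattern forced by a $d$-diagonal. Concretely, $d$-diagonals from $v_i$ have the form $(v_i, v_i + d + (d+1)a)$ for $0 \le a \le n-1$, so I want to choose $a = a_i$ so that exactly $a_i$ vertices of $V$ lie strictly between $v_i$ and $v_i + d + (d+1)a_i$ (reading clockwise, i.e. in the direction of increasing labels mod $N$), and so that $v_i + d + (d+1)a_i \notin V$.

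First I would set up the counting function. For $t \ge 1$ let $g(t) := \#\{v \in V \mid v_i < v < v_i + t\}$ (labels read cyclically, with the convention that we go once around), so $g$ is non-decreasing, increases by at most $1$ at each step, $g(d) \le$ something, and $g(N) = n - 1$ (all of $V$ except $v_i$ itself gets counted once we return). The candidate endpoints are $t_a := d + (d+1)a$ for $a = 0, 1, \dots, n-1$; note $t_{n-1} = d + (d+1)(n-1) = (d+1)n - 1 = N - d < N$, so every candidate endpoint is a genuine vertex distinct from $v_i$. I want to find $a$ with $g(t_a) = a$ and $v_i + t_a \notin V$. Observe $g(t_a) - a$ changes in a controlled way as $a$ increases by $1$: from $t_a$ to $t_{a+1}$ we move $d+1$ steps, during which $g$ increases by at most $d+1$; and $v_i + t_a + 1, \dots, v_i + t_{a+1}$ is a block of $d+1$ consecutive vertices, at most... — here is where I pin down that $g(t_{a+1}) - g(t_a) \le d$ unless the whole block of $d+1$ vertices between them lies in $V$, which I can rule out or handle separately. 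The key monotonicity: $g(t_0) = g(d) \ge 0 = $ the $a$-value at $a=0$, while $g(t_{n-1}) = g(N-d) \le n-1$ with equality only in a degenerate situation; so $h(a) := g(t_a) - a$ starts $\ge 0$ and ends $\le 0$ (essentially), and by a discrete intermediate value argument, combined with the step-size control, there is some $a_i$ with $h(a_i) = 0$, i.e. $g(t_{a_i}) = a_i$.

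The main obstacle — and the reason the lemma statement bundles in the condition $v_i + t_{a_i} \notin V$ — is handling the boundary cases where the naive IVT gives an $a$ with $g(t_a) = a$ but $v_i + t_a$ happens to lie in $V$, or where $h$ jumps past $0$ rather than hitting it. The remedy is to be careful about \emph{which} crossing point to pick: choose $a_i$ minimal (or maximal) with the right property, and use the fact that if $v_i + t_a \in V$ then the endpoint vertex itself is a "$V$-vertex", which shifts the count by exactly the amount that lets me move to the adjacent candidate $a \pm 1$ and land correctly. Because $g$ increases by at most $1$ per unit step and the candidate endpoints are spaced $d+1 \ge 2$ apart, one can always slide off a bad endpoint onto a good one without disturbing $g(t_a) = a$; I would phrase this as: let $a_i$ be the largest index with $g(t_{a_i}) \le a_i$, then check $g(t_{a_i}) = a_i$ and $v_i + t_{a_i} \notin V$ both follow from maximality.

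Finally, I should double-check the edge cases $a_i = 0$ (diagonal $(v_i, v_i+d)$, requiring no elements of $V$ strictly between and $v_i + d \notin V$) and the cyclic wrap-around (when $v_i$ is near the top of the labelling $\{1, \dots, N\}$), but these are routine once the counting function $g$ and the discrete IVT argument are in place. The output of the lemma is exactly the data needed to attach, to each $v_i$, a canonical $d$-diagonal, which is the first step toward building the surjection $\mathbf{V} \to \mathbf{B}$ in Theorem \ref{Thm:n}.
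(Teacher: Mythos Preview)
Your approach is essentially the paper's: both set up the count $f(a)=\#\{v\in V\mid v_i<v<v_i+d+(d+1)a\}$, observe that $h(a):=f(a)-a$ decreases by at most $1$ per step, and apply a discrete intermediate value argument. The paper streamlines the upper endpoint by first choosing $b_i$ to be the \emph{largest} index in $\{0,\dots,n-1\}$ with $v_i+d+(d+1)b_i\notin V$; since the $n-1-b_i$ endpoints beyond $b_i$ all lie in $V$ and are distinct from $v_i$, one gets $f(b_i)\le b_i$ for free, and the IVT runs on $[0,b_i]$ where the top already satisfies the non-membership condition.

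One small slip in your extremal choice: taking $a_i$ to be the largest index with $g(t_{a_i})\le a_i$ just returns $a_i=n-1$, since you already noted $h(n-1)\le 0$. What works is to take $a_i$ the largest index (in $[0,b_i]$, or in $[0,n-1]$) with $h(a_i)\ge 0$. If $a_i$ equals the top, non-membership is automatic; otherwise $h(a_i+1)<0$, which forces $h(a_i)=0$ (step size $\le 1$) and $f(a_i+1)-f(a_i)=0$, so no element of $V$ lies in $[w_{a_i},w_{a_i+1})$ and in particular $w_{a_i}\notin V$. With that correction your plan goes through.
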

\begin{proof}
Let $b_{i}\in \{ 0,1,2,\dots, n-1\}$ be the biggest  number such that $v_{i}+d+(d+1)b_{i}\not\in V$. Since $\# V=n$, then 
\[ \#\{ v\in V \mid v_{i}<v < v_{i}+d+(d+1)b_{i}\}\le b_{i}. \]
On the other hand, we have 
\[ \#\{ v\in V \mid v_{i}<v < v_{i}+d\}\ge 0. \]
So there exists $0\le a_{i}\le b_{i}$ satisfies our conditions.
\end{proof}

 For any $v_{i}\in V$, let $J_{v_{i}}=(v_{i},w_{i}:=v_{i}+d+(d+1)a_{i})$ be the $d$-diagonal such that $a_{i}$ is the smallest number satisfies the conditions in Lemma \ref{Lem:map1}.  We have the following result.
 \begin{Prop}\label{Prop:map}
 Let $V=\{ v_{1}, \dots, v_{n}\}\in \bf V$. Then 
$ \{J_{v_{1}},\dots, J_{v_{n}} \}$ defined above is a maximal $d$-Brauer relation on $\Pi$.
 \end{Prop}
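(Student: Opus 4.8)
The plan is to verify directly that $B := \{J_{v_1}, \dots, J_{v_n}\}$ is a $d$-Brauer relation on $\Pi$ with exactly $n$ elements; maximality will then be automatic. First, each $J_{v_i}$ is a genuine $d$-diagonal: Lemma~\ref{Lem:map1} guarantees that the set of admissible values $a$ is nonempty, so the smallest one, $a_i$, is well defined and $J_{v_i}=(v_i,\,v_i+d+(d+1)a_i)$ is a $d$-diagonal. Second, the $J_{v_i}$ are pairwise distinct: if $J_{v_i}=J_{v_j}$ then $\{v_i,w_i\}=\{v_j,w_j\}$ as unordered pairs, but $v_i,v_j\in V$ while $w_i\notin V$ and $w_j\notin V$ by Lemma~\ref{Lem:map1}, so we must have $v_i=v_j$, i.e.\ $i=j$. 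Granting pairwise disjointness (treated below), $B$ is a $d$-Brauer relation consisting of $n$ distinct $d$-diagonals; since every maximal $d$-Brauer relation has exactly $n$ elements by Theorem~\ref{Thm:bijection}(3) (via the bijection with $(-d)$-CY configurations), any $d$-Brauer relation properly containing $B$ would have more than $n$ elements, which is impossible. Hence $B$ is maximal.

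The crux is therefore pairwise disjointness of the $J_{v_i}$. Suppose $J_{v_i}$ and $J_{v_j}$ with $i\neq j$ are not disjoint, i.e.\ they cross or share an endpoint. A shared endpoint cannot be a vertex of $V$ (that would force $v_i=v_j$) nor a vertex of $V$ meeting a vertex outside $V$ (as $v_i\in V$ but $w_j\notin V$), so it would have to be $w_i=w_j$. In all cases --- a genuine crossing, or the touching $w_i=w_j$ --- one checks that, after possibly interchanging $i$ and $j$, the vertex $v_j$ lies on the open clockwise arc from $v_i$ to $w_i$; in particular $v_j$ is one of the $a_i$ vertices of $V$ lying strictly between $v_i$ and $w_i$, so $a_i\ge 1$. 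It then suffices to prove that $w_j$ lies on the open clockwise arc from $v_j$ to $w_i$: in that case both endpoints of $J_{v_j}$ lie strictly inside the arc $(v_i,w_i)$, so the chord $J_{v_j}$ cannot meet the chord $J_{v_i}$, contradicting the assumption. To get this, one argues by contradiction: assuming instead that $w_j=w_i$ or that $w_j$ lies beyond $w_i$ clockwise, one compares the numbers of $V$-vertices on the sub-arcs $(v_i,v_j)$, $(v_j,w_i)$ and $(w_i,w_j)$ and exhibits from $v_j$ a shorter $d$-diagonal still satisfying the conditions of Lemma~\ref{Lem:map1}, contradicting the minimality of $a_j$.

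The main obstacle is exactly this last step: a priori the naive ``shorter landing'' for $v_j$ need not be a $d$-diagonal, because the $\Pi$-distance $v_j-v_i$ need not be divisible by $d+1$, so the residue class mod $d+1$ and the running $V$-count along the arc must be controlled simultaneously. I expect the cleanest organization to be a reformulation of the definition of $J_{v_i}$: $w_i$ is the $(d c_i)$-th vertex outside $V$ clockwise from $v_i$, where $c_i\ge 1$ is minimal such that exactly $c_i-1$ vertices of $V$ strictly precede it (so $a_i=c_i-1$). In this description the $d$-diagonal condition $w_i-v_i\equiv d\pmod{d+1}$ becomes automatic, and the minimality of $a_i$ becomes a clean statement about partial sums of the numbers of $V$-vertices falling between consecutive vertices outside $V$; a crossing then contradicts this minimality after a short computation. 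An alternative route is induction on $n$, peeling off an ``ear'': there is always a vertex $v_i$ with $a_i=0$, since otherwise every gap between consecutive vertices of $V$ would have length $\le d$, forcing $N=\sum(\text{gaps})\le dn<(d+1)n+d-1=N$; however, arranging the reduction so that the removed ear contains no endpoint of another $J_{v_l}$ again requires the minimality in the definition of the $J_{v_l}$, so this route is of comparable difficulty.
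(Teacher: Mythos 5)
Your proposal is correct and follows essentially the same route as the paper: maximality is reduced to having $n$ pairwise disjoint $d$-diagonals via the cardinality of maximal $d$-Brauer relations, and disjointness is proved by showing the ``inner'' diagonal is nested, using the minimality in the definition of $J_v$ together with counts of $V$-vertices on sub-arcs (the content of Lemma \ref{Lem:map2}). The ``short computation'' you defer is exactly the paper's two-case analysis according to which $(d+1)$-block of $J_{v_i}$ the inner vertex falls into, where the candidate endpoints are chosen of the form $v+d+(d+1)c$ so the divisibility issue you flag never arises, and it goes through as you predict.
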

 
 Before prove this proposition, we give some basic properties of $J_{v_{i}}$ first.
 
\begin{Lem} \label{Lem:map2}
Let $J_{v_{i}}=(v_{i}, w_{i}=v_{i}+d+(d+1)a_{i})$ defined as above, then
\begin{enumerate}[\rm (1)] 
 \item For any $0\le c_{i}<a_{i}$, we have
  \[ \#\{ v\in V \mid v_{i}<v \le v_{i}+d+(d+1)c_{i}\}> c_{i}. \]
  \item   \[ \#\{v\in V \mid v_{i}+d+(d+1)(a_{i}-1)<v < v_{i}+d+(d+1)a_{i}\}=0. \]
 \end{enumerate}
\end{Lem}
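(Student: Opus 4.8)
\textbf{Proof plan for Lemma \ref{Lem:map2}.}

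The plan is to unwind both statements directly from the minimality that defines $a_i$, namely that $a_i$ is the smallest index in $\{0,1,\dots,n-1\}$ for which the pair of conditions in Lemma \ref{Lem:map1} holds: $\#\{v\in V\mid v_i<v<v_i+d+(d+1)a_i\}=a_i$ together with $v_i+d+(d+1)a_i\notin V$. For part (1), fix $c_i$ with $0\le c_i<a_i$. By minimality of $a_i$, the index $c_i$ fails at least one of the two conditions. First I would dispose of the case where $c_i$ fails the ``endpoint not in $V$'' condition: if $v_i+d+(d+1)c_i\in V$, then this vertex itself lies in the half-open interval $(v_i,\,v_i+d+(d+1)c_i\,]$, and a counting argument (using that the open-interval counts grow by steps as $c_i$ increases, since each block $v_i+d+(d+1)(c-1)<v\le v_i+d+(d+1)c$ can contain vertices of $V$) shows the count on the half-open interval exceeds $c_i$; more robustly, I would prove part (1) by a single induction on $a_i-c_i$, using that passing from $c_i$ to $c_i+1$ enlarges the interval and that the condition ``$=c_i$ with endpoint outside $V$'' fails for every $c_i<a_i$. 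The key elementary fact to isolate is: if for some $c$ the half-open count is $\le c$, then one can find $c'\le c$ satisfying \emph{both} Lemma \ref{Lem:map1} conditions, contradicting minimality unless $c\ge a_i$.

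For part (2), I would argue that the last block, i.e. the set $\{v\in V\mid v_i+d+(d+1)(a_i-1)<v<v_i+d+(d+1)a_i\}$, is empty. Suppose it contained some vertex. Then
\[
\#\{v\in V\mid v_i<v<v_i+d+(d+1)(a_i-1)\}
= \#\{v\in V\mid v_i<v<v_i+d+(d+1)a_i\} - (\text{stuff in the last block and at the boundary point}).
\]
Combining the defining equality $\#\{v\in V\mid v_i<v<v_i+d+(d+1)a_i\}=a_i$ with part (1) applied to $c_i=a_i-1$ (which gives that the \emph{half-open} count up to $v_i+d+(d+1)(a_i-1)$ is $>a_i-1$, i.e. $\ge a_i$) forces the last block together with the single boundary vertex $v_i+d+(d+1)(a_i-1)$ to contribute exactly $0$ to the open-interval count; since the boundary vertex, if in $V$, would already be counted in the half-open count, a short case distinction on whether $v_i+d+(d+1)(a_i-1)\in V$ pins down that the open last block is empty.

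\textbf{Main obstacle.} The only delicate point is bookkeeping the difference between open intervals $v_i<v<\cdot$ and half-open intervals $v_i<v\le\cdot$, and keeping track of whether the various ``breakpoints'' $v_i+d+(d+1)c$ themselves lie in $V$; the conditions in Lemma \ref{Lem:map1} are stated with open intervals but the natural monotonicity argument wants half-open ones. I expect the cleanest route is to introduce the auxiliary function $g(c):=\#\{v\in V\mid v_i<v\le v_i+d+(d+1)c\}$, note $g$ is non-decreasing with $g(c)-g(c-1)\le d+1$ but more importantly $g(c)\ge g(c-1)$, translate both the defining property of $a_i$ and the desired inequalities into statements about $g$, and then the proof becomes a one-line consequence of minimality of $a_i$ together with $g(a_i-1)\ge g$ evaluated appropriately. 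Once that translation is set up, parts (1) and (2) are immediate and no genuine difficulty remains.
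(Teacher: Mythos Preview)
Your proposal is correct and follows essentially the same approach as the paper: for (1) the paper argues by contrapositive that if the half-open count were $\le c_i$ then some $d_i\le c_i$ would already satisfy the conditions of Lemma~\ref{Lem:map1}, contradicting minimality of $a_i$; for (2) the paper simply combines the inequality from (1) at $c_i=a_i-1$ with the defining equality at $a_i$. Your worries about open-versus-half-open bookkeeping and the case distinction on whether the boundary point lies in $V$ are unnecessary --- the half-open interval $(v_i,\,v_i+d+(d+1)(a_i-1)]$ is contained in the open interval $(v_i,\,v_i+d+(d+1)a_i)$ regardless, so the two counts compare directly without any case split.
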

\begin{proof}
(1) If $\#\{ v\in V \mid v_{i}<v \le v_{i}+d+(d+1)c_{i}\}\le c_{i}$, then we can find $0\le d_{i}\le c_{i}$, such that $d_{i}$ satisfies the conditions in Lemma \ref{Lem:map1}, it contradicts the minimality of $a_{i}$. Then the assertion is true.

(2) By (1), we have  $\#\{ v\in V \mid v_{i}<v \le v_{i}+d+(d+1)(a_{i}-1)\}> a_{i}-1$. On the other hand, 
$\#\{ v\in V \mid v_{i}<v < v_{i}+d+(d+1)a_{i}\}=a_{i}$, then the statement holds clearly.
\end{proof}

\begin{proof}[Proof of Proposition \ref{Prop:map}]
By Theorem \ref{Thm:bijection}, we only need to show that any two diagonals in $ \{J_{v_{1}},\dots, J_{v_{n}} \}$ are disjoint. 
Let $v_{i}, v_{j} \in V$.  If neither $v_{j}<v_{i}<w_{j}$ nor $v_{i}<v_{j}<w_{i}$ holds, then it is clear $J_{v_{i}}$ and $J_{v_{j}}$ are disjoint. Otherwise, we may assume $v_{j}<v_{i}<w_{j}$. It suffices to show $v_{j}<u_{i}<w_{j}$. We consider the following two cases.

If $v_{j}+d+(d+1)b_{j}<v_{i}\le v_{j}+d+(d+1)(b_{j}+1)$, for some $0\le b_{j}< a_{j}$. By Lemma \ref{Lem:map2} (2), we know that $b_{j}+1\le a_{j}-1$. Consider the diagonal $(v_{i}, v_{i}+d+(d+1)(a_{j}-b_{j}-2))$, we claim that 
\[ \#\{ v\in V \mid v_{i}<v \le v_{i}+d+(d+1)(a_{j}-b_{j}-2)\}\le a_{j}-b_{j}-2. \]
Indeed by Lemma \ref{Lem:map2} (1),  
  \[ \#\{ v\in V \mid v_{j}<v \le v_{j}+d+(d+1)b_{j}\}> b_{j}, \]
and by the definition of $w_{j}$,
\[ \#\{ v\in V \mid v_{j}<v < v_{j}+d+(d+1)a_{i}\}=a_{i}. \]
Then $\#\{ v\in V \mid v\not=v_{i}  \ {\rm and}\ v_{j}+d+(d+1)b_{j}<v < w_{j}\}\le a_{j}-b_{j}-2$. So the claim is true and  $a_{i}\le a_{j}-b_{j}-2<a_{j}$. Then $w_{i}<w_{j}$ and $J_{v_{i}}$ and $J_{v_{j}}$ are disjoint.

If $v_{j}<v_{i}\le v_{j}+d$. Consider the diagonal $(v_{i}, v_{i}+d+(d+1)(a_{j}-1))$. It is clear that $ \#\{ v\in V \mid v_{i}< v\le v_{i}+d+(d+1)(a_{j}-1)\} \le a_{j}-1$. Then $a_{i}\le a_{j}-1<a_{j}$. So $w_{i}<w_{j}$. Moreover, $Y_{v_{i}}$ and $Y_{v_{j}}$ are disjoint. 

Thus $ \{J_{v_{1}},\dots, J_{v_{n}} \}$ is a maximal $d$-Brauer relation. 
\end{proof}

Now we can construct a map $\Theta:{\bf V}\lra \B $ by sending $V\in {\bf V}$ to $\Theta(V):= \{ J_{v}\mid v\in V \}$. By Proposition \ref{Prop:map}, it is well defined.  Next  for  $B\in \B$, we need to determine the preimage of $B$.

\begin{Lem}\label{Lem:map4}
Let $\Theta$ be defined as above. Then $\Theta$ is surjective. More precisely, for any $B\in \B$, we have $\# \{ V\in {\bf V} \mid \Theta(V)=B\} =n+1$.
\end{Lem}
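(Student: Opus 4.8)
The plan is to exhibit, for each $B \in \mathbf{B}$, exactly $n+1$ sets $V \in \mathbf{V}$ with $\Theta(V) = B$, by describing a natural way to recover the preimages from the combinatorics of $B$. Surjectivity will then follow, since every fiber is nonempty. First I would fix $B = \{I_1,\dots,I_n\} \in \mathbf{B}$ and analyze what it means for $\Theta(V) = B$: each diagonal $I = (v,w) \in B$ must arise as $J_v$ for some chosen endpoint $v$ of $I$, with the minimality of $a_i$ in Lemma~\ref{Lem:map1} forcing constraints on which vertices of $V$ lie strictly between $v$ and $w$. The key observation is that if $V \in \mathbf{V}$ satisfies $\Theta(V)=B$, then for each $I = (v,w) \in B$ exactly one of its two endpoints is the ``tail'' $v$ (the one with $J_v = I$), and choosing the tails amounts to an \emph{orientation} of each diagonal in $B$; moreover $V$ consists precisely of these $n$ chosen tails.

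Next I would pin down which orientations of $B$ are admissible. Think of $B$ as cutting $\Pi$ into regions; by Proposition~\ref{Prop:cycleproperty}(1), $B$ is a union of $B$-cycles, and each diagonal lies on (at most) two regions, one ``inner'' and one ``outer'' with respect to any given cycle. For $J_v = I = (v,w)$ to hold with $a_i$ minimal, one checks using Lemma~\ref{Lem:map1} and Lemma~\ref{Lem:map2} that the tail $v$ must be the endpoint of $I$ such that, reading anti-clockwise from $v$ to $w$, the $d$-diagonals of $B$ strictly enclosed are exactly those cut off, i.e.\ $v$ is the ``clockwise-most'' endpoint relative to the region on the $w$-side containing no further chosen vertices. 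Concretely, in each region (face) of the dissection $\Pi \setminus \bigcup_{I \in B} I$ bounded by several diagonals of $B$, exactly one of those bounding diagonals may have its tail pointing into that face; combined over all faces this set-up is a classical ``rotation/parking'' constraint, and the number of valid global choices of tails on an $n$-diagonal dissection of this type is $n+1$. I would prove the count $n+1$ by induction on $n$: removing a diagonal $I \in B$ that bounds an ``ear'' (a region with only one $B$-diagonal on its boundary, which exists since $B$ is a nonempty maximal family) splits $B$ into two smaller maximal $d$-Brauer relations $B \cap \Pi_1$, $B \cap \Pi_2$ on sub-polygons, with sizes $j$ and $n-1-j$ by Lemma~\ref{Lem:l}(2); the admissible tail-orientations of $B$ decompose accordingly, and an Euler-characteristic / double-counting bookkeeping on the two endpoints of the ear gives the recursion $(n+1) = (j+1) + (n-j)$, which holds identically.

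The main obstacle I anticipate is making the ``admissible orientation'' bijection precise: translating the purely numerical minimality condition on $a_i$ in Lemma~\ref{Lem:map1} into the clean combinatorial statement that tails are distributed one-per-region (with exactly one region receiving no tail, accounting for the ``$+1$''), and verifying that every such orientation actually comes from an honest $V \in \mathbf{V}$ — that is, that the $n$ tails, regarded as a vertex subset, reproduce $B$ under $\Theta$. For the latter I would argue directly: given the tail assignment, set $V$ to be the set of tails; then for each tail $v$, Lemma~\ref{Lem:map2}(2) guarantees no element of $V$ lies in the last ``block'' before $w$, and Lemma~\ref{Lem:map2}(1) together with the region structure guarantees $a_i$ is as small as possible, so $J_v = I$; hence $\Theta(V) = B$. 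Since the excerpt ends at the statement of Lemma~\ref{Lem:map4}, I would present the proof at the level of this fiber analysis plus the inductive count, deferring the routine verifications of the inequalities to Lemma~\ref{Lem:map1} and Lemma~\ref{Lem:map2} which are already available.
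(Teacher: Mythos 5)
Your plan is workable and organizes the argument genuinely differently from the paper, although both rest on the same backbone (the splitting Lemma \ref{Lem:l} and the non-crossing structure of $B$). The paper never characterizes the fiber abstractly: for each of the $2n$ endpoints $x_t,y_t$ of the diagonals of $B$ it writes down an explicit candidate preimage $V_{x_t}$ (for every diagonal of $B$, take the endpoint met first when reading clockwise from $x_t$), checks $\Theta(V_{x_t})=B$, proves by induction --- cutting $\Pi$ along one diagonal into two sub-polygons via Lemma \ref{Lem:l} --- that these $2n$ sets give exactly $n+1$ distinct subsets, and finally argues that every $V\in\Theta^{-1}(B)$ is one of them. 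You instead describe the fiber intrinsically: a preimage is a choice of one endpoint (``tail'') per diagonal (this first reduction is correct, since $\#V=n=\#B$ forces $v\mapsto J_v$ to be a bijection onto $B$ and $v$ is an endpoint of $J_v$), subject to a per-region constraint, and you count admissible choices. What your route buys is a cleaner count: once the characterization is established, the dual graph of the dissection is a tree with $n+1$ regions, and the admissible tail-assignments are exactly the orientations with in-degree at most one at each region, i.e.\ the $n+1$ ``orient away from a root region'' orientations, so the ear-induction and the recursion $(j+1)+(n-j)$ are not even needed. What it costs is exactly the step you flag as the main obstacle: ``tail pointing into that face'' is not yet well defined, and the translation of Lemma \ref{Lem:map1} is subtler than your sketch suggests --- for an arbitrary tail-assignment the counting condition of Lemma \ref{Lem:map1} and the condition $v_i+d+(d+1)a_i\notin V$ hold automatically (every element of $V$ strictly inside the clockwise arc from $v$ to $w$ is the tail of one of the $a_i$ diagonals of $B$ on that side, and $w$ is an endpoint of no other diagonal of $B$), so the entire content is the minimality of $a_i$, which must be shown equivalent to the statement that the region adjacent to $(v,w)$ on the clockwise side of the tail $v$ receives no other tail. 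With that convention fixed the characterization is true and your argument goes through; the paper's construction avoids stating or proving it by hard-coding the constraint into the sets $V_{x_t}$, at the price of its own (also only sketched) final verification that every preimage equals some $V_{x_t}$ or $V_{y_t}$.
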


\begin{proof}
Let $B=\{ X_{1}, \dots, X_{n}\}$ be a maximal $d$-Brauer relation.  Assume $X_{t}$ has the form $(x_{t}, y_{t})$ for $1\le t\le n$. Given any $x_{t}$, we construct a set  $V_{x_{t}}\in \bf V$  as follows. 
\begin{enumerate}[\rm (1)]
  \item For any $1\le s\le n$, one of $x_{s}$ and $y_{s}$ belongs to $V_{x_{t}}$;
 \item $i_{s}\in V_{i_{t}}$ if and only if $i_{t}\le i_{s}<j_{s}$ by clockwise ordering.
\end{enumerate}
We construct $V_{y_{t}}$ in a similar way. It is easy to show  $\Theta(V_{x_{t}})=\Theta(V_{y_{t}})=B$. Then $\Theta$ is surjective. 

We claim that $\# \{ V_{x_{t}}, V_{y_{t}} \mid 1\le t \le n \} =n+1$. We show this by induction. If $n=1$, it is clear. Assume the claim holds for $n\le m-1$. For the case $n=m$. Let $\Pi_{1}$ and $\Pi_{2}$ be the two connect components of $\Pi\backslash X_{i}$. Assume $y_{i}=x_{i}+d+(d+1)j$, where $0\le j\le n-1$.
 By Lemma \ref{Lem:l}, $B\cap \Pi_{l}$ is a maximal $d$-Brauer relation on $\Pi_{l}$, $1\le l \le2$ and moreover $\{\# B\cap \Pi_{1}, \# B\cap \Pi_{2}\}= \{ m-j-1, j\}$. Then by induction, 
  $\# \{ V_{x_{t}}, V_{y_{t}} \mid 1\le t \le n \} =(m-j-1+1)+(j+1)= m+1$. So the claim is true. 

For $V\in\Theta^{-1}(B)$, by our construction of $\{ J_{v}\mid v\in V\}$, one can show that $V$ is given by some $V_{x_{t}}$ or $V_{y_{t}}$. Thus by the claim above  $\# \{ V\in {\bf V} \mid \Theta(V)=B\} =n+1$.
\end{proof}

Theorem \ref{Thm:n} is  deduced  by the Lemma \ref{Lem:map4} directly.

\begin{proof}[Proof of Theorem \ref{Thm:n}]
By Lemma \ref{Lem:map4}, we know $\#\B=\frac{1}{n+1}\# {\bf V}=\frac{1}{n+1}\binom{(d+1)n+d-1}{n}$.
\end{proof}

\subsection{Brauer tree dg algebras}
\label{Brauertreedga}
We first introduce a graded quiver from given maximal $d$-Brauer relation in the following way.

\begin{Def}\label{Def:Brauerquiver}
Let $B\in \B$. The graded quiver $Q_{B}$   associated to $B$ is defined as  follows.
\begin{enumerate}[\rm (1)]
\item The vertices of $Q_{B}$ are given by the $d$-diagonals in $B$;
\item 
For any $B$-cycle $C$ with anti-clockwise ordering $\{ X_{1},\cdots, X_{s}\}$, we draw arrows  $X_{i}\ra X_{i+1}$ with degree $1-\delta(X_{i},X_{i+1})$, where $1\le i\le s$ and $X_{s+1}=X_{1}$.  
\end{enumerate}
We say a cycle in $Q_{B}$ is \emph{minimal}, if it is given by some $B$-cycle.
\end{Def}

In Example \ref{Ex:d=1&n=3}, we give the maximal $d$-Brauer relations for the case $d=2$ and $n=3$. Now we draw the $d$-Brauer quivers associate to them. 

\begin{Ex}
Let $d=2$ and $n=3$. The graded quivers associate to the maximal $d$-Brauer relations are as follows.
\begin{center}
\newdimen \R \R=1cm
\scriptsize{
\begin{tikzpicture}
\draw[xshift=8\R, yshift=-2.5\R] node (1) at (0:\R){1}
       node (2) at (-360/10:\R){2} 
       node (3) at (-360*2/10:\R){3}
       node (4) at (-360*3/10:\R){4}
       node (5) at (-360*4/10:\R){5}
       node (6) at (-360*5/10:\R){6}
       node (7) at (-360*6/10:\R){7}
       node (8) at (-360*7/10:\R){8}
       node (9) at (-360*8/10:\R){9}
       node (10) at (-360*9/10:\R){10}
       [dotted] (1)--(2)--(3)--(4)--(5)--(6)--(7)--(8)--(9)--(10)--(1);
   \draw (1)--(3) (4)--(6) (7)--(9);
\draw[->, double, yshift=-2.5\R] (9.5,0) -- (10.5,0);

\draw[xshift=12\R,yshift=-2.5\R] node (1) at (0:\R){1}
       node (2) at (-360/10:\R){2} 
       node (3) at (-360*2/10:\R){3}
       node (4) at (-360*3/10:\R){4}
       node (5) at (-360*4/10:\R){5}
       node (6) at (-360*5/10:\R){6}
       node (7) at (-360*6/10:\R){7}
       node (8) at (-360*7/10:\R){8}
       node (9) at (-360*8/10:\R){9}
       node (10) at (-360*9/10:\R){10}
       node[red] (79) at ($(7)!0.5!(9)$){$\bullet$}
       node[red] (46) at ($(4)!0.5!(6)$){$\bullet$}
       node[red] (13) at ($(1)!0.5!(3)$){$\bullet$}
       node[red,right]  at ($(13)!0.5!(79)$){$-1$}
       [dotted] (1)--(2)--(3)--(4)--(5)--(6)--(7)--(8)--(9)--(10)--(1);
   \draw (1)--(3) (4)--(6) (7)--(9);
   \draw[red,->] (46)--(13); \draw[red,->] (13)--(79); \draw[red,->] (79)--(46);

\draw[yshift=0\R] node (1) at (0:\R){1}
       node (2) at (-360/10:\R){2} 
       node (3) at (-360*2/10:\R){3}
       node (4) at (-360*3/10:\R){4}
       node (5) at (-360*4/10:\R){5}
       node (6) at (-360*5/10:\R){6}
       node (7) at (-360*6/10:\R){7}
       node (8) at (-360*7/10:\R){8}
       node (9) at (-360*8/10:\R){9}
       node (10) at (-360*9/10:\R){10}
       [dotted] (1)--(2)--(3)--(4)--(5)--(6)--(7)--(8)--(9)--(10)--(1);
\draw[->, thin, double ,yshift=0\R] (1.5,0) -- (2.5,0);
 \draw (1)--(6) (2)--(4) (8)--(10);

\draw[yshift=0\R,xshift=4\R] node (1) at (0:\R){1}
       node (2) at (-360/10:\R){2} 
       node (3) at (-360*2/10:\R){3}
       node (4) at (-360*3/10:\R){4}
       node (5) at (-360*4/10:\R){5}
       node (6) at (-360*5/10:\R){6}
       node (7) at (-360*6/10:\R){7}
       node (8) at (-360*7/10:\R){8}
       node (9) at (-360*8/10:\R){9}
       node (10) at (-360*9/10:\R){10}
              node[red] (24) at ($(2)!0.5!(4)$){$\bullet$}
       node[red] (16) at ($(1)!0.5!(6)$){$\bullet$}
       node[red] (810) at ($(8)!0.5!(10)$){$\bullet$}
       node[red,left=6]  at ($(16)!0.5!(810)$){$-1$}
       node[red,left=8]  at ($(16)!0.5!(24)$){$-1$}
       [dotted] (1)--(2)--(3)--(4)--(5)--(6)--(7)--(8)--(9)--(10)--(1);
  \draw (1)--(6) (2)--(4) (8)--(10);
 \draw[red,->] (16) to [out=220, in=160] (24); \draw[red,->] (24)to [out=60,in=-15] (16); \draw[red,->] (810)to [out=200,in=110](16); \draw[red,->](16) to [out=20,in=-60] (810);

 \draw[yshift=-2.5\R,xshift=4\R] node (1) at (0:\R){1}
       node (2) at (-360/10:\R){2} 
       node (3) at (-360*2/10:\R){3}
       node (4) at (-360*3/10:\R){4}
       node (5) at (-360*4/10:\R){5}
       node (6) at (-360*5/10:\R){6}
       node (7) at (-360*6/10:\R){7}
       node (8) at (-360*7/10:\R){8}
       node (9) at (-360*8/10:\R){9}
       node (10) at (-360*9/10:\R){10}
              node[red] (35) at ($(3)!0.5!(5)$){$\bullet$}
       node[red] (16) at ($(1)!0.5!(6)$){$\bullet$}
       node[red] (810) at ($(8)!0.5!(10)$){$\bullet$}
       node[red,left=6]  at ($(16)!0.5!(810)$){$-1$}
       node[red,right=8]  at ($(16)!0.5!(35)$){$-1$}
       [dotted] (1)--(2)--(3)--(4)--(5)--(6)--(7)--(8)--(9)--(10)--(1);
  \draw (1)--(6) (3)--(5) (8)--(10);
 \draw[red,->] (16) to [out=200, in=110] (35); \draw[red,->] (35)to [out=20,in=-60] (16); \draw[red,->] (810)to [out=200,in=110](16); \draw[red,->](16) to [out=20,in=-60] (810);
 
 \draw[yshift=-2.5\R] node (1) at (0:\R){1}
       node (2) at (-360/10:\R){2} 
       node (3) at (-360*2/10:\R){3}
       node (4) at (-360*3/10:\R){4}
       node (5) at (-360*4/10:\R){5}
       node (6) at (-360*5/10:\R){6}
       node (7) at (-360*6/10:\R){7}
       node (8) at (-360*7/10:\R){8}
       node (9) at (-360*8/10:\R){9}
       node (10) at (-360*9/10:\R){10}
       [dotted] (1)--(2)--(3)--(4)--(5)--(6)--(7)--(8)--(9)--(10)--(1);
\draw[->, double,yshift=-2.5\R] (1.5,0) -- (2.5,0);
 \draw (1)--(6) (3)--(5) (8)--(10);

\draw[xshift=8\R] node (1) at (0:\R){1}
       node (2) at (-360/10:\R){2} 
       node (3) at (-360*2/10:\R){3}
       node (4) at (-360*3/10:\R){4}
       node (5) at (-360*4/10:\R){5}
       node (6) at (-360*5/10:\R){6}
       node (7) at (-360*6/10:\R){7}
       node (8) at (-360*7/10:\R){8}
       node (9) at (-360*8/10:\R){9}
       node (10) at (-360*9/10:\R){10}
       [dotted] (1)--(2)--(3)--(4)--(5)--(6)--(7)--(8)--(9)--(10)--(1);
\draw[->, double ,xshift=8\R] (1.5,0) -- (2.5,0);
 \draw (1)--(6) (2)--(4) (7)--(9);

\draw[yshift=0\R,xshift=12\R] node (1) at (0:\R){1}
       node (2) at (-360/10:\R){2} 
       node (3) at (-360*2/10:\R){3}
       node (4) at (-360*3/10:\R){4}
       node (5) at (-360*4/10:\R){5}
       node (6) at (-360*5/10:\R){6}
       node (7) at (-360*6/10:\R){7}
       node (8) at (-360*7/10:\R){8}
       node (9) at (-360*8/10:\R){9}
       node (10) at (-360*9/10:\R){10}
              node[red] (24) at ($(2)!0.5!(4)$){$\bullet$}
       node[red] (16) at ($(1)!0.5!(6)$){$\bullet$}
       node[red] (79) at ($(7)!0.5!(9)$){$\bullet$}
       node[red,right=6]  at ($(16)!0.5!(79)$){$-1$}
       node[red,left=8]  at ($(16)!0.5!(24)$){$-1$}
       [dotted] (1)--(2)--(3)--(4)--(5)--(6)--(7)--(8)--(9)--(10)--(1);
  \draw (1)--(6) (2)--(4) (7)--(9);
 \draw[red,->] (16) to [out=220, in=160] (24); \draw[red,->] (24)to [out=60,in=-15] (16); 
 \draw[red,->] (79)to [out=250,in=160](16); \draw[red,->](16) to [out=70,in=-20] (79);
  
\end{tikzpicture} }
\end{center}
where the quivers are drawn by red lines and the numbers with red color are  degrees correspond to the arrows near them.
\end{Ex}

We give some basic properties on $Q_{B}$, which are induced by Proposition \ref{Prop:cycleproperty} and Lemma \ref{Lem:cycle} (2).

\begin{Prop} Let $B\in \B$. Then $Q_{B}$ satisfies the following 
\begin{enumerate}[\rm(1)]
 \item Every vertex of $Q$ belongs to one or two minimal cycles;
 \item Any two minimal cycles meet in one vertex at most;
  \item There are no loops in $Q$;

 \item Every  arrow is equipped with  a non-positive degree and the sum of degrees of each minimal cycle is  $-d+1$.
 \end{enumerate}
\end{Prop}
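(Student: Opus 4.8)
The plan is to verify the four listed properties of $Q_B$ directly from the combinatorial machinery developed in Section~\ref{maximalBrauer}, treating each item as a short corollary of the cycle theory (Proposition~\ref{Prop:cycleproperty}, Lemma~\ref{Lem:l}, Lemma~\ref{Lem:cycle}). The vertices of $Q_B$ are the $d$-diagonals of $B$ and the arrows come only from $B$-cycles; so every structural statement about $Q_B$ is really a statement about how the $B$-cycles overlap, and Proposition~\ref{Prop:cycleproperty} already packages exactly what is needed.

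\emph{Item (1).} By Proposition~\ref{Prop:cycleproperty}(1), $B$ is the union of its $B$-cycles, so every vertex lies in at least one minimal cycle; I must show a $d$-diagonal $X \in B$ lies in at most two $B$-cycles. The two connected components $\Pi_1,\Pi_2$ of $\Pi\setminus X$ partition $B\setminus\{X\}$ (Lemma~\ref{Lem:l}(1)), and any $B$-cycle containing $X$ has its remaining diagonals entirely on one side (since a cycle is contained in the closure of one region of $\Pi\setminus C$, and $X$ is an edge of that region). Using Proposition~\ref{Prop:cycleproperty}(4)--(5), two $B$-cycles through $X$ whose other diagonals lie on the same side $\Pi_i$ would force those diagonals to be ``$B$-cycle connected'' to each other through $X$, hence to belong to a common $B$-cycle; a short argument using the anti-clockwise ordering shows they must coincide. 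Thus $X$ belongs to at most one $B$-cycle per side, i.e. at most two in total.

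\emph{Item (2).} This is precisely Proposition~\ref{Prop:cycleproperty}(2), since minimal cycles of $Q_B$ are by definition the $B$-cycles. \emph{Item (3).} A loop at $X$ would require an arrow $X\to X$, which by Definition~\ref{Def:Brauerquiver}(2) only arises from a $B$-cycle $C=\{X_1,\dots,X_s\}$ with $X_i=X_{i+1}=X$ for some $i$; but the elements of a $B$-cycle are distinct $d$-diagonals and, by Proposition~\ref{Prop:cycle}, a one-element subset $\{X\}$ cannot satisfy $\delta(X,X)=d+1-1=d$ since $\delta$ is only defined for disjoint diagonals (and $X$ is not disjoint from itself); hence $s\ge 2$ and no loop occurs. \emph{Item (4).} Each arrow $X_i\to X_{i+1}$ has degree $1-\delta(X_i,X_{i+1})\le 0$ because $\delta\ge 1$; summing around a $B$-cycle $C=\{X_1,\dots,X_s\}$ gives $\sum_{i=1}^s(1-\delta(X_i,X_{i+1})) = s - \sum_{i=1}^s \delta(X_i,X_{i+1}) = s - (d+s-1) = -d+1$, where the middle equality is exactly Lemma~\ref{Lem:cycle}(1) with $m=0$ (Lemma~\ref{Lem:cycle}(3) / Proposition~\ref{Prop:cycle}).

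The only genuinely non-formal point is Item~(1): turning ``$X$ bounds two regions'' into ``$X$ lies in at most two $B$-cycles'' requires ruling out two distinct $B$-cycles on the same side of $X$, and the cleanest route is via Proposition~\ref{Prop:cycleproperty}(4), observing that any two diagonals lying in the closure of the same component of $\Pi\setminus X$ together with $X$ satisfy the ``same connected component of $\Pi\setminus Z$ for all other $Z$'' criterion and hence sit in one $B$-cycle. I would write that argument out carefully and leave the remaining three items as the brief deductions above, as the paper's phrasing (``induced by Proposition~\ref{Prop:cycleproperty} and Lemma~\ref{Lem:cycle}(2)'') already signals.
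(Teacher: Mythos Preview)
Your proposal is correct and matches the paper's approach: the paper itself gives no proof beyond the sentence ``which are induced by Proposition~\ref{Prop:cycleproperty} and Lemma~\ref{Lem:cycle}(2)'', and you have simply filled in those details. The only cosmetic difference is that for item~(4) you invoke Lemma~\ref{Lem:cycle}(1)/(3) (equivalently Proposition~\ref{Prop:cycle}) rather than part~(2), but since a $B$-cycle has $m=0$ these say the same thing, and your computation $\sum (1-\delta(X_i,X_{i+1})) = s-(d+s-1) = -d+1$ is exactly what is intended.
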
 

\begin{Rem}
The above properties (1), (2), (3)  imply that $Q_{B}$  a Brauer quiver in the sense of Gabriel and Riedtmann (see \cite{GR}).
\end{Rem}
 
Now we introduce the following main object in this section.

\begin{Def}
Let $B\in \bf B$. The \emph{Brauer tree dg algebra}  $A_{Q_{B}}$ is defined as $kQ_{B}/I_{B}$ with zero differential and  grading given by that of $Q_{B}$, where the admissible ideal  $I_{B}$ is generated by the following relations.
\begin{enumerate}[\rm (1)]
\item For any minimal cycle 
 \[ X_{1}\xra{\alpha_{1}}X_{2} \xra{}  \cdots \xra{} X_{m-1} \xra{\alpha_{m-1}} X_{m} \xra{\alpha_{m}} X_{1},\]
$\alpha_{i}\alpha_{i+1}\cdots\alpha_{m}\alpha_{1}\cdots\alpha_{i} \in I$ for each $1\le i\le m$;
\item If $X$ is  the common $d$-diagonal of two $B$-cycles 
 \begin{eqnarray*} X &= &X_{1}\xra{\alpha_{1}}X_{2} \xra{}  \cdots \xra{} X_{m-1} \xra{\alpha_{m-1}} X_{m} \xra{\alpha_{m}} X_{1}\\
  X&=& Y_{1}\xra{\beta_{1}}Y_{2} \xra{}  \cdots \xra{} Y_{s-1} \xra{\beta_{m-1}} Y_{s} \xra{\beta_{s}} Y_{1}, \end{eqnarray*}
 then 
$\beta_{s}\alpha_{1}\in I$  and $\alpha_{m}\beta_{1}\in I$ and $\alpha_{1}\alpha_{2}\cdots\alpha_{m}-\beta_{1}\beta_{2}\cdots\beta_{s}\in I$.
\end{enumerate}
\end{Def}

The following proposition  is an easy generalization of well-known result for ungraded case. 
\begin{Prop}
The dg algebra $A_{Q_{B}}$ is $d$-symmetric.
\end{Prop}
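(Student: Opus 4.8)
The plan is to exhibit an isomorphism $A\cong DA[d-1]$ of graded $A$-bimodules, where $A:=A_{Q_B}$; since the differential of $A$ is zero, this is automatically an isomorphism in $\D(A^{\rm e})$, and then $\add A=\add(DA[d-1])$, so $A$ is $d$-symmetric. Such a bimodule isomorphism amounts to a homogeneous linear form $\lambda\in(DA)^{d-1}$ (equivalently, $\lambda\colon A\to k$ supported in degree $1-d$) which is \emph{symmetric}, $\lambda(ab)=\lambda(ba)$ for all $a,b\in A$, and \emph{nondegenerate}, meaning the bilinear form $(a,b)\mapsto\lambda(ab)$ has vanishing radical; the map $a\mapsto\lambda(a\cdot(-))$ is then a degree-$0$ bimodule isomorphism $A\xra{\ \sim\ }DA[d-1]$.

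The construction of $\lambda$ is the classical one for Brauer graph algebras, which I would import as follows. Forgetting the grading, the properties of $Q_B$ recorded above (every vertex lies on one or two minimal cycles, two minimal cycles share at most one vertex, no loops) say that $Q_B$ is a Brauer quiver in the sense of \cite{GR}, and the generators of $I_B$ are exactly the standard Brauer-graph-algebra relations; hence the ungraded algebra $\overline A=kQ_B/\overline{I_B}$ is a Brauer graph algebra, and in particular symmetric. Concretely, for a vertex $X$ lying on the minimal cycle $X=X_1\xra{\alpha_1}X_2\to\dots\to X_m\xra{\alpha_m}X_1$, put $w_X:=\alpha_1\alpha_2\cdots\alpha_m\in e_XAe_X$; relation (2) makes $w_X$ independent of the chosen minimal cycle through $X$, and $w_X$ spans the socle of the indecomposable right projective $e_XA$. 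Define $\lambda$ to take the value $1$ on each $w_X$ and $0$ on every basis path (that is, every path not lying in $I_B$) which is not of the form $w_X$; this is the (essentially unique) symmetrizing form of $\overline A$.

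It remains to verify that $\lambda$ is homogeneous of the required degree and to transport symmetry and nondegeneracy to the graded setting. By Definition \ref{Def:Brauerquiver} the arrow $X_i\to X_{i+1}$ of a minimal cycle has degree $1-\delta(X_i,X_{i+1})$, so by Lemma \ref{Lem:cycle}(1) (with $\#(B\cap\Pi_C)=0$, as $C$ is a $B$-cycle) we get $\deg w_X=\sum_{i=1}^m\bigl(1-\delta(X_i,X_{i+1})\bigr)=m-(d+m-1)=1-d$, the same for every vertex $X$; thus $\lambda$ is supported in degree $1-d$ and $\lambda\in(DA)^{d-1}$. (This uniformity is also what makes relation (2) homogeneous, so that the grading on $A$ is well defined in the first place.) Symmetry of $\lambda$ follows because, for basis paths $a\colon X\to Y$ and $b\colon Y\to X$, $ab$ equals $w_X$ in $A$ if and only if $ba$ equals $w_Y$ in $A$, as these two cyclic words are obtained from one another by a cyclic rotation; and nondegeneracy follows because, fixing $X$, each basis path $p$ starting at $X$ has a unique completing basis path $q_p$ ending at $X$ with $p\,q_p=w_X$, the assignment $p\mapsto q_p$ being a bijection between the path bases of $e_XA$ and $Ae_X$, so the Gram matrix of $(a,b)\mapsto\lambda(ab)$ on $e_XA\times Ae_X$ is a permutation matrix; compatibility with the grading is automatic since $\deg p+\deg q_p=\deg w_X=1-d$. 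Therefore $a\mapsto\lambda(a\cdot(-))$ is the desired isomorphism $A\cong DA[d-1]$, and $A$ is $d$-symmetric.

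The main obstacle is not any single hard step but the care needed in matching the graded combinatorics of $Q_B$ with the ungraded theory of Brauer graph algebras: one has to be certain that $I_B$ really is generated by the Brauer relations (so that $\overline A$ is literally a Brauer graph algebra), that the two socle words through a common vertex are identified \emph{as homogeneous elements} — which is precisely the content of the degree computation above and ultimately rests on Lemma \ref{Lem:cycle} — and that the small minimal cycles cause no trouble: length $1$ is ruled out since $Q_B$ has no loops, while length $2$ (a pair of $d$-diagonals forming a $B$-cycle) still yields a legitimate Brauer graph algebra. Once these points are in place the argument is routine.
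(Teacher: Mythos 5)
Your proposal is correct and is essentially the argument the paper has in mind: the paper states this proposition without proof, remarking only that it is an easy generalization of the well-known ungraded fact that Brauer tree (graph) algebras are symmetric. Your write-up supplies exactly that generalization — the classical socle-supported symmetrizing form, made homogeneous of degree $1-d$ via Lemma \ref{Lem:cycle} (equivalently, the property that each minimal cycle of $Q_{B}$ has total degree $-d+1$), yielding $A_{Q_B}\cong DA_{Q_B}[d-1]$ as bimodules.
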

 
Now we  are ready to state the following main result, which implies Theorem \ref{Thm:main} for the case $\Delta=A_{n}$. Recall from Definition \ref{Def:newstable} the definition of $(\Z A_{n,d})_{C}$.
\begin{Thm}\label{Thm:section}
Let $B$ be a maximal $d$-Brauer relation on $((d+1)n+d-1)$-gon and let $C$ be the $(-d)$-CY configuration in $\Z A_{n,d}$ corresponding to $B$. Then for the Brauer tree dg algebra $A_{Q_{B}}$, the $AR$ quiver of $\CM A_{Q_{B}}$ is isomorphic to $(\Z A_{n,d})_{C}$.
\end{Thm}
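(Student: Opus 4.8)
The plan is to transfer the question to the stable category $\un{\CM}A_{Q_B}$, identify the latter's AR quiver with $\Z A_{n,d}$, and locate the simple dg modules at the configuration $C$; the extra projective vertices are then restored automatically.

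\emph{Step 1: reduction to the stable category.} Put $A:=A_{Q_B}$. By the Proposition on radicals of indecomposable projectives, each $\rad P$ with $P\in\add A$ indecomposable is indecomposable in $\CM A$, lies outside $\add A$, and sits in a triangle $\rad P\to P\to \Top\h^{0}(P)\to \rad P[1]$; passing to $\D_{\sg}(A)\cong\un{\CM}A$ this gives $\rad P_S\cong S[-1]$ for the simple $S=\Top\h^{0}(P)$. Since $\Proj(\CM A)=\add A$, the vertices of $\mathfrak A(\CM A)$ not already in $\mathfrak A(\un{\CM}A)$ are exactly the indecomposable projectives $P_S$, each attached by arrows $\rad P_S\to P_S\to \tau^{-1}(\rad P_S)$. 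Hence $\mathfrak A(\CM A)$ is obtained from $\mathfrak A(\un{\CM}A)$ by the operation of Definition \ref{Def:newstable} applied to the set of simples (the passage $S\rightsquigarrow\rad P_S\cong S[-1]$ being harmless, as $[1]$ is an automorphism of the stable translation quiver $\Z A_{n,d}$). It therefore suffices to show that $\mathfrak A(\un{\CM}A)\cong\Z A_{n,d}$ under an isomorphism sending the simple dg $A$-modules to the $(-d)$-CY configuration $C$ of Theorem \ref{Thm:bijection}.

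\emph{Step 2: the simples and their $\delta$-data.} By the cited Proposition, $A=A_{Q_B}$ is $d$-symmetric, so $DA\simeq A[-d+1]$; by Theorem \ref{Thm:configuration} the set $E:=\{S_X\mid X\in B\}$ of its simples is a $d$-SMS of $\un{\CM}A$, whence $\un{\CM}A$ is a representation-finite Hom-finite Krull--Schmidt triangulated category with Serre functor $\nu[-1]$ and almost split triangles (Theorem \ref{Thm:AR}), so $\mathfrak A(\un{\CM}A)\cong\Z\Delta/G$ for some Dynkin $\Delta$ and weakly admissible $G$. Because $A$ has zero differential and admissible relations, Proposition \ref{Prop:key2} applies: for each $B$-cycle $\{X_{1},\dots,X_{s}\}$ in its anti-clockwise order there is an arrow $X_{i}\to X_{i+1}$ of $Q_B$ of degree $1-\delta(X_{i},X_{i+1})$, so $\Hom_{\D^{\bb}(A)}(S_{X_{i}},S_{X_{i+1}}[\delta(X_{i},X_{i+1})])\ne0$; conversely the relations generating $I_B$ forbid any path from a vertex $X$ to a vertex $Y$ of degree exceeding $-\ell$ when $\ell<\delta(X,Y)$, so by Proposition \ref{Prop:key2}(1) $\Hom_{\D^{\bb}(A)}(S_{X},S_{Y}[\ell])=0$ for all $0<\ell<\delta(X,Y)$ and $X,Y\in B$. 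Thus the first non-vanishing degree of $\Hom$ between two simples of $A$ equals the combinatorial $\delta$ of the corresponding $d$-diagonals (Remark \ref{delta}), and the minimal cycles of $Q_B$ record exactly the $B$-cycles of $B$ (Propositions \ref{Prop:cycleproperty} and \ref{Prop:cycle}).

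\emph{Step 3: matching with $\Z A_{n,d}$.} Let $\Lambda:=k A_{n}\oplus D(kA_{n})[d-1]$, whose stable category has AR quiver $\Z A_{n,d}$ by Corollary \ref{Cor:cluster}. By Theorem \ref{Thm:SMCsurj} there is an SMC $R=\{R_{1},\dots,R_{n}\}$ of $\D^{\bb}(\Lambda)$ whose image in $\un{\CM}\Lambda$ is $C$; by Proposition \ref{Prop:KoY} it corresponds to a silting $P\in\per\Lambda$ with $B':=\shEnd_{\Lambda}(P)$ sending its simples to $R$, and by Lemma \ref{Lem:siltGoren} $B'$ is $d$-symmetric with $\un{\CM}B'\simeq\un{\CM}\Lambda$, the simples of $B'$ going to $C$. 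The graded Gabriel quiver of $B'$ together with its relations is read off from the graded morphism spaces among the $R_{i}$; via Corollary \ref{Cor:cluster} and Remark \ref{delta} these are governed by the $\delta$-values of the $d$-diagonals of $B$, and the relations by the $B$-cycle structure. Comparing this with the definition of $A_{Q_B}$ and invoking the rigidity of Proposition \ref{Prop:B=B'} (a maximal $d$-Brauer relation is determined up to rotation by the $\delta$-data along its $B$-cycles) yields an isomorphism of dg algebras $B'\cong A_{Q_B}$. Hence $\un{\CM}A_{Q_B}\simeq\un{\CM}\Lambda$ with simples carried to $C$, so $\mathfrak A(\un{\CM}A_{Q_B})=\Z A_{n,d}$ with the simples at $C$, and by Step 1, $\mathfrak A(\CM A_{Q_B})\cong(\Z A_{n,d})_{C}$.

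\textbf{Main obstacle.} The delicate point is the explicit identification $\shEnd_{\Lambda}(P)\cong A_{Q_B}$: one must extract the graded quiver (arrows and their degrees) and the relations of the endomorphism dg algebra of a silting object directly from the combinatorics of $R$, and the bookkeeping of $B$-cycles in Lemma \ref{Lem:cycle} and Propositions \ref{Prop:cycle}, \ref{Prop:B=B'} is exactly what makes this go through. An alternative that sidesteps this computation is an induction on $n$ parallel to the proof of Theorem \ref{Thm:SMCsurj}: deleting a leaf $d$-diagonal from $B$ — equivalently, performing SMC/SMS reduction at the corresponding simple — should identify the Brauer tree dg algebra of the reduced Brauer relation with the reduction $eA_{Q_B}e$, and the inductive step then propagates the isomorphism of AR quivers.
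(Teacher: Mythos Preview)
Your Steps 1 and 2 are essentially the paper's ingredients (the role of projectives in the AR quiver, and the computation of the first non-vanishing Ext degree between simples via Proposition~\ref{Prop:key2}). The genuine gap is in Step 3.

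You try to obtain $\mathfrak A(\un{\CM}A_{Q_B})=\Z A_{n,d}$ by producing, via Theorem~\ref{Thm:SMCsurj} and silting theory, a $d$-symmetric dg algebra $B'=\shEnd_\Lambda(P)$ whose simples land on $C$, and then asserting $B'\cong A_{Q_B}$. But this isomorphism is not justified. Knowing only that the \emph{first} non-vanishing degree of $\Hom(S_X,S_Y[\ast])$ equals $\delta(X,Y)$ along arrows does not determine the graded quiver with relations of $B'$ (let alone its $A_\infty$-structure), so you cannot read off that $B'$ is presented by $Q_B$ with the Brauer-tree relations. Invoking Proposition~\ref{Prop:B=B'} here is a category error: that proposition compares two maximal $d$-Brauer relations via their $\delta$-data, not two dg algebras. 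Without the identification $B'\cong A_{Q_B}$ you never establish that $\mathfrak A(\un{\CM}A_{Q_B})=\Z A_{n,d}$, and without that you cannot even speak of ``the configuration of the simples in $\Z A_{n,d}$'' for $A_{Q_B}$, so Step~2 and Proposition~\ref{Prop:B=B'} have nothing to bite on.

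The paper closes this gap differently and more directly. It first proves $\mathfrak A(\un{\CM}A_{Q_B})=\Z A_{n,d}$ \emph{intrinsically}: replacing an indecomposable projective $P_Y$ by $P_Y[a]$ changes only the degrees of the arrows at $Y$ and gives a derived (hence singularly) equivalent Brauer tree dg algebra; iterating, one reaches an ``admissible'' grading in which exactly one arrow of each minimal cycle has degree $-d+1$ and the rest degree $0$, and in that case $A_{Q_B}$ is a trivial extension $\Lambda'\oplus D\Lambda'[d-1]$ with $\Lambda'$ iterated tilted of type $A_n$, so Corollary~\ref{Cor:cluster} yields $\Z A_{n,d}$. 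Only \emph{after} this is established does the paper form the Brauer relation $B_A$ of the simples and use Proposition~\ref{Prop:B=B'} in its intended sense, comparing $B$ with $B_A$ via the $\delta$-data you computed in Step~2. Your proposed alternative of inducting via SMS reduction at a leaf diagonal is plausible, but would likewise require first knowing the AR quiver shape, or else a direct compatibility $e A_{Q_B} e\cong A_{Q_{B'}}$ for the reduced Brauer relation $B'$, which you have not supplied.
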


The outline  of our proof is the following.
 Consider the $(-d)$-CY configuration $C_{A}$ given by the simples of $A_{Q_{B}}$. Then the AR quiver of $\CM A_{Q_{B}}$ is isomorphic to $(\Z A_{n,d})_{C_{A}}$. So we only need to show $C=C_{A}$. To show this, let $B_{A}$ be the maximal $d$-Brauer relation   corresponds to  $C_{A}$. 
 \[C\longleftrightarrow B \lra A_{Q_{B}} \xra{\rm simples} C_{A} \longleftrightarrow B_{A} \]
 Then it suffices to  prove $B$ is isomorphic to $B_{A}$ up to rotation. 

We first describe the AR quiver of the stable category $\un{\CM}A_{Q_{B}}$.

\begin{Prop}\label{Prop:ARofA_{Q_{B}}}
The AR quiver of  $\un{\CM}A_{Q_{B}} $ is $\Z A_{n,d}$.
\end{Prop}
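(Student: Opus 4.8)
The plan is to reduce the statement to the trivial extension case already handled in Corollary \ref{Cor:cluster}. Since $A_{Q_B}$ is $d$-symmetric, we have $DA_{Q_B}\cong A_{Q_B}[1-d]$ as bimodules, so $\nu=[1-d]$ and, by Theorem \ref{Thm:AR}(1), the Serre functor of $\un{\CM}A_{Q_B}$ is $\SSS=\nu[-1]=[-d]$, with Auslander--Reiten translation $\tau=\SSS[-1]=[-d-1]$. Thus $\un{\CM}A_{Q_B}$ is a Hom-finite Krull--Schmidt triangulated category whose Serre functor is $[-d]$, and the same holds for $\un{\CM}A_0$ with $A_0:=k A_n\oplus D(kA_n)[d-1]$. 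The whole problem then becomes the production of a triangle equivalence $\un{\CM}A_{Q_B}\simeq\un{\CM}A_0$: by Corollary \ref{Cor:cluster} the AR quiver of $\un{\CM}A_0$ is $\Z A_n/\nu[d]=\Z A_n/\SSS[d]=\Z A_{n,d}$, and any triangle equivalence transports Serre functors and hence AR triangles.

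First I would treat the ``star'' relation $B_\star$, namely the maximal $d$-Brauer relation consisting of a single $B$-cycle (so $m=0$ and $s=n$ in Lemma \ref{Lem:cycle}). Then $Q_{B_\star}$ is a single oriented $n$-cycle $X_1\to\cdots\to X_n\to X_1$ whose arrow degrees sum to $-d+1$, and the first family of relations in the definition of $A_{Q_{B_\star}}$ kills exactly the paths wrapping once around the cycle; a direct inspection identifies $A_{Q_{B_\star}}$ with $A_0=k A_n\oplus D(kA_n)[d-1]$. Hence the proposition holds for $B_\star$ immediately from Corollary \ref{Cor:cluster}.

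For arbitrary $B$ the key step is a derived equivalence $\per A_{Q_B}\simeq\per A_0$. I would connect $B$ to $B_\star$ by a sequence of flips of $d$-diagonals, each altering the set of $B$-cycles in a controlled way that is traceable through the $\delta$-values of Propositions \ref{Prop:cycle} and \ref{Prop:B=B'}, and realize each flip as a silting mutation. Concretely, I expect $A_{Q_B}$ to be a trivial extension $E_B\oplus DE_B[d-1]$ with $E_B=\shEnd_{kA_n}(X_B)$ the endomorphism dg algebra of a silting object $X_B\in\per(kA_n)$ attached to the tree underlying $B$, so that Proposition \ref{Prop:Ric} supplies a chain of triangle equivalences $\per A_{Q_B}\simeq\per A_0$. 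As such an equivalence preserves compact objects and objects of finite-dimensional total cohomology, it descends to $\D_{\sg}(A_{Q_B})\simeq\D_{\sg}(A_0)$, i.e.\ $\un{\CM}A_{Q_B}\simeq\un{\CM}A_0$ by Theorem \ref{Thm:properties}(3). Combined with the star case, this gives $\mathfrak A(\un{\CM}A_{Q_B})=\Z A_{n,d}$.

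The main obstacle is precisely this derived equivalence. One must verify that the flip graph of maximal $d$-Brauer relations is connected, so that $B_\star$ is reachable from every $B$, and that each elementary flip is genuinely a silting mutation respecting the $d$-symmetric trivial-extension structure, so that Proposition \ref{Prop:Ric} applies at each stage. Theorem \ref{Thm:bijection}(3) keeps the number of simples fixed equal to $n$ throughout, and the combinatorial analysis of $B$-cycles ensures the bookkeeping closes up; checking that the mutated endomorphism dg algebra is again the Brauer tree dg algebra of the flipped relation is where the real work lies.
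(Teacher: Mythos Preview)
Your overall strategy---produce a triangle equivalence $\un{\CM}A_{Q_B}\simeq\un{\CM}A_0$ with $A_0$ a trivial extension and then invoke Corollary~\ref{Cor:cluster}---is exactly the paper's strategy. But both the base case and the reduction step in your proposal are problematic, and the paper's mechanism is quite different and much simpler.

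\textbf{The base case is not immediate.} For the star relation $B_\star$ the quiver $Q_{B_\star}$ is indeed a single $n$-cycle, but the arrow degrees are $1-\delta(X_i,X_{i+1})$, not $(0,\dots,0,1-d)$. Only their \emph{sum} is $1-d$. The dg algebra $A_{Q_{B_\star}}$ depends on the individual degrees, not just their sum, so ``direct inspection'' does not identify it with $kA_n\oplus D(kA_n)[d-1]$; as graded algebras they are typically non-isomorphic. You therefore already need, in the star case, a derived-equivalence argument to normalise the grading---which is precisely what the paper supplies and what you have not.

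\textbf{The paper avoids flips entirely.} Rather than moving through the flip graph of maximal $d$-Brauer relations (whose connectivity and compatibility with silting mutation you correctly flag as the hard part), the paper keeps $B$ fixed and only changes the \emph{grading} on $Q_B$. The elementary move is: pick a vertex $Y$ and shift the indecomposable projective $P_Y$ by $a$, i.e.\ replace $A_{Q_B}$ by $\shEnd(T_{Y,a})$ with $T_{Y,a}=P_Y[a]\oplus\bigoplus_{Y'\neq Y}P_{Y'}$. This is visibly a tilting object, so it yields a triangle equivalence on $\per$, hence on $\D^{\bb}$ and on $\D_{\sg}$ (Lemma~\ref{degreechanging}). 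The effect on $Q_B$ is to add $a$ to the degrees of the arrows entering $Y$ and subtract $a$ from those leaving $Y$; iterating, one reaches an \emph{admissible} grading in which each minimal cycle has exactly one arrow of degree $1-d$ and the rest of degree $0$. For admissible $Q_B$ the set of degree-$(1-d)$ arrows is a cutting set in the sense of \cite{FP,Schroll}, and Schroll's theorem identifies $A_{Q_B}\cong\Lambda\oplus D\Lambda[d-1]$ with $\Lambda=A_{Q_B}/(D)$; then Happel's criterion shows $\Lambda$ is iterated tilted of type $A_n$, and Corollary~\ref{Cor:cluster} finishes.

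So the missing idea in your proposal is exactly this degree-shifting trick: it makes the reduction local and elementary, bypasses all questions about the flip graph and silting mutation of Brauer tree dg algebras, and simultaneously fixes your star case. Your Proposition~\ref{Prop:Ric} route would eventually work, but it is a detour; the paper never leaves the fixed combinatorial type $B$.
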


To prove this proposition, we need  some observations. Let $Y\in (Q_{B})_{0}$ be a vertex and $a\in \Z$. We construct a new graded quiver $Q_{Y,a}$. It is isomorphic to $Q_{B}$ as ungraded quiver. The degrees of arrows ending at $Y$ and starting  at $Y$ are changed as follows.
{\tiny 
\begin{center}
 \begin{tikzpicture}[scale=0.8]
  \draw
  node (Y) at (0,0) {$Y$}
  node (lu) at (-1,1){} node (ru) at (1,1){}
  node (ld) at (-1,-1){} node (rd) at (1,-1){}
  node[left] at (-0.6,0.6) {$b_{1}$}
  node[left] at (-0.4,-0.4) {$b_{2}$}
  node[right] at (0.4,0.4) {$c_{1}$}
  node[right] at (0.4,-0.4) {$c_{2}$}
  node at (0,-1.5) {$Q_{B}$}
  [->] (lu)edge(Y) (ru)edge(Y) (Y)edge(ld) (Y)edge(rd);
   \draw[dotted, <-] (-1,1) arc(90:270:1);
   \draw[dotted, ->](1,-1) arc(270:450:1);
   \draw[->, double] (3,0) -- (5,0);
      \draw[xshift=8cm]
  node (Y) at (0,0) {$Y$}
  node (lu) at (-1,1){} node (ru) at (1,1){}
  node (ld) at (-1,-1){} node (rd) at (1,-1){}
  node[left] at (-0.5,0.5) {$b_{1}+a$}
  node[left] at (-0.4,-0.4) {$b_{2}-a$}
  node[right] at (0.5,0.5) {$c_{1}+a$}
  node[right] at (0.5,-0.5) {$c_{2}-a$}
  node at (0,-1.5) {$Q_{Y,a}$}
  [->] (lu)edge(Y) (ru)edge(Y) (Y)edge(ld) (Y)edge(rd);
   \draw[xshift=8cm, dotted, <-] (-1,1) arc(90:270:1);
   \draw[xshift=8cm, dotted, ->](1,-1) arc(270:450:1);
 \end{tikzpicture}
\end{center}}
And other degrees of arrows are the same as in $Q_{B}$. Let $T_{Y,a}:=P_{Y}[a]\bop (\bop_{Y'\in B, Y'\not=Y} P_{Y'}) $ be a dg $A_{Q_{B}}$-module, where $P_{Y}$ is the indecomposable projective module corresponds to the vertex $Y$.   Consider the Brauer tree dg algebra $A_{Q_{Y,a}}$. Then  one can show  that $A_{Q_{Y,a}}$ is isomorphic to the endmorphism dg algebra $\shEnd(T_{Y,a})$.
Immediately, we have 
 \begin{Lem}\label{degreechanging}
  The functor $\RshHom(T_{Y,a}, ?)$ induces a triangle equivalence $\D^{\bb}(A_{Q_{B}})/\per A_{Q_{B}}\ra \D^{\bb}(A_{Q_{Y,a}})/\per A_{Q_{Y,a}}$.
\end{Lem}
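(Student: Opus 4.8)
The plan is to deduce the lemma from the identification $\shEnd_{A_{Q_B}}(T_{Y,a})\cong A_{Q_{Y,a}}$ noted just above the statement, together with the dg Morita theory of Keller \cite{Keller94} (the same principle already invoked in the proof of Proposition~\ref{Prop:Ric}). First I would observe that $T_{Y,a}$ is a compact generator of $\D(A_{Q_B})$: it lies in $\per A_{Q_B}$, being a finite direct sum of shifts of the indecomposable summands $P_{Y'}$ of $A_{Q_B}$, and $\thick_{A_{Q_B}}(T_{Y,a})=\thick_{A_{Q_B}}(A_{Q_B})=\per A_{Q_B}$ because shifting a direct summand does not change the generated thick subcategory. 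Hence, writing $F:=\RshHom_{A_{Q_B}}(T_{Y,a},?)$, Keller's theorem gives that $F$ is a triangle equivalence $\D(A_{Q_B})\xra{\sim}\D(\shEnd_{A_{Q_B}}(T_{Y,a}))=\D(A_{Q_{Y,a}})$, under which $T_{Y,a}\mapsto A_{Q_{Y,a}}$.

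Next I would check that $F$ restricts compatibly to the relevant subcategories. Since $F$ is an equivalence carrying $T_{Y,a}\in\per A_{Q_B}$ to $A_{Q_{Y,a}}$, it carries $\per A_{Q_B}=\thick_{A_{Q_B}}(T_{Y,a})$ onto $\per A_{Q_{Y,a}}=\thick_{A_{Q_{Y,a}}}(A_{Q_{Y,a}})$. For the bounded pieces, note that $A_{Q_B}$ and $A_{Q_{Y,a}}$ are finite-dimensional (admissible quotients of path algebras of finite quivers) and non-positive by the chosen degrees, so $\D^{\bb}$ is precisely the full subcategory of objects with finite-dimensional total cohomology, and is $\Hom$-finite by Proposition~\ref{Prop:Dbfinite}. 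For $M\in\D^{\bb}(A_{Q_B})$ we have $\h^{i}(F(M))=\Hom_{\D(A_{Q_B})}(T_{Y,a},M[i])$; representing the perfect complex $T_{Y,a}$ by a bounded complex of finitely generated projective $A_{Q_B}$-modules, $F(M)$ is computed by a bounded complex each of whose terms is a finite sum of shifts of $M$, so its total cohomology is finite-dimensional, i.e. $F(M)\in\D^{\bb}(A_{Q_{Y,a}})$. Applying the same argument to the quasi-inverse of $F$ (which is $\RshHom_{A_{Q_{Y,a}}}(F(A_{Q_B}),?)$ against the perfect object $F(A_{Q_B})$) shows that $F$ restricts to a triangle equivalence $\D^{\bb}(A_{Q_B})\xra{\sim}\D^{\bb}(A_{Q_{Y,a}})$.

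Finally, since these two restrictions of $F$ are compatible with the inclusion $\per\hookrightarrow\D^{\bb}$ on both sides, $F$ descends to a triangle equivalence on the Verdier quotients $\D^{\bb}(A_{Q_B})/\per A_{Q_B}\xra{\sim}\D^{\bb}(A_{Q_{Y,a}})/\per A_{Q_{Y,a}}$, which is exactly the assertion. I do not anticipate a genuine obstacle: the one point requiring care is the dg-algebra isomorphism $\shEnd_{A_{Q_B}}(T_{Y,a})\cong A_{Q_{Y,a}}$, which is settled in the paragraph preceding the lemma by matching the relations defining $A_{Q_{Y,a}}$ with composition of morphisms between the summands of $T_{Y,a}$ (the $\pm a$ degree shifts on arrows at $Y$ coming exactly from the shift $P_Y\mapsto P_Y[a]$); everything else is formal dg Morita theory plus the elementary facts that shifting summands preserves generation and that perfect-against-bounded has finite total cohomology.
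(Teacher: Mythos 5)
Your proposal is correct and follows essentially the same route as the paper: $T_{Y,a}$ is a compact generator, so $\RshHom(T_{Y,a},?)$ gives an equivalence identifying $\per A_{Q_B}$ with $\per A_{Q_{Y,a}}$ and $\D^{\bb}(A_{Q_B})$ with $\D^{\bb}(A_{Q_{Y,a}})$, which then descends to the Verdier quotients. The only difference is that you spell out the preservation of the bounded subcategories and the identification $\shEnd_{A_{Q_B}}(T_{Y,a})\cong A_{Q_{Y,a}}$, which the paper treats as immediate.
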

\begin{proof}
It is clear that $T_{Y,a}$ is a compact generator of $\per A_{Q_{B}}$. Then $\RshHom(T_{Y,a}, ?): \per A_{Q_{B}}\ra \per A_{Q_{Y,a}}$ is an equivalence, which induces a triangle equivalence $\D^{\bb}(A_{Q_{B}})\ra \D^{\bb}(A_{Q_{Y,a}})$. Thus the assertion is true.
\end{proof}

Now we prove Proposition \ref{Prop:ARofA_{Q_{B}}} by adjusting degrees of $Q_{B}$ to some special case. 
\begin{proof}[Proof of Proposition \ref{Prop:ARofA_{Q_{B}}}]
Let $B\in\B$. We say $Q_{B}$ is \emph{admissible} if each minimal cycle in $Q_{B}$ has an arrow with degree $-d+1$ and other arrows with degree $0$. We consider the following two cases.

(1) If $Q_{B}$ is admissible. 
Let $D$  be the set of arrows in $Q_{B}$  with degree $-d+1$. It is an admissible cutting set in the sense of \cite{FP, Schroll}. 
 Therefore  $A_{Q_{B}}$  is isomorphic to the trivial extension $\Lambda\op D\Lambda[d-1]$ by \cite[Theorem 1.3]{Schroll}, where $\Lambda$ is the factor algebra $A_{Q_{B}}/(D)$. By Corollary \ref{Cor:cluster}, $\un{\CM}(A_{Q_{B}})$ is triangle equivalent to $\D^{\rm b}({\mod}\Lambda)/\nu[d]$.   By \cite[Theorem 6.7]{Happel}, $\Lambda$ is an iterated titled algebra of type $A_{n}$. So the AR-quiver of  $\un{\CM}(A_{Q_{B}})$ is given by $\Z A_{n,d}$.

(2) For general $Q_{B}$. We claim  there exists $B'\in \B$, such  that $Q_{B'}$ is  admissible and there is a triangle equivalence $\un{\CM}(A_{Q_{B}})\xra{\sim} \un{\CM}(A_{Q_{B'}})$.
 In fact, we can start from any minimal cycle. Under a suitable ordering, we may change the degrees of $Q_{B}$ to obtain an admissible quiver $Q_{B'}$ step by step by our discussion above. Then by Theorem \ref{Thm:properties} (3) and by Lemma \ref{degreechanging}, $\un{\CM} A_{Q_{B}}= \D^{\bb}(A_{Q_{B}})/\per A_{Q_{B}}$ is triangle equivalence to $\un{\CM} A_{Q_{B'}}=\D^{\bb}(A_{Q_{B'}})/\per A_{Q_{B'}}$.  Then by (1), the  AR quiver of $\un{\CM}(A_{Q_{B}})$ is $\Z A_{n,d}$.
 \end{proof}
 
Let $B=\{Y_{1}, \cdots, Y_{n}\} $ be a  maximal $d$-Brauer relation  on  $((d+1)n+d-1)$-gon. Recall that the vertices of $Q_{B}$ are given by $\{ Y_{1}, \cdots, Y_{n}\}$.
By Theorem \ref{Thm:configuration}, 
the set $C_{A}:=\{ S_{1}, \cdots, S_{n}\}$ of simple dg $A_{Q_{B}}$-modules is a $(-d)$-CY configuration, where $S_{i}$ is the simple module corresponds to vertex $Y_{i}$. And by Proposition \ref{Prop:ARofA_{Q_{B}}}, the AR quiver of $\un{\CM}A_{Q_{B}}$ is $\Z A_{n,d}$. Thus we can also regard $C_{A}$ as the subset of $\Z A_{n,d}$. Let $B_{A}$  be the maximal $d$-Brauer relation corresponds to $C_{A}$. By abuse of notation, the $d$-diagonals in $B_{A}$ are also denoted by $\{S_{1}, \cdots,S_{n}\}$.

Let $\{ Y_{j_{1}}, Y_{j_{2}}, \cdots, Y_{j_{s}} \}$ be a $B$-cycle with anti-clockwise ordering. Then it gives a minimal cycle in $Q_{B}$.
\[ Y_{j_{1}} \xra{\alpha_{1}}  Y_{j_{2}} \xra{\alpha_{2}} \cdots \xra{\alpha_{s-1}} Y_{j_{s}} \xra{\alpha_{s}} Y_{j_{1}}\]  
 where $\deg\alpha_{i}=1-\delta(Y_{j_{i}}, Y_{j_{i+1}})$ by Definition \ref{Def:Brauerquiver}.
  The following proposition gives us some information  which determines $B$ uniquely.

\begin{Prop}\label{Prop:key}
Assume $\alpha_{i}: Y_{j_{i}}\ra Y_{j_{i+1}}$ is an arrow in $Q_{B}$. Then
 $\delta(S_{j_{i}}, S_{j_{i+1}})=\delta(Y_{j_{i}}, Y_{j_{i+1}})$, where we regard $S_{j_{i}}$ and $Y_{j_{i}}$ as $d$-diagonals in $B_{A}$ and $B$ respectively. 
\end{Prop}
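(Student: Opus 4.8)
\textbf{Proof strategy for Proposition \ref{Prop:key}.}

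The plan is to relate the graded $\Hom$-spaces between the simple $A_{Q_{B}}$-modules $S_{j_i}$ in $\D^{\bb}(A_{Q_{B}})$ to the arrows of the quiver $Q_{B}$, and then translate this back into the combinatorial datum $\delta$ via Remark \ref{delta}. Recall that by Remark \ref{delta}, for disjoint $d$-diagonals $X,Y$ in $\Z A_{n,d}$ one has $\delta(X,Y)=\min\{i>0\mid \bar h_{X}(Y[i])\neq 0\}$, and by Proposition \ref{Prop:vs} this equals $\min\{i>0\mid \Hom_{\un{\CM}A_{Q_{B}}}(X,Y[i])\neq 0\}$ once we identify $\ind\un{\CM}A_{Q_{B}}$ with the vertices of $\Z A_{n,d}$ (using Proposition \ref{Prop:ARofA_{Q_{B}}}). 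So for the right-hand side it suffices to compute, for the simple modules, the smallest positive $i$ with $\Hom_{\un{\CM}A_{Q_{B}}}(S_{j_i},S_{j_{i+1}}[i])\neq 0$, and show it equals $\delta(Y_{j_i},Y_{j_{i+1}})$, which by Definition \ref{Def:Brauerquiver} is $1-\deg\alpha_i$, i.e. we must show the smallest such $i$ is $1-\deg\alpha_i$.

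First I would pass from $\un{\CM}A_{Q_{B}}$ back to $\D^{\bb}(A_{Q_{B}})$: by Lemma \ref{12}(2) (or Proposition \ref{Prop:Groper}(2) and the fact that the simples lie in degree $0$), for $0<i$ sufficiently small the stable and unstable $\Hom$'s agree, and more precisely $\Hom_{\D^{\bb}(A_{Q_{B}})}(S_{j_i},S_{j_{i+1}}[i])$ is what controls $\delta$ — one has to be slightly careful that passing to the stable category does not kill the relevant extension, but since $S_{j_{i+1}}[i]\in \CM A_{Q_{B}}=\D^{\bb}_{\le 0}\cap\D^{\bb}_{\ge -d+1}$ for $1\le i\le d-1$ and $\add A$ consists of projective-injectives concentrated appropriately, the identification is clean in the range $i\le d-1$; and since every arrow has degree in $[-d+1,0]$, $1-\deg\alpha_i\in[1,d]$, so this range suffices after handling the boundary case $i=d$ separately (which forces $\deg\alpha_i=-d+1$ and the minimal cycle has length $1$, excluded since $Q_{B}$ has no loops, or is handled directly). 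Then I would invoke Proposition \ref{Prop:key2}: since $A_{Q_{B}}=kQ_{B}/I_{B}$ has zero differential and $I_{B}$ is admissible, part (2) gives that an arrow $Y_{j_i}\to Y_{j_{i+1}}$ of degree $-l\le 0$ forces $\Hom_{\D^{\bb}(A_{Q_{B}})}(S_{j_i},S_{j_{i+1}}[l+1])\neq 0$, i.e. with $l=-\deg\alpha_i$ we get a nonzero Hom in degree $1-\deg\alpha_i=\delta(Y_{j_i},Y_{j_{i+1}})$; this gives the inequality $\delta(S_{j_i},S_{j_{i+1}})\le\delta(Y_{j_i},Y_{j_{i+1}})$. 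For the reverse inequality, part (1) of Proposition \ref{Prop:key2} says that if $\Hom_{\D^{\bb}(A_{Q_{B}})}(S_{j_i},S_{j_{i+1}}[l])\neq 0$ for some $l>0$ then there is a path from $Y_{j_i}$ to $Y_{j_{i+1}}$ in $Q_{B}$ of degree $>-l$; so it remains to show that the minimal degree of any path $Y_{j_i}\leadsto Y_{j_{i+1}}$ in $Q_{B}$ is exactly $\deg\alpha_i=1-\delta(Y_{j_i},Y_{j_{i+1}})$, equivalently that no path is strictly cheaper than the single arrow $\alpha_i$.

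The main obstacle, then, is this purely combinatorial claim about $Q_{B}$: the arrow $\alpha_i\colon Y_{j_i}\to Y_{j_{i+1}}$ realizes the minimal possible degree among all directed paths from $Y_{j_i}$ to $Y_{j_{i+1}}$. This is where the structure theory of $B$-cycles from Section \ref{maximalBrauer} enters. Any path in $Q_{B}$ is a concatenation of segments along minimal cycles, and the degree of a segment going once around (most of) a $B$-cycle $\{X_1,\dots,X_s\}$ is $\sum(1-\delta(X_l,X_{l+1}))=s-\sum\delta(X_l,X_{l+1})=s-(d+s-1)=-(d-1)$ by Lemma \ref{Lem:cycle}(1) (with $m=0$); partial segments are controlled by the additivity of $\delta$ along the anti-clockwise ordering and the fact (Lemma \ref{Lem:cycle}(2)) that any reordering only increases the $\delta$-sum, hence only increases path degree relative to the direct arrow. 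I would formalize this by: (a) noting a path from $Y_{j_i}$ to $Y_{j_{i+1}}$ that does not just use $\alpha_i$ must either traverse the rest of the $B$-cycle of $\alpha_i$ the long way (degree $-(d-1)-\deg\alpha_i\cdot(\text{something})$ — compare: the long way around has total degree $-(d-1)$, so the complementary arc to $\alpha_i$ has degree $-(d-1)-\deg\alpha_i$, which since $\deg\alpha_i\le 0$ is $\le -(d-1)\le\deg\alpha_i$ only when... here one checks $-(d-1)-\deg\alpha_i \le \deg\alpha_i \iff \deg\alpha_i\ge -(d-1)/2$, so actually the complementary arc can be cheaper — wait, this needs care) — so I will instead argue via the relations in $I_{B}$: any sufficiently long path is zero in $A_{Q_{B}}$ (relations (1) kill paths that wrap a minimal cycle fully plus one more arrow, relations (2) kill transitions between the two cycles at a shared vertex except via the "around" relation), so only finitely many nonzero paths $Y_{j_i}\leadsto Y_{j_{i+1}}$ exist and each, being a sub-path of an "around-a-cycle" walk, has degree $\ge \deg\alpha_i$ by a direct accounting using Lemma \ref{Lem:cycle} and Proposition \ref{Prop:cycleproperty}. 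Combining the two inequalities yields $\delta(S_{j_i},S_{j_{i+1}})=\delta(Y_{j_i},Y_{j_{i+1}})$, completing the proof.
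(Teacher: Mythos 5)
Your overall route is the same as the paper's: identify $\delta(S_{j_{i}},S_{j_{i+1}})$ with the least $t>0$ such that $\Hom_{\un{\CM}A_{Q_{B}}}(S_{j_{i}},S_{j_{i+1}}[t])\neq 0$ via Remark \ref{delta} and Proposition \ref{Prop:vs}, pass to $\D^{\bb}(A_{Q_{B}})$, and then apply both parts of Proposition \ref{Prop:key2}; part (2) indeed gives the nonvanishing at shift $1-\deg\alpha_{i}$. The genuine gap is in the lower bound: the combinatorial claim you set out to prove has the inequality reversed. To kill $\Hom_{\D^{\bb}(A_{Q_{B}})}(S_{j_{i}},S_{j_{i+1}}[t])$ for $0<t\le l:=-\deg\alpha_{i}$, the contrapositive of Proposition \ref{Prop:key2}(1) requires that there be \emph{no} path from $Y_{j_{i}}$ to $Y_{j_{i+1}}$ of degree bigger than $-l$, i.e. that \emph{every} path have degree at most $\deg\alpha_{i}$: the arrow $\alpha_{i}$ must realize the \emph{maximal} (least negative) degree among all paths. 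You instead assert that $\alpha_{i}$ realizes the \emph{minimal} degree (``no path is strictly cheaper''), and your fallback claim that every nonzero path has degree $\ge\deg\alpha_{i}$ points the same wrong way; such a statement is compatible with the existence of a degree-$0$ path from $Y_{j_{i}}$ to $Y_{j_{i+1}}$, which is exactly what would produce a nonzero $\Hom$ in a shift smaller than $l+1$ and destroy the desired equality. Your own aside (``the complementary arc can be cheaper --- wait, this needs care'') is a symptom of this reversal rather than a repairable detail of the estimate you were attempting.

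The statement actually needed is both what the paper uses and much simpler than the $B$-cycle accounting you sketch: every path in $Q_{B}$ from $Y_{j_{i}}$ to $Y_{j_{i+1}}$ must traverse the arrow $\alpha_{i}$ itself, because $\alpha_{i}$ is the unique arrow of its minimal cycle leaving $Y_{j_{i}}$, and an excursion into the other cycle through $Y_{j_{i}}$ (or cycles beyond it) can only return to the rest of the quiver through $Y_{j_{i}}$, the Brauer quiver being a tree of cycles glued at single vertices (Proposition \ref{Prop:cycleproperty}). Since all arrows have non-positive degree, any such path has degree at most $\deg\alpha_{i}$, which is what the paper records as ``clear from the construction of $Q_{B}$''; note this is a statement about arbitrary quiver paths, so no appeal to the relations $I_{B}$ is needed. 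Two smaller slips: $\deg\alpha_{i}=-d+1$ does not force the minimal cycle to be a loop (a cycle of length $s$ with one arrow of degree $1-d$ and the rest of degree $0$ is the standard admissible case), and the worry about a boundary shift is unnecessary, since the identification of stable and derived $\Hom$'s holds for every positive shift by Proposition \ref{Prop:Groper}(2) applied to the simples, with no case distinction.
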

\begin{proof}
By Remark \ref{delta},  $\delta (S_{j_{i}}, S_{j_{i+1}})= {\rm min}\{ t>0 \mid \bar{h}_{S_{j_{i}}}(S_{j_{i+1}}[t])\not=0 \}$ and by  Proposition \ref{Prop:vs}, we have
$\bar{h}_{S_{j_{i}}}(S_{j_{i+1}}[t])=\Hom_{\un{\CM}A_{Q_{B}}}(S_{j_{i}}, S_{j_{i+1}}[t])$. Thus
\begin{eqnarray*}\delta (S_{j_{i}}, S_{j_{i+1}}) &=&
\min \{ t> 0 \mid \Hom_{\un{\CM}A_{Q_{B}}}(S_{j_{i}},S_{j_{i+1}}[t])\not=0 \} \\
 &=&   \min \{ t> 0 \mid \Hom_{\D^{\bb}(A_{Q_{B}})}(S_{j_{i}},S_{j_{i+1}}[t])\not=0 \} \end{eqnarray*}
 where the second equality holds by the fact that  $\Hom_{\D^{\bb}(A_{Q_{B}})}(S_{j_{i}}, A)=\h^{-d+1}(S_{j_{i}})=0$.
 Let $l=-\deg \alpha_{i}$. By our construction of $Q_{B}$, it is clear that  every path from $Y_{j_{i}}$ to $Y_{j_{i+1}}$ has degree no more than $-l$. Then
 $\Hom_{\D^{\bb}(A_{Q_{B}})}(S_{j_{i}},S_{j_{i+1}}[t])=0$ for any $0\le t\le l$   and
 $\Hom_{\D^{\bb}(A_{Q_{B}})}(S_{j_{i}},S_{j_{i+1}}[l+1])\not=0$  by Proposition \ref{Prop:key2}. Thus $\delta(S_{j_{i}},S_{j_{i+1}})=l+1=1-\deg \alpha_{i}=\delta(Y_{j_{i}}, Y_{j_{i+1}})$.
  \end{proof}

Now we are ready to prove Theorem \ref{Thm:section}.
\begin{proof}[Proof of Theorem \ref{Thm:section}]
Consider the map $\phi: B \ra B_{A}$ sending $Y_{j}$ to $S_{j}$. It is clearly bijective and 
   for any $B$ cycle $C$ with anti-clockwise ordering $\{ Y_{j_{1}}, Y_{j_{2}}, \cdots, Y_{j_{m}}\}$, we have  $\delta(Y_{j_{i}}, Y_{j_{i+1}})=\delta(S_{j_{i}}, S_{j_{i+1}})$ by  Proposition \ref{Prop:key}. Then  by Proposition \ref{Prop:B=B'}, $B$ is isomorphic to $B_{A}$ up to rotation. Then  the $AR$ quiver of $\CM A_{Q_{B}}$ is isomorphic to $(\Z A_{n,d})_{C}$.
 \end{proof}

\appendix

\section{A  new proof of Theorem \ref{Thm:bijection}}\label{appendixA}

In this part, we give a new proof of Theorem \ref{Thm:bijection} by using the results developed in Section \ref{maximalBrauer}. We first point out the following property.

\begin{Prop}\label{Prop:n}
  For any $B\in \B$, we have $\# B=n$.
\end{Prop}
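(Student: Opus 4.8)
The plan is to prove that every maximal $d$-Brauer relation $B$ on the $N$-gon $\Pi$ with $N = (d+1)n+d-1$ has exactly $n$ diagonals, by induction on $n$. The base case $n=1$ is immediate: $N = 2d$, a $d$-diagonal has the form $(i, i+d)$, any two distinct such diagonals cross, so a maximal $d$-Brauer relation is a single diagonal.

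For the inductive step, pick any diagonal $X \in B$; by Lemma~\ref{Lem:l}(1), writing $\Pi_1, \Pi_2$ for the two connected components of $\Pi \setminus X$, the sets $B \cap \Pi_1$ and $B \cap \Pi_2$ are maximal $d$-Brauer relations on $\Pi_1$ and $\Pi_2$ respectively, and $B$ is the disjoint union $(B \cap \Pi_1) \sqcup (B \cap \Pi_2) \sqcup \{X\}$. The key point is to identify $\Pi_1$ and $\Pi_2$ as smaller polygons of the correct size: if $X$ has the form $(i, i+d+1+(d+2)j)$ then, counting vertices, one of $\Pi_1, \Pi_2$ is a polygon on $d+1+(d+2)j$ vertices together with the endpoints of $X$, which after a short arithmetic check equals $(d+1)j' + d - 1$ vertices for $j' = j+1$, i.e.\ it is an $N_{j+1}$-gon of the shape to which the induction hypothesis applies with parameter $j+1$; similarly the other side is an $N_{n-j-1}$-gon with parameter $n-j-1$. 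Hence $\#(B\cap \Pi_1) = j+1$ and $\#(B\cap\Pi_2) = n-j-1$ by induction (or one of them is trivial if the corresponding parameter is $0$), so $\#B = (j+1) + (n-j-1) + 1 = n$. One must double-check the degenerate case where $X$ is a $d$-diagonal that is not of the crossing-splitting form $(i,i+d+1+(d+2)j)$ — i.e.\ when $j=0$ and the diagonal $(i,i+d)$ bounds a region with only $d-1$ interior vertices, too few to contain any $d$-diagonal — so that one side simply contributes $0$; this is exactly the content that makes Lemma~\ref{Lem:l}(2) consistent.

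The main obstacle is purely bookkeeping: one has to be careful that a sub-polygon of $\Pi$ obtained by cutting along a $d$-diagonal is again of the form ``$M$-gon with $M = (d+1)m + d - 1$'' so that the notion of $d$-diagonal and the induction hypothesis transfer verbatim, and that the $d$-diagonals of the sub-polygon are precisely the $d$-diagonals of $\Pi$ lying inside it. This compatibility is where Lemma~\ref{Lem:l} does the real work, and I would cite it rather than re-derive the vertex counts. Once the sizes match up, maximality of $B\cap\Pi_i$ inside $\Pi_i$ follows from maximality of $B$ inside $\Pi$ (any $d$-diagonal of $\Pi_i$ disjoint from $B\cap\Pi_i$ is automatically disjoint from $X$ and from $B\cap\Pi_{3-i}$, hence could be added to $B$), so the induction closes and $\#B = n$.
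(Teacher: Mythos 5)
Your overall strategy is exactly the paper's proof of this proposition: induction on $n$, cutting $\Pi$ along a diagonal $X\in B$, invoking Lemma \ref{Lem:l}(1) to see that $B\cap\Pi_1$ and $B\cap\Pi_2$ are maximal $d$-Brauer relations of the two components, and then counting; your base case and your remark about the degenerate side (parameter $0$, a $(d-1)$-gon containing no $d$-diagonal) are also fine. The gap is precisely in the bookkeeping you yourself identify as the crux. By definition a $d$-diagonal has the form $(i,i+d+(d+1)j)$ with $0\le j\le n-1$; the form $(i,i+d+1+(d+2)j)$ you work with (apparently taken from the statement of Lemma \ref{Lem:l}(2), which contains a misprint) is not a $d$-diagonal in general, and your size identification fails. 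For $X=(i,i+d+(d+1)j)$, the component of $\Pi\setminus X$ lying between the endpoints of $X$ should be taken without those endpoints (no element of $B$ other than $X$ can use them, since it would meet $X$); it then has exactly $(d+1)j+d-1$ vertices, i.e.\ it is the polygon with induction parameter $j$, not $j+1$, while the other component has $(d+1)(n-j-1)+d-1$ vertices, parameter $n-j-1$. Your claimed identity is false as arithmetic: $(d+1)(j+1)+d-1=(d+1)j+2d$, which does not equal $d+1+(d+2)j$ (with or without the two endpoints added) for general $j$ and $d$.

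Moreover, even granting your counts, your concluding sum is wrong: $(j+1)+(n-j-1)+1=n+1$, not $n$, so as written the inductive step would ``prove'' $\#B=n+1$. Both problems disappear simultaneously once the correct normal form is used: by induction $\#(B\cap\Pi_1)=j$ and $\#(B\cap\Pi_2)=n-j-1$, hence $\#B=j+(n-j-1)+1=n$, which is exactly the computation in the paper. So the idea and the structure of your argument are right, but the vertex count and the final tally need to be redone with the definition's form of a $d$-diagonal rather than the misprinted one.
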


\begin{proof}
Let $B\in {\bf B}$.
We apply the induction on $n$.

If $n=1$, then $\Pi$ is a $2d$-gon and every $d$-diagonal has the form $(i,i+d)$. In this case, any two $d$-diagonals intersect, which implies that $B$ contains only one $d$-diagonal.

Assume our argument is true for $n\le m$, where  $m\ge 1$. Now consider the case $n=m+1$. 
Assume $I\in B$ has the form $(i_{1}, i_{1}+d+(d+1)j)$. 
 Then $\Pi\backslash I$ has two connect components $\Pi_{1}$ and $\Pi_{2}$, where $\Pi_{1}$ is a $((d+1)j+d-1)$-gon and $\Pi_{2}$ is a $((d+1)(n-j-1)+d-1)$-gon. By Lemma \ref{Lem:l}, $ B\cap \Pi_{1}$ (resp. $B\cap \Pi_{2}$) is a maximal $d$-Brauer relation of $\Pi_{1}$ (resp. $\Pi_{2}$). By induction, $\# (B\cap\Pi_{1})=j$ and $\# (B\cap\Pi_{2})=n-j-1$.
Then $\# B=\# (B\cap\Pi_{1})+ \# (B\cap\Pi_{2}) +1 =n$.
 Therefore the statement holds for any $n\ge 1$.
\end{proof}

The following lemma  is  immediately from  our labelling on $\Z A_{n}$.
\begin{Lem}\label{Lem:homspace}
Let $X, Y\in \Z (A_{n})_{0}$, where $X=(x, x+d+(d+1)m)$, $0\le m \le n-1$. Then  ${h}_{X}(Y)\not=0$ if and only if $Y=(x+(d+1)i,x+d+(d+1)j)$, where $0\le i \le m \le j \le n-1$. 
 \end{Lem}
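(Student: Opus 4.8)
The plan is to unwind the definition of $h_X$. Recall from Section~\ref{Section:combi} that $h_X$ is the restriction to the connected component $Q_X\ni X$ of the support of the additive function $f_X$ on $\Z A_n$ whose value on the slice starting at $X$ is $1$; thus $h_X(Y)\neq 0$ if and only if $Y\in Q_X$ (and then $h_X(Y)=f_X(Y)>0$). So it suffices to show
\[ Q_X=R:=\{\,(x+(d+1)i,\,x+d+(d+1)j)\ :\ 0\le i\le m\le j\le n-1\,\}. \]
The first step is to read off, from the labelling fixed just before the lemma, that the two families of arrows of $\Z A_n$ are $(a,b)\to(a,b+d+1)$, which raises the $\Z A_n$-level by one, and $(a,b)\to(a+d+1,b)$, which lowers it, that $\tau(a,b)=(a-d-1,b-d-1)$, and that $(a,\,a+d+(d+1)j)$ sits on level $j\in\{0,\dots,n-1\}$.

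In these coordinates the slice starting at $X=(x,\,x+d+(d+1)m)$ is the ``$L$''-shaped path
\[ \{(x+(d+1)i,\,x+d+(d+1)m):0\le i\le m\}\ \cup\ \{(x,\,x+d+(d+1)j):m\le j\le n-1\}, \]
i.e.\ the bottom and left edges of the ``rectangle'' $R$, meeting at the unique source $X$ and reaching the two boundary rows (levels $0$ and $n-1$) at its two leaves. Writing a vertex of $R$ as $(a,b)$ for $(x+(d+1)a,\,y+(d+1)b)$ with $y=x+d+(d+1)m$, so that $a\in\{0,\dots,m\}$ and $b\in\{0,\dots,n-1-m\}$, the level of $(a,b)$ is $m-a+b$, and one checks that every $(a,b)$ with $a,b\ge 1$ lies strictly between the two boundary rows. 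Hence the mesh relation there reads $f_X(a,b)+f_X(a-1,b-1)=f_X(a-1,b)+f_X(a,b-1)$; since $f_X\equiv 1$ on the bottom and left edges, induction on $a+b$ gives $f_X\equiv 1$ on all of $R$. In particular $R$ is a connected subset of $\{f_X>0\}$ containing $X$, so $Q_X\supseteq R$.

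For the reverse inclusion I would check that $f_X$ vanishes on the neighbours of $R$ lying ``after'' it: by the two arrow types these are the vertices $(x+(d+1)a,\,y+(d+1)(n-m))$ with $1\le a\le m$ and $(x+(d+1)(m+1),\,y+(d+1)b)$ with $1\le b\le n-1-m$ (the would-be neighbours obtained by raising the top-left corner or lowering the bottom-right corner are not vertices of $\Z A_n$, sitting above level $n-1$ or below level $0$). Applying the mesh relation at these vertices -- now including the one-predecessor version valid on the two boundary rows -- and feeding in the value $1$ on the top and right edges of $R$ and at the two leaves of the slice, a short induction gives $f_X\equiv 0$ there. The remaining neighbours of $R$ either are not vertices of $\Z A_n$ or lie ``before'' the slice, where $f_X\le 0$ as one sees by propagating the mesh relations backwards from the slice. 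Hence no vertex outside $R$ lies in the component of $\{f_X>0\}$ through $X$, so $Q_X=R$, which is the assertion. The only delicate point is the bookkeeping on the two boundary rows of $\Z A_n$, where the mesh relation has fewer terms; everything else is the forced propagation of the constant $1$ across the rectangle cut out by the labelling, which is exactly why the lemma is immediate from it.
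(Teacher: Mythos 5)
Your proposal is correct and follows the route the paper itself intends: the paper gives no written proof (the lemma is declared immediate from the labelling), and what you do --- unwinding $h_X$ via the additive function $f_X$, identifying the slice starting at $X$ as the two edges of the rectangle, and propagating the mesh relations (interior and boundary-row versions) to get $f_X\equiv 1$ on the rectangle and $f_X=0$ on its immediate successor and predecessor neighbours --- is exactly that omitted verification. One small caution: state the last step locally (the finitely many predecessors adjacent to the rectangle get $f_X=0$ by a one-step backward mesh computation), because the blanket assertion that $f_X\le 0$ everywhere ``before'' the slice is false in general ($f_X$ becomes $+1$ again on even-shifted hom-hammocks), although your argument only ever uses the local statement, which is true and follows from precisely the propagation you describe.
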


The following  lemma gives us a way to read $\bar{h}_{X}(Y)$ from the relative position  of $X$ and $Y$ in $\Pi$.

\begin{Lem} \label{Lem:A3}
Let $X, Y\in \Z A_{n,d}$. We also regard them as $d$-diagonals in $\Pi$. Then
 \begin{enumerate}[\rm(1)]
  \item If $X$ and $Y$ are disjoint, then $\bar{h}_{X}(Y)=0$;
  \item If $X$ and $Y$ are joint, then  $\bar{h}_{X}(Y)\not=0$ if and only if $X$ and $Y$ are connected by $d$-diagonals as follows
  \newdimen\R
\R=0.8cm
{\tiny
\begin{center}
\begin{tikzpicture}
\draw[xshift=0\R] circle (\R);
 \draw[xshift=0\R] (45:\R)--(270:\R);
 \draw[xshift=0\R] (0:\R)--(180:\R)
 node[right] at (45:\R){$x_{1}$} 
 node[below](x22) at (270:\R){$x_{2}$}
  node[right] at (0:\R){$y_{1}$} 
 node[left] at (180:\R){$y_{2}$} 
     node[right]  at ($(45:\R)!0.6!(270:\R)$){$X$}
         node[above]  at ($(0:\R)!0.6!(180:\R)$){$Y$};  \draw[xshift=0\R, dotted](270:\R)--(0:\R)  (45:\R)--(180:\R);             
    \end{tikzpicture}
\end{center} }
 that is, if and only if $(y_{1},x_{2})$ {\rm(}or equivalently, $(y_{2},x_{1})${\rm)} is a $d$-diagonal. \end{enumerate}

\end{Lem}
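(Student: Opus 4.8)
The statement to prove is Lemma \ref{Lem:A3}: for $X,Y\in\Z A_{n,d}$, we have $\bar h_X(Y)=0$ when $X,Y$ are disjoint as $d$-diagonals, and when they intersect, $\bar h_X(Y)\neq 0$ if and only if the endpoint configuration $(y_1,x_2)$ (equivalently $(y_2,x_1)$) is again a $d$-diagonal. The plan is to reduce everything to the combinatorial description of $\bar h$ coming from $h$ on $\Z A_n$ (via $\bar h_X(Y)=\sum_{\pi(z)=Y}h_X(z)$, as in Definition \ref{Def:combi} and the discussion preceding it), together with the explicit labelling of $\Z A_{n,d}$ in $\Z/N\Z\times\Z/N\Z$ and Lemma \ref{theta}, which identifies $[1]$ with the rotation $\theta$. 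The central input will be Lemma \ref{Lem:homspace}, which tells us exactly which vertices $Y$ satisfy $h_X(Y)\neq 0$ in terms of the labelling on the covering $\Z A_n$: writing $X=(x,x+d+(d+1)m)$, one needs $Y=(x+(d+1)i,\ x+d+(d+1)j)$ with $0\le i\le m\le j\le n-1$.

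First I would fix $X$ with lift $(x,x+d+(d+1)m)$ in $\Z A_n$ and enumerate the lifts $z$ of an arbitrary $Y\in\Z A_{n,d}$; because $\SSS[d]$ acts by adding $N$ to both coordinates (up to the flip $(i,j)\leftrightarrow(j,i)$ when $d$ is even), the fibre $\pi^{-1}(Y)$ is easy to list, and by Lemma \ref{Lem:homspace} at most the lifts lying "between" the two rays through $X$ contribute. For part (1), I would translate "$X$ and $Y$ disjoint as $d$-diagonals in the $N$-gon $\Pi$" into the statement that no lift $z$ of $Y$ can be written in the admissible form $(x+(d+1)i,\ x+d+(d+1)j)$ with $0\le i\le m\le j\le n-1$: disjointness of the chords means the two endpoints of any lift of $Y$ either both lie strictly inside one of the two arcs cut off by $X$, or coincide with an endpoint of $X$; in either case the index inequalities $i\le m\le j$ fail. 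Hence every term in $\sum_{\pi(z)=Y}h_X(z)$ vanishes and $\bar h_X(Y)=0$.

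For part (2), assume $X$ and $Y$ intersect, with endpoints labelled $x_1,x_2$ and $y_1,y_2$ so that going clockwise around $\Pi$ they appear in the cyclic order $x_1,y_1,x_2,y_2$ (this is precisely what "intersecting" means for two chords). I would lift $X$ and $Y$ to $\Z A_n$ and compare with the shape forced by Lemma \ref{Lem:homspace}: $h_X(Y)\neq 0$ (for some lift) exactly when a lift of $Y$ has first coordinate of the form $x+(d+1)i$ with $i\le m$ and second coordinate $x+d+(d+1)j$ with $j\ge m$, i.e.\ when one endpoint of $Y$ lies on the "$d$-diagonal grid" emanating forward from $x_1$ and the other on the grid ending at $x_2=x+d+(d+1)m$. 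Unwinding this, the condition is exactly that $(y_1,x_2)$ is itself a $d$-diagonal: $y_1 = x_1+(d+1)i$ and $x_2 - y_1 = d+(d+1)(m-i)$, so $(y_1,x_2)=(y_1,\ y_1+d+(d+1)(m-i))$ with $0\le m-i\le n-1$, which is the defining form of a $d$-diagonal; and the equivalent reformulation $(y_2,x_1)$ being a $d$-diagonal follows since $N-(\text{length of }(y_1,x_2))$ and the complementary length differ by the total $N=(d+1)n+d-1$, which is $\equiv d-1\pmod{d+1}$, so one of the two chords $(y_1,x_2)$, $(y_2,x_1)$ is a $d$-diagonal iff the other is.

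The main obstacle I anticipate is bookkeeping the $d$-even flip in the $\SSS[d]$-action and making sure the cyclic ordering of the four endpoints $x_1,y_1,x_2,y_2$ is handled uniformly: a lift of $Y$ in $\Z A_n$ only sees a linear order, and I must check that among the (at most two) lifts of $Y$ that can possibly contribute, exactly the one respecting the inequalities $0\le i\le m\le j\le n-1$ survives, with $h_X$-value $1$ there (by Lemma \ref{Lem:homspace} the function $h_X$ is $0$--$1$ on the relevant region), so that $\bar h_X(Y)\in\{0,1\}$ and the "$\neq 0$" criterion is clean. Once the labelling conventions are pinned down, each case is a short congruence computation modulo $d+1$, and no deeper machinery beyond Lemmas \ref{theta} and \ref{Lem:homspace} and the definition of $\bar h$ is needed.
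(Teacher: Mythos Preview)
The paper does not actually supply a proof of this lemma: it is stated immediately after Lemma~\ref{Lem:homspace} with the remark that it ``gives us a way to read $\bar h_X(Y)$ from the relative position of $X$ and $Y$ in $\Pi$'', and is evidently meant to be a direct unwinding of Lemma~\ref{Lem:homspace} through the labelling of $\Z A_{n,d}$. Your proposal carries out precisely that unwinding, so it is exactly the argument the paper leaves implicit; there is nothing to compare. One small correction: in your disjoint case you allow the endpoints of $Y$ to coincide with those of $X$, but disjointness of diagonals here excludes shared endpoints, so that clause should simply be dropped.
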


Then by the description above, we have the following result.
\begin{Prop}\label{Prop:disjoint}
Let $X, Y\in \Z A_{n,d}$. Then the following are equivalent
\begin{enumerate}[\rm (1)]
\item $\bar{h}_{X}(Y[-s])=0$ and $\bar{h}_{Y}(X[-s])=0$ for $0\le s \le d-1$;
\item $X$ and $Y$ are disjoint as $d$-diagonals.
\end{enumerate}
\end{Prop}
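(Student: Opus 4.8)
The plan is to translate both implications into statements about $d$-diagonals of $\Pi$ and settle them by a case analysis on the relative position of $X$ and $Y$. The tools are Lemma~\ref{Lem:A3}, which reads off $\bar h_X(Z)$ from the relative position of the $d$-diagonals $X$ and $Z$ in $\Pi$, Lemma~\ref{Lem:homspace}, which records the same information via the labelling, and Lemma~\ref{theta}, which identifies $Z[-s]$ with the rotated $d$-diagonal $\theta^{-s}(Z)$. The one elementary fact I would isolate first is that a $d$-diagonal $(i,\,i+d+(d+1)j)$ of the $N$-gon $\Pi$, $N=(d+1)n+d-1$, cuts $\Pi$ into two arcs each containing a number of interior vertices $\equiv d-1\pmod{d+1}$ (immediate from the definitions, cf.\ Lemma~\ref{Lem:l}); the same computation gives the re-basing identity $(a,\,a+d+(d+1)j)=(a',\,a'+d+(d+1)(n-1-j))$ with $a'=a+d+(d+1)j$, which lets me view any $d$-diagonal from either of its endpoints.

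For $(2)\Rightarrow(1)$, assume $X$ and $Y$ are disjoint $d$-diagonals and fix $0\le s\le d-1$. If $\theta^{-s}(Y)$ is still disjoint from $X$ then $\bar h_X(Y[-s])=0$ by Lemma~\ref{Lem:A3}(1). Otherwise $\theta^{-s}(Y)$ meets $X$, which by the definition of $\delta$ first occurs at $s=\delta(Y,X)$ (see Remark~\ref{delta}); I would then show that neither nonvanishing mechanism is available for $s\le d-1$. When $\theta^{-s}(Y)$ and $X$ only share a vertex, re-basing both $d$-diagonals at that vertex and applying Lemma~\ref{Lem:homspace} gives $\bar h_X(Y[-s])=0$, the point being that $Y$ was short enough to have fitted disjointly from $X$ to begin with. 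When $\theta^{-s}(Y)$ properly crosses $X$, the chord joining its free endpoint to the shared $X$-endpoint has a number of interior vertices that a short modular count shows is $\not\equiv d-1\pmod{d+1}$ throughout $0\le s\le d-1$, so the connecting-$d$-diagonal criterion of Lemma~\ref{Lem:A3}(2) is never met. Exchanging $X$ and $Y$ gives $\bar h_Y(X[-s])=0$ as well, which is $(1)$.

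For $(1)\Rightarrow(2)$ I argue the contrapositive: suppose $X$ and $Y$ are not disjoint. If $X=Y$ then $\bar h_X(Y)=\dim\End(X)\ne0$, contradicting $(1)$. If $X$ and $Y$ share a single vertex, re-express both as $d$-diagonals based at that vertex via the re-basing identity; Lemma~\ref{Lem:homspace} then yields $\bar h_X(Y)\ne0$ or $\bar h_Y(X)\ne0$, again contradicting $(1)$. If $X$ and $Y$ cross in the interior, label their endpoints $x_1,y_1,x_2,y_2$ clockwise and let $a,b,c,e$ be the numbers of interior vertices on the four arcs between consecutive endpoints; the constraint above forces $a\equiv c$, $b\equiv e$ and $a+b\equiv d-2\pmod{d+1}$. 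By Lemma~\ref{Lem:A3}(2), $\bar h_X(Y)\ne0$ iff $b\equiv d-1\pmod{d+1}$, and $\bar h_Y(X)\ne0$ iff $c\equiv d-1\pmod{d+1}$; if both fail, the congruences just listed let me pick $s$ with $0\le s\le d-1$ for which either $\theta^{-s}(Y)$ crosses $X$ with a genuine connecting $d$-diagonal (whence $\bar h_X(Y[-s])\ne0$) or $\theta^{-s}(X)$ does so for $Y$ (whence $\bar h_Y(X[-s])\ne0$), contradicting $(1)$ in either case. The case $d=1$ is classical and handled directly.

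The hard part is the bookkeeping in the degenerate range $0\le s<\max(\delta(X,Y),\delta(Y,X))$. Because these quantities can be as small as $1$, one cannot simply claim that the rotated diagonals stay disjoint, and one must check --- keeping precise track of which endpoint of $\theta^{-s}(Y)$ lies in which arc of $\Pi\setminus X$, of whether a contact is a shared vertex or a crossing, and of the boundary cases $n=1$ and $d=1$ --- that the ``short'' configurations force $\bar h=0$ throughout $\{0,\dots,d-1\}$, while the first admissible shift beyond that range produces a nonzero $\bar h$. This modular arithmetic, together with the re-basing identity, is the technical heart of the argument.
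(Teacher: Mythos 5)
Your proposal is correct and follows essentially the same route as the paper: both directions reduce to rotating one diagonal by $\theta^{-s}$ and testing the connecting-chord/homspace criterion of Lemmas \ref{Lem:A3} and \ref{Lem:homspace}, and the paper's four coordinate cases in the proof of $(1)\Rightarrow(2)$ are exactly your shared-vertex and crossing cases (with your congruences $a\equiv c$, $b\equiv e$, $a+b\equiv d-2 \pmod{d+1}$ playing the role of the paper's inequalities on $y_{1}-x_{1}$ and $y_{2}-x_{2}$). The one point to watch is that in the crossing case the good shift $s\equiv c+1 \pmod{d+1}$ (which lies in $[0,d-1]$ precisely because $c\not\equiv d-1$) may land on a vertex-touching rather than a crossing configuration, so there you must conclude via the re-based Lemma \ref{Lem:homspace} criterion rather than Lemma \ref{Lem:A3}(2) as literally stated --- this is exactly the bookkeeping you flag, and it does go through.
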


\begin{proof} 
From $\rm (1)$ to $\rm (2)$. If $X\cap Y\not=\emptyset$. We may assume $X=(x_{1},x_{2}=x_{1}+d+(d+1)i)$ and $x_{1}\le y_{1}<x_{2} \le y_{2}$, where  $0\le i\le n-1$. 
We consider the following cases.
\begin{enumerate}[\rm $\bullet$]
\item If $x_{2}\le y_{2}\le x_{2}+d-1$ and $y_{1}-x_{1}>y_{2}-x_{2}$, then $\bar{h}_{X}(Y[x_{2}-y_{2}])\not=0$;
 \item  If $x_{2}\le y_{2}\le x_{2}+d-1$ and $y_{1}-x_{1}\le y_{2}-x_{2}$, then $\bar{h}_{X}(Y[x_{1}-y_{1}])\not=0$;
\item If $x_{2}+d<y_{2}$ and $y_{1}=x_{1}+d+(d+1)i'$, $0\le i'\le i$, then $(x_{1},y_{1})$ is a $d$-diagonal. Then by our discussion above, $\bar{h}_{Y}(X)\not=0$;  
\item If $x_{2}+d-1<y_{2}$ and $y_{1}\not=x_{1}+d+(d+1)i'$ for any $0\le i'\le i$. Then there exist $0\le t\le d$, such that $Y[-t]$ has the form $(x_{1}+(d+1)j, y_{2}-t)$ for some $0\le j< i$. In this case, $\bar{h}_{X}(Y[-t])\not=0$.
\end{enumerate}
All the cases above are contradictory to the condition $\rm (1)$. So we know  $X$ and $Y$ are disjoint.

From $\rm (2)$ to $\rm (1)$. Assume $X$ and $Y$ are disjoint as follows
 \newdimen\R
\R=0.8cm
{\tiny
\begin{center}
\begin{tikzpicture}
 \draw[xshift=0\R] circle (\R);
 \draw[xshift=0\R] (45:\R)--(270:\R);
 \draw[xshift=0\R] (90:\R)--(180:\R)
 node[right] at (45:\R){$x_{1}$} 
 node[below](x21) at (270:\R){$x_{2}$}
  node[above] at (90:\R){$y_{1}$} 
 node[left] at (180:\R){$y_{2}$} 
     node[right]  at ($(45:\R)!0.5!(270:\R)$){$X$}
         node[right]  at ($(90:\R)!0.5!(180:\R)$){$Y$};   
\end{tikzpicture} \end{center} }
For $0\le s\le d-1$, $Y[-s]=(y_{1}-s,y_{2}-s)$. If $X\cap Y[-s]=\emptyset$, it is clear $\bar{h}_{X}(Y[-s])=0$. If $X\cap Y[-s]\not=\emptyset$, $i.e. \ y_{2}-s\le x_{2}$.  Then $(y_{2}-s,x_{2})$ can not be a $d$-diagonal (it is possible only when $s>d$). So we still have $\bar{h}_{X}(Y[-s])=0$. Similarly, $\bar{h}_{Y}{X[-s]}=0$ for $0\le s\le d-1$.
\end{proof}

\begin{Rem}\label{Rem:zero}
Let $X=(x_{1},x_{2})$ and $Y=(y_{1},y_{2})$ be two $d$-diagonals. Assume $x_{1}<y_{1}<x_{2}<y_{2}$. Then by the proof of Proposition \ref{Prop:disjoint}, 
if $y_{1}\not=x_{1}+d+(d+1)i'$ for any $0\le i'\le i$, in other words, if $(x_{1},y_{1})$ is not a $d$-diagonal, then there exists $0\le s\le d-1$ such that $\bar{h}_{X}(Y[-s])\not =0$.
\end{Rem}

To prove Theorem \ref{Thm:bijection}, we need another lemma.

\begin{Lem}\label{Lem:existence}
Let $B\in \B$ and let $M$ be a $d$-diagonal. Then there exists $X\in B$ and $0\le i \le d-1$ such that $\bar{h}_{X}(M[-i])\not=0$.
\end{Lem}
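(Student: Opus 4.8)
The plan is to reduce everything to the non-vanishing criterion of Lemma~\ref{Lem:A3} and Remark~\ref{Rem:zero} and then run a descent argument driven by the maximality of $B$. If $M\in B$, take $X=M$ and $i=0$: then $\bar h_M(M)=\dim\Hom(M,M)\ge 1$, so we are done. Assume now $M\notin B$. Since $B$ is maximal and $M$ is a $d$-diagonal, $M$ cannot be disjoint from every member of $B$, so there is $X=(x_1,x_2)\in B$ crossing $M=(m_1,m_2)$; say the four endpoints occur in cyclic order $m_1,x_1,m_2,x_2$. Applying Remark~\ref{Rem:zero} to the crossing pair, reading $X$ once as $(x_1,x_2)$ and once as $(x_2,x_1)$, shows that if at least one of the two ``connecting'' chords $(x_1,m_2)$, $(x_2,m_1)$ is not a $d$-diagonal then $\bar h_X(M[-s])\neq 0$ for some $0\le s\le d-1$, and we are done with $i=s$. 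Thus we may assume the \emph{exceptional position}: both $(x_1,m_2)$ and $(x_2,m_1)$ are $d$-diagonals. Since $X$, $M$, $(x_1,m_2)$, $(x_2,m_1)$ are then all $d$-diagonals, the congruences defining $d$-diagonals force the arc from $x_1$ to $m_2$ (and likewise the arc from $x_2$ to $m_1$) to have length $\equiv d\pmod{d+1}$, hence $\ge d$.

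Set $N:=(x_1,m_2)$, a $d$-diagonal disjoint from $X$ (common endpoint $x_1$) and from $M$ (common endpoint $m_2$). If $N\in B$: one checks that $M[-1]$ crosses $N$ and that the relevant connecting chord $(x_1,m_2-1)$ has arc length $\not\equiv d\pmod{d+1}$, so Remark~\ref{Rem:zero} gives $\bar h_{N}(M[-(1+s)])\neq 0$ for some $0\le s\le d-1$; the value $1+s=d$ is impossible, because by AR duality on $\Z A_{n,d}$ (where $\SSS=[-d]$, so $\bar h_A(B)=\bar h_B(A[-d])$) it would force $\bar h_M(N)\neq 0$, contradicting Lemma~\ref{Lem:A3}(1) since $M$ and $N$ share the endpoint $m_2$. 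Hence $1+s\le d-1$ and $(X,i):=(N,1+s)$ works. If $N\notin B$: maximality of $B$ forces $N$ to cross some $X'\in B$; then $X'$ is disjoint from $X$, lies in the component of $\Pi\setminus X$ containing $m_2$, has an endpoint strictly between $x_1$ and $m_2$, and one verifies that $X'$ still crosses $M$. Replacing $X$ by $X'$ strictly decreases the controlling arc length (the arc cut off by the connecting chord), and this length stays $\equiv d\pmod{d+1}$ as long as we remain in the exceptional position; so after finitely many steps we either leave the exceptional position (which must occur, since an exceptional position with controlling arc $<d$ is impossible) or fall into the subcase $N\in B$, and in both cases the previous reductions finish the proof.

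The main obstacle is the descent step of the second paragraph: one must choose the right connecting chord at each stage (possibly $(x_2,m_1)$ instead of $(x_1,m_2)$), verify that the element of $B$ produced genuinely crosses $M$ and genuinely shortens the controlling arc, and confirm that the exceptional position cannot recur indefinitely — all while keeping careful track of which connected component of $\Pi\setminus X$ contains each endpoint of $M$ and of the arc-length congruences that underlie being a $d$-diagonal.
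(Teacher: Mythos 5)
There is a genuine gap, and it occurs right at the start. Maximality of $B$ only tells you that $M$ is not \emph{disjoint} from every element of $B$; since ``disjoint'' in this paper includes sharing a vertex (see the proof of Proposition \ref{Prop:disjoint}, where $x_{1}\le y_{1}<x_{2}\le y_{2}$ is allowed), the element $X\in B$ meeting $M$ may do so only at a common endpoint rather than by a proper crossing. This is not a degenerate possibility: for $d=2$, $n=2$, $B=\{(1,6),(2,4)\}$ and $M=(4,6)$, the diagonal $M$ crosses no element of $B$ at all, it only shares endpoints. Your reduction ``there is $X\in B$ crossing $M$ with the four endpoints in cyclic order $m_{1},x_{1},m_{2},x_{2}$'' therefore fails, and the shared-endpoint configurations (the paper's types $1$ and $2$) are exactly where the real work lies: in type $2$ one must either produce $T=(m_{1},t)\in B$ with $\bar h_{T}(M)\not=0$ or manufacture a genuinely crossing $Y\in B$, and the paper does this by the $B$-cycle count $\sum_{i}\delta(X_{i},X_{i+1})=d+s-1$ of Proposition \ref{Prop:cycle}; none of this appears in your argument.

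Even within the proper-crossing case your descent is unsound. The subcase $N=(x_{1},m_{2})\in B$ is vacuous: $N$ shares the endpoint $x_{1}$ with $X\in B$, and two members of a $d$-Brauer relation are never non-disjoint, so this case cannot occur (which also makes the ``AR duality'' computation there moot). In the subcase $N\notin B$, maximality again only yields some $X'\in B$ \emph{meeting} $N$, possibly only at the vertex $m_{2}$; and even if $X'$ properly crosses $N$, it need not cross $M$ --- its second endpoint can be $m_{1}$, landing you back in the shared-endpoint situation you never treat (the only configuration that is genuinely excluded is an endpoint strictly between $m_{1}$ and $x_{1}$, since that would force $X'$ to cross $X$). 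The claims that the ``controlling arc'' strictly decreases, that the exceptional position cannot persist, and that the process terminates in a good case are asserted rather than proved. The paper avoids all of this by choosing $X$ extremal (no element of $B$ of type $3$ on the $m_{1}$-side of $X$) and then using Proposition \ref{Prop:cycle} to show that $(x_{2},m_{1})$ is \emph{not} a $d$-diagonal, after which Remark \ref{Rem:zero} finishes; some counting argument of this kind is indispensable, and your proposal has no substitute for it.
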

\begin{proof}
Since $B$ is maximal, there exists $X\in B$ such that $X\cap M\not=\emptyset$. Up to rotation, there are three types of positional relationships between $M$ and $X$ as follows.
 \newdimen\R
\R=0.8cm
{\tiny
\begin{center}
\begin{tikzpicture}
\draw[xshift=-5\R] circle (\R);
 \draw[xshift=-5\R] (45:\R)--(220:\R);
 \draw[xshift=-5\R] (220:\R)--(120:\R)
 node[right] at (45:\R){$m_{1}$} 
 node[left](x21) at (220:\R){$x_{1}=m_{2}$}
  node[left] at (120:\R){$x_{2}$} 
     node[right]  at ($(45:\R)!0.5!(220:\R)$){$M$}
         node[left]  at ($(220:\R)!0.5!(120:\R)$){$X$}
         node[below=8] at (270:\R){\small type $1$};   
         
 \draw[xshift=0\R] circle (\R);
 \draw[xshift=0\R] (45:\R)--(220:\R);
 \draw[xshift=0\R] (220:\R)--(320:\R)
 node[right] at (45:\R){$m_{1}$} 
 node[left](x21) at (220:\R){$x_{2}=m_{2}$}
  node[right] at (320:\R){$x_{1}$} 
     node[left]  at ($(45:\R)!0.5!(220:\R)$){$M$}
         node[above]  at ($(220:\R)!0.5!(320:\R)$){$X$}
         node[below=8] at (270:\R){\small type $2$};  
         
          \draw[xshift=5\R] circle (\R);
 \draw[xshift=5\R] (45:\R)--(220:\R);
 \draw[xshift=5\R] (180:\R)--(320:\R)
 node[right] at (45:\R){$m_{1}$} 
 node[left](x21) at (220:\R){$m_{2}$}
  node[right] at (320:\R){$x_{1}$} 
 node[left] at (180:\R){$x_{2}$} 
     node[right]  at ($(45:\R)!0.5!(220:\R)$){$M$}
         node[below]  at ($(180:\R)!0.5!(320:\R)$){$X$}
         node[below=8] at (270:\R){\small type $3$};    
\end{tikzpicture} \end{center} }
We show the statement case by case. For type $1$, it is clear $\bar{h}_{X}(M)\not=0$ by Lemma \ref{Lem:A3}. For type $2$, if there is $m_{1}<t<x_{1}$, such that $T=(m_{1},t)$ is a $d$-diagonal in $B$, then $\bar{h}_{T}(M)\not=0$. If there is no such a $T$, we claim that $\exists \ Y\in B$ such that $Y$ and $M$ are of type $3$.

To prove this claim, let us consider the $B$-cycle $B_{X}$ containing $X$ such that $B_{X}$ and $M$ are on the same side of $X$. If the claim is not true, 
then for any $X'\in B_{X}$, $M$ and $X'$ are disjoint or of type $2$ (Notice that by our assumption, type $1$ never happens). Labelling $B_{X}$ anti-clockwise starting from $X$. Let $X_{s+1}=X=X_{1}$ (see figure (a) below). 
We may write $x_{1}=x_{2}+d+(d+1)i'$ and $m_{1}=m_{2}+d+(d+1)i''$, where $0\le i''< i' \le n-1$, then  $x_{1}-m_{1}=(d+1)(i'-i'')$ and the number of vertices between $m_{1}$ and $x_{1}$ is $ (d+1)(i'-i'')-1=d+(d+1)(i'-i''-1)$. 
Let $j$ be the smallest number such that $X_{j}$ and $X_{1}$ are on the different sides of $M$. Then  the sum of number of vertices between $X_{i}$ and $X_{i+1}$ for $1\le i\le j-1$ is at least $d$. Then $\sum_{i=1}^{s}\delta(X_{i},X_{i+1})\ge d+s$. It is contradictory to Proposition \ref{Prop:cycle}, which says $\sum_{i=1}^{s}\delta(X_{i},X_{i+1})=d+s-1$. 
So the claim holds. Then we only need to consider type $3$.
{\tiny
\begin{center}
\begin{tikzpicture}
 \draw[xshift=0\R] circle (\R);
 \draw[xshift=0\R] (45:\R)--(220:\R) (300:\R)--(0:\R)
 (45:\R)--(100:\R) (140:\R)--(200:\R);
 \draw[dotted] (0:\R)--(45:\R) (100:\R)--(140:\R);
 \draw[xshift=0\R] (220:\R)--(270:\R)
     node[right] at (45:\R){$m_{1}$} 
     node[left](x21) at (220:\R){$x_{2}=m_{2}$}
     node[below] at (270:\R){$x_{1}$} 
     node[left]  at ($(45:\R)!0.5!(220:\R)$){$M$}
     node[left]  at ($(300:\R)!0.5!(0:\R)$){$X_{2}$}
     node[left]  at ($(140:\R)!0.5!(200:\R)$){$X_{s}$}
     node[below]  at ($(45:\R)!0.5!(100:\R)$){$X_{j}$}
     node[above]  at ($(220:\R)!0.7!(270:\R)$){$X_{1}$}
     node[below=12] at (270:\R){\small (a)};  
     
               \draw[xshift=7\R] circle (\R);
 \draw[xshift=7\R] (45:\R)--(220:\R);
 \draw[xshift=7\R, dotted] (30:\R)--(90:\R);
 \draw[xshift=7\R] (180:\R)--(320:\R) (340:\R)--(30:\R) (90:\R)--(150:\R)
 node[right] at (45:\R){$m_{1}$} 
 node[left](x21) at (220:\R){$m_{2}$}
  node[right] at (320:\R){$x_{1}$} 
 node[left] at (180:\R){$x_{2}$} 
     node[right]  at ($(45:\R)!0.5!(220:\R)$){$M$}
     node[right]  at ($(340:\R)!0.5!(30:\R)$){$X_{2}$}
     node[below]  at ($(90:\R)!0.5!(150:\R)$){$X_{s}$}
         node[below]  at ($(180:\R)!0.5!(320:\R)$){$X=X_{1}$}
         node[below=12] at (270:\R){\small (b)};    
\end{tikzpicture} \end{center} }
Assume $X$ and $M$ are of type $3$. We may assume there is no $X'\in B$, such that $X'$ and the vertex $m_{1}$ are on the same side of $X$, and $X'$, $M$ are of type $3$ (if such $X'$ exists, replace $X$ by $X'$). Now we show $(x_{2},m_{1})$ is not a $d$-diagonal. If $(x_{2},m_{1})$ is a $d$-diagonal, consider the $B$-cycle $B_{X}$ containing $X$, $B_{X}$ and the vertex $m_{1}$ are on the same side of $X$. Labelling $B_{X}$ anti-clockwise (see figure (b) above). Since $(x_{2},m_{1})$ is a $d
$-diagonal, then $(d+1)| (x_{1}-m_{1})$. Similar to our discussion for type 2, we have $\sum_{i=1}^{s}\delta(X_{i},X_{i+1})\ge d+s$, which is contradictory to Proposition \ref{Prop:cycle}. So we know $(x_{2},m_{1})$ is not a $d$-diagonal. Then by Remark \ref{Rem:zero},  there exists $0\le i\le d-1$ such that $\bar{h}_{X}(M[-i])\not=0$. Therefore the assertion is true.
\end{proof}

 We are ready to  prove Theorem \ref{Thm:bijection} now.
\begin{proof}[The proof of Theorem \ref{Thm:bijection}] 

Given a $(-d)$-CY configuration $ C$ in $\Z A_{n,d}$. 
By Definition \ref{Def:combi}, for any two different objects $X$ and $Y$ in $ C$, we have 
 $\bar{h}_{X}(Y[-s])=0$ and $\bar{h}_{Y}(X[-s])=0$ for $0\le s \le d-1$.
  Then by Proposition \ref{Prop:disjoint}, $X$ and $Y$ are disjoint. So the set $\{ X | X\in C \}$ gives rise to a $d$-Brauer relation $B$. We claim $B$ is maximal. If not, there exists a $d$-diagonal $M$ such that for any $X\in  C$,  $X$ and $M$ are disjoint. Then by Proposition \ref{Prop:disjoint}, $\bar{h}_{X}(M[-s])=0$ for $0\le s \le d-1$. It is contradictory to that $C$ is a $(-d)$-CY configuration (see Definition \ref{Def:combi}). 

On the other hand, given a maximal $d$-Brauer relation $B$. Let $C$ be the set of vertices of $\Z A_{n,d}$ corresponds to the $d$-diagonals in $B$. By Proposition \ref{Prop:disjoint},  for any two different objects $X$ and $Y$ in $C$, we have $\bar{h}_{X}(Y[-j])=0$, where $0\le j\le d-1$.
Let $M$ be any vertex in $\Z A_{n,d}$. Since $B$ is maximal, then by Lemma \ref{Lem:existence}, there exists  $X\in \cal C$ and $0\le i\le d-1$ such that $\bar{h}_{X}(M[-i])\not=0$. So $C$ is a $(-d)$-CY configuration.
 \end{proof}



\end{document}